\date{} % % March 17 2003
\title{Imaginary geometry II: reversibility of $\SLE_\kappa(\rho_1;\rho_2)$ for $\kappa \in (0,4)$}
\author{Jason Miller and Scott Sheffield}
\def\@rst #1 #2other{#1}
\newcommand\MR[1]{\relax\ifhmode\unskip\spacefactor3000 \space\fi
  \MRhref{\expandafter\@rst #1 other}{#1}}
\newcommand{\MRhref}[2]{\href{http://www.ams.org/mathscinet-getitem?mr=#1}{MR#2}}
\newcommand{\giv}{\,|\,}
\newif\ifhyper\IfFileExists{hyperref.sty}{\hypertrue}{\hyperfalse}
\ifhyper\usepackage{hyperref}\fi
\newif\ifdraft
\def\note#1/{\ifdraft {\bf [#1]}\fi}
\long\def\comment#1{}
\numberwithin{equation}{section}
\numberwithin{figure}{section}
\newtheorem{theorem}{Theorem}
\numberwithin{theorem}{section}
\newtheorem{lemma}[theorem]{Lemma}
\newtheorem{proposition}[theorem]{Proposition}
\theoremstyle{remark}\newtheorem{definition}[theorem]{Definition}
\theoremstyle{remark}\newtheorem{remark}[theorem]{Remark}
\newcommand{\C}{\mathbf{C}}
\newcommand{\D}{\mathbf{D}}
\newcommand{\E}{\mathbf{E}}
\newcommand{\N}{\mathbf{N}}
\newcommand{\p}{\mathbf{P}}
\newcommand{\Q}{\mathbf{Q}}
\newcommand{\R}{\mathbf{R}}
\newcommand{\h}{\mathbf{H}}
\newcommand{\Fg}{\mathfrak {g}}
\newcommand{\Fh}{\mathfrak {h}}
\newcommand{\CA}{\mathcal {A}}
\newcommand{\CC}{\mathcal {C}}
\newcommand{\CF}{\mathcal {F}}
\newcommand{\CK}{\mathcal {K}}
\newcommand{\CL}{\mathcal {L}}
\newcommand{\CR}{\mathcal {R}}
\newcommand{\CS}{\mathcal {S}}
\newcommand{\CV}{\mathcal {V}}
\newcommand{\CZ}{\mathcal {Z}}
\newcommand{\im}{{\rm Im}}
\newcommand{\re}{{\rm Re}}
\newcommand{\SLE}{{\rm SLE}}
\newcommand{\CLE}{{\rm CLE}}
\newcommand{\strip}{\CS}
\newcommand{\striptop}{\partial_U \CS}
\newcommand{\stripbot}{\partial_L \CS}
\newcommand{\vstrip}{\CV}
\newcommand{\vstripleft}{\partial_L \vstrip}
\newcommand{\vstripright}{\partial_R \vstrip}
\newcommand{\vhstrip}{\CV^+}
\newcommand{\vhstripleft}{\partial_L \vhstrip}
\newcommand{\vhstripright}{\partial_R \vhstrip}
\newcommand{\wh}{\widehat}
\newcommand{\wt}{\widetilde}
\newcommand{\ol}{\overline}
\newcommand{\ul}{\underline}
\newcommand{\one}{{\bf 1}}
\def\CLEkk#1/{$\mathrm{CLE}(#1)$}
\def\CLEk/{\CLEkk{\kappa}/}
\def\CLE/{$\mathrm{CLE}$}
\def\Ito/{It\^o}
\def \E {{\bf E}}
\def\hcap{{\rm hcap}}
\newcommand{\refrho}{\widehat}
\begin{document}
\maketitle

\begin{abstract}
Given a simply connected planar domain $D$, distinct points $x,y \in \partial D$, and $\kappa >0$, the {\em Schramm-Loewner evolution} $\SLE_\kappa$ is a random continuous non-self-crossing path in $\overline D$ from $x$ to $y$.
The $\SLE_\kappa(\rho_1;\rho_2)$ processes, defined for $\rho_1, \rho_2 > -2$, are in some sense the most natural generalizations of $\SLE_\kappa$.

When $\kappa \leq 4$, we prove that the law of the time-reversal of an $\SLE_\kappa(\rho_1;\rho_2)$ from $x$ to $y$ is, up to parameterization, an $\SLE_\kappa(\rho_2;\rho_1)$ from $y$ to $x$.
This assumes that the ``force points'' used to define $\SLE_\kappa(\rho_1;\rho_2)$ are immediately to the left and right of the $\SLE$ seed. A generalization to arbitrary (and arbitrarily many) force points applies whenever the path does not (or is conditioned not to) hit $\partial D \setminus \{x,y\}$.

The proof of time-reversal symmetry makes use of the interpretation of $\SLE_{\kappa}(\rho_1; \rho_2)$ as a ray of a random geometry associated to the Gaussian free field.  Within this framework, the time-reversal result allows us to couple two instances of the Gaussian free field (with different boundary conditions) so that their difference is almost surely constant on either side of the path.  In a fairly general sense, adding appropriate constants to the two sides of a ray reverses its orientation.
\end{abstract}

\newpage
\tableofcontents
\newpage

\setlength{\parskip}{0.25cm plus1mm minus1mm}

\medbreak {\noindent\bf Acknowledgments.}  We thank Oded Schramm, Wendelin Werner, David Wilson and Dapeng Zhan for helpful discussions.  We also thank several referees for many helpful comments.

\section{Introduction}

For each simply connected Jordan domain $D \subseteq \C$ and distinct pair $x,y \in \partial D$, the {\em Schramm-Loewner evolution} of parameter $\kappa > 0$ ($\SLE_\kappa$) describes the law of a random continuous path from $x$ to $y$ in $\overline D$.  This path is almost surely a continuous, simple curve when $\kappa \leq 4$.  It is almost surely a continuous, non-space-filling curve that intersects itself and the boundary when $\kappa \in (4,8)$, and it is almost surely a space-filling curve when $\kappa \geq 8$ \cite{RS05, LSW04}.  We recall the basic definitions of $\SLE_\kappa$ in Section~\ref{subsec::SLEoverview}.

While $\SLE_\kappa$ curves were introduced by Schramm in \cite{S0}, the following fact was proved only much more recently by Zhan in \cite{Z_R_KAPPA}: if $\eta$ is an $\SLE_\kappa$ process for $\kappa \in (0,4]$ from $x$ to $y$ in $D$ then the law of the time-reversal of $\eta$ is, up to monotone reparameterization, that of an $\SLE_\kappa$ process from $y$ to $x$ in $D$.  This is an extremely natural symmetry.

Since their introduction in \cite{S0}, it has been widely expected that $\SLE_\kappa$ curves would exhibit this symmetry for all $\kappa \leq 8$.\footnote{This was the final problem in a series presented by Schramm at ICM 2006 \cite{MR2334202}.}  One reason to expect this to be true is that $\SLE_\kappa$ has been conjectured and in some cases proved to arise as a scaling limit of discrete models that enjoy a discrete analog of time-reversal symmetry.  However, it is far from obvious from the definition of $\SLE_\kappa$ why such a symmetry should exist.

It was further conjectured by Dub\'edat that when $\kappa \leq 4$ the property of time-reversal symmetry is also enjoyed by the so-called $\SLE_\kappa(\rho_1;\rho_2)$ processes, which depend on parameters $\rho_1,\rho_2 > -2$, and which are in some sense the most natural generalizations of $\SLE_\kappa$.  We recall the definition of $\SLE_\kappa(\rho_1;\rho_2)$ in Section~\ref{subsec::SLEoverview}.  (The conjecture assumes the so-called {\em force points}, whose definition we recall in Section~\ref{subsec::SLEoverview},  are located immediately left and right of the $\SLE$ seed.)  Dub\'edat's conjecture was later proved in the special case of $\SLE_\kappa(\rho)$ processes (obtained by setting one of the~$\rho_i$ to~$0$ and the other to~$\rho$), under the condition that $\rho$ is in the range of values for which the curve almost surely does not intersect the boundary.  This was accomplished by Dub\'edat \cite{DUB_DUAL} and Zhan \cite{Z_R_KAPPA_RHO} using a generalization of the technique used to prove the reversibility of $\SLE_\kappa$ in \cite{Z_R_KAPPA}.

This article will prove Dub\'edat's conjecture in its complete generality using several new methods that we hope will be of independent interest.  In particular, we will give a new proof of the time-reversal symmetry of ordinary $\SLE_\kappa$ for $\kappa < 4$ (inspired by an unpublished argument of Schramm and second author), which is independent of the arguments in \cite{Z_R_KAPPA, DUB_DUAL, Z_R_KAPPA_RHO}.

One can generalize $\SLE_\kappa(\rho_1;\rho_2)$ theory to multiple force points, as we recall in Section~\ref{subsec::SLEoverview}; when there are more than two, we often use $\ul \rho$ to denote the corresponding vector of $\rho$ values. The time-reversal of $\SLE_\kappa(\ul{\rho})$ with multiple force points (or a force point not immediately adjacent to the $\SLE$ seed) need not be an $\SLE_\kappa(\ul{\rho})$, as illustrated in \cite{DUB_DUAL, Z_R_KAPPA_RHO}.  However, another result of the current paper is that in general the time-reversal of an $\SLE_\kappa(\ul{\rho})$ that does not (or is conditioned not to) hit the boundary is also an $\SLE_\kappa(\ul{\rho})$ that does not (or is conditioned not to) hit the boundary, with appropriate force points.  A similar result applies if the $\SLE_\kappa(\ul{\rho})$ hits the boundary only on one of the two boundary arcs connecting $x$ and $y$ (and there are no force points in the interior of that arc).

This paper is a sequel to and makes heavy use of a recent work of the authors \cite{MS_IMAG}, which in particular proves the almost sure continuity of general $\SLE_\kappa(\ul{\rho})$ traces, even those that hit the boundary. The results of the current paper have various applications to the theory of ``imaginary geometry'' described in \cite{MS_IMAG}, to Liouville quantum gravity, and to SLE theory itself.

In particular, they will play a crucial role in a subsequent work by the authors that will give the first proof of the time-reversal symmetry of $\SLE_\kappa$ and $\SLE_\kappa(\rho_1;  \rho_2)$ processes that applies when $\kappa \in (4,8)$ \cite{MS_IMAG3}.  Interestingly, we will find in \cite{MS_IMAG3} that when $\kappa \in (4,8)$ the $\SLE_\kappa(\rho_1;\rho_2)$ processes are reversible if and only if $\rho_i \geq \tfrac{\kappa}{2}-4$ for $i \in \{1,2\}$.  The threshold $\tfrac{\kappa}{2}-4$ is significant because, when $\kappa \in (4,8)$, the $\SLE_\kappa(\rho_1;\rho_2)$ curves almost surely hit every point on the entire left (resp.\ right) boundary of $D$ if and only if $\rho_1 \leq \tfrac{\kappa}{2}-4$ (resp.\ $\rho_2 \leq \tfrac{\kappa}{2}-4$).  Thus, aside from the critical cases, the ``non-boundary-filling'' $\SLE_\kappa(\rho_1;\rho_2)$ curves are the ones with time-reversal symmetry.

The time-reversal symmetries that apply when $\kappa \geq 8$ will be addressed in the fourth work of the current series \cite{MS_INTERIOR}. When $\kappa \geq 8$, we will see that one has time-reversal symmetry only for one special pair of $\rho_1, \rho_2$ values; however, in the $\kappa \geq 8$ context, it is possible to describe time-reversals of $\SLE_\kappa(\rho_1;\rho_2)$ processes more generally in terms of $\SLE_\kappa(\refrho{\rho}_1;\refrho{\rho}_2)$ processes for certain values of $\refrho{\rho}_1$ and $\refrho{\rho}_2$.  We will also show in \cite{MS_INTERIOR} that certain families of ``whole-plane'' variants of $\SLE_\kappa$ have time-reversal symmetry as well, generalizing a recent work of Zhan on this topic \cite{2010arXiv1004.1865Z}.

\subsection{Main results}

The following is our first main result:

\begin{theorem}
\label{thm::all_reversible}
Suppose that $\eta$ is an $\SLE_\kappa(\rho_1;\rho_2)$ process in a Jordan domain $D$ from $x$ to $y$, with $x,y \in \partial D$ distinct and weights $\rho_1,\rho_2 > -2$ corresponding to force points located at $x^-,x^+$, respectively.  The law of the time-reversal $\mathcal R(\eta)$ of $\eta$ is, up to reparameterization, an $\SLE_\kappa(\rho_2;\rho_1)$ process in $D$ from $y$ to $x$ with force points located at $y^-,y^+$, respectively.  Thus, the law of $\eta$ as a random set is invariant under an anti-conformal map that swaps $x$ and $y$.
\end{theorem}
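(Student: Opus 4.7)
The plan is to use the imaginary geometry coupling from \cite{MS_IMAG} to realize $\eta$ as a measurable function of a Gaussian free field, and then produce the time-reversal as a second flow line of the \emph{same} field started from $y$. First I would pick boundary data for a GFF $h$ on $D$ with prescribed jumps at $x$ (determined by $\rho_1,\rho_2$) so that the flow line of $h$ from $x$ to $y$ at angle $0$ has exactly the law of $\SLE_\kappa(\rho_1;\rho_2)$. By the coupling and uniqueness theorems of \cite{MS_IMAG}, we may couple $\eta$ with such an $h$ in a way that makes $\eta$ a.s.\ determined by $h$.

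The natural candidate for the reversal is the flow line $\wt\eta$ of the \emph{same} field $h$ started from $y$, with the angle shifted by $\pi$ so that its outgoing tangent at $y$ agrees with the incoming tangent of $\eta$ at $y$. Reading the boundary data of $h$ from $y$'s perspective under the imaginary-surface coordinate change, the jumps now occurring at $y^-$ and $y^+$ correspond respectively to $\rho_2$ and $\rho_1$; hence, by the same coupling theorem, the marginal law of $\wt\eta$, viewed as a curve from $y$ to $x$, is $\SLE_\kappa(\rho_2;\rho_1)$ with force points at $y^-,y^+$. The theorem would then follow from the set-theoretic identity $\eta=\wt\eta$, because the time-reversal of $\eta$ traces the same set and has the same parameterization-free law as $\wt\eta$.

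The main obstacle is establishing $\eta=\wt\eta$. I would tackle this via a bichordal uniqueness statement that simultaneously answers question~(1) of the introduction: any coupling of a pair of random curves $(\eta_1,\eta_2)$ from $x$ to $y$ in $D$, in which the conditional law of each given the other is $\SLE_\kappa(\rho_1;\rho_2)$ in the appropriate complementary component (with force points immediately adjacent to the seed), must be supported on the diagonal $\eta_1=\eta_2$. The GFF coupling produces such a bichordal law by taking both $\eta_1$ and $\eta_2$ to be the flow line of $h$; applying the uniqueness to the coupling of $\eta$ with a time-reversal of $\wt\eta$ then forces the two sets to coincide.

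The technical heart of the argument is therefore the bichordal uniqueness, not the GFF coupling itself. I would approach it by running a Loewner/SDE analysis of one curve conditional on the other, using the domain Markov property of $\SLE_\kappa(\rho_1;\rho_2)$, the continuity of $\SLE_\kappa(\ul\rho)$ established in \cite{MS_IMAG}, and the flow line monotonicity and merging lemmas from \cite{MS_IMAG} (flow lines of the same field with the same effective angle cannot cross and must merge on meeting). The delicate point I would expect to work hardest on is ruling out nontrivial bichordal couplings in which $\eta_1$ and $\eta_2$ bound a nonempty pocket; handling this will likely require combining the non-crossing/merging information with a careful analysis of the conditional driving SDE to show that any such pocket is instantly absorbed.
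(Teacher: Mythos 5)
The central step of your argument fails at the identity $\eta=\wt\eta$: the flow line of the \emph{same} field $h$ started from $y$ with the angle shifted by $\pi$ is precisely the ``naive time-reversal'' discussed in Section~\ref{subsec::naive_time_reversal}, and it is \emph{not} the time-reversal of $\eta$. The coupling of $\SLE_\kappa(\ul\rho)$ with the GFF is non-reversible in exactly this sense: the forward flow line and the opposite-direction (dual) flow line of the same field almost surely do \emph{not} coincide --- in the boundary-hitting regime they intersect only at isolated boundary points, where they immediately cross each other, and in general they are genuinely different random curves. So no uniqueness principle can force the two sets to agree; the statement you are trying to prove about this particular pair is simply false. (After reversibility is \emph{proved}, one does obtain a GFF description of $\CR(\eta)$, but as the flow line of a \emph{different} field $h_2$, which differs from $h$ by a harmonic function that is constant on each side of the path, as in Figure~\ref{fig::abswap}; it is not $h+\pi\chi$.)

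The auxiliary ``bichordal uniqueness'' you propose is also not of a usable form: if a coupling of $(\eta_1,\eta_2)$ were supported on the diagonal, then the conditional law of $\eta_1$ given $\eta_2$ would be a point mass, not an $\SLE_\kappa(\rho_1;\rho_2)$ in a complementary component, so the hypotheses and the desired conclusion are incompatible as stated. The bi-chordal uniqueness that actually holds (Theorem~\ref{thm::bi_chordal}) concerns two curves meeting only at their endpoints, one lying to the left of the other, each being a \emph{one-sided} $\SLE_\kappa(\ul\rho)$ given the other; and in the paper it is used not to identify $\eta$ with a candidate reversal, but to transfer reversibility from the single-force-point case to $\SLE_\kappa(\rho_1;\rho_2)$. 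The single-force-point case itself requires the bulk of the work: a conformal Markov characterization of $\SLE_\kappa(\rho)$ with one extra marked point (Theorem~\ref{thm::conformal_markov}), a resampling description of the path given its terminal segment via dual flow lines and paths conditioned to avoid the boundary (Sections~\ref{sec::conditioned_not_to_hit} and~\ref{sec::time_reversal_conformal_markov}), and the monotonicity argument showing the reversal map $R$ on weights satisfies $R(\rho)=\rho$. None of these ingredients is replaced by your construction, so the proposal does not yield a proof.
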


The proof of Theorem~\ref{thm::all_reversible} has two main parts.  The first part is to establish the reversibility of $\SLE_\kappa(\rho)$ processes with a single force point located at the $\SLE$ seed, even when they hit the boundary.
This extends the one-sided result of \cite{DUB_DUAL, Z_R_KAPPA_RHO} to the boundary-intersecting regime.
The second part is to extend this result to $\SLE_\kappa(\rho_1;\rho_2)$ processes.

This second part of the proof will be accomplished using so-called \emph{bi-chordal} $\SLE$ processes to reduce the two-force-point problem to the single-force-point case.  The bi-chordal processes we use will be probability measures on pairs of non-crossing paths $(\eta_1,\eta_2)$ in $D$ from $x$ to $y$ with the property that the conditional law of $\eta_1$ given $\eta_2$ is an $\SLE_\kappa(\ul{\rho}^L)$ in the left connected component of $D \setminus \eta_2$ and the law of $\eta_2$ given $\eta_1$ is an $\SLE_\kappa(\ul{\rho}^R)$ in the right connected component of $D \setminus \eta_1$.  We use the superscript ``$R$'' to indicate that the force points associated with $\eta_1$ lie on the counterclockwise arc of $\partial D$ between the initial and terminal points of $\eta_1$.  Likewise, the superscript ``$L$'' indicates that the force points associated with $\eta_2$ lie on the clockwise arc of $\partial D$ between the initial and terminal points of $\eta_2$.  We will prove in a rather general setting that this information (about the conditional law of each $\eta_i$ given the other) completely characterizes the joint law of $(\eta_1,\eta_2)$, a result we consider independently interesting.  One can then use the imaginary geometry constructions from \cite{MS_IMAG} to explicitly produce processes in which each $\eta_i$ is a one-sided $\SLE_\kappa(\rho)$ when restricted to the complement of the other, but the marginal law of each path is an $\SLE_\kappa(\rho_1;\rho_2)$ process in the whole domain.  The time-reversal symmetry of the individual $\SLE_\kappa(\rho)$ processes can then be used to prove the time-reversal symmetry of $\SLE_\kappa(\rho_1;\rho_2)$.

Another important point for us will be to show the law of $\SLE_\kappa(\rho)$ is uniquely determined by certain type of domain Markov property, much like the one characterizes ordinary $\SLE_\kappa$ \cite{S0}.  A simple heuristic argument of a statement of this kind appears at the beginning of \cite[Section~8.3]{CONF_RES_CHORDAL}, which is where the $\SLE_\kappa(\rho)$ processes were first defined.  It is noted there that the SDE driving single-force-point $\SLE_\kappa(\rho)$ is the only one with this type of property.  
Our proof of the reversibility of $\SLE_\kappa(\rho)$ requires a particular precise characterization of the type indicated in \cite[Section~8.3]{CONF_RES_CHORDAL} (see also the discussion in \cite[Section~7.2]{WER_CONF_RES}).  That is, we will need that $\SLE_\kappa(\rho)$ is characterized by the following version of conformal invariance and the domain Markov property.  Let $c = (D,x,y;z)$ be a configuration which consists of a Jordan domain $D \subseteq \C$ and $x,y,z \in \partial D$ and $x \neq y$.  We let $\CC_L$ (resp.\ $\CC_R$) be the collection of configurations $c = (D,x,y;z)$ where $z$ lies on the clockwise (resp.\ counterclockwise) arc of $\partial D$ from $x$ to $y$.

\begin{definition}[Conformal Invariance]
\label{def::conf_invariance}
We say that a family $(\p_c : c \in \CC_q)$, $q \in \{L,R\}$, where $\p_c$ is a probability measure on continuous paths from $x$ to $y$ in $D$, is conformally invariant if the following is true.  Suppose that $c = (D,x,y;z)$, $c' = (D',x',y';z') \in \CC_q$, and $\psi \colon D \to D'$ is a conformal map with $\psi(x) = x'$, $\psi(y) = y'$, and $\psi(z) = z'$.  Then for $\eta \sim \p_c$, we have that $\psi(\eta) \sim \p_{c'}$, up to reparameterization.
\end{definition}

\begin{definition}[Domain Markov Property]
\label{def::domain_markov}
We say that a family $(\p_c : c \in \CC_q)$, $q \in \{L,R\}$, where $\p_c$ is a probability measure on continuous paths from $x$ to $y$ in $D$, satisfies the domain Markov property if for all $c \in \CC_q$ the following is true.  Suppose $\eta \sim \p_c$.  Then for every $\eta$ stopping time $\tau$, the law of $\eta|_{[\tau,\infty)}$ conditional on $\eta|_{[0,\tau]}$ is, up to reparameterization, given by $\p_{c_\tau}$ where $c_\tau = (D_\tau,\eta(\tau),y;z_\tau)$.  Here, $D_\tau$ is the connected component of $D \setminus \eta([0,\tau])$ which contains $y$ on its boundary.  If $q = L$ (resp.\ $q = R$), then $z_\tau$ is the leftmost (resp.\ rightmost) point on the clockwise (resp.\ counterclockwise) arc of $\partial D$ from $x$ to $y$ which lies to the right (resp.\ left) of $z$ and $\eta([0,\tau]) \cap \partial D$.
\end{definition}

Our conformal Markov characterization of $\SLE_\kappa(\rho)$ is the following:

\begin{theorem}
\label{thm::conformal_markov}
Suppose that $(\p_c : c \in \CC_q)$, $q \in \{L,R\}$, is a conformally invariant family which satisfies the domain Markov property in the sense of Definitions~\ref{def::conf_invariance} and~\ref{def::domain_markov}.  Assume further that when $c = (D,x,y;z) \in \CC_q$ and $D$ has smooth boundary and $\eta \sim \p_c$, the Lebesgue measure of $\eta \cap \partial D$ is zero almost surely.  Then there exists $\rho > -2$ such that for each $c = (D,x,y;z) \in \CC_q$, $\p_c$ is the law of an $\SLE_\kappa(\rho)$ process in $D$ from $x$ to $y$ with a single force point at $z$.
\end{theorem}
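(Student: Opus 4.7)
The plan is to reduce the problem, via the Loewner correspondence, to the identification of a scalar diffusion. Take $q = R$ (the case $q = L$ is symmetric). By conformal invariance, it suffices to treat the canonical configurations $c = (\H, 0, \infty; z)$ with $z > 0$. Let $\eta \sim \p_c$, parameterize by half-plane capacity to obtain a Loewner chain $(g_t)$ with driving function $U_t$, set $V_t := g_t(z_t)$ (with $z_t$ the time-$t$ force point of Definition~\ref{def::domain_markov}), and $O_t := V_t - U_t$. Continuity of $U_t$ (hence of $V_t$ and $O_t$) follows from the continuity of $\eta$ together with the Lebesgue-null hypothesis on $\eta \cap \partial D$.

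The first main step is to show that $(O_t)$ is a time-homogeneous continuous Markov process on $(0,\infty)$. For an $\eta$-stopping time $\tau$, the domain Markov property says that $\eta|_{[\tau,\infty)}$, conditional on $\CF_\tau$, is distributed as $\p_{c_\tau}$ with $c_\tau = (D_\tau, \eta(\tau), \infty; z_\tau)$. Applying conformal invariance to the normalized Loewner map $g_\tau - U_\tau \colon D_\tau \to \H$ (which sends $\eta(\tau) \mapsto 0$, $\infty \mapsto \infty$, and $z_\tau \mapsto O_\tau$) shows that the shifted pair $(U_{\tau+s} - U_\tau, V_{\tau+s} - U_\tau)_{s \geq 0}$ is distributed, conditional on $\CF_\tau$, exactly as the corresponding pair under $\p_{(\H, 0, \infty; O_\tau)}$. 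Conformal invariance under the real translations $w \mapsto w + a$ then reduces the dependence on $\CF_\tau$ to $O_\tau$ alone, yielding the Markov property of $(O_t)$.

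The second main step is to identify $O$ as a Bessel process. Conformal invariance under the dilations $w \mapsto \lambda w$ (which fix $\{0,\infty\}$ and send $z \mapsto \lambda z$) gives the Brownian self-similarity $O^{(\lambda z)} \stackrel{d}{=} \lambda\, O^{(z)}_{\cdot/\lambda^2}$. By Lamperti's representation theorem for continuous positive self-similar Markov processes of index $1/2$, or equivalently by noting that the drift and diffusion coefficients of any candidate SDE are forced to be homogeneous of degrees $-1$ and $0$, respectively, $O$ satisfies
\[
dO_t \;=\; \frac{c_1}{O_t}\,dt \;+\; c_2\,dB_t
\]
for some $c_1 \in \R$, $c_2 \geq 0$, and standard Brownian motion $B$. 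Setting $\kappa := c_2^2$ and $c_1 =: 2 + \rho$ identifies $O/\sqrt{\kappa}$ as a Bessel process of dimension $\delta = 1 + 2(2+\rho)/\kappa$; the zero-Lebesgue-measure hypothesis on $\eta \cap \partial D$ forces $\delta > 1$, i.e.\ $\rho > -2$.

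The conclusion now follows from the Loewner equation $dV_t = 2/O_t\,dt$: combining with $dO_t = dV_t - dU_t$ yields $dU_t = \sqrt{\kappa}\,dB_t + \rho/(U_t - V_t)\,dt$, which is precisely the driving SDE of $\SLE_\kappa(\rho)$ with force point at $z$. The principal obstacle is the careful handling of the boundary-intersecting regime, in which $z_t$ jumps each time $\eta$ reaches a new point of $\partial \H$ past the current force point: one must verify that the Markov and scaling arguments for $O_t$ are not disrupted by these events, and that the relation $dV_t = 2/O_t\,dt$ continues to hold in an appropriate integrated sense through them. The Lebesgue-null hypothesis is essential here, as it guarantees the set of such exceptional times is negligible for the stochastic calculus.
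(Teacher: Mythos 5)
Your outline follows the same strategy as the paper's proof (reduce to $(\H,0,\infty;z)$, show $O_t = V_t - U_t$ is a time-homogeneous Markov process, use scaling plus Lamperti to identify a Bessel process, then read off the $\SLE_\kappa(\rho)$ driving SDE), but the step you defer at the end is not a routine verification --- it is the heart of the matter, and as written it is a genuine gap. Everything downstream of ``$dV_t = 2/O_t\,dt$'' requires knowing that the image of the force point is given by the Loewner flow \emph{in integrated form through the boundary-hitting times}, i.e.\ that $V_t = \int_0^t 2/(V_s - U_s)\,ds + V_0$ globally, not just between excursions of $\eta$ away from $\partial\H$. The paper isolates this as Lemma~\ref{lem::x_differential}: one writes the complement of the boundary-hitting times as a countable union of open intervals, sums the within-interval identities, and then uses the hypothesis that $\eta([0,t])\cap\R$ is compact with zero Lebesgue measure (covering it by finitely many intervals of total length $<\epsilon$) to show the possible discrepancy is at most $\epsilon$. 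You flag this issue but supply no argument, so your final SDE is unjustified precisely in the boundary-intersecting regime that the theorem is designed to cover. Relatedly, your claim that the Lebesgue-null hypothesis ``forces $\delta>1$'' is misattributed: zero Lebesgue time at $0$ (instantaneous reflection, Lemma~\ref{lem::leb_zero}) holds for Bessel processes of every dimension in $(0,2)$; what rules out $\delta\le 1$ is the finiteness of $V_t = \int_0^t 2/O_s\,ds$, which again depends on the integrated Loewner identity above.

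There is a second, related gap in the Lamperti step. You treat $O$ as a self-similar Markov process ``on $(0,\infty)$'', but in the regime of interest $O$ hits $0$, and the standard Lamperti representation of positive self-similar Markov processes only describes the process up to the hitting time of zero; characterizing the continuation through $0$ requires knowing how the process behaves there. The paper handles this by first proving $O$ is instantaneously reflecting at $0$ (via Lemma~\ref{lem::leb_zero}), then applying Lamperti's generator theorem to $O^2$ and using instantaneous reflection to exclude the $a_0\le 0$ (absorbing) case, which identifies $O^2$ as a $\BESQ^\delta$ for all time. Your alternative justification --- that ``the drift and diffusion coefficients of any candidate SDE are forced to be homogeneous'' --- is circular, since it presupposes that $O$ is an It\^o diffusion, which is exactly what Lamperti is being invoked to establish. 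Finally, a smaller point: continuity of the driving function does not follow from continuity of $\eta$ plus the Lebesgue-null hypothesis alone; one also needs the consequence of the domain Markov property that $\eta((t,\infty))$ stays in the closure of the unbounded component of $\H\setminus\eta((0,t))$ and that $\eta|_{[t,\infty)}^{-1}(\R\cup\eta([0,t]))$ has empty interior, which is how the paper verifies the criteria of \cite[Proposition 6.12]{MS_IMAG}. With the integrated Loewner lemma, the instantaneous-reflection argument, and the corrected reason for $\rho>-2$ supplied, your argument would essentially coincide with the paper's (including its final stopping-time reduction from general $z>0$ to $z=0^+$, which your family-indexed scaling is meant to replace and which would then need its own justification).
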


Theorem~\ref{thm::conformal_markov} is a generalization of the conformal Markov characterization of ordinary $\SLE_\kappa$ established by Schramm \cite{S0} (and used by Schramm \cite{S0} to characterize $\SLE_\kappa$ processes) but with the addition of one extra marked point.  It is implicit in the hypotheses that $\eta \sim \p_c$ cannot cross itself and also never enters the loops it creates with segments of the boundary or itself as it moves from $x$ to $y$.  This combined with the hypothesis that $\eta \cap \partial D$ has zero Lebesgue measure almost surely when $D$ is smooth implies that $\eta$ has a continuous Loewner driving function \cite[Section~6.2]{MS_IMAG} and that the evolution of the marked point under the uniformizing conformal maps is described by the Loewner flow (Lemma~\ref{lem::x_differential}).  The proof makes use of a characterization of continuous self-similar Markov processes due to Lamperti \cite[Theorem~5.1]{LAMP72}.  (In the setting that we consider, this is a rescaled version of the fact that ``the only continuous Markov process with stationary increments are the Brownian motions with drift.'')

The next step in the proof of the reversibility of $\SLE_\kappa(\rho)$ for $\rho > -2$ is to show that the time-reversal $\CR(\eta)$ of $\eta \sim \SLE_\kappa(\rho)$ for $\rho \in (-2,0]$ satisfies the criteria of Theorem~\ref{thm::conformal_markov}.  This is in some sense the heart of the argument.  We will first present a new proof in the case that $\rho = 0$, which is related to an argument sketch obtained (but never published) by the second author and Schramm several years ago.  The idea is to try to make sense of conditioning on a flow line of the Gaussian free field, whose law is an $\SLE_\kappa$, up to a {\em reverse} stopping time.  The conditional law of the initial part of the flow line is then in some sense a certain $\SLE_\kappa(\rho_1; \rho_2)$ process conditioned to merge into the tip of that flow line.  We make this idea (which involves conditioning on an event of probability zero) precise using certain bi-chordal $\SLE_\kappa$ constructions.  We will use similar tricks to establish a conformal Markov property for the time-reversal of $\SLE_\kappa$, which is then extended to give a similar property for $\SLE_\kappa(\rho)$.

The above arguments will imply that the time-reversal of an $\SLE_\kappa(\rho)$ is itself an $\SLE_\kappa(\wt\rho)$ for some $\wt{\rho} > -2$.  This will imply that there exists a function $R$ such that $\CR(\eta) \sim \SLE_\kappa(R(\rho))$.  One can then easily observe that the function $R$ is continuous and increasing and satisfies $R(R(\rho)) = \rho$ which implies $R(\rho) = \rho$.  Using another trick involving bi-chordal $\SLE$ configurations, we can extend the reversibility of $\SLE_\kappa(\rho)$ to all $\rho > -2$.

Using the interpretation of $\SLE_\kappa(\ul{\rho}^L;\ul{\rho}^R)$ processes as flow lines of the Gaussian free field with certain boundary data \cite{MS_IMAG}, we will also give a description of the time-reversal $\CR(\eta)$ of $\eta \sim \SLE_\kappa(\ul{\rho}^L;\ul{\rho}^R)$ processes with many force points, provided $\eta$ is almost surely non-boundary intersecting.

\begin{theorem}
\label{thm::multiple_force_points}
Suppose that $\eta$ is an $\SLE_\kappa(\ul{\rho}^L;\ul{\rho}^R)$, in a Jordan domain $D$ from $x$ to $y$, with $x,y \in \partial D$ distinct, that does not (or is conditioned not to) hit $\partial D$ except at $x$ and $y$.  Then the time-reversal $\CR(\eta)$ of $\eta$ is an $\SLE_\kappa(\ul{\rho}^L;\ul{\rho}^R)$ process (with appropriate force points) that does not (or is conditioned not to) hit $\partial D$ except at $x$ and $y$.
\end{theorem}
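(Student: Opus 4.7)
The plan is to reduce Theorem~\ref{thm::multiple_force_points} to the two-force-point reversibility of Theorem~\ref{thm::all_reversible} by sandwiching $\eta$ between a pair of auxiliary flow lines of the Gaussian free field. Realize $\eta$ as the angle-zero flow line from $x$ to $y$ of a GFF $h$ on $D$ whose boundary data, chosen as in \cite{MS_IMAG}, makes the marginal law of $\eta$ equal to the prescribed $\SLE_\kappa(\ul\rho^L;\ul\rho^R)$. Since $\eta$ almost surely (or conditionally almost surely) avoids $\partial D \setminus \{x,y\}$, we may fix a small $\theta > 0$ such that the flow lines $\eta^L$ and $\eta^R$ of $h$ started at $x$ with angles $+\theta$ and $-\theta$ also almost surely stay away from $\partial D \setminus \{x,y\}$. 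By the monotonicity results of \cite{MS_IMAG}, the triple $(\eta^L,\eta,\eta^R)$ is ordered from left to right, and $\eta$ lies in the pocket bounded by $\eta^L$ on the left and $\eta^R$ on the right.

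Next, invoke the flow-line conditioning theory of \cite{MS_IMAG} to identify the conditional law of $\eta$ given $(\eta^L,\eta^R)$: inside the pocket, this conditional law is an $\SLE_\kappa(\rho_1';\rho_2')$ from $x$ to $y$ with exactly two force points, located immediately to the left and right of the tip $x$, whose weights $\rho_1',\rho_2'$ are read off from the jumps in the GFF boundary data across $\eta^L$ and $\eta^R$. Applying Theorem~\ref{thm::all_reversible} inside this pocket, we find that the conditional time-reversal $\CR(\eta)$ given $(\eta^L,\eta^R)$ is an $\SLE_\kappa(\rho_2';\rho_1')$ from $y$ to $x$ in the same pocket, with force points immediately left and right of $y$. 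It remains only to argue that the joint law of the unordered pair $\{\eta^L,\eta^R\}$ is invariant under any anti-conformal involution of $D$ swapping $x$ and $y$. Each of $\eta^L,\eta^R$ is, conditional on the other, a single-force-point $\SLE_\kappa(\rho)$ in a subdomain (by the flow-line description), and hence is reversible by the $\SLE_\kappa(\rho)$ reversibility established earlier in the paper. Combining this with the bi-chordal uniqueness characterization used in the proof of Theorem~\ref{thm::all_reversible} --- which pins down a joint law from its two conditional laws --- the GFF construction of $\eta^L,\eta^R$ started from $y$ (with the reversed boundary data) must coincide in distribution with the original pair up to orientation reversal, giving the desired joint reversibility.

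Assembling the conditional reversibility inside the pocket with the joint reversibility of the bounding pair, we conclude that $\CR(\eta)$, viewed in all of $D$, is a chordal $\SLE_\kappa(\ul{\wt\rho}^L;\ul{\wt\rho}^R)$ from $y$ to $x$, where the weight vectors $\ul{\wt\rho}^L,\ul{\wt\rho}^R$ are those prescribed by reading the boundary data of $h$ starting from $y$ and traversing $\partial D$ in the appropriate direction; these are the ``appropriate force points'' in the statement. The conditioning regime (where the unconditioned $\SLE_\kappa(\ul\rho^L;\ul\rho^R)$ would a.s.\ hit $\partial D$) is handled by realizing the ``conditioned not to hit $\partial D$'' law as a limit of $\SLE_\kappa(\ul\rho^L;\ul\rho^R)$ laws with auxiliary force points pushed in from the boundary, exactly as in \cite{MS_IMAG}, and then passing the reversibility through the limit.

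The main obstacle I anticipate is the joint reversibility of the bounding pair $\{\eta^L,\eta^R\}$: individually each is reversible, but upgrading this to a joint statement forces one to (i) recognize the flow lines started from $y$ (with the reversed GFF boundary data) as the same pair of random sets, and (ii) invoke the bi-chordal characterization in a setting where both marginals and a single conditional law are specified. A secondary difficulty is the boundary-intersecting/conditioned regime, where ensuring that the limiting procedure commutes with time-reversal requires continuity estimates for $\SLE_\kappa(\ul\rho)$ traces from \cite{MS_IMAG}.
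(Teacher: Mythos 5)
Your reduction hinges on the joint time-reversal symmetry of the sandwiching pair $(\eta^L,\eta^R)$, and the justification you give for it is circular.  You assert that, conditional on one of these paths, the other is a single-force-point $\SLE_\kappa(\rho)$ and hence reversible by the results already proved.  That is only true when $|\ul{\rho}^L|=|\ul{\rho}^R|=1$ with force points at the seed.  In the setting of Theorem~\ref{thm::multiple_force_points}, conditional on $\eta^R$ the path $\eta^L$ is a flow line in a complementary component whose boundary data on the clockwise arc still carries \emph{all} of the original force points $\ul{\rho}^L$; it is an $\SLE_\kappa(\ul{\rho}^L;\rho')$ with just as many left force points as $\eta$ itself, so its reversibility is precisely the multiple-force-point statement you are trying to prove, and there is no reduction in complexity that would set up an induction.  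Neither Theorem~\ref{thm::all_reversible} nor Theorem~\ref{thm::bi_chordal} can be invoked to pin down the law of the reversed pair, so the final assembly (integrating out $\eta^L,\eta^R$ to identify the marginal of $\CR(\eta)$) has no foundation.

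There are further problems even before that step.  For small $\theta>0$ the flow lines with angles $\pm\theta$ started at $x$ bounce off one another (the angle gap corresponds to weight $2\theta\chi/\lambda-2<\tfrac{\kappa}{2}-2$), so the ``pocket'' between them is not a Jordan domain and neither Theorem~\ref{thm::all_reversible} nor the bi-chordal uniqueness theorem (stated for pairs meeting only at their endpoints) applies as stated; taking $\theta$ large enough to separate them can force $\eta^L$ or $\eta^R$ to hit $\partial D$.  Moreover, when some partial sum of the weights equals the critical value $\tfrac{\kappa}{2}-2$, or in the conditioned regime, no choice of $\theta>0$ yields boundary-avoiding auxiliary paths at all.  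Finally, in the conditioned regime the theorem first requires a \emph{definition} of the conditioned law: the paper devotes Section~\ref{sec::conditioned_not_to_hit} and Section~\ref{sec::multiple_force_points} (the shields of Figure~\ref{fig::shields}, Definition~\ref{def::avoiding}, Proposition~\ref{prop::uniqueboundaryavoidinglaw}, a four-stub variant of Proposition~\ref{prop::conditionalunique2}) to making sense of it via a resampling characterization and proving existence and uniqueness; your proposal to ``pass the reversibility through the limit'' of laws with force points pushed in from the boundary is not an argument.  For comparison, the paper's actual proof does not sandwich $\eta$ at all: it recasts two-force-point reversibility as a measure-preserving map between GFF boundary data (Figure~\ref{fig::abswap}, Figure~\ref{fig::simpleswap}), computes the Radon--Nikodym derivative under perturbations $h+\phi$ with $\Delta\phi$ compactly supported (Proposition~\ref{prop::radonforwardreverse}), notes it is constant on the event that the path avoids the support of $\Delta\phi$ (giving Lemma~\ref{lem::multiple_force_special_case}), and then upgrades to general piecewise-constant boundary data through the shield/resampling machinery.
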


A more precise discussion and complete description of how to construct the law of the time-reversal (in particular how to set up the various $\rho$ values) appears in Section~\ref{sec::multiple_force_points}.  In order to make the theorem precise, we will in particular have to make sense of what it means to condition a path not to hit the boundary (which in some cases involves conditioning on a probability zero event).

\subsection{Relation to previous work}

As we mentioned earlier, the reversibility of $\SLE_\kappa$ for $\kappa \in (0,4]$ was first proved by Zhan \cite{Z_R_KAPPA} but also appears in the work of Dub\'edat \cite{DUB_DUAL}.  Both proofs are based on a beautiful technique that allows one to construct a \emph{coupling} of $\eta \sim \SLE_\kappa$ from $x$ to $y$ in $D$ with $\wt{\eta} \sim \SLE_\kappa$ in $D$ from $y$ to $x$ such that the two paths \emph{commute}.  In other words, one has a recipe for growing the paths one at a time, in either order, that produces the same overall joint law.  In the coupling of \cite{Z_R_KAPPA}, the joint law is shown to have the property that for every $\eta$ stopping time $\tau$, the law of $\wt{\eta}$ given $\eta|_{[0,\tau]}$ is an $\SLE_\kappa$ in the connected component of $D \setminus \eta([0,\tau])$ containing $y$ from $y$ to $\eta(\tau)$.  The same likewise holds when the roles of $\eta$ and $\wt{\eta}$ are reversed.  This implies that $\eta$ contains a dense subset of $\wt{\eta}$ and vice-versa.  Thus the continuity of $\eta$ and $\wt{\eta}$ implies that $\wt{\eta}$ is almost surely the time-reversal (up to reparameterization) of $\eta$.  In particular, the time-reversal $\CR(\eta)$ of $\eta$ is an $\SLE_\kappa$ in $D$ from $y$ to $x$.  The approaches of both Dub\'edat \cite{DUB_DUAL} and Zhan \cite{Z_R_KAPPA_RHO} to the reversibility of non-boundary intersecting $\SLE_\kappa(\rho)$ are also based on considering a commuting pair of $\SLE_\kappa(\rho)$ processes $\eta,\wt{\eta}$ growing at each other.\footnote{The results in \cite{DUB_DUAL, Z_R_KAPPA_RHO} only apply if $\rho$ is in the range for which the path avoids the boundary almost surely.  However, it is possible that the arguments could be extended to the boundary-hitting case of one-force-point $\SLE_\kappa(\rho)$. Zhan told us privately before we wrote \cite{MS_IMAG} that he believed the techniques in \cite{Z_R_KAPPA_RHO} could be extended to the single-force-point boundary-intersecting case if the continuity result of \cite{MS_IMAG} were known.}  The difference from the setup of ordinary $\SLE_\kappa$ is that in such a coupling, the conditional law of $\eta$ given $\wt{\eta}|_{[0,\wt{\tau}]}$, $\wt{\tau}$ an $\wt{\eta}$ stopping time, is not an $\SLE_\kappa(\rho)$ process in the connected component of $D \setminus \wt{\eta}([0,\wt{\tau}])$ containing $x$ from $x$ to $\wt{\eta}(\wt{\tau})$.  Rather, it is a more complicated variant of $\SLE_\kappa$, a so-called \emph{intermediate $\SLE$}.

Because our approach to $\SLE_\kappa(\rho)$ reversibility is somewhat different from the methods in \cite{DUB_DUAL, Z_R_KAPPA_RHO} (and in particular we use a domain Markov property to characterize the time-reversal), we will not actually need to define intermediate $\SLE$ explicitly (e.g., by giving an explicit formula for the Loewner drift term).  We will also avoid the analogous explicit calculations in the multiple force point cases with tricks involving bi-chordal resampling and the Gaussian free field.

We also remark that in \cite{WER_GIR} it is shown that an $\SLE_\kappa(\rho)$ process with $\rho > 0$ can be viewed as arising from an $\SLE_\kappa$ process conditioned not to hit a sample of a one-sided restriction measure.  This, combined with the reversibility of $\SLE_\kappa$ for $\kappa \in (0,4]$ \cite{DUB_DUAL,Z_R_KAPPA} yields another proof of the reversibility of $\SLE_\kappa(\rho)$ for $\kappa \in (0,4]$ and $\rho > 0$.  Also, it is shown in \cite{WER_WU_CLE_SLE_KR} that the boundary intersecting $\SLE_\kappa(\rho)$ processes when $\kappa \in (8/3,4]$ arise naturally in the context of the Brownian loop soups and this implies the reversibility of the $\SLE_\kappa(\rho)$ for $\kappa$ and $\rho$ in this regime.

Part of this paper addresses the question of making sense of $\SLE_\kappa(\rho)$ (and its variants) conditioned not to hit the boundary.  The simplest version of this is a consequence of the fact that a Bessel process of dimension $d \leq 2$ conditioned not to hit $0$ is a Bessel process of dimension $4-d \geq 2$ and various statements based on this fact have appeared in other places in the literature, for example \cite{DUB_MG_DUALITY}.  In the present work, we need to establish particular statements of this form that hold in the presence of many force points and also in the case of multiple paths.  For example, we need to know that the joint laws of certain families of non-intersecting paths are uniquely characterized by certain Gibbs properties.  Proving this requires some re-sampling tricks beyond the usual Girsanov martingale-weighting procedures, and we did not find a place in the literature where these kinds of characterizations were carefully stated or worked out.

\subsection{Outline}

The remainder of this article is structured as follows.  In the next section, we will review the basics of $\SLE_\kappa(\ul{\rho})$ processes.  We will also give a summary of how $\SLE_\kappa(\ul{\rho})$ processes can be viewed as flow lines of the Gaussian free field (GFF) ---  this is the so-called imaginary geometry of the GFF \cite{MS_IMAG}.  We will in particular emphasize how this interpretation can be used to construct couplings of systems of $\SLE_\kappa(\ul{\rho})$ processes and the calculus one uses in order to compute the conditional law of one such curve given the realizations of the others.  Next, in Section~\ref{sec::conformal_markov}, we will prove Theorem~\ref{thm::conformal_markov}, the conformal Markov characterization of $\SLE_\kappa(\rho)$ processes. In Section~\ref{sec::bi_chordal}, we will show in certain special cases that the joint law of a system of multiple $\SLE_\kappa(\ul{\rho})$ strands is characterized by the conditional laws of the individual strands.  This provides an alternative mechanism for constructing systems of $\SLE_\kappa$ type curves in which it is easy to compute the conditional law of one of the curves given the others.  It is the key tool for deducing the reversibility of $\SLE_\kappa(\rho_1;\rho_2)$ from the reversibility of $\SLE_\kappa(\rho)$.   In Section~\ref{sec::conditioned_not_to_hit} we discuss how to make sense of $\SLE_\kappa(\ul{\rho})$ processes conditioned not to intersect certain boundary segments.

We will then combine the above elements in Section~\ref{sec::time_reversal_conformal_markov} to show that the time-reversal of an $\SLE_\kappa(\rho)$ process satisfies the conformal Markov property.  In Section~\ref{sec::proof} we will complete the proof of Theorem~\ref{thm::all_reversible} by deducing the reversibility of $\SLE_\kappa(\rho_1;\rho_2)$ from the reversibility of $\SLE_\kappa(\rho)$.  We finish in Section~\ref{sec::multiple_force_points} by proving Theorem~\ref{thm::multiple_force_points}

\section{Preliminaries}
\label{sec::preliminaries}

The purpose of this section is to review the basic properties of $\SLE_\kappa(\ul{\rho}^L;\ul{\rho}^R)$ processes in addition to giving a non-technical overview of the so-called imaginary geometry of the Gaussian free field.  The latter provides a mechanism for constructing couplings of many $\SLE_\kappa(\ul{\rho}^L;\ul{\rho}^R)$ strands in such a way that it is easy to compute the conditional law of one of the curves given the realization of the others \cite{MS_IMAG}.

\subsection{$\SLE_\kappa(\rho)$ processes} \label{subsec::SLEoverview}

$\SLE_\kappa$ is a one-parameter family of conformally invariant random curves, introduced by Oded Schramm in \cite{S0} as a candidate for (and later proved to be) the scaling limit of loop erased random walk \cite{LSW04} and the interfaces in critical percolation \cite{S01, CN06}.  Schramm's curves have been shown so far also to arise as the scaling limit of the macroscopic interfaces in several other models from statistical physics: \cite{SS09,S07,CS10U,SS05,MillerSLE}.  More detailed introductions to $\SLE$ can be found in many excellent survey articles of the subject, e.g., \cite{W03, LAW05}.

An $\SLE_\kappa$ in $\h$ from $0$ to $\infty$ is defined by the random family of conformal maps $g_t$ obtained by solving the Loewner ODE
\begin{equation}
\label{eqn::loewner_ode}
\partial_t g_t(z) = \frac{2}{g_t(z) - W_t},\quad g_0(z) = z
\end{equation}
where $W = \sqrt{\kappa} B$ and $B$ is a standard Brownian motion.  Write $K_t := \{z \in \h: \tau(z) \leq t \}$.  Then $g_t$ is a conformal map from $\h_t := \h \setminus K_t$ to $\h$ satisfying $\lim_{|z| \to \infty} |g_t(z) - z| = 0$.

Rohde and Schramm showed that there almost surely exists a curve $\eta$ (the so-called $\SLE$ \emph{trace}) such that for each $t \geq 0$ the domain $\h_t$ is the unbounded connected component of $\h \setminus \eta([0,t])$, in which case the (necessarily simply connected and closed) set $K_t$ is called the ``filling'' of $\eta([0,t])$ \cite{RS05}.  An $\SLE_\kappa$ connecting boundary points $x$ and $y$ of an arbitrary simply connected Jordan domain can be constructed as the image of an $\SLE_\kappa$ on $\h$ under a conformal transformation $\psi \colon \h \to D$ sending $0$ to $x$ and $\infty$ to $y$.  (The choice of $\psi$ does not affect the law of this image path, since the law of $\SLE_\kappa$ on $\h$ is scale invariant.)  $\SLE_\kappa$ is characterized by the fact that it satisfies the domain Markov property and is invariant under conformal transformations.

$\SLE_\kappa(\ul{\rho}^L;\ul{\rho}^R)$ is the stochastic process one obtains by solving~\eqref{eqn::loewner_ode} where the driving function $W$ is taken to be the solution to the SDE
\begin{equation}
\label{eqn::sle_kappa_rho_eqn}
\begin{split}
dW_t &= \sqrt{\kappa} dB_t + \sum_{q \in \{L,R\}} \sum_{i} \frac{\rho^{i,q}}{W_t- V_t^{i,q}} dt \\
dV_t^{i,q} &= \frac{2}{V_t^{i,q} - W_t} dt,\quad V_0^{i,q} = x^{i,q}.
\end{split}
\end{equation}
The existence and uniqueness of solutions to~\eqref{eqn::sle_kappa_rho_eqn} is discussed in \cite[Section~2]{MS_IMAG}.  In particular, it is shown that there is a unique solution to~\eqref{eqn::sle_kappa_rho_eqn} until the first time $t$ that $W_t = V_t^{j,q}$ where $\sum_{i=1}^j \rho^{i,q} \leq -2$ for $q \in \{L,R\}$ (we call this time the {\bf continuation threshold}).  In particular, if $\sum_{i=1}^j \rho^{i,q} > -2$ for all $1 \leq j \leq |\ul{\rho}^q|$ for $q \in \{L,R\}$, then~\eqref{eqn::sle_kappa_rho_eqn} has a unique solution for all times $t$.  This even holds when one or both of the $x^{1,q}$ are zero.  The almost sure continuity of the $\SLE_\kappa(\ul{\rho}^L;\ul{\rho}^R)$ trace is also proved in \cite{MS_IMAG}.  In Section~\ref{sec::conformal_markov}, we will prove Theorem~\ref{thm::conformal_markov}, that the single-force-point $\SLE_\kappa(\rho)$ processes are characterized by conformal invariance and the domain Markov property with one extra marked point.  This extends Schramm's conformal Markov characterization of ordinary $\SLE_\kappa$ \cite{S0}.

\subsection{Imaginary geometry of the Gaussian free field}
\label{subsec::imaginary}

We will now give an overview of the so-called \emph{imaginary geometry} of the Gaussian free field (GFF).  In this article, this serves as a tool for constructing couplings of multiple $\SLE$ strands and provides a simple calculus for computing the conditional law of one of the strands given the realization of the others \cite{MS_IMAG}.  The purpose of this overview is to explain just enough of the theory so that this article may be read and understood independently of \cite{MS_IMAG}.  We refer the reader interested in proofs of the statements we make here to \cite{MS_IMAG}.  We begin by fixing a domain $D \subseteq \C$ with smooth boundary and let $C_0^\infty(D)$ denote the space of compactly supported $C^\infty$ functions on $D$.  For $f,g \in C_0^\infty(D)$, let
\[ (f,g)_\nabla := \frac{1}{2\pi} \int_D \nabla f (x) \cdot \nabla g(x)dx\]
denote the Dirichlet inner product of $f$ and $g$ where $dx$ is the Lebesgue measure on~$D$.  Let $H(D)$ be the Hilbert space closure of $C_0^\infty(D)$ under $(\cdot,\cdot)_\nabla$.  The continuum Gaussian free field $h$ (with zero boundary conditions) is the so-called standard Gaussian on $H(D)$.  It is given formally as a random linear combination
\begin{equation}
\label{eqn::gff_definition}
 h = \sum_n \alpha_n f_n,
\end{equation}
where $(\alpha_n)$ are i.i.d.\ $N(0,1)$ and $(f_n)$ is an orthonormal basis of $H(D)$.  The GFF with non-zero boundary data $\psi$ is given by adding the harmonic extension of $\psi$ to a zero-boundary GFF $h$ (see \cite{SHE06} for a more detailed introduction to the GFF).

\begin{figure}[h!]
\begin{center}
\includegraphics[width=140mm,height=120mm,clip=true, trim = 1mm 1mm 1mm 1mm]{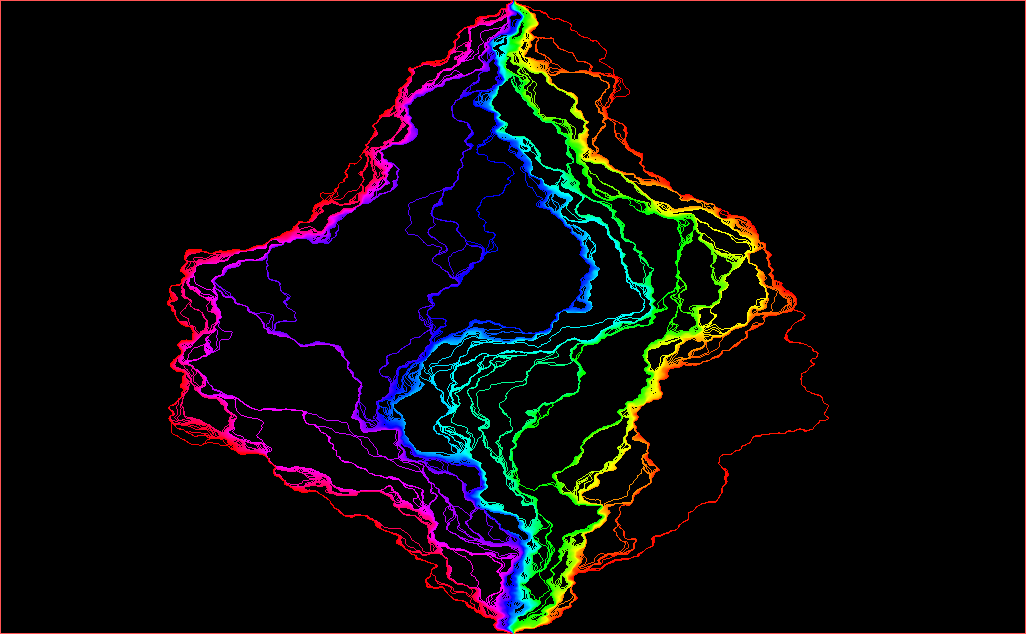}
\end{center}
\caption{ \label{fig::flowlines} Numerically generated flow lines, started at a common point, of $e^{i(h/\chi+\theta)}$ where $h$ is the projection of a GFF onto the space of functions piecewise linear on the triangles of a $300 \times 300$ grid and $\kappa=1/2$.  Different colors indicate different values of $\theta \in [-\pi/2,\pi/2]$.  We expect but do not prove that if one considers increasingly fine meshes (and the same instance of the GFF) the corresponding paths converge to limiting continuous paths.  One of the consequences of Theorem~\ref{thm::all_reversible} is that the law of the continuum analog of the random set depicted above is invariant under anti-conformal maps which swap the initial and terminal points of the paths.}
\end{figure}

The GFF is a two-dimensional-time analog of Brownian motion.  Just as Brownian motion can be realized as the scaling limit of many random lattice walks, the GFF arises as the scaling limit of many random (real or integer valued) functions on two dimensional lattices \cite{BAD96, KEN01, NS97, RV08, MillerGLCLT}.  The GFF can be used to generate various kinds of random geometric structures, in particular the imaginary geometry discussed here \cite{2010arXiv1012.4797S, MS_IMAG}.  This corresponds to considering the formal expression $e^{ih/\chi}$, for a fixed constant $\chi >0$.  Informally, the ``rays'' of the imaginary geometry are flow lines of the complex vector field $e^{i(h/\chi+\theta)}$, i.e., solutions to the ODE
\begin{equation}
\label{eqn::flowlineode} \eta'(t) = e^{i \left(h(\eta(t))/\chi+\theta\right)} \quad\text{for}\quad t > 0,
\end{equation}
for given values of $\eta(0)$ and $\theta$.

Although~\eqref{eqn::flowlineode} does not make sense as written (since $h$ is an instance of the GFF, not a continuous function), one can construct these rays precisely by solving~\eqref{eqn::flowlineode} in a rather indirect way: one begins by constructing explicit couplings of $h$ with variants of $\SLE$ and showing that these couplings have certain properties.  Namely, if one conditions on part of the curve, then the conditional law of $h$ is that of a GFF in the complement of the curve with certain boundary conditions (see Figure~\ref{fig::conditional_boundary_data}).  Examples of these couplings appear in \cite{She_SLE_lectures, SchrammShe10, DUB_PART, 2010arXiv1012.4797S} as well as variants in \cite{MakarovSmirnov09,HagendorfBauerBernard10,IzyurovKytola10}.  This step is carried out in some generality in \cite{DUB_PART, 2010arXiv1012.4797S, MS_IMAG}.  The next step is to show that in these couplings the path is almost surely {\em determined by the field} so that we can really interpret the ray as a path-valued function of the field.  This step is carried out for certain boundary conditions in \cite{DUB_PART} and in more generality in \cite{MS_IMAG}.

If $h$ is a smooth function, $\eta$ a flow line of $e^{ih/\chi}$, and $\psi \colon \wt D \to D$ a conformal transformation, then by the chain rule, $\psi^{-1}(\eta)$ is a flow line of $h \circ \psi - \chi \arg \psi'$, as in Figure~\ref{fig::coordinatechange}. With this in mind, we define an {\bf imaginary surface} to be an equivalence class of pairs $(D,h)$ under the equivalence relation
\begin{equation}
\label{eqn::ac_eq_rel}
 (D,h) \rightarrow (\psi^{-1}(D), h \circ \psi - \chi \arg \psi') = (\wt{D},\wt{h}).
\end{equation}
We interpret $\psi$ as a (conformal) {\em coordinate change} of the imaginary surface.  In what follows, we will generally take $D$ to be the upper half plane, but one can map the flow lines defined there to other domains using~\eqref{eqn::ac_eq_rel}.

\begin{figure}[h]
\begin{center}
\includegraphics[scale=0.85]{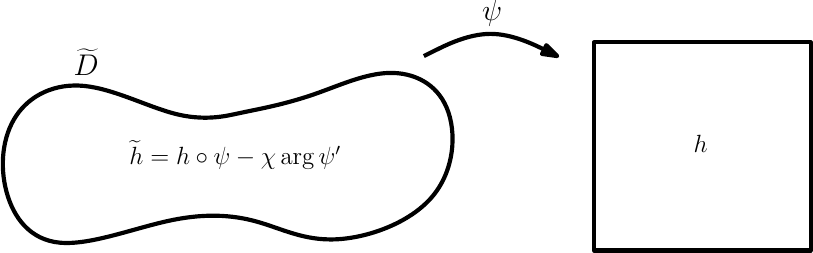}
\caption{\label{fig::coordinatechange} The set of flow lines in $\wt{D}$ will be the pullback via a conformal map $\psi$ of the set of flow lines in $D$ provided $h$ is transformed to a new function $\wt{h}$ in the manner shown.}
\end{center}
\end{figure}

\begin{figure}[h!]
\begin{center}
\subfigure{\includegraphics[scale=0.85]{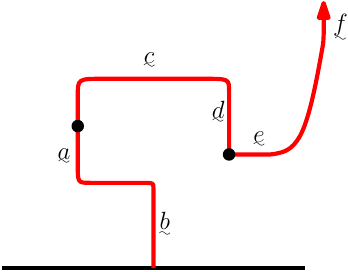}}
\hspace{0.05\textwidth}
\subfigure{\includegraphics[scale=0.85]{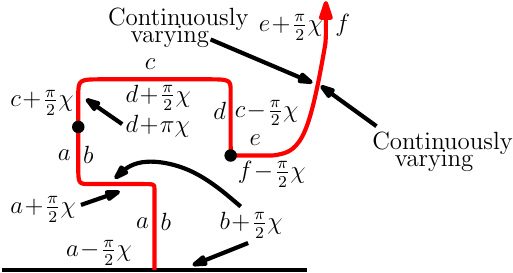}}
\caption{\label{fig::winding}  We will often make use of the notation depicted on the left hand side to indicate boundary values for Gaussian free fields.  Specifically, we will delineate the boundary $\partial D$ of a Jordan domain $D$ with black dots.  On each arc $L$ of $\partial D$ which lies between a pair of black dots, we will draw either a horizontal or vertical segment $L_0$ and label it with $\uwave{x}$.  This means that the boundary data on $L_0$ is given by $x$.  Whenever $L$ makes a quarter turn to the right, the height goes down by $\tfrac{\pi}{2} \chi$ and whenever $L$ makes a quarter turn to the left, the height goes up by $\tfrac{\pi}{2} \chi$.  More generally, if $L$ makes a turn which is not necessarily at a right angle, the boundary data is given by $\chi$ times the winding of $L$ relative to $L_0$.  If we just write $x$ next to a horizontal or vertical segment, we mean just to indicate the boundary data at that segment and nowhere else.  The right side above has exactly the same meaning as the left side, but the boundary data is spelled out explicitly everywhere.  Even when the curve has a fractal, non-smooth structure, the {\em harmonic extension} of the boundary values still makes sense, since one can transform the figure via the rule in Figure~\ref{fig::coordinatechange} to a half plane with piecewise constant boundary conditions. The notation above is simply a convenient way of describing what the constants are.  We will often include horizontal or vertical segments on curves in our figures (even if the whole curve is known to be fractal) so that we can label them this way.
}
\end{center}
\end{figure}

\begin{figure}[h!]
\begin{center}
\includegraphics[scale=0.85]{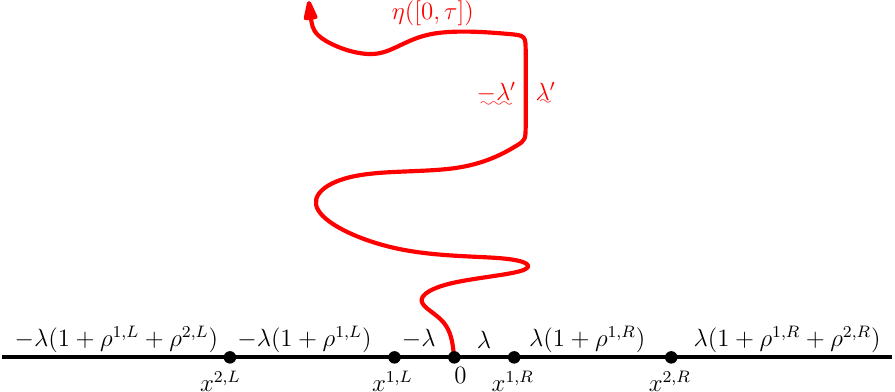}
\caption{\label{fig::conditional_boundary_data}  Suppose that $h$ is a GFF on $\h$ with the boundary data depicted above.  Then the flow line $\eta$ of $h$ starting from $0$ is an $\SLE_\kappa(\ul{\rho}^L;\ul{\rho}^R)$ curve in $\h$ where $|\ul{\rho}^L| = |\ul{\rho}^R| = 2$.  Conditional on $\eta|_{[0,\tau]}$ for any $\eta$ stopping time $\tau$, $h$ is equal in distribution to a GFF on $\h \setminus \eta([0,\tau])$ with the boundary data on $\eta([0,\tau])$ depicted above (the notation $\uwave{a}$ which appears adjacent to $\eta([0,\tau])$ is explained in some detail in Figure~\ref{fig::winding}).  It is also possible to couple $\eta' \sim\SLE_{\kappa'}(\ul{\rho}^L;\ul{\rho}^R)$ for $\kappa' > 4$ with $h$ and the boundary data takes on the same form (with $\lambda' := \tfrac{\pi}{\sqrt {\kappa'}}$ in place of $\lambda := \frac{\pi}{\sqrt \kappa}$).  The difference is in the interpretation.  The (almost surely self-intersecting) path $\eta'$ is not a flow line of $h$, but for each $\eta'$ stopping time $\tau'$ the left and right {\em boundaries} of $\eta'([0,\tau'])$ are $\SLE_{\kappa}$ flow lines, where $\kappa=16/\kappa'$, angled in opposite directions.  The union of the left boundaries --- over a collection of $\tau'$ values --- is a tree of merging flow lines, while the union of the right boundaries is a corresponding dual tree whose branches do not cross those of the tree.}
\end{center}
\end{figure}

We assume throughout the rest of this section that $\kappa \in (0,4)$ so that $\kappa' := 16/\kappa \in (4,\infty)$.  When following the illustrations, it will be useful to keep in mind a few definitions and identities:
\begin{equation} \label{eqn::deflist} \lambda := \frac{\pi}{\sqrt \kappa}, \,\,\,\,\,\,\,\,\lambda' := \frac{\pi}{\sqrt{16/\kappa}} = \frac{\pi \sqrt{\kappa}}{4} = \frac{\kappa}{4} \lambda < \lambda, \,\,\,\,\,\,\,\, \chi := \frac{2}{\sqrt \kappa} - \frac{\sqrt \kappa}{2} \end{equation}
\begin{equation} \label{eqn::fullrevolution} 2 \pi \chi = 4(\lambda-\lambda'), \,\,\,\,\,\,\,\,\,\,\,\lambda' = \lambda - \frac{\pi}{2} \chi \end{equation}
\begin{equation} \label{eqn::fullrevolutionrho}
2 \pi \chi = (4-\kappa)\lambda = (\kappa'-4)\lambda'.
\end{equation}

The boundary data one associates with the GFF on $\h$ so that its flow line from $0$ to $\infty$ is an $\SLE_\kappa(\ul{\rho}^L;\ul{\rho}^R)$ process with force points located at $\ul{x} = (\ul{x}^L,\ul{x}^R)$ is
\begin{align}
 -&\lambda\left( 1 + \sum_{i=1}^j \rho^{i,L}\right) \quad\text{for}\quad x \in [x^{j+1,L},x^{j,L}) \quad\text{and}\\
 &\lambda\left( 1 + \sum_{i=1}^j \rho^{i,R}\right) \quad\text{for}\quad x \in [x^{j,R},x^{j+1,R})
\end{align}
This is depicted in Figure~\ref{fig::conditional_boundary_data} in the special case that $|\ul{\rho}^L| = |\ul{\rho}^R| = 2$.  As we explained earlier, for any $\eta$ stopping time $\tau$, the law of $h$ conditional on $\eta([0,\tau])$ is a GFF in $\h \setminus \eta([0,\tau])$.  The boundary data of the conditional field agrees with that of $h$ on $\partial \h$.  On the right side of $\eta([0,\tau])$, it is $\lambda' + \chi \cdot {\rm winding}$, where the terminology ``winding'' is explained in Figure~\ref{fig::winding}, and to the left it is $-\lambda' + \chi \cdot {\rm winding}$.  This is also depicted in Figure~\ref{fig::conditional_boundary_data}.

By considering several flow lines of the same field (starting either at the same point or at different points), we can construct couplings of multiple $\SLE_\kappa(\ul{\rho})$ processes.  For example, suppose that $\theta \in \R$.  The flow line $\eta_\theta$ of $h+\theta\chi$ should be interpreted as the flow line of the vector field $e^{ih/\chi + \theta}$.  We say that $\eta_\theta$ is the flow line of $h$ with angle $\theta$.  If $h$ were a smooth function and $\theta_1 < \theta_2$, then it would be obvious that $\eta_{\theta_1}$ lies to the right of $\eta_{\theta_2}$.  Although non-trivial to prove, this is also true in the setting of the GFF \cite[Theorem~1.5]{MS_IMAG} and is depicted in Figure~\ref{fig::monotonicity}.  The case in which the flow lines start at different points is depicted in Figure~\ref{fig::different_starting_point}.

\begin{figure}[h!]
\begin{center}
\includegraphics[scale=0.85]{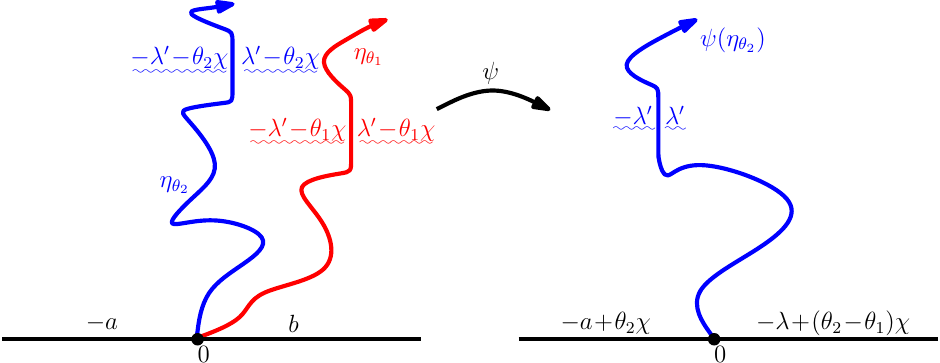}
\caption{\label{fig::monotonicity}  Suppose that $h$ is a GFF on $\h$ with the boundary data depicted above.  For each $\theta \in \R$, let $\eta_\theta$ be the flow line of the GFF $h+\theta \chi$.  This corresponds to setting the angle of $\eta_\theta$ to be $\theta$.  Just as if $h$ were a smooth function, if $\theta_1 < \theta_2$ then $\eta_{\theta_1}$ lies to the right of $\eta_{\theta_2}$.  The conditional law of $h$ given $\eta_{\theta_1}$ and $\eta_{\theta_2}$ is a GFF on $\h \setminus (\eta_{\theta_1} \cup \eta_{\theta_2})$ shown above.  By applying a conformal mapping and using the transformation rule, we can compute the conditional law of~$\eta_{\theta_1}$ given the realization of~$\eta_{\theta_2}$ and vice-versa.  That is, $\eta_{\theta_2}$ given $\eta_{\theta_1}$ is an $\SLE_\kappa((a-\theta_2 \chi)/\lambda -1; (\theta_2-\theta_1)\chi/\lambda-2)$ process independently in each of the connected components of $\h \setminus \eta_{\theta_1}$ which lie to the left of $\eta_{\theta_1}$, and moreover, $\eta_{\theta_1}$ given $\eta_{\theta_2}$ is independently an $\SLE_\kappa((\theta_2-\theta_1) \chi/\lambda -2;(b+\theta_1\chi)/\lambda-1)$ in each of the connected components of $\h \setminus \eta_{\theta_2}$ which lie to the right of $\eta_{\theta_2}$.}
\end{center}
\end{figure}

For $\theta_1 < \theta_2$, we can compute the conditional law of $\eta_{\theta_2}$ given $\eta_{\theta_1}$.  It is an $\SLE_\kappa((a-\theta_2 \chi)/\lambda-1; (\theta_2-\theta_1) \chi/\lambda -2)$ process independently in each connected component of $\h \setminus \eta_{\theta_1}$ which lies to the left of $\eta_{\theta_1}$.  Moreover, $\eta_{\theta_1}$ given $\eta_{\theta_2}$ is an $\SLE_\kappa((\theta_2-\theta_1) \chi/\lambda -2;(b+\theta_1\chi)/\lambda-1)$ in each of the connected components of $\h \setminus \eta_{\theta_2}$ which lie to the right of $\eta_{\theta_2}$.  This is depicted in Figure~\ref{fig::monotonicity}.  We can also couple together $\SLE_\kappa(\ul{\rho})$ processes starting at different points by considering the flow lines of $h$ initialized at different points.  This is depicted in Figure~\ref{fig::different_starting_point}.

\begin{figure}[h!]
\begin{center}
\includegraphics[scale=0.85]{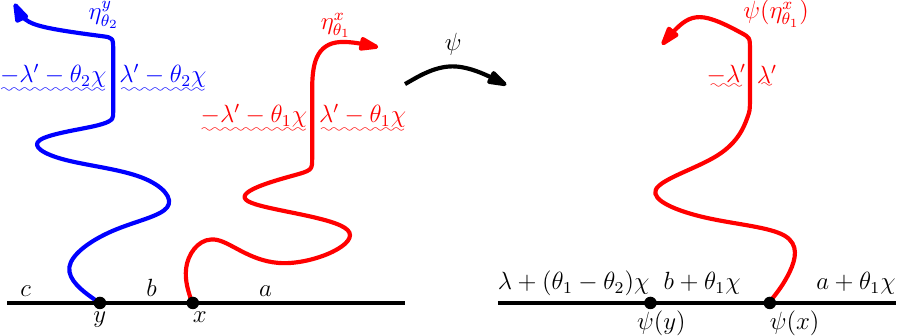}
\caption{\label{fig::different_starting_point}  Suppose that $h$ is a GFF on $\h$ with the boundary data depicted above.  For each $x,\theta \in \R$, let $\eta_\theta^x$ be the flow line of the GFF $h+ \theta \chi$ starting at $x$.  Assume $y < x$.  Just as in Figure~\ref{fig::monotonicity}, if $\theta_2 > \theta_1$, then $\eta_{\theta_2}^y$ lies to the left of $\eta_{\theta_1}^x$.  The conditional law of $h$ given $\eta_{\theta_1}^x, \eta_{\theta_2}^y$ is a GFF on $\h \setminus (\eta_{\theta_1}^x \cup \eta_{\theta_2}^y)$ shown above.  By applying a conformal mapping and using the transformation rule, we can compute the conditional law of either curve given the realization of the other.  For example, as depicted above, the conditional law of $\eta_{\theta_1}^x$ given $\eta_{\theta_2}^y$ is an $\SLE_\kappa( -(b+\theta_1 \chi)/\lambda-1,(b+\theta_2 \chi)/\lambda - 1 ; (a+ \theta_1 \chi)/\lambda-1)$ process.}
\end{center}
\end{figure}

Recall that $\kappa' = 16/\kappa \in (4,\infty)$.  We refer to $\SLE_{\kappa'}$ processes as \emph{counterflow} lines because of the particular way that they are related to the flow lines of the Gaussian free field.  As explained in \cite{MS_IMAG}, the counterflow line runs in the opposite direction of (or {\em counter to}) the so-called ``light cone'' of angle-varying flow lines whose angles stay within some range.  We can construct couplings of $\SLE_{\kappa}$ and $\SLE_{\kappa'}$ processes (flow lines and counterflow lines) within the same imaginary geometry.  This is depicted in Figure~\ref{fig::counterflowline}.  Just as in the setting of multiple flow lines, we can compute the conditional law of a flow line given the realization of a counterflow line within the same geometry.  This will be rather useful for us in Section~\ref{sec::bi_chordal} and is explained in Figure~\ref{fig::counterflowline}.

\begin{figure}[h!]
\begin{center}
\includegraphics[scale=0.85]{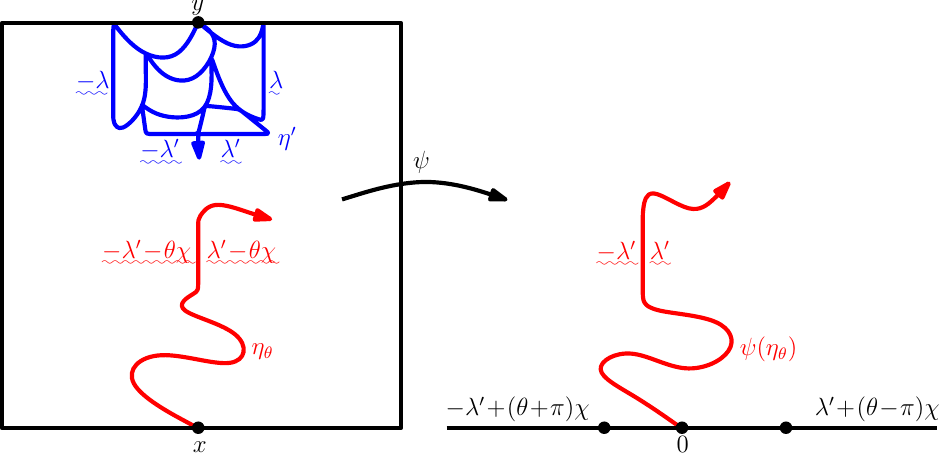}
\end{center}
\caption{\label{fig::counterflowline} We can construct $\SLE_\kappa$ flow lines and $\SLE_{\kappa'}$ counterflow lines within the same geometry when $\kappa' = 16/\kappa$.  This is depicted above for a single counterflow line $\eta'$ emanating from $y$ and a flow line $\eta_\theta$ with angle $\theta$ starting from $0$.  Also shown is the boundary data for $h$ in $D \setminus (\eta'([0,\tau']) \cup \eta_\theta([0,\tau]))$ where $\tau'$ and $\tau$ are stopping times for $\eta'$ and $\eta_\theta$ respectively (we intentionally did not specify the boundary data of $h$ on $\partial D$).  If $\theta = \tfrac{1}{\chi}(\lambda'-\lambda) = -\tfrac{\pi}{2}$ so that the boundary data on the right side of $\eta_\theta$ matches that on the right side of $\eta'$, then $\eta_\theta$ will almost surely hit and then ``merge'' into the right boundary of $\eta'$ --- this fact is known as $\SLE$ duality.  We can compute the conditional law of $\eta_\theta$ given $\eta'([0,\tau'])$ by conformally mapping the connected component of $D \setminus \eta'([0,\tau'])$ which contains $x$ to $\h$ and then applying Figure~\ref{fig::conditional_boundary_data}.  If $\eta_\theta \sim \SLE_\kappa(\ul{\rho}^L;\ul{\rho}^R)$, then $\eta_\theta$ conditional on $\eta'([0,\tau'])$ is an $\SLE_\kappa(\ul{\rho}^L,\tfrac{3}{2}(\tfrac{\kappa}{2}-2)-\tfrac{\theta \chi}{\lambda} - \ol{\rho}^L;\ul{\rho}^R,\tfrac{3}{2}(\tfrac{\kappa}{2}-2)+\tfrac{\theta \chi}{\lambda} - \ol{\rho}^R)$ process where $\ol{\rho}^q = \sum_i \rho^{i,q}$ with the extra force points located at the left and right sides of $\eta'([0,\tau']) \cap \partial D$.  In the special case that $\theta = -\tfrac{\pi}{2}$, then the law of $\eta_\theta$ given $\eta'([0,\tau'])$ is an $\SLE_\kappa(\ul{\rho}^L,\tfrac{\kappa}{2}-2 - \ol{\rho}^L;\ul{\rho}^R,\kappa-4 - \ol{\rho}^R)$ process.
}
\end{figure}

\begin{figure}[ht!]
\begin{center}
\includegraphics[scale=0.85]{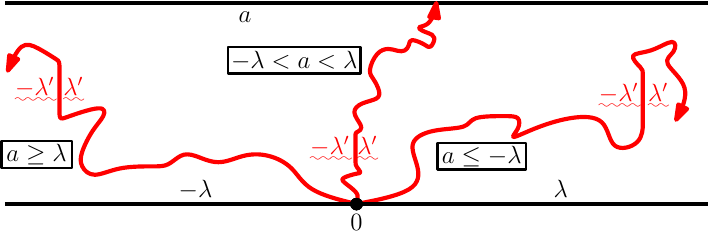}
\caption{\label{fig::hittingrange}  Suppose that $h$ is a GFF on the strip $\strip$ with the boundary data depicted above and let $\eta$ be the flow line of $h$ starting at $0$ (we do not specify the boundary data on the bottom of the strip $\stripbot$, but only assume that the flow line never hits the continuation threshold upon hitting $\stripbot$).  The interaction of $\eta$ with the upper boundary $\striptop$ of $\partial \strip$ depends on $a$, the boundary data of $h$ on $\striptop$.  Curves shown represent almost sure behaviors corresponding to the three different regimes of $a$ (indicated by the closed boxes).  The path hits $\striptop$ almost surely if and only if $a \in (-\lambda, \lambda)$.  When $a \geq \lambda$, it tends to $-\infty$ (left end of the strip) and when $a \leq - \lambda$ it tends to $+\infty$ (right end of the strip) without hitting $\striptop$.  If $\eta$ can hit the continuation threshold upon hitting some point on $\stripbot$, then $\eta$ only has the possibility of hitting $\striptop$ if $a \in (-\lambda,\lambda)$ (but does not necessarily do so); if $a \notin (-\lambda,\lambda)$ then $\eta$ almost surely does not hit $\striptop$.  Using the transformation rule~\eqref{eqn::ac_eq_rel}, we can extract from this the values of the boundary data for the boundary segments that $\eta$ can hit with other orientations.  We can also rephrase this in terms of the weights $\ul{\rho}$: an $\SLE_{\kappa}(\ul{\rho})$ process almost surely does not hit a boundary interval $(x^{i,R},x^{i+1,R})$ (resp.\ $(x^{i+1,L},x^i)$) if $\sum_{s=1}^i \rho^{s,R} \geq \tfrac{\kappa}{2}-2$ (resp.\ $\sum_{s=1}^i \rho^{s,L} \geq \tfrac{\kappa}{2}-2$).  See \cite[Lemma~5.2 and Remark~5.3]{MS_IMAG}.  These facts hold for all $\kappa > 0$.}
\end{center}
\end{figure}

\begin{figure}[h!]
\begin{center}
\includegraphics[scale=0.85]{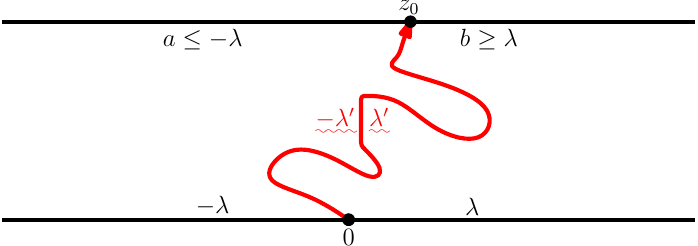}
\caption{\label{fig::hittingsinglepoint} (Continuation of Figure~\ref{fig::hittingrange})  Assume the boundary data on $\stripbot$ are such that the flow line $\eta$ of $h$ almost surely does not hit the continuation threshold on $\stripbot$.  Suppose the boundary data for $h$ on $\striptop$ is $a \leq -\lambda$ to the left of $z_0$ and $b \geq \lambda$ to the right of $z_0$.  Then $\eta$ almost surely terminates at $z_0$  without first hitting any other point on $\striptop$ \cite[Theorem~1.3]{MS_IMAG}.  If $a \leq -\lambda$ and $b \in (-\lambda+ \pi \chi, \lambda)$, then $\eta$ still almost surely reaches $z_0$ but it hits $\striptop$ to the right (and only to the right) of $z_0$ before it hits $z_0$.  Similarly, if we have $b \geq \lambda$ and $a \in (-\lambda, \lambda- \pi \chi)$, then $\eta$ hits $\striptop$ to the left (and only to the left) of $z_0$ before it hits $z_0$.  If $b \in (-\lambda+ \pi \chi, \lambda)$ and $a \in (-\lambda, \lambda- \pi \chi)$ then $\eta$ almost surely hits both sides of $z_0$ before reaching $z_0$.  If either $b \leq -\lambda+\pi\chi$ or $a \geq \lambda - \pi \chi$ then $\eta$ almost surely hits the continuation threshold before reaching $z_0$.
%More generally, if $\eta$ can hit the continuation threshold, then it terminates at $z_0$ with positive probability.
%By conformally mapping this figure and applying~\eqref{eqn::ac_eq_rel}, we can also determine when the path has a positive probability of reaching a specific point (after possibly hitting the boundary in a neighborhood of that point).  In particular, on a horizontal line which lies to the left of the $\SLE$ seed, a path can terminate at a point $w_0$ if there exists a horizontal interval $I$ of the boundary which contains $w_0$ where $h$ is less than $\lambda$ to the right of $w_0$ in $I$ and at least $\lambda-2\pi \chi$ to the left of $w_0$ in $I$.  Analogously, on a horizontal line which lies to the right of the $\SLE$ seed, a path can terminate at a point $w_0$ if there exists a horizontal interval $I$ of the boundary  which contains $w_0$ where $h$ is more than $-\lambda$ to the left of $z_0$ and less than $-\lambda+2\pi \chi$ to the right of $z_0$.  The same statement holds for $\SLE_{\kappa'}$ processes provided $\lambda$ is replaced by $\lambda'$ and $\chi$ is replaced by $-\chi$.
}
\end{center}
\end{figure}

It is also possible to determine which segments of the boundary a flow or counterflow line cannot hit.  This is described in terms of the boundary data of the field in Figure~\ref{fig::hittingrange} and Figure~\ref{fig::hittingsinglepoint} and proved in \cite[Lemma~5.2]{MS_IMAG} (this result gives the range of boundary data that $\eta$ cannot hit, contingent on the almost sure continuity of $\eta$; this, in turn, is given in \cite[Theorem~1.3]{MS_IMAG}).  This can be rephrased in terms of the weights $\ul{\rho}$: an $\SLE_{\kappa}(\ul{\rho})$ process almost surely does not hit a boundary interval $(x^{i,R},x^{i+1,R})$ (resp.\ $(x^{i+1,L},x^i)$) if $\sum_{s=1}^i \rho^{s,R} \geq \tfrac{\kappa}{2}-2$ (resp.\ $\sum_{s=1}^i \rho^{s,L} \geq \tfrac{\kappa}{2}-2$).  See \cite[Remark~5.3]{MS_IMAG}.

\subsection{Naive time-reversal}
\label{subsec::naive_time_reversal}
If $h$ were a smooth function and $\eta$ were a flow line of $e^{ih/\chi}$, then the time-reversal of $\eta$ would be a flow line of $e^{i(h/\chi + \pi)}$.  This turns out not to be the case for the flow lines of the Gaussian free field (which is not a smooth function).  To give an instructive explanation of what actually happens if we try to reverse direction naively in this way, consider the Gaussian free field on an infinite strip with boundary conditions $-a$ and $b$ as in Figure~\ref{fig::chicken}.  If $a,b \in (-\lambda', \lambda)$ then the results of \cite{MS_IMAG} (see Figure~\ref{fig::hittingrange}) imply that both a forward path from the bottom to the top of strip and an ``opposite direction'' path (sometimes called a {\em dual flow line}) from the top to the bottom are defined, and that both paths almost surely hit both sides of the strip infinitely often, as Figure~\ref{fig::chicken} illustrates.

However, the extent to which these two paths fail to coincide with one another is somewhat amusing.  By way of remark, we mention a few facts about the joint law of this pair of paths that follow from certain results of \cite{MS_IMAG}, which are explained in Figure~\ref{fig::hittingrange} and Figure~\ref{fig::hittingsinglepoint}).  Combined with Figure~\ref{fig::different_starting_point} and Figure~\ref{fig::monotonicity}, these describe when flow lines can intersect each other and at which segments and points they can exit given domains with positive probability.  If we condition on the dual flow line $\wt \eta$ up to any rational time $\wt \tau$ and on the flow line $\eta$ up until any rational time $\tau$ and the two paths run to these times do not intersect each other, then it almost certainly the case that, after time $\tau$, $\eta$ first intersects $\wt \eta([0,\wt\tau])$ at either the last place $\wt \eta$ hit the left boundary before time $\wt \tau$ or the last place $\wt \eta$ hit the right boundary before time $\wt \tau$.  An analogous result holds with the roles of $\eta$ and $\wt \eta$ reversed.

This implies in particular that the two paths almost surely do not intersect each other at {\em any} point in the interior of the strip.  It also implies that if they do intersect at a point $x = \eta(\tau) = \wt \eta(\wt \tau)$ on the boundary of the strip, then the next place $\eta$ hits $\wt \eta([0,\wt \tau])$ {\em after} time $\tau$ must be on the opposite side of the strip.  (If it collided with a point $y$ on same side before crossing the strip, and we restricted $\eta$ and $\wt \eta$ to the portions in between their hitting of $x$ and $y$, then the boundary intersection rules mentioned above would imply that each of these portions must be to the right of the other, a contradiction.)  This implies that the paths almost surely behave in the manner illustrated in Figure~\ref{fig::chicken}.  In particular, whenever the two paths intersect at a boundary point, they immediately cross each other and do not intersect anywhere else in a neighborhood of that point.

The fact that flow lines and dual flow lines almost surely do not intersect each other (except at isolated boundary points) will actually be useful to us.  Figure~\ref{fig::chicken} also illustrates another principle that will be useful in this paper, namely that in some circumstances a flow line avoids a boundary interval if and only if a dual flow line avoids another boundary interval.

\begin{figure}[h!]
\begin{center}
\includegraphics[scale=0.85]{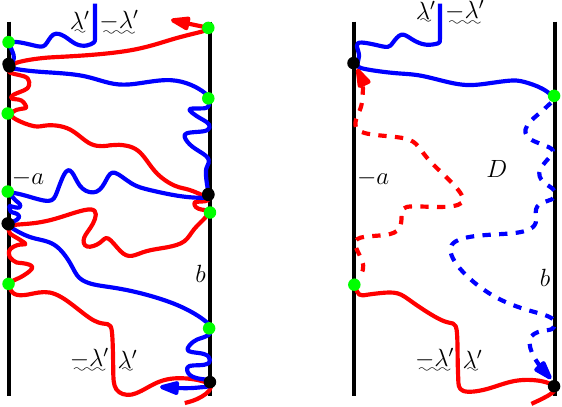}
\end{center}
\caption{\label{fig::chicken}
Suppose $a,b \in (-\lambda',\lambda')$ so that one can draw a red flow line and blue dual flow line directed at each other, starting from opposite ends of the strip.  Each path almost surely hits both sides of the strip infinitely often (the range of boundary values for which the path can hit both sides comes from combining Figure~\ref{fig::hittingrange} with~\eqref{eqn::ac_eq_rel}).  A green dot indicates the first time a path hits a given side of the strip after hitting the other side; a black dot indicates the last time.  The results of \cite{MS_IMAG} imply that the black dots for the blue and red paths coincide (see also Figures~\ref{fig::hittingrange}--\ref{fig::hittingsinglepoint}). In the right figure, we consider the red path stopped at a stopping time when it has just completed a crossing (a green dot) and the blue path stopped at an analogous forward stopping time (a green dot on the opposite side).  On the event that the red path and blue path up to these stopping times are disjoint, we denote by $D$ the component of the complement of this pair of paths that is incident to both paths.  Conditioned on $D$, there is a positive probability that the following three {\em equivalent} events will occur: the dotted lines (which are continuations of the red and blue flow lines) avoid each other, the red dotted line avoids the right boundary, and the blue dotted line avoids the left boundary. }
\end{figure}

\subsection{Idea for a Gaussian free field reversibility proof}

We are now going to provide some intuition behind our proof of the reversibility of $\SLE_\kappa(\rho_1;\rho_2)$ which comes from the imaginary geometry perspective of $\SLE$.  First we present a heuristic argument for the reversibility of ordinary $\SLE_\kappa$ that uses Gaussian free field machinery.  (This was discovered by Schramm and second author some years ago but never published.)

\begin{figure}[ht!]
\begin{center}
\subfigure[\label{fig::reversibility_intuition_interior}]{
\includegraphics[scale=0.85,page=1]{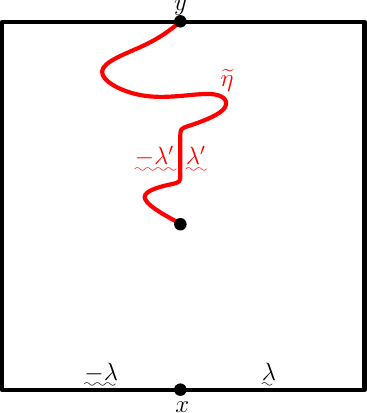}}
\hspace{0.05\textwidth}
\subfigure[\label{fig::reversibility_intuition_interior2}]{
\includegraphics[scale=0.85,page=2]{figures/imaginary_reversibility2.pdf}}
\end{center}
\caption{\label{fig::reversibility_intuition} Although the coupling of $\SLE_\kappa$ processes with the GFF is non-reversible, the theory nevertheless provides an intuitive explanation of the reversibility.  Suppose that $h$ is a GFF on $D$ whose boundary data is compatible with an ordinary $\SLE_\kappa$ process from $x$ to $y$ as above.  On the left side, we have drawn an interior flow line $\eta'$ of $h$ starting from $z$.  Conditionally on $\wt{\eta}$, $\eta$ is an $\SLE_\kappa(\kappa-4;\kappa-4)$ process in $D \setminus \wt{\eta}$ with force points at $y^-,y^+$, respectively.  If $\wt{\eta}$ is the terminal segment of $\eta$, then $\eta$ must first hit $\wt{\eta}$ at $z$ (see the right side).  This gives us the conditional law of $\eta$ given the realization of its time-reversal up to a reverse stopping time.  Once one shows that $\SLE_\kappa(\kappa-4;\kappa-4)$ processes conditioned not to hit the boundary (which corresponds to $\eta$ first exiting $D \setminus \eta'$ at $z$) are (appropriately interpreted) $\SLE_\kappa$ processes, the reversibility follows.
}
\end{figure}

Suppose that $D$ is a Jordan domain with $x,y \in \partial D$ distinct.  Let $h$ be a GFF on $D$ with boundary data as described in Figure~\ref{fig::reversibility_intuition_interior}, so that the flow line $\eta$ of $h$ starting from~$x$ is an $\SLE_\kappa$ process from $x$ to $y$.  Let $\wt{\eta}$ be any simple curve ending at~$y$ which does not hit $\partial D \setminus \{y\}$ and which starts at a point $z = \wt{\eta}(0) \in D$.  Formally, we can write the event $E$ that $\wt{\eta}$ is the terminal segment of $\eta$ as the intersection of the event $E_1$ that $\wt{\eta}$ is a flow line of $h$ starting at $z$ and the event $E_2$ that $\eta$ intersects and then merges with $\wt{\eta}$ precisely at $z$.  Although $E_1$ is a probability zero event, we expect that the conditional law of $h$ given $E_1$ is that of a GFF in $D \setminus \wt{\eta}$ with the boundary data depicted in Figure~\ref{fig::reversibility_intuition_interior}.  This would imply that the conditional law of $\eta$ given $E_1$ is that of an $\SLE_\kappa(\kappa-4;\kappa-4)$ in $D \setminus \wt{\eta}$ with force points on the left and right sides of $y$.  Therefore $E_2$ given $E_1$ is the event that an $\SLE_\kappa(\kappa-4;\kappa-4)$ process first exits $D \setminus \wt{\eta}$ at $z$.

The event $E_2$ has probability zero; however, we will see in Section~\ref{sec::conditioned_not_to_hit} that, with an appropriate interpretation, one can still ``condition'' on this event so that the conditional law of the process is that of an $\SLE_\kappa$ in $D \setminus \wt{\eta}$ from $x$ to $z$.  That is, the law of $\eta$ conditional on $\wt{\eta}$ being its terminal segment is an $\SLE_\kappa$ from $x$ to $z$ in $D \setminus \wt{\eta}$.  This implies the domain Markov property for the time-reversal of $\SLE_\kappa$, which implies that the time-reversal is itself an $\SLE_\kappa$.

This argument also motivates our approach to the time-reversal symmetry of $\SLE_\kappa(\rho_1;\rho_2)$.
At least heuristically, we can also consider the same setup with more general boundary data to obtain that for $\eta \sim \SLE_\kappa(\ul{\rho}^L;\ul{\rho}^R)$, the law of $\eta$, given the realization of its time-reversal $\wt{\eta}$ up to a reverse stopping time $\tau_R$, is that of an $\SLE_\kappa(\ul{\rho}^L,\kappa-4-\ol{\rho}^L;\ul{\rho}^R,\kappa-4-\ol{\rho}^R)$ process in the connected component of $D \setminus \wt{\eta}([0,\tau_R])$ containing $x$ from $x$ to $\wt{\eta}(\tau_R)$ where $\ol{\rho}^q = \sum_i \rho^{i,q}$ for $q \in \{L,R\}$, conditioned to hit $\wt{\eta}([0,\tau_R])$ precisely at its tip.

\section{$\SLE_\kappa(\rho)$: conformal Markov characterization}
\label{sec::conformal_markov}

The purpose of this section is to prove Theorem~\ref{thm::conformal_markov}, that conformal invariance (Definition~\ref{def::conf_invariance}) and the domain Markov property (Definition~\ref{def::domain_markov}) with one extra marked point single out the single-force-point $\SLE_\kappa(\rho)$ processes.  This is a generalization of Schramm's conformal Markov characterization of ordinary $\SLE$ \cite{S0}.

Before we proceed to the proof, we pause momentarily to remind the reader of a few basic facts regarding Bessel processes \cite{RY04}.  We recall that a Bessel process ${\rm BES}^\delta$ of dimension $\delta$ is a solution to the SDE
\begin{equation}
\label{eqn::bessel}
 dX(t) = \frac{\delta-1}{2 X(t)} dt + dB(t),\quad X(0) = x_0 \geq 0
\end{equation}
where $B$ is a standard Brownian motion.  Bessel processes are typically defined by first starting with a solution to the square Bessel equation ${\rm BESQ}^\delta$
\begin{equation}
\label{eqn::square_bessel}
 dZ(t) = 2 \sqrt{Z(t)} dB(t) + \delta t,\quad Z(0) = z_0 \geq 0,
\end{equation}
which exists for all $t \geq 0$ provided $\delta > 0$ \cite{RY04}, and then taking the square root.   In this case, $\sqrt{Z}$ solves~\eqref{eqn::bessel} at times when $Z > 0$ and is instantaneously reflecting at $0$ (almost surely spends zero Lebesgue measure time at $0$).  When $\delta > 1$, $\sqrt{Z}$ solves~\eqref{eqn::bessel} for all times $t$.  We make sense of this when $Z = 0$ by taking the integral version of~\eqref{eqn::bessel}.  The infinitesimal generator $\CA$ of $Z$ is given by
\begin{equation}
\label{eqn::bessel_generator}
 \CA f(x) = \delta f'(x) + x f''(x).
\end{equation}
Therefore any Markov process whose infinitesimal generator $\wt{\CA}$ takes the form
\begin{equation}
\label{eqn::bessel_generator_general}
 \wt{\CA} f(x) = a_0 f'(x) + a_1 x f''(x)
\end{equation}
for $a_0, a_1 > 0$ is a ${\rm BESQ}^\delta$ process for $\delta = a_0/\sqrt{a_1} > 0$ multiplied by $\sqrt{a_1}$.

The driving function $W$ for an $\SLE_\kappa(\rho)$ process with one force point of weight $\rho > -2$ can be constructed by first taking a solution $X$ of~\eqref{eqn::bessel} with
\begin{equation}
\label{eqn::dimension_kappa_rho}
\delta = 1+ \frac{2(\rho+2)}{\kappa} > 1,
\end{equation}
then letting $O(t) = -2 \kappa^{-1/2} \int_0^t \tfrac{2}{X(s)} ds,$
 and finally setting $W = O +\sqrt{\kappa} X$ (see, e.g.\ \cite[Section~3.3]{SHE_CLE}).  The main ingredient to the proof of Theorem~\ref{thm::conformal_markov} is the following characterization of Bessel processes, which is essentially a consequence of Lamperti's characterization \cite{LAMP72} of time homogeneous self-similar Markov processes taking values in $[0,\infty)$.

\begin{lemma}
\label{lem::bessel_characterization}
Suppose that $V$ is a time homogeneous Markov process with continuous sample paths taking values in $[0,\infty)$ which is instantaneously reflecting at $0$ and satisfies Brownian scaling.  Then $V$ is a positive multiple of a ${\rm BES}^\delta$ process for some $\delta > 0$.
\end{lemma}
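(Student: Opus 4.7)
The plan is to apply Lamperti's representation theorem for positive self-similar Markov processes \cite[Theorem 5.1]{LAMP72} to $V$ run between its excursions from $0$, use continuity of sample paths to reduce the associated L\'evy process to Brownian motion with drift, and finally use the instantaneous reflection hypothesis to pin down the behavior at $0$.

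First I would restrict attention to $V$ started from some $x > 0$ and stopped at $\sigma := \inf\{t : V_t = 0\}$. On $[0,\sigma)$ the process $V$ is positive, continuous, and time-homogeneous Markov, and the Brownian scaling assumption---that the law of $(c^{-1/2} V_{ct})$ started from $c^{-1/2} y$ equals the law of $V$ started from $y$---is exactly self-similarity of index $\alpha = 2$ in the sense of Lamperti. Lamperti's theorem therefore produces a L\'evy process $\xi$ and a strictly increasing time change $A$ (with $dA(t)/dt$ proportional to $V_t^{-2}$) such that $V_t = x \exp(\xi_{A(t)})$ on $[0,\sigma)$. Continuity of $V$ forces $\xi$ to be continuous, so L\'evy's characterization gives $\xi_s = as + b B_s$ for a standard Brownian motion $B$ and constants $a \in \R$, $b \geq 0$; the degenerate case $b = 0$ can be ruled out because it would make $V$ a deterministic monotone function of time on $[0,\sigma)$, which is incompatible with the assumed instantaneous reflection at $0$ and Brownian scaling.

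Unwinding the Lamperti representation via It\^o's formula then shows that on $\{V > 0\}$, the process $V$ solves an SDE of the form
\begin{equation*}
dV_t = \frac{c_0}{V_t}\, dt + c_1 \, dW_t
\end{equation*}
for some $c_0 \in \R$ and $c_1 > 0$. Comparing with \eqref{eqn::bessel} identifies $V$ on $(0,\infty)$ as $c_1$ times a Bessel process of dimension $\delta := 1 + 2c_0/c_1^2$. To see that $\delta > 0$, observe that when $\delta \leq 0$ the Bessel SDE has no continuous instantaneously reflecting extension to $[0,\infty)$ (any such extension would spend positive Lebesgue time at the origin), so instantaneous reflection forces $\delta > 0$; conversely, for $\delta > 0$ the reflecting $\BES^\delta$ process is the unique continuous, instantaneously reflecting, strong Markov extension of the SDE on $(0,\infty)$ to $[0,\infty)$. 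This pins down the law of $V$ as $c_1 \cdot \BES^\delta$.

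The main obstacle I anticipate is the bookkeeping needed to extend Lamperti's representation---which is naturally phrased for $V$ killed at $0$---to the reflecting process on all of $[0,\infty)$, and to verify that the parameters $(c_0, c_1)$ recovered from one excursion agree with those from another. Here Brownian scaling and time homogeneity do the work: the scaling hypothesis implies that the excursion measure of $V$ from $0$ scales in the Bessel fashion and is therefore determined up to a multiplicative constant, while the strong Markov property together with instantaneous reflection ensures that the process reassembled from its excursions has a single consistent law. Once this gluing step is justified, the dimension $\delta > 0$ and the constant $c_1 > 0$ are global and the characterization follows.
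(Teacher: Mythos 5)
Your proposal reaches the right conclusion by a genuinely different route from the paper. The paper never passes through the Lamperti representation in terms of L\'evy processes: it squares the process, notes that $V^2$ is a continuous, instantaneously reflecting, time-homogeneous Markov process with scaling index one, and then quotes Lamperti's characterization of the generator of such processes, $\CA f(x) = a_0 f'(x) + a_1 x f''(x)$ on $(0,\infty)$, with instantaneous reflection used to exclude $a_0 \leq 0$ (which would force $0$ to be absorbing); this identifies $aV^2$ as a $\BESQ^\delta$ away from $0$, and instantaneous reflection is used once more to conclude that it solves the $\BESQ$ equation for all time. Your argument --- Lamperti transform to a L\'evy process $\xi$, continuity forcing $\xi_s = as + bB_s$, It\^o's formula to recover the Bessel SDE on $(0,\infty)$, then a boundary analysis at $0$ --- reconstructs by hand what the generator-form statement delivers in one step. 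What it buys is independence from the precise form of Lamperti's Theorem 5.1; what it costs is the gluing step you flag, which does work but requires the It\^o--McKean classification of boundary behavior at a regular point (continuity excludes jumping in, zero occupation time at $0$ excludes stickiness and absorption, conservativeness excludes killing) to see that the instantaneously reflecting extension is unique; the paper's passage to $V^2$ is designed precisely to keep this part short.

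One concrete flaw: your reason for discarding $b=0$ is not valid. The deterministic process $V_t = \sqrt{x^2 + ct}$ with $c>0$ is a continuous, time-homogeneous Markov process on $[0,\infty)$, it is instantaneously reflecting at $0$ (it spends zero Lebesgue time there), and it satisfies Brownian scaling; so a degenerate, monotone $V$ is \emph{not} ruled out by ``instantaneous reflection and Brownian scaling'' when the drift is positive. Your argument does dispose of $b=0$ with $a \leq 0$ (the path would be trapped at or absorbed into $0$), but not of $b=0$, $a>0$. This degenerate case corresponds to $a_1 = 0$ in the paper's generator form and is passed over silently there as well, so it is a shared blind spot with the lemma as stated rather than a defect unique to your write-up; still, since your proof makes the exclusion explicit, it should either be stated as a nondegeneracy hypothesis or be ruled out by examining how Lamperti's theorem treats the $a_1 = 0$ case (in the paper's application it is excluded because the driving pair of an $\SLE_\kappa(\rho)$-type process is manifestly non-deterministic), not by the incompatibility claim as you wrote it.
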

\begin{proof}
Note that $V^2$ is also a time homogeneous Markov process with continuous sample paths taking values in $[0,\infty)$ which is instantaneously reflecting at $0$.  Moreover, $V^2$ satisfies the scaling relation $(t \mapsto \alpha^{-1} V^2(\alpha t))\stackrel{d}{=} (t \mapsto V^2(t))$.  Consequently, it follows from \cite[Theorem~5.1]{LAMP72} that the generator $\CA$ of $V^2$ takes the form
\[ \CA f(x) = a_0 f'(x) + a_1 x f''(x),\quad x > 0\]
for constants $a_0,a_1 > 0$ (Lamperti's characterization allows for $a_0 \leq 0$; this, however, requires that $0$ is an absorbing state, which is not the case in our setting since our process is defined for all times and is instantaneously reflecting at $0$).  This is the generator for a positive multiple $a$ of a ${\rm BESQ}^\delta$ process, some $\delta > 0$ (recall~\eqref{eqn::bessel_generator_general} and the surrounding text).  That is, $a V^2$ evolves as a ${\rm BESQ}^\delta$ process when it is away from~$0$.  Since $V^2$ is instantaneously reflecting, it follows that $a V^2(t)$ solves the ${\rm BESQ}^\delta$ equation for all time.  Consequently, $V$ is a positive multiple of a ${\rm BES}^\delta$ process.
\end{proof}

Before we proceed to the proof of Theorem~\ref{thm::conformal_markov}, we need to collect two technical lemmas.

\begin{lemma}
\label{lem::leb_zero}
Suppose that $\eta$ is a continuous path in $\ol{\h}$ from $0$ to $\infty$ that admits a continuous Loewner driving function.  Assume that $\eta$ is parameterized by capacity and let $J = \{ t \geq 0 : \eta(t) \in \R\}$.  Then $J$ has Lebesgue measure zero.
\end{lemma}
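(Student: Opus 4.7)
The plan is a two-step argument built on the capacity parameterization $\hcap(K_t) = 2t$.

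First, I observe that $J = \eta^{-1}(\R)$ is closed in $[0,\infty)$ by continuity of $\eta$. Next, I show that $J$ has empty interior. Suppose $(a,b) \subset J$ with $a < b$. Then $\eta([a,b]) \subset \R$, so for any $t \in [a,b]$ the incremental trace $\eta([a,t])$ lies in $\R$, and hence $K_t$ differs from $K_a$ only by a subset of $\R \subset \partial \H$. In particular $\H \setminus K_t = \H \setminus K_a$ as open subsets of $\H$, so the Loewner maps satisfy $g_t = g_a$, and thus $\hcap(K_t) = \hcap(K_a) = 2a$. But capacity parameterization gives $\hcap(K_t) = 2t > 2a$ for $t > a$, a contradiction.

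Second, to upgrade from empty interior to Lebesgue measure zero, I approximate $J$ from above by open sets. For $\delta > 0$, set $A_\delta = \{t \geq 0 : \Im \eta(t) < \delta\}$. By continuity of $\eta$ this set is open in $[0,\infty)$; moreover $A_\delta \supseteq J$ and $\bigcap_{\delta > 0} A_\delta = J$. Decompose $A_\delta \cap [0,T] = \bigsqcup_n (a_n, b_n)$ into connected components. On each component, $\eta$ stays in the strip $\{0 \leq \Im z < \delta\}$, so the hull increment $K_{b_n} \setminus K_{a_n}$ lies in this strip. The plan is to bound the capacity increment $2(b_n - a_n) = \hcap(K_{b_n}) - \hcap(K_{a_n})$ by combining the estimate $\hcap(K) \leq C\delta \cdot \diam(K)$ for hulls $K$ in a $\delta$-strip with a pushforward estimate describing how the image of the strip under $g_{a_n}$ behaves near the tip of $K_{a_n}$. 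Summing over $n$ and controlling the total diameter contribution using $\diam(\eta([0,T]))$ should yield $|A_\delta \cap [0,T]| = O(\delta)$; letting $\delta \to 0$ then gives $|J \cap [0,T]| = 0$, and since $T$ is arbitrary, $|J| = 0$.

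The main obstacle is the pushforward step. The Loewner map $g_{a_n}$ can be very distorted near the tip of $K_{a_n}$, where its boundary derivative may blow up with typical square-root behavior, so naive bounds on the image height pick up additional factors compared to the original $\delta$. Controlling this distortion uniformly enough across all components $(a_n, b_n)$ so that the resulting estimates sum to something finite and vanishing with $\delta$ is the key technical point. This will require the continuity of the Loewner driving function together with Koebe-type boundary estimates for $g_{a_n}$ near the tip, and is the only place where continuity of the driving function (rather than just of $\eta$) is essential.
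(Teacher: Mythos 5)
The first half of your argument (that $J$ is closed with empty interior) is correct but carries almost no weight, as you yourself note: it does not come close to measure zero. The second half is where the lemma actually lives, and it is only a plan: the ``pushforward step'' you flag as the main obstacle is precisely the content of the proof, and as written the plan fails there. The capacity increment $2(b_n-a_n)$ equals $\hcap$ of the \emph{image} under $g_{a_n}$ of the curve piece $\eta((a_n,b_n])$ (equivalently of its filling), so the estimate $\hcap(K)\lesssim \delta\cdot\diam(K)$ would have to be applied in the image coordinates. But at time $a_n$ the tip $\eta(a_n)$ sits at height exactly $\delta$, i.e.\ the growth point is itself in the thin strip, and near the tip $g_{a_n}$ has square-root expansion: portions of the trace at height $<\delta$ that lie near the tip, or that run along the base of the existing hull $K_{a_n}$, can be mapped to sets whose height is comparable to the macroscopic scale of $K_{a_n}$ rather than to $\delta$ (and with no power of $\delta$ uniform in $n$). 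So the image of the strip piece need not lie in any thin strip, and the per-component bound you want does not follow from the ingredients you list. Moreover, even granting such a bound, the summation step is unjustified: the diameters of the image hulls $g_{a_n}(K_{b_n}\setminus K_{a_n})$ live in different coordinates for each $n$, the curve may revisit the same neighborhood of $\R$ in many components, and forward maps expand, so there is no evident domination of $\sum_n \diam$ by $\diam(\eta([0,T]))$. Finally, note that the hypothesis that $\eta$ admits a continuous Loewner driving function is never actually used in what you wrote; the fact that it enters only inside the unresolved step is a symptom that the real mathematical content is still missing, so this is a genuine gap rather than a deferred technicality.

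For comparison, the paper does not reprove the statement at all: its proof of Lemma~\ref{lem::leb_zero} is simply a citation to \cite[Lemma 2.5]{MS_IMAG}. If you want a self-contained argument, you must supply the missing quantitative input, which is exactly where continuity of $W$ (e.g.\ through its uniform modulus of continuity on $[0,T]$, controlling the location and scale of the image hulls near $W_{a_n}$) has to do the work; alternatively, simply cite the external lemma as the paper does.
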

\begin{proof}
This is \cite[Lemma~2.5]{MS_IMAG}.
\end{proof}

\begin{lemma}
\label{lem::x_differential}
Suppose that $\eta$ is a continuous path in $\ol{\h}$ from $0$ to $\infty$ that admits a continuous Loewner driving function $W$.  Let $(g_t)$ be the corresponding family of conformal maps, parameterized by capacity.  For each $t$, let $X(t)$ be the right most point of $g_t(\eta([0,t]))$ in $\R$.  If the Lebesgue measure of $\eta([0,\infty]) \cap \R$ is zero, then $X$ solves the integral equation
\[ X(t) = \int_0^t \frac{2}{X(s) - W(s)} ds,\quad X(0) = 0^+.\]
\end{lemma}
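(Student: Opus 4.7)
The plan is to derive the integral equation by integrating the Loewner ODE along real boundary points to the right of the growing hull and then passing to a limit. I would first observe that for any $z \in \R_{>0}$ and any $t$ with $z \notin K_t$, the function $u \mapsto g_u(z)$ stays on $\R$ throughout $[0,t]$, since $\R \setminus K_u$ is invariant under $g_u$; integrating the Loewner ODE then yields
\[ g_t(z) = z + \int_0^t \frac{2}{g_s(z) - W(s)}\,ds. \]

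Next I would identify $X(t)$ as a boundary limit. Setting $x(t) := \sup(K_t \cap \R_{\geq 0})$, continuity of the prime-end extension of $g_t$ gives $X(t) = \lim_{z \downarrow x(t),\, z \notin K_t} g_t(z)$. I would choose a sequence $z_n \downarrow x(t)$ with $z_n \notin K_t$ and substitute into the displayed equation. The left side yields $g_t(z_n) - z_n \to X(t) - x(t)$. For the right side I would invoke dominated convergence with majorant $2/(X(s) - W(s))$, valid since $g_s(z_n) - W(s) \geq X(s) - W(s) > 0$; the crucial input is that $g_s(z_n) \to X(s)$ for a.e.\ $s \in [0, t]$. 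This is where the hypothesis enters: combined with Lemma~\ref{lem::leb_zero} (which says that the time set $J = \{s : \eta(s) \in \R\}$ has zero Lebesgue measure), the zero-measure of $\eta([0,\infty]) \cap \R$ controls the structure of $s \mapsto x(s)$ and forces this convergence. Finally, taking $t \downarrow 0$ shows that $x(t) \to 0$, absorbing the offset into the initial condition $X(0) = 0^+$ and yielding the claimed integral equation.

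The main obstacle is the pointwise identification $\lim_n g_s(z_n) = X(s)$ for almost every $s$. The issue is that whenever $x(s) < x(t)$---that is, at times $s$ when the hull has not yet absorbed all of the real points it will absorb by time $t$---the limit $g_s(x(t)^+)$ can strictly exceed $X(s) = g_s(x(s)^+)$. The key technical step is thus to show that the set of such ``pre-absorption'' times has zero Lebesgue measure, which is precisely where the hypothesis of the lemma, together with Lemma~\ref{lem::leb_zero}, must be used in an essential way.
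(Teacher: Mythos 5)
There is a genuine gap, and it sits exactly where you flag it --- but the repair you propose cannot work. Passing to the limit $z_n \downarrow x(t)$ in $g_t(z_n) = z_n + \int_0^t 2\bigl(g_s(z_n)-W(s)\bigr)^{-1}ds$ yields (by monotone convergence, since $g_s$ is increasing on $\R$ off the hull) the integrated Loewner equation for the \emph{fixed} point $x(t)$, namely $X(t) = x(t) + \int_0^t 2\bigl(g_s(x(t)^+)-W(s)\bigr)^{-1}ds$. To convert this into the claimed identity you need $g_s(z_n) \to X(s)$ for a.e.\ $s \leq t$, i.e.\ that the ``pre-absorption'' set $\{s \leq t : x(s) < x(t)\}$ is Lebesgue-null. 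That set typically has \emph{positive} measure: if $\eta$ touches $\R_{>0}$ at $x_0$ at time $s_0$ and next touches it at $x_1 > x_0$ at time $t_0 > s_0$, then $x(s) = x_0 < x(t)$ for every $s$ in the entire excursion interval $(s_0,t_0)$, and there $g_s(x(t)^+) > X(s)$ strictly. This is precisely the boundary-hitting regime the lemma is designed for (a boundary-intersecting $\SLE_\kappa(\rho)$ does this almost surely), and neither hypothesis helps: Lemma~\ref{lem::leb_zero} controls the set of \emph{times} at which $\eta$ lies on $\R$, and the stated hypothesis controls the spatial trace $\eta \cap \R$, but neither constrains the lengths of the excursion intervals between boundary hits. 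Nor can the leftover term $x(t)$ be ``absorbed into the initial condition'': it is a fixed positive offset at time $t$, not a time-zero quantity. What your limit argument actually proves is a correct but different identity, and closing the gap between it and the claim is essentially the content of the lemma itself.

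The paper's proof uses a different decomposition, which is the missing idea. Write $I = \{s : \eta(s) \notin \R\}$ as a countable union of open intervals; on each such interval the rightmost image point $X$ genuinely obeys the Loewner flow, and since $I^c$ is Lebesgue-null (Lemma~\ref{lem::leb_zero}) these contributions sum to $\int_0^t 2\bigl(X(s)-W(s)\bigr)^{-1}ds$, giving one inequality. The reverse inequality controls the upward jumps of $X$ that occur when $\eta$ touches $\R$ at a new rightmost point: each jump is the $g$-image of a newly covered real interval, hence at most its Euclidean length (since $0 < g_s' \leq 1$ on $\R$ off the hull), and compactness plus the zero-measure hypothesis let one cover $\eta([0,t]) \cap \R$ by finitely many intervals of total length $< \epsilon$. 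If you wish to salvage your setup, you would have to prove exactly this jump estimate to compare $\int_0^t 2\bigl(g_s(x(t)^+)-W(s)\bigr)^{-1}ds$ with $\int_0^t 2\bigl(X(s)-W(s)\bigr)^{-1}ds$, at which point the boundary-limit reduction buys nothing.
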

\begin{remark}
\label{rem::x_differential}
This always holds for $\SLE_\kappa(\ul{\rho})$ paths, even when $\kappa > 4$ and $\ul{\rho}$ is such that the path is boundary filling.  However, there are examples of continuous curves which intersect the boundary with positive Lebesgue measure so that the above does not hold.
\end{remark}
\begin{proof}[Proof of Lemma~\ref{lem::x_differential}]
\footnote{We thank Wendelin Werner for suggesting the following simple proof.} Fix $\epsilon > 0$.  We define iteratively the process $X^\epsilon$ as follows.  We take $X^\epsilon(0) = \epsilon$ and let $X^\epsilon(t)$ evolve according to the Loewner flow which is driven by $W$ up until time $\tau_1 = \inf\{t \geq 0 : X^\epsilon(t) = W(t)\}$.  We then take $X^\epsilon(\tau_1+) = X^\epsilon(\tau_1) + \epsilon$.  Assuming that $X^\epsilon$ has been defined up until time $\tau_k$, we take $X^\epsilon(\tau_k+) = X^\epsilon(\tau_k)+\epsilon$ and then take $\tau_{k+1} = \inf \{ t > \tau_k : X^\epsilon(t) = W(t)\}$.  Then we clearly have that
\[ X(t) \leq X^\epsilon(t) \leq X(t)+\epsilon \quad\text{for all}\quad t.\]
We then let $N^\epsilon(t)$ be the number of $\epsilon$-jumps made by $X^\epsilon$ before time $t$.  Then we have that the Lebesgue measure of the $\epsilon$-neighborhood of $\eta([0,t]) \cap \R$ is at least $\epsilon N^\epsilon(t)$ as each jump of $X^\epsilon$ corresponds to an interval of length larger than $\epsilon$ on the real line that contains a point of $\eta \cap \R$ and, for a given $\epsilon$, these intervals are disjoint.  Hence, since $\eta([0,t]) \cap \R$ is both closed and has zero Lebesgue measure, it follows that $\epsilon N^\epsilon(t) \to 0$ as $\epsilon \to 0$.  On the other hand, we have that
\[ X^\epsilon(t) = \epsilon N^\epsilon(t) + \int_0^t \frac{2}{X^\epsilon(s) - W(s)} ds.\]
Thus sending $\epsilon \to 0$, we obtain
\[ X(t) = \int_0^t \frac{2}{X(s) - W(s)} ds,\]
as desired. (The convergence of the integral follows from the monotone convergence theorem as $X(s) - W(s) \leq X^\epsilon(s) - W(s)$ for all $s$.)
\end{proof}

Lemma~\ref{lem::x_differential} allows us to make sense of $g_t(x)$ for any $x \in \R$ for all $t \geq 0$, even after being swallowed by $\eta$, via the Loewner evolution.  We can now complete the proof of Theorem~\ref{thm::conformal_markov}.

\begin{proof}[Proof of Theorem~\ref{thm::conformal_markov}]
For $x \geq 0^+$, let $c(x) = (\h,0,\infty;x)$.  Let $\eta \sim \p_{c(x)}$ and, for each $t > 0$, let $c_t(x) = (\h_t,\eta(t),\infty;x_t)$ where we recall the definitions of $\h_t$ and $x_t$ from Definition~\ref{def::domain_markov}.  We are now going to argue that $\eta$ almost surely has a continuous Loewner driving function viewed as a path in $\ol{\h}$ from $0$ to $\infty$.  To see this, we will check the criteria of \cite[Proposition~6.12]{MS_IMAG}.  It is implicit in the domain Markov property that, almost surely, for every $t > 0$ we have that $\eta((t,\infty))$ is contained in the closure of the unbounded connected component of $\h \setminus \eta((0,t))$.  The hypothesis that $\eta \cap \R$ has zero Lebesgue measure combined with the domain Markov property and conformal invariance implies that, almost surely, for every $t > 0$ we have $\eta|_{[t,\infty)}^{-1}(\R \cup \eta([0,t]))$ has empty interior.  Therefore $\eta$ almost surely has a continuous Loewner driving function.  Let $(g_t)$ be the family of Loewner maps associated with $\eta$, parameterized by capacity.

Let $X(t) = g_t(x)$ (which makes sense for all $t \geq 0$ by Lemma~\ref{lem::x_differential}) and $W$ be the Loewner driving function for $\eta$.  Finally, let $V(t) = X(t) - W(t)$.  Then $V(t)$ takes values in $[0,\infty)$.  Since $W$ and $X$ are continuous, so is $V$.  We note that $\eta(t) \in \R$ precisely when $V(t) = 0$.  Therefore Lemma~\ref{lem::leb_zero} implies that $V$ is instantaneously reflecting at $0$.  We are going to complete the proof of the theorem by showing that $V$ is a Bessel process by checking the criteria of Lemma~\ref{lem::bessel_characterization}.

We are first going to show that $V$ is a time homogeneous Markov process.  Let $\CF_t = \sigma(\eta(s) : s \leq t)$ be the filtration generated by $\eta$.  For $t \in \R$, let $\theta_t u(s) = u(s+t)$ be the usual shift operator.  For any $t \geq 0$, the domain Markov property implies
\begin{align}
\label{eqn::domain_markov_app}
  \p_{c(V(0))}[ \theta_t V \in A \giv \CF_t]
= \p_{c_t(V(0))}[V \in A].
\end{align}
Note that we used $V(0) = X(0)$.  By applying the conformal map $g_t - W(t)$, conformal invariance implies that the expression in~\eqref{eqn::domain_markov_app} is equal to $\p_{c(V(t))}[V \in A]$.  Therefore $V$ is a time homogeneous Markov process.

We are now going to show in the special case $x = 0^+$ that $V$ satisfies Brownian scaling.  The reason we choose to work with this particular case is that $\p_{c(0^+)}$ is scale invariant by conformal invariance.  We will first show that $W$ satisfies Brownian scaling.  For $\alpha > 0$, we note that
\begin{align*}
   \partial_t g_{\alpha t}(z)
&= \frac{2 \alpha}{g_t(\alpha t) - W(\alpha t)}.
\end{align*}
Dividing both sides by $\alpha^{1/2}$ and rearranging, we see that
\begin{align*}
   \partial_t \big( \alpha^{-1/2} g_{\alpha t}(z) \big)
&= \frac{2}{ \alpha^{-1/2} g_{\alpha t}(z) - \alpha^{-1/2} W(\alpha t)}.
\end{align*}
The scale invariance of $\p_{c(0^+)}$ and the half-plane capacity scaling relation $\hcap(\alpha A) = \alpha^2 \hcap(A)$ (this appears, for example, just after \cite[Definition~3.35]{LAW05}) imply that $\big(t \mapsto g_{\alpha t}(z)\big) \stackrel{d}{=} \big(t \mapsto \alpha^{1/2} g_t(\alpha^{-1/2}z)\big)$.  Consequently, we have that $\big(t \mapsto \alpha^{-1/2} W(\alpha t) \big) \stackrel{d}{=} \big( t \mapsto W(t) \big)$.  That is, $W$ satisfies Brownian scaling.  Note that
\[ X(\alpha t) = \int_0^{\alpha t} \frac{2}{X(s) - W(s)} ds,\quad X(0) = 0^+\]
(the integral representation of $X$ is justified by Lemma~\ref{lem::x_differential}).  Making the change of variables $\alpha u = s$, we see that
\[ X(\alpha t) = \int_0^t \frac{2 \alpha}{X(\alpha u) - W(\alpha u)} du,\quad X(0) = 0^+.\]
Thus multiplying both sides by $\alpha^{-1/2}$, we see that $X$ satisfies Brownian scaling jointly with $W$.  Therefore $V$ satisfies Brownian scaling.

Lemma~\ref{lem::bessel_characterization} therefore implies that $V$ is a positive multiple of a ${\rm BES}^\delta$ process.  Moreover, we must have that $\delta > 1$ since $\delta \in (0,1]$ would imply that $X(t) = \int_0^t \tfrac{2}{V(s)} ds = \infty$ almost surely \cite{RY04}.  Therefore there exists $\kappa > 0$ such that
\[ dV(t) = \frac{\sqrt{\kappa}(\delta-1)}{2 V(t)} dt + \sqrt{\kappa} dB(t)\]
where $B$ is a standard Brownian motion.  Combining everything, we see that $(X,W)$ solves the SDE
\begin{align*}
 dW(t) &= d(X(t) - V(t)) = \frac{\sqrt{\kappa} (\delta-1)/2 - 2}{W(t) - X(t)} dt - \sqrt{\kappa} dB(t)\\
 dX(t) &= \frac{2}{X(t) - W(t)} dt,\quad X(0) = 0.
\end{align*}
This exactly says that $\p_{c(0^+)}$ is an $\SLE_\kappa(\rho)$ process with a single force point at $0^+$.

We will now explain how the case $x > 0$ follows from the case $x=0^+$.  Let $\tau = \inf\{t > 0 : V(t) = x\}$.  By the domain Markov property, the law of $\p_{c_\tau(0^+)}$ is the conformal image of $\p_{c(x)}$ under the conformal map $\h \to \h_\tau$ which fixes $\infty$, sends $0$ to $\eta(\tau)$, and sends $x$ to $(0^+)_\tau$ (recall Definition~\ref{def::domain_markov}).  On the other hand, we also know that $\p_{c_\tau(0^+)}$ is the law of an $\SLE_\kappa(\rho)$ process in $\h_\tau$ from $\eta(\tau)$ to $\infty$ with a single force point of weight $\rho$ at $(0^+)_\tau$.  Therefore $\p_{c(x)}$ is an $\SLE_\kappa(\rho)$ process in $\h$ from $0$ to $\infty$ with a single force point at $x$.
\end{proof}

\section{Bi-chordal $\SLE_\kappa$ processes}
\label{sec::bi_chordal}

\begin{figure}[ht!]
\begin{center}
\includegraphics[scale=0.85]{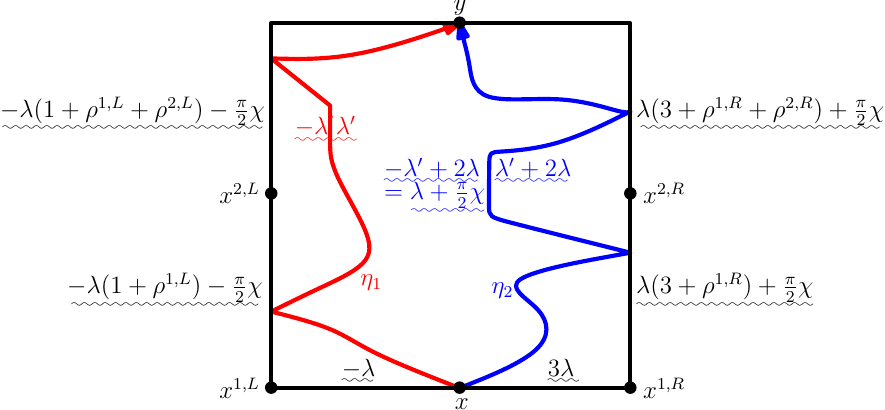}
\end{center}
\caption{\label{fig::bi_chordal}  Suppose that $D$ is a Jordan domain in $\C$.  Fix weights $\ul{\rho}^L,\ul{\rho}^R$ with $\sum_{i=1}^j \rho^{i,q} > -2$ for all $1 \leq j \leq |\ul{\rho}^q|$ and $q \in \{L,R\}$.  Let $h$ be a GFF on $D$ whose boundary data is depicted above (with the obvious generalization from $|\ul{\rho}^q| = 2$).  Let $\eta_1$ be the flow line of $h$ starting at $x$ and $\eta_2$ the flow line of $h-2\lambda$ starting at $x$.  Then $\eta_1 \sim \SLE_\kappa(\ul{\rho}^L;2+\ul{\rho}^R)$ (where $2+\ul{\rho} = (2+\rho_1,\rho_2,\ldots,\rho_n)$) and $\eta_2 \sim \SLE_\kappa(2+\ul{\rho}^L;\ul{\rho}^R)$.  The conditional law of $\eta_1$ given $\eta_2$ is an $\SLE_\kappa(\ul{\rho}^L)$ in the left connected component of $D \setminus \eta_2$ and the conditional law of $\eta_2$ given $\eta_1$ is an $\SLE_\kappa(\ul{\rho}^R)$ in the right connected component of $D \setminus \eta_1$.  We prove in Theorem~\ref{thm::bi_chordal} that the joint law of $(\eta_1,\eta_2)$ is characterized by these conditional laws.}
\end{figure}

Suppose that $D$ is a Jordan domain in $\C$.  Fix $x,y \in \partial D$ distinct.  Let $h$ be an instance of the GFF on $D$ with some boundary conditions.  We would like to consider two flow lines $\eta_1$ and $\eta_2$ from $x$ to $y$ with respective angles $\theta_1 > \theta_2$. We can change coordinates (using the coordinate change of Figure~\ref{fig::coordinatechange}) to the setting where $D$ is the upper half plane, with $x$ mapping to $0$ and $y$ to $\infty$.  Let us assume that after such a mapping, the boundary conditions of the GFF become piecewise constant, and that these values are less than $\lambda - \theta_1 \chi$ on $(-\infty, 0)$ and more than $-\lambda - \theta_2 \chi$ on $(0, \infty)$.  This ensures that each $\eta_i$ is some kind of $\SLE_\kappa(\ul\rho)$ process that can almost surely be continued all the way from $x$ to $y$.

In this setting, we also know (from the theory developed in \cite{MS_IMAG}) that if we condition on one of the $\eta_i$, then the conditional law of the other is given by a particular $\SLE_\kappa(\ul\rho)$ process (recall Figure~\ref{fig::monotonicity} and Figure~\ref{fig::different_starting_point}).  Thus, we have a recipe that tells us how to resample either one of the paths (leaving the other fixed) in a way that preserves the joint law of the pair of paths.  A natural question is whether the invariance of the joint law under this one-path-at-a-time resampling completely characterizes the joint law.  The purpose of this section is to show that indeed it does.

The most straightforward way to approach this would be to argue that if we start with any pair of paths $(\eta_1,\eta_2)$, and we repeatedly alternate between resampling $\eta_1$ and resampling $\eta_2$, then the law of the pair of paths ``mixes'', i.e., converges to some unique invariant measure.  The problem is that we do not actually expect a statement this strong to be true, mainly because we do not expect that the behavior in a small neighborhood of $x$ (or in a small neighborhood of $y$) can be completely mixed in finitely many steps.  We will get around this by first using a Gaussian free field trick that allows us to put a little bit of distance between the endpoints of the two paths, and second arguing that this little distance does not affect the macroscopic behavior of the pair of paths.

Before we proceed to prove this, let us restate the theorem formally in terms of $\SLE_\kappa(\underline{\rho})$ processes.  Let $\ul{\rho}^L,\ul{\rho}^R$ be vectors of weights with $\sum_{i=1}^j \rho^{i,q} > -2$ for all $1 \leq j \leq |\ul{\rho}^q|$ and $q \in \{L,R\}$.  Let $\eta_1$ be an $\SLE_\kappa(\ul{\rho}^L;2+\ul{\rho}^R)$ in $D$ from $x$ to $y$ (here and hereafter, we will write $2+\ul{\rho}$ for $(2+\rho_1,\rho_2,\ldots,\rho_n)$).  Conditional on $\eta_1$, let $\eta_2$ be an $\SLE_\kappa(\ul{\rho}^R)$ in the right connected component of $D \setminus \eta_1$ from $x$ to $y$.  Then $\eta_2$ is an $\SLE_\kappa(2+\ul{\rho}^L;\ul{\rho}^R)$ from $x$ to $y$ in $D$ and the conditional law of $\eta_1$ given $\eta_2$ is an $\SLE_\kappa(\ul{\rho}^L)$ in the left connected component of $D \setminus \eta_2$ from $x$ to $y$ (see Section~\ref{subsec::imaginary} and Figure~\ref{fig::bi_chordal}).  We will show that if $(\wt{\eta}_1,\wt{\eta}_2)$ is any other pair of non-intersecting simple random curves in $D$ in which $\wt{\eta}_1$ lies to the left of $\wt{\eta}_2$ and such that $\wt{\eta}_i$ connects $x$ to $y$ with the conditional law of $\wt{\eta}_i$ given $\wt{\eta}_j$ equal to the conditional law of $\eta_i$ given $\eta_j$ for $i,j \in \{1,2\}$ distinct, then $(\wt{\eta}_1,\wt{\eta}_2) \stackrel{d}{=} (\eta_1,\eta_2)$.  We call this configuration a ``bi-chordal $\SLE_\kappa(\ul{\rho})$'' process.  This characterization is the main ingredient in our derivation of the reversibility of $\SLE_\kappa(\rho_1; \rho_2)$ from the reversibility of single-force-point $\SLE_\kappa(\rho)$ processes.  The following theorem states this result.

\begin{theorem}
\label{thm::bi_chordal}
Suppose that $D \subseteq \C$ is a Jordan domain.  Fix weights $\ul{\rho}^L,\ul{\rho}^R$ with $\sum_{i=1}^j \rho^{i,q} > -2$ for $1 \leq j \leq |\ul{\rho}^q|$ and $q \in \{L,R\}$ and fix $x,y \in \partial D$ distinct.  There exists a unique measure on pairs of simple curves $(\eta_1,\eta_2)$ --- each connecting $x$ to $y$ in $D$ and intersecting each other only at these endpoints --- such that the following is true:  The law of $\eta_1$ given $\eta_2$ is an $\SLE_\kappa(\ul{\rho}^L)$ in the left connected component of $D \setminus \eta_2$, with the force points corresponding to $\ul{\rho}^L$ located on the clockwise arc from $x$ to $y$; and the law of $\eta_2$ given $\eta_1$ is an $\SLE_\kappa(\ul{\rho}^R)$ in the right connected component of $D \setminus \eta_1$, with force points corresponding to $\ul{\rho}^R$ located on the counterclockwise arc from $x$ to $y$.
\end{theorem}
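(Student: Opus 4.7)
\emph{Existence} is obtained by the GFF construction in Figure~\ref{fig::bi_chordal}: let $h$ be a Gaussian free field on $D$ with boundary data chosen (using the convention of Section~\ref{subsec::imaginary}) so that its flow line from $x$ to $y$ has the law of $\SLE_\kappa(\ul{\rho}^L; 2+\ul{\rho}^R)$, and let $\eta_1,\eta_2$ be the flow lines of $h$ and $h - 2\lambda$, respectively, from $x$ to $y$. The monotonicity of flow lines in angle (Figure~\ref{fig::monotonicity}) and the conditional-law computation recalled in Section~\ref{subsec::imaginary} (and proved in \cite{MS_IMAG}) directly give that $\eta_1$ lies to the left of $\eta_2$ and that the prescribed conditional laws hold.

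For \emph{uniqueness}, the natural approach is a resampling Markov chain argument. Let $K$ be the kernel on ordered pairs $(\eta_1,\eta_2)$ which first replaces $\eta_1$ by a fresh sample from the $\SLE_\kappa(\ul{\rho}^L)$ law in the left component of $D\setminus\eta_2$, and then replaces $\eta_2$ by a fresh sample from the $\SLE_\kappa(\ul{\rho}^R)$ law in the right component of $D\setminus\eta_1$. Every measure meeting the hypotheses of the theorem is $K$-invariant, so the theorem reduces to showing that $K$ admits a unique invariant probability measure.

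The principal obstacle is that $\eta_1$ and $\eta_2$ share the endpoints $x$ and $y$, so naive iteration of $K$ cannot mix the small-scale geometry of the pair in arbitrarily small neighborhoods of these two points. I plan to circumvent this by an auxiliary construction in which the starting and ending points of the two paths are slightly separated. Concretely, using imaginary geometry one considers two flow lines of an appropriately boundary-conditioned GFF emanating from nearby but distinct points $x_1,x_2 \in \partial D$ (and analogously terminating at $y_1,y_2$); the conditional laws of each path given the other are again $\SLE_\kappa(\ul{\rho})$ processes of the same form, but now the two paths avoid one another in genuine neighborhoods of their endpoints, removing the ``pinching'' obstruction.

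In the perturbed model I would run a monotone coupling: resample both marginals with a common Brownian driving noise and use the monotonicity of flow lines in their boundary data, together with the continuity of $\SLE_\kappa(\ul{\rho})$ established in \cite{MS_IMAG}, to sandwich the chain between extremal invariant measures that one shows coincide, yielding uniqueness for the separated problem. Finally one sends the separation to zero and uses continuity of $\SLE_\kappa(\ul{\rho})$ in its starting and target data to transfer uniqueness back to the original problem. I expect the coupling/mixing step in the separated setting to be the main technical difficulty, since it requires a quantitative forgetting-of-initial-condition statement for the resampling chain; this should ultimately rest on the SDE for the $\SLE_\kappa(\ul{\rho})$ driving function and its continuous dependence on the initial boundary configuration.
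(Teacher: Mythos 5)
Your existence step and the identification of the resampling kernel $K$ match the paper, and you correctly isolate the real obstruction (the two curves are pinched together at $x$ and $y$, so the chain cannot mix there). The gap is in how you remove the pinching. You propose to build a \emph{different} auxiliary model with separated endpoints $x_1,x_2,y_1,y_2$, prove uniqueness of the invariant measure there, and then ``send the separation to zero and transfer uniqueness back.'' Uniqueness does not transfer through such a limit: knowing that each separated model has a unique invariant law, and that these laws converge to the GFF-constructed pair, only re-proves existence of one solution of the original problem. To prove the theorem you must take an \emph{arbitrary} measure on pairs $(\eta_1,\eta_2)$ satisfying the coincident-endpoint resampling hypotheses and connect \emph{it} to a separated problem. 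The paper does exactly this from within the given pair, using $\SLE$ duality (Figure~\ref{fig::counterflowline}, Figure~\ref{fig::coupling_reduction}): since the conditional law of $\eta_1$ given $\eta_2$ is an $\SLE_\kappa(\ul{\rho}^L)$ in the left component of $D\setminus\eta_2$, one can couple a counterflow line $\eta_1'$ from $y$ whose right boundary is $\eta_1$, condition on the initial segments $A_\epsilon=\eta_1([0,\tau_\epsilon])$ and $A_\epsilon'=\eta_1'([0,\tau_\epsilon'])$, and conformally map off these segments; this produces genuinely distinct start and target data for the two remaining paths while the conditional laws remain explicit (with the extra force points $\tfrac{\kappa}{2}-2-\ol{\rho}^L$ and $\kappa-4$). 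Uniqueness of the stationary law of the chain in that setting then determines the conditional law of the original pair given $A_\epsilon,A_\epsilon'$, and letting $\epsilon\to 0$ together with continuity of $\SLE_\kappa(\ul{\rho})$ in its force-point locations identifies the marginal of $\eta_1$ and hence the joint law. Without some device of this kind that manufactures the separation inside an arbitrary admissible measure, your final ``transfer'' step has no content.

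A secondary remark: your mixing argument in the separated setting (a monotone coupling driven by a common Brownian noise, sandwiching between extremal invariant measures) is not developed and is not obviously implementable, since resampling an $\SLE_\kappa(\ul{\rho})$ with many force points from a common driving noise does not evidently give a monotone map on path configurations. The paper instead decomposes any stationary law into ergodic components and shows two ergodic components cannot be mutually singular, via the absolute-continuity coupling of Lemma~\ref{lem::coupling} (two applications of which couple the chains exactly after two steps with positive probability). If you repair the reduction step, you could either adopt this ergodic-decomposition argument or supply the missing monotonicity construction, but as written both halves of your uniqueness argument are incomplete.
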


The proof of Theorem~\ref{thm::bi_chordal} consists of two steps.  First, we will reduce to the case that the initial and terminal points of $\eta_1$ and $\eta_2$ are almost surely distinct as follows.  By $\SLE$ duality (see Figure~\ref{fig::counterflowline}), conditionally on $\eta_2$, there exists an $\SLE_{16/\kappa}(\ul{\rho}')$ process $\eta_1'$ running from $y$ to $x$ whose right boundary is almost surely equal to $\eta_1$.  We condition on the realization of an initial segment of $\eta_1$ as well as on an initial segment of $\eta_1'$.  Then $\eta_1$ will merge into the latter before hitting $y$, so that this conditioning has an effect similar to conditioning on the terminal segment of $\eta_1$.  Upon applying a conformal mapping, this in turn allows us to assume that $\eta_1$ connects $x_1$ to $(y_1,y_2)$ and that $\eta_2$ connects $x_2$ to $y_2$ where $x_1,x_2,y_2,y_1 \in \partial D$ are distinct and given in counterclockwise order.  Next, we will show that the chain which transitions from a configuration $(\gamma_1,\gamma_2)$ of non-intersecting simple paths where $\gamma_1$ connects $x_1$ to $(y_1,y_2)$ and $\gamma_2$ connects $x_2$ to $y_2$ by picking $i \in \{1,2\}$ uniformly and then resamples $\gamma_i$ according to the conditional law of $\eta_i$ given $\eta_j$ for $j=3-i$ has a unique stationary distribution.  The result then follows by noting that the law of $\eta_1$ is continuous in the realization of its initial segment and terminal dual segment \cite[Section~2]{MS_IMAG}.  Before we proceed to the proof, we establish the following technical lemma:

\begin{lemma}
\label{lem::coupling}
Suppose that $D$ is a Jordan domain and fix $x,y \in \partial D$ distinct.  Fix a vector of weights $\ul{\rho}^L$ and let $\eta$ be an $\SLE_\kappa(\ul{\rho}^L)$ process in $D$ from $x$ to $y$ where the force points $\ul{x}^L$ associated with the weights $\ul{\rho}^L$ are located on the clockwise arc of $\partial D$ from $x$ to $y$.  Let $\wt{D}$ be another Jordan domain with $x,y \in \partial \wt{D}$.  Assume that $\partial \wt{D}$ agrees with $\partial D$ in neighborhoods of both $x$ and $y$, as well as along the clockwise arc from $x$ to $y$.  If $\wt{\eta}$ is an $\SLE_\kappa(\ul{\rho}^L)$ process in $\wt{D}$ from $x$ to $y$ with the same force points $\ul{x}^L$ as $\eta$, then there exists a coupling $(\eta,\wt{\eta})$ such that $\p[\eta = \wt{\eta}] > 0$.
\end{lemma}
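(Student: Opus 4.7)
The plan is to show that the laws of $\eta$ and $\wt{\eta}$ are mutually absolutely continuous when restricted to a suitable positive-probability event, and then to apply the standard maximal coupling construction.

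First, I choose a smooth simple curve $\gamma_0 \subset \ol{D}$ from $x$ to $y$ whose interior lies in $D$ and which stays uniformly bounded away from the counterclockwise arc of $\partial D$. Let $U \subseteq D$ be a Jordan subdomain containing $\gamma_0$ such that $\partial U \cap \partial D$ is exactly the clockwise arc of $\partial D$ from $x$ to $y$ together with neighborhoods of $x$ and $y$ in $\partial D$, and such that $\ol{U}$ is disjoint from the counterclockwise arc of $\partial D$. Since $\partial D$ and $\partial \wt{D}$ agree along the clockwise arc and near $x,y$, we also have $U \subseteq \wt{D}$ with an identical boundary relationship. By the almost sure continuity of the $\SLE_\kappa(\ul{\rho}^L)$ trace in both $D$ and $\wt{D}$ (\cite{MS_IMAG}), combined with the fact that every smooth simple curve from $x$ to $y$ lies in the support of the law, both $\p[\eta \subseteq U]$ and $\p[\wt{\eta} \subseteq U]$ are strictly positive.

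Next, I claim that the laws of $\eta$ and $\wt{\eta}$ (viewed as measures on curves in $\wt{D}$), restricted to the respective events $\{\eta \subseteq U\}$ and $\{\wt{\eta} \subseteq U\}$, are mutually absolutely continuous. To see this, map $\wt{D}$ conformally to $\h$ by $\varphi$ sending $x \to 0$ and $y \to \infty$. Then $\varphi(\wt{\eta})$ is an $\SLE_\kappa(\ul{\rho}^L)$ in $\h$ with force points at $\varphi(\ul{x}^L)$, and $\varphi(\eta) = \alpha(\gamma)$, where $\alpha \colon \h \to \varphi(D)$ is the conformal map fixing $0$ and $\infty$ and $\gamma$ is an $\SLE_\kappa(\ul{\rho}^L)$ in $\h$ with force points at $\alpha^{-1}(\varphi(\ul{x}^L))$. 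The classical change-of-coordinates formula for Loewner chains yields a semimartingale description of the driving function of $\varphi(\eta)$ in $\h$ whose diffusion coefficient is $\sqrt{\kappa}$ and whose drift, on the event that $\varphi(\eta)$ stays inside $\varphi(U)$, differs from the drift of $\varphi(\wt{\eta})$ only by a uniformly bounded continuous process (involving $\alpha$ and its first two derivatives, which are uniformly bounded on $\varphi(\ol{U})$ since this set is bounded away from the image under $\varphi$ of the counterclockwise arc of $\partial D$). Girsanov's theorem then furnishes mutual absolute continuity between the two driving functions stopped at the exit time from $\varphi(U)$, and on $\{\varphi(\eta) \subseteq \varphi(U)\}$ (respectively $\{\varphi(\wt{\eta}) \subseteq \varphi(U)\}$) this is precisely mutual absolute continuity of the full paths.

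Finally, mutual absolute continuity on a positive-probability event implies that the total variation distance between the laws of $\eta$ and $\wt{\eta}$ is strictly less than $1$, and the standard maximal coupling of two mutually absolutely continuous probability measures produces a joint law with $\p[\eta = \wt{\eta}] > 0$, completing the proof. The main technical hurdle is the Girsanov comparison: one must extract an explicit expression for the driving function of $\varphi(\eta)$ in $\h$ and control the conformal-distortion correction. However, this correction involves only $\alpha, \alpha', \alpha''$ evaluated along the trace, all of which remain uniformly bounded on $\varphi(\ol{U})$, so the Girsanov exponent is well-controlled.
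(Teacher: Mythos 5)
Your overall strategy (restrict to a positive-probability boundary-avoiding event, prove mutual absolute continuity there, then couple) is sound, and it differs from the paper's argument, which is much softer: the paper couples each of $\eta$, $\wt{\eta}$ with a GFF on $D$, $\wt{D}$ as a flow line, invokes the absolute continuity of the two fields restricted to the common region where the domains and boundary data agree (\cite[Proposition 3.2]{MS_IMAG}), and uses that the path is a.s.\ determined by the field (\cite[Theorem 1.2]{MS_IMAG}); no stochastic calculus on driving functions is needed. The trouble with your Girsanov step is that it does not actually deliver what you need. The curves run from $x$ to $y$, so after mapping to $\h$ the comparison is over an \emph{infinite} capacity-time horizon, and on the event $\{\eta\subseteq U\}$ the ``exit time from $\varphi(U)$'' is $+\infty$. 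A drift discrepancy that is merely \emph{uniformly bounded} is not enough for Girsanov to give mutual absolute continuity of the full driving processes: one needs the total quadratic cost $\int_0^\infty(\text{drift difference})^2\,dt$ to be finite (equivalently, that the Radon--Nikodym martingale converges a.s.\ and in $L^1$ on the avoidance event). That requires a quantitative decay estimate for the discrepancy as the tip escapes to $\infty$ --- and the relevant quantity is not $\alpha,\alpha',\alpha''$ on $\varphi(\ol U)$ but the time-evolved map $\alpha_t=\wt g_t\circ\alpha\circ g_t^{-1}$ and its derivatives \emph{at the tip} $W_t$, together with the mismatch in the force-point drift terms; uniform-in-$t$ control of these is exactly the missing estimate. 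As written, the key step would not close.

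Two further points need repair even granting the Girsanov estimate. First, the domains are not nested in general (they only agree along the clockwise arc and near $x,y$), so $\varphi(D)$ and the map $\alpha\colon\h\to\varphi(D)$ are not defined; you must either compare both laws inside $D\cap\wt D$ or set up separate uniformizing maps for $D$ and $\wt D$ and compare the two driving SDEs through the composite map defined only where the curve actually lives. Second, the positivity claims $\p[\eta\subseteq U]>0$ and $\p[\wt\eta\subseteq U]>0$ are asserted via a support statement for $\SLE_\kappa(\ul{\rho}^L)$ that you do not justify; this is believable, but in the boundary-touching regime ($\rho^{i,L}$ small) it deserves an argument --- e.g.\ via the GFF coupling, or via absolute continuity with respect to a process for which the avoidance probability is known to be positive. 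If you want to salvage your route, the cleanest fix is to replace the infinite-horizon Girsanov argument by the field-level comparison the paper uses: absolute continuity of the restrictions of the two GFFs to a subdomain away from where $D$ and $\wt D$ differ immediately gives mutual absolute continuity of the two path laws on the event that the paths stay in that subdomain, with no martingale-convergence issue.
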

\begin{proof}
Let $h$ (resp.\ $\wt{h}$) be a GFF on $D$ (resp.\ $\wt{D}$) whose boundary data is such that we can couple $h$ (resp.\ $\wt{h}$) with $\eta$ (resp.\ $\wt{\eta}$) so that $\eta$ (resp.\ $\wt{\eta}$) is the flow line of $h$ (resp.\ $\wt{h}$) from $x$ to $y$ as in \cite[Theorem~1.1]{MS_IMAG}.  Let $x''$ (resp.\ $y''$) be in the counterclockwise arc of both $\partial D$ and $\partial \wt{D}$ from $x$ to $y$ such that the counterclockwise arcs of $\partial D$ and $\partial \wt{D}$ from $x$ to $x''$ and from $y''$ to $y$ agree.  Fix $x'$ (resp.\ $y'$) in the counterclockwise arc of $\partial D$ from $x$ to $x''$ (resp.\ $y$ to $y''$) distinct from $x$ and $x''$ (resp.\ $y$ and $y''$).  Assume further that $x',x'',y',y''$ are at a positive distance from where $\partial D$ and $\partial \wt{D}$ disagree.  Let $U \subseteq D \cap \wt{D}$ be a Jordan domain such that the clockwise arc of $\partial D$ from $x'$ to $y'$ is contained in $\partial U$ and such that $U$ has a positive distance from where $\partial D$ and $\partial \wt{D}$ differ.  Part~2 of \cite[Proposition~3.4]{MS_IMAG} implies that the laws of $h$ and $\wt{h}$, both restricted to $U$, are mutually absolutely continuous.  By \cite[Theorem~1.2]{MS_IMAG}, $\eta$ (resp.\ $\wt{\eta}$) up until first exiting $U$ is almost surely determined by $h$ (resp.\ $\wt{h}$) restricted to $U$.  Therefore the laws of $\eta$ and $\wt{\eta}$ stopped upon first exiting $U$ are mutually absolutely continuous.

To finish proving the lemma, it suffices to show that $\eta$ has a positive chance of staying in $U$.  To see this, suppose that $\wh{D} \subseteq D \cap \wt{D}$ is a Jordan domain with $x'',y'' \in \partial \wh{D}$ whose boundary agrees with that of $D$ and $\wt{D}$ along the clockwise arc from $x''$ to $y''$.  Assume also that $U \subseteq \wh{D}$ and that the clockwise arc of $\partial U$ from $x'$ to $y'$ has a positive distance from the clockwise arc of $\partial \wh{D}$ from $x''$ to $y''$.  Suppose that $\wh{h}$ is a GFF on $\wh{D}$ with the same boundary data as $h$ and $\wt{h}$ on the clockwise arc from $x'$ to $y'$ and so that its flow line $\wh{\eta}$ from $x$ to $y$ has the law of an $\SLE_\kappa(\ul{\rho}^L)$ process with force points at $\ul{x}^L$.  Then $\wh{\eta}$ almost surely does not hit the counteclockwise segment of $\partial \wh{D}$ from $x$ to $y$, hence almost surely stays in $D \cap \wt{D}$.  Applying the same reasoning as above, we have that the law of $\wh{\eta}$ stopped upon exiting $U$ is mutually absolutely continuous with respect to the laws of $\eta$ and $\wt{\eta}$ stopped upon exiting $U$.  Since $\wh{\eta}$ is almost surely continuous and almost surely does not hit the counterclockwise segment $L$ of $\partial \wh{D}$ from $x'$ to $y'$, it has a positive chance of not getting within distance $\delta > 0$ of $L$ provided we make $\delta > 0$ small enough.  Thus (by possibly expanding $U$) it follows that the same is true for $\eta$ and $\wt{\eta}$, which implies that they both have a positive chance of staying in $U$.
\end{proof}

We remark that we have not made any hypotheses on the weights $\ul{\rho}^L$ in the statement of Lemma~\ref{lem::coupling}.  The important assumption is that $\eta$ (resp.\ $\wt{\eta}$) does not have force points on the segment of $\partial D$ (resp.\ $\partial \wt{D}$) which connects $x$ to $y$ in the counterclockwise direction because this ensures that $\eta$ (resp.\ $\wt{\eta}$) almost surely does not intersect this part of $\partial D$ (resp.\ $\partial \wt{D}$).  The same result is true if we add force points on the arc of $\partial D$ (resp.\ $\partial \wt{D}$) which connects $x$ to $y$ in the counterclockwise direction as long the particular choice does not imply that $\eta$ (resp.\ $\wt{\eta}$) almost surely intersects this part of the boundary.  We are now ready to prove Theorem~\ref{thm::bi_chordal}.

\begin{figure}[h!]
\begin{center}
\includegraphics[scale=0.85]{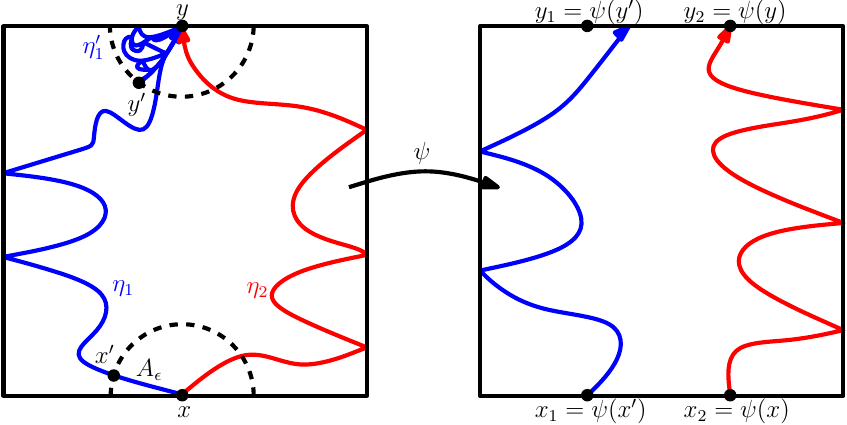}
\end{center}
\caption{\label{fig::coupling_reduction}The first step in the proof of Theorem~\ref{thm::bi_chordal} is to reduce to the case that the initial and terminal points of $\eta_1$ and $\eta_2$ are distinct.  This is accomplished as follows.  Conditional on $\eta_2$, we know that $\eta_1$ is an $\SLE_\kappa(\ul{\rho}^L)$ process in the left connected component $D_2$ of $D \setminus \eta_2$.  This implies that there exists an $\SLE_{16/\kappa}(\ul{\rho}')$ process $\eta_1'$ in $D_2$ such that $\eta_1$ is almost surely the right boundary of $\eta_1'$ (recall Figure~\ref{fig::counterflowline}).  Fix $\epsilon > 0$, let $\tau_\epsilon = \inf\{t \geq 0 : \eta_1(t) \notin B(x,\epsilon)\}$, $\tau_\epsilon' = \inf\{t \geq 0 : \eta_1'(t) \notin B(y,\epsilon)\}$,  $A_\epsilon = \eta_1([0,\tau_\epsilon])$, and $A_\epsilon' = \eta_1'([0,\tau_\epsilon'])$.
Then $\eta_1$ almost surely merges into (and never separates from) the right boundary of $A_\epsilon'$ before hitting $y$.  Let $\psi$ be a conformal map which takes the connected component of $D \setminus (A_\epsilon \cup A_\epsilon')$ which contains $x^+$ and $y^-$ on its boundary to $D$.  Let $\ol{\rho}^L = \sum_{i} \rho^{i,L}$.  Then the law of $\psi(\eta_1)$ conditional on $\psi(\eta_2)$ is an $\SLE_\kappa(\ul{\rho}^L,\tfrac{\kappa}{2}-2-\ol{\rho}^L;\kappa-4)$ process with the extra force points located at the image under $\psi$ of the left and right most points of $A_\epsilon' \cap \partial D$ (see Figure~\ref{fig::counterflowline}).  Let $y_1$ be the image of the tip $y'$ of $A_\epsilon'$ under $\psi$ and $y_2 = \psi(y^-)$.  Then $\psi(\eta_1)$ almost surely first exits $D$ in $(y_1,y_2)$ (since $\eta_1$ almost surely merges into the right boundary of $A_\epsilon'$ before hitting $y$) and $\psi(\eta_2)$ exits at $y_2$.  Conditional on $\psi(\eta_1)$, $\psi(\eta_2)$ is an $\SLE_\kappa(\ul{\rho}^R)$ process.
}
\end{figure}

\begin{proof}[Proof of Theorem~\ref{thm::bi_chordal}]
We are first going to explain how, using $\SLE$ duality, we can reduce to the setting that the initial and terminal points of $\eta_1$ and $\eta_2$ are distinct.  Let $D_1$ be the right connected component of $D \setminus \eta_1$ and let $D_2$ be the left connected component of $D \setminus \eta_2$.  Fix $\epsilon > 0$ and let $\tau_\epsilon$ be the first time $t$ that $|\eta_1(t)-x| = \epsilon$.  Conditional on $\eta_2$, we can couple $\eta_1$ with an $\SLE_{16/\kappa}(\ul{\rho}')$ process $\eta_1'$ from $y$ to $x$ in $D_2$ such that $\eta_1$ is almost surely the right boundary of $\eta_1'$ (recall Figure~\ref{fig::counterflowline}).  Let $\tau_\epsilon'$ be the first time $t$ that $|\eta_1'(t) - y| = \epsilon$.  The conditional law of $\eta_1$ given $\eta_2$, $A_\epsilon := \eta_1([0,\tau_\epsilon])$, and $A_\epsilon' := \eta_1'([0,\tau_\epsilon'])$ is an $\SLE_\kappa(\ul{\rho}^L,\tfrac{\kappa}{2}-2-\ol{\rho}^L;\kappa-4)$ process from $\eta_1(\tau_\epsilon)$ to $\eta_1'(\tau_\epsilon')$ where $\ol{\rho}^L = \sum_i \rho^{i,L}$.  The extra force points are located at the left and right most points of $A_\epsilon' \cap \partial D$ (the right most point is actually $y^-$; see Figure~\ref{fig::coupling_reduction}).  Now, conditional on $\eta_1, A_\epsilon$, and $A_\epsilon'$, the law of $\eta_2$ is still an $\SLE_\kappa(\ul{\rho}^R)$ from $x$ to $y$ in $D_1$.  Let $\psi \colon D_{\epsilon} \to D$, $D_{\epsilon}$ the connected component of $D \setminus (A_\epsilon \cup A_\epsilon')$ whose boundary contains both $x^+$ and $y^-$, be a conformal transformation and set $x_1 = \psi(\eta_1(\tau_\epsilon))$, $x_2 = \psi(x^+)$, $y_1 = \psi(\eta_1'(\tau_\epsilon'))$, and $y_2 = \psi(y^-)$.  Given $\psi(\eta_2)$, we have that $\psi(\eta_1)$ is an $\SLE_\kappa(\ul{\rho}^L,\tfrac{\kappa}{2} - 2-\ol{\rho}^L;\kappa-4)$ process from $x_1$ to $y_1$ where the extra force points are located at the image of the left most point of $A_\epsilon' \cap \partial D$ under $\psi$ and at $y_2$.  This implies that $\psi(\eta_1)$ almost surely exits $D$ in the open interval $(y_1,y_2)$ (since $\eta_1$ almost surely merges with the right boundary of $A_\epsilon'$ before hitting $y$).  Conditional on $\psi(\eta_1)$, $\psi(\eta_2)$ is an $\SLE_\kappa(\ul{\rho}^R)$ from $x_2$ to $y_2$ in the right connected component of $D \setminus \psi(\eta_1)$.  Therefore the initial and terminal points of $\psi(\eta_1)$ and $\psi(\eta_2)$ are almost surely distinct.

Suppose that $\mu$ is any stationary distribution of the chain described just after the statement of Theorem~\ref{thm::bi_chordal}.  Fix $\epsilon > 0$ small so that the counterclockwise segment $S_1$ of $\partial D$ from $y_1$ to $x_1$ has distance at least $4 \epsilon$ from the counterclockwise segment $S_2$ of $\partial D$ from $x_2$ to $y_2$ and let $A_\epsilon$ be the event that $\eta_1$ (resp.\ $\eta_2$) has distance at least $\epsilon$ from $S_2$ (resp.\ $S_1$).  Let $\mu_\epsilon$ be given by $\mu$ conditioned on $A_\epsilon$.  We note that from the form of the Markov chain described just above and the stationarity of~$\mu$ it is easy to see that $\mu(A_\epsilon) > 0$.  We further note that~$\mu_\epsilon$ is stationary under the Markov chain as defined just above except in each step we resample each of the paths conditioned on $A_\epsilon$.  We will call this the ``$\epsilon$-Markov chain.''

To be concrete, we view $\mu_\epsilon$ as a Borel probability measure on the space of pairs of closed, connected sets $(K_1,K_2)$ in $\ol{D}$ equipped with the Hausdorff distance where the distance between $K_1$ (resp.\ $K_2$) and $S_2$ (resp.\ $S_1$) is at least $\epsilon$ and $x_j,y_j \in K_j$ for $j \in \{1,2\}$.  We equip the space of such measures with the weak topology.  Let $\CS_\epsilon$ be the set of all such stationary probability measures.  Then $\CS_\epsilon$ is clearly convex.  We claim that~$\CS_\epsilon$ is in fact compact.  To see this, we suppose that $(\nu_n)$ is a sequence in~$\CS_\epsilon$ which converges weakly to $\nu$.  We will argue that $\nu \in \CS_\epsilon$.  Suppose that $(K_1^n,K_2^n)$ has law $\nu_n$ and $(K_1,K_2)$ has law $\nu$.  By the Skorohod representation theorem, we can put $(K_1^n,K_2^n)$ and $(K_1,K_2)$ onto a common probability space so that $K_j^n \to K_j$ almost surely as $n \to \infty$ for $j \in \{1,2\}$ with respect to the Hausdorff distance.  Let~$V_j^n$ (resp.~$V_j$) be the component of $D \setminus K_j^n$ (resp.\ $D \setminus K_j$) which contains $x_{3-j},y_{3-j}$ on its boundary.

Assume that $w_j$ has been fixed in $S_j$ and is distinct from $x_j,y_j$.  Let $\varphi_1 \colon V_1 \to \D$ be the unique conformal map which takes $x_2$ to $-i$, $w_2$ to $1$, and $y_2$ to $i$.  Since the distance between $K_1$ and $S_2$ is at least $\epsilon$, we note that there exists $\zeta > 0$ depending only on $\epsilon$, $D$, $x_2$, $w_2$, and $y_2$ such that $\varphi_1(K_1)$ is contained in the counterclockwise segment $T_1$ of $\partial \D$ from $e^{i(\pi/2+\zeta)}$ to $e^{i(3\pi/2-\zeta)}$.  For each $\delta > 0$ we let $\wt{U}_1^\delta$ be the set of points $z \in \D$ whose distance from $T_1$ is at least $\delta$.  Finally, we let $U_1^\delta = \varphi_1^{-1}(\wt{U}_j^\delta)$.  We define $U_2^\delta$ analogously.

Let $\CF_j = \sigma(K_j, K_j^n : n \in \N)$ be the $\sigma$-algebra generated by $K_j$ and \emph{all of the} $K_j^n$ for $n \in \N$.  In what follows, we will always be conditioning on $\CF_j$ (so that $K_j$ and \emph{all of the} $K_j^n$ for $n \in \N$ are fixed).  By the convergence of $K_j^n \to K_j$ with respect to the Hausdorff distance, we have that there exists $n_0 \in \N$ such that $n \geq n_0$ implies that $U_1^\delta \subseteq V_1^n$.  Note that both $n_0$ and $U_j^\delta$ are $\CF_j$-measurable.

For each $n$, we let $h_j^n$ be an instance of the GFF on $V_j^n$ with the boundary conditions so that its flow line from $x_{3-j}$ to $y_{3-j}$ has the correct law to perform the resampling operation.  We define~$h_j$ analogously for~$V_j$.  We assume that the $h_j^n$ (for all $n \in \N$) and $h_j$ are coupled together on a common probability space so that, given $\CF_j$, we have that the~$h_j^n$ (for all $n \in \N$) and~$h_j$ are conditionally independent.  Fix $\delta > 0$.  We will now argue that, given $\CF_j$, the total variation distance between the law of $h_j^n|_{U_j^\delta}$ and the law of $h_j|_{U_j^\delta}$ almost surely tends to $0$ as $n \to \infty$.  That is, if we let $\CL(X \giv \CF_j)$ denote the conditional law of a random variable $X$ given $\CF_j$ and $\| \cdot \|_{\mathrm{TV}}$ denote the total variation distance,  we have almost surely that
\begin{equation}
\label{eqn::tv_convergence}
\lim_{n \to \infty} \| \CL(h_j^n|_{U_j^\delta} \giv \CF_j) - \CL(h_j|_{U_j^\delta} \giv \CF_j) \|_{\mathrm{TV}} = 0.
\end{equation}
To see this, we fix $\wh{\delta} \in (0,\delta/4)$.  Assume that $n$ is such that $U_j^{\wh{\delta}} \subseteq V_j^n$ (note that we can choose $n$ in an $\CF_j$-measurable way).  By the Markov property for $h_j^n$ (resp.\ $h_j$) we can write $h_j^n$ (resp.~$h_j$) as the sum of a zero boundary GFF $\wh{h}_j^n$ (resp.\ $\wh{h}_j$) on $U_j^{\wh{\delta}}$ and the harmonic extension $\Fh_j^n$ (resp.\ $\Fh_j$) of the boundary values of $h_j^n$ (resp.~$h_j$) from $\partial U_j^{\wh{\delta}}$ to $U_j^{\wh{\delta}}$.  We let $\wh{\phi}_j^\delta$ be a $C_0^\infty$ function which is supported in $U_j^{\delta/2}$ with $\wh{\phi}_j^\delta|_{U_j^\delta} \equiv 1$ and then take $\phi_j^{n,\delta,\wh{\delta}} = (\Fh_j^n-\Fh_j) \wh{\phi}_j^\delta$.  (We note that we can choose $\wh{\phi}_j^\delta$ in an $\CF_j$-measurable way.)  We can transform from $\CL(h_j|_{U_j^\delta} \giv \CF_j)$ to $\CL(h_j^n|_{U_j^\delta} \giv \CF_j)$ by weighting the former by $\exp( (\wh{h}_j,\phi_j^{n,\delta,\wh{\delta}})_\nabla)$ (this is the infinite dimensional analog of the fact that weighting the law of $Z \sim N(0,1)$ by $e^{\mu x}$ yields a $N(\mu,1)$).  We emphasize that $\exp((\wh{h}_j,\phi_j^{n,\delta,\wh{\delta}})_\nabla)$ is a measurable function of the pair $h_j,h_j^n$ because it is determined by $\wh{h}_j$, $\Fh_j^n$, and $\Fh_j$.  In particular, $\Fh_j^n$ and $\Fh_j$ can be constructed from $h_j^n$ and $h_j$ via a series expansion where the coefficients are computed by taking $(\cdot,\cdot)_\nabla$-inner products of $h_j$ and $h_j^n$ with an orthonormal basis of those functions in $H(V_j)$ and $H(V_j^n)$ which are harmonic in $U_j^{\wh{\delta}}$.  Given $\Fh_j^n$ and $\Fh_j$, hence $\phi_j^{n,\delta,\wh{\delta}}$, we can determine $(\wh{h}_j,\phi_j^{n,\delta,\wh{\delta}})_\nabla$ by representing both $\wh{h}_j$ and $\phi_j^{n,\delta,\wh{\delta}}$ in terms of an orthonormal basis of $H(V_j)$ and then taking the sum of the products of the coefficients.  It is not difficult to see that for each $\delta,\epsilon' > 0$ there exists $\wh{\delta}_0 > 0$ and $n_0 \in \N$ (both $\CF_j$-measurable) such that $\wh{\delta} \in (0,\wh{\delta}_0)$ and $n \geq n_0$ implies that
\[ \p\left[ \sup_{z \in U_j^{\delta/4}}|\Fh_j^n(z) - \Fh_j(z)| \leq \epsilon' \giv \CF_j \right] \geq 1-\epsilon'.\]

By applying a (non-random) conformal transformation to all of $D$, we may assume without loss of generality that $D \subseteq \h$ and that $\partial V_j \setminus K_j = \partial V_j^n \setminus K_j^n$ is given by an interval of $\R$.  Since $\Fh_j^n - \Fh_j$ is harmonic on $U_j^{\delta/4}$ and vanishes on $\partial V_j \setminus K_j = \partial V_j^n \setminus K_j^n$, we can extend it to a function $\Fg_j^n$ which is harmonic on $U_j^{\delta/4} \cup (U_j^{\delta/4})^*$ where $(U_j^{\delta/4})^* = \{ \ol{z} : z \in U_j^{\delta/4}\}$.  Since $\Fg_j^n$ is harmonic on $U_j^{\delta/4} \cup (U_j^{\delta/4})^*$, its derivatives can be bounded in terms of its supremum on compact sets.  Since the closure of $U_j^{\delta/2} \cup (U_j^{\delta/2})^*$, $(U_j^{\delta/2})^* = \{ \ol{z} : z \in U_j^{\delta/2}\}$, is a compact subset of $U_j^{\delta/4} \cup (U_j^{\delta/4})^*$, it follows that for each $\delta,\epsilon' > 0$ there exists $\wh{\delta}_0 > 0$ and $n_0 \in \N$ (both $\CF_j$-measurable) such that $\wh{\delta} \in (0,\wh{\delta}_0)$ and $n \geq n_0$ implies that
\[ \p\left[ \| \Fg_j^n |_{U_j^{\delta/2} \cup (U_j^{\delta/2})^*} \|_\nabla \leq \epsilon' \giv \CF_j \right] \geq 1-\epsilon'.\]
It therefore follows the same is true for $\| (\Fh_j^n - \Fh_j)|_{U_j^{\delta/2}}\|_\nabla$.   By the Cauchy-Schwarz inequality, we thus have that for each $\delta,\epsilon' > 0$ there exists $\wh{\delta}_0 > 0$ ($\CF_j$-measurable) such that $\delta \in (0,\wh{\delta}_0)$ implies that
\begin{equation}
\label{eqn::phi_norm_bound}
\p\left[ \| \phi_j^{n,\delta,\wh{\delta}}\|_\nabla \leq \epsilon' \giv \CF_j \right] \geq 1-\epsilon'.
\end{equation}
Combining~\eqref{eqn::phi_norm_bound} with the form of the Radon-Nikodym derivative implies the claimed convergence in total variation~\eqref{eqn::tv_convergence}.

We will now deduce from the claim established in the previous paragraph that the conditional law given $\CF_j$ of the flow line from $x_{3-j}$ to $y_{3-j}$ generated using $h_j^n$ almost surely converges in total variation as $n \to \infty$ to the conditional law given $\CF_j$ of the flow line from $x_{3-j}$ to $y_{3-j}$ generated using $h_j$.  Fix~$\epsilon' > 0$.  Then there exists $\delta > 0$ ($\CF_j$-measurable) such that the conditional probability given $\CF_j$ that the flow line of the latter stays in $U_j^\delta$ is at least $1-\epsilon'/2$.  By the total variation convergence established in the previous paragraph, we can find $n_0 \in \N$ ($\CF_j$-measurable) such that $n \geq n_0$ implies that
\[ \| \CL(h_j^n|_{U_j^\delta} \giv \CF_j) - \CL(h_j|_{U_j^\delta} \giv \CF_j) \|_{\mathrm{TV}} \leq \frac{\epsilon'}{2}.\]
Combining this with \cite[Theorem~1.2]{MS_IMAG} implies that for $n \geq n_0$ we have that the total variation distance between the conditional laws of the two aforementioned flow lines given $\CF_j$ is at most~$\epsilon'$ (as we can couple the paths onto a common probability space, given $\CF_j$, to be equal with probability at least $1-\epsilon'$).  The claimed total variation convergence given $\CF_j$ follows since~$\epsilon' > 0$ was arbitrary.  In particular, the conditional probability given $\CF_j$ of the event that the former has distance at least $\epsilon$ from $S_j$ converges to the corresponding conditional probability given $\CF_j$ for the latter.  This implies that the conditional law of the former given $\CF_j$ and \emph{conditioned} on having distance at least $\epsilon$ from $S_j$ converges in total variation to the conditional law given $\CF_j$ of the latter \emph{conditioned} on the same event.  Since total variation convergence implies weak convergence, we therefore have that the one step transition kernel for the $\epsilon$-Markov chain is continuous, which implies that $\nu$ is stationary.  This proves that $\CS_\epsilon$ is compact.

As $\CS_\epsilon$ is a compact and convex subset of a locally convex space, Choquet's theorem (see, e.g., \cite[Section~3]{choquet_lectures}) thus implies that $\mu_\epsilon$ can be uniquely expressed as a superposition of extremal (or ergodic measures) in $\CS_\epsilon$.  That is, there exists a probability measure $m_\epsilon$ on $\CS_\epsilon$ supported on the extremal elements $\nu$ of $\CS_\epsilon$ such that
\[ \mu_\epsilon= \int \nu dm_\epsilon(\nu).\]
It is clear from the definition of the $\epsilon$-Markov chain that $\mu_\epsilon$ is supported on pairs of non-crossing paths in $D$ connecting $x_1,y_1$ and $x_2,y_2$.  Therefore $m_\epsilon$ is necessarily supported on such measures.  To show that $\CS_\epsilon$ consists of a single such element, it suffices to show that there is only one extremal such element of $\CS_\epsilon$.  Suppose that $\nu,\wt{\nu}$ are extremal elements of $\CS_\epsilon$.

We will now show that if $\nu,\wt{\nu}$ are distinct then $\nu,\wt{\nu}$ are necessarily mutually singular.  Lebesgue's decomposition theorem implies that we can uniquely write $\nu = \nu_0 + \nu_1$ where $\nu_0$ (resp.\ $\nu_1$) is absolutely continuous (resp.\ singular) with respect to $\wt{\nu}$.  If both $\nu_0$ and $\nu_1$ are non-zero, then they can be normalized to be probability measures which are both stationary for the $\epsilon$-Markov chain (by the uniqueness of the Lebesgue decomposition).  This contradicts that $\nu$ is an extremal measure.  We also need to rule out the possibility that $\nu$ is absolutely continuous with respect to $\wt{\nu}$.  We suppose for contradiction that $\nu$ is absolutely continuous with respect to $\wt{\nu}$ and let $f = d\nu/d\wt{\nu}$ be the Radon-Nikodym derivative of $\nu$ with respect to $\wt{\nu}$.

Suppose that, given $X_0$, we sample $(X_k)$ using the $\epsilon$-Markov chain defined above.  We will write $\E_\nu$ (resp.\ $\E_{\wt{\nu}}$) for the expectation under the law where the distribution of $X_0$ is given by $\nu$ (resp.\ $\wt{\nu}$).  Fix a bounded function $g$.  Then we have that
\[ \E_\nu[ g(X_1) f(X_0)] = \E_{\wt{\nu}}[ g(X_1) ] = \E_{\wt{\nu}}[ g(X_0)] = \E_\nu[g(X_0) f(X_0)] = \E_\nu[ g(X_1) f(X_1)].\]
It therefore follows that
\[ \E_\nu[ g(X_1) \E_\nu[ f(X_0) \giv X_1]] = \E_\nu[ g(X_1) f(X_1)]\]
for all bounded functions $g$.  This implies that $\E_\nu[ f(X_0) \giv X_1] = f(X_1)$ under $\nu$ almost surely.  More generally, the same argument implies that $\E_\nu[ f(X_n) \giv X_{n+1}] = f(X_{n+1})$ for all $n$ under $\nu$ almost surely.  As $(X_k)$ is a Markov chain, we have for each $n$ that $X_n$ is conditionally independent of $X_m$ for $m \geq n+2$ given $X_{n+1}$.  This implies that
\[ \E_\nu[ f(X_n) \giv X_m,\ m \geq n+1] = \E_\nu[ f(X_n) \giv X_{n+1}] = f(X_{n+1})\]
which, in turn, implies that $M_k^n = f(X_{n-k})$ for $0 \leq k \leq n$ is a martingale under~$\E_\nu$.  By stationarity, we have for all $n_1,n_2 \in \N$ that $M^{n_1}|_{[0,n]} \stackrel{d}{=} M^{n_2}|_{[0,n]}$ where $n = n_1 \wedge n_2$.  Therefore by the Kolmogorov extension theorem there exists a process $\wt{M}_k$ which is defined for all $k$ such that $\wt{M}|_{[0,n]} \stackrel{d}{=} M^n$ for all $n$.  As $\wt{M}$ is a non-negative martingale, the martingale convergence theorem implies that $\lim_n \wt{M}_n$ almost surely exists.  In particular, we almost surely have that $\wt{M}_n - \wt{M}_{n-1} \to 0$ as $n \to \infty$.  Fix $\delta > 0$.  Then we have that
\begin{align*}
     \p_\nu[ |f(X_0) - f(X_1)| \geq \delta ] &= \lim_{n \to \infty} \p_\nu[ | M_n^n - M_{n-1}^n| \geq \delta]\\
     &= \lim_{n \to \infty} \p[ |\wt{M}_n - \wt{M}_{n-1}| \geq \delta]
     = 0.
\end{align*}
That is, $f(X_0) = f(X_1)$ almost surely under $\nu$.  If $\nu,\wt{\nu}$ are distinct then there exists $a > 0$ such that both $\{ f > a\}$ and $\{ f \leq a\}$ have positive $\wt{\nu}$-measure.  We can write $\nu = \mu_0 + \mu_1$ where $\mu_0$ (resp.\ $\mu_1$) is the measure with Radon-Nikodym derivative with respect to $\wt{\nu}$ given by $f \one_{\{ f > a\}}$ (resp.\ $f \one_{\{f \leq a\}}$).  The above argument implies that $\mu_0,\mu_1$ can be normalized to be probability measures in $\CS_\epsilon$, which contradicts that $\nu$ is extremal.

We will now argue that $\nu = \wt{\nu}$ by showing that they cannot be mutually singular.  Suppose that we have two initial configurations $(\gamma_1,\gamma_2) \sim \nu$ and $(\wt{\gamma}_1,\wt{\gamma}_2) \sim \wt{\nu}$, sampled independently.  Let $(\gamma_1^n,\gamma_2^n)$ and $(\wt{\gamma}_1^n,\wt{\gamma}_2^n)$ be the $n$th step of the $\epsilon$-Markov chain described above with the initial data $(\gamma_1,\gamma_2)$ and $(\wt{\gamma}_1,\wt{\gamma}_2)$ (corresponding to $n=0$), respectively, coupled as follows.  Fix $n \geq 0$.  We sample the same value of $i_{n+1}$ for both pairs $(\gamma_1^n,\gamma_2^n)$ and $(\wt{\gamma}_1^n,\wt{\gamma}_2^n)$ and then take $(\gamma_{i_{n+1}}^{n+1},\wt{\gamma}_{i_{n+1}}^{n+1})$ to maximize the probability that they are equal given $(\gamma_{j_{n+1}}^{n+1},\wt{\gamma}_{j_{n+1}}^{n+1})$, where $j_{n+1} = 3 - i_{n+1}$.  Two applications of Lemma~\ref{lem::coupling} imply that $\p[(\gamma_1^2,\gamma_2^2) = (\wt{\gamma}_1^2,\wt{\gamma}_2^2)] > 0$.  Since $(\gamma_1^2,\gamma_2^2) \sim \nu$ and $(\wt{\gamma}_1^2,\wt{\gamma}_2^2) \sim \wt{\nu}$, we thus have that $\nu$ is not mutually singular with respect to $\wt{\nu}$, which in turn implies $\nu = \wt{\nu}$.  We conclude that $\CS_\epsilon$ consists of a single element and therefore the $\epsilon$-Markov chain has a unique stationary distribution.  Sending $\epsilon \to 0$ implies that the original chain has a unique stationary distribution.

We can explicitly construct a pair of random paths $(\wt{\eta}_1,\wt{\eta}_2)$ whose law is a stationary distribution to this chain as follows.  We let $\wt{\eta}_1$ be an $\SLE_\kappa(\ul{\rho}^L,\tfrac{\kappa}{2}-2-\ol{\rho}^L;2+\ul{\rho}^R,\kappa-4-\ol{\rho}^R)$ process in $D$ from $x_1$ to $y_1$ where the force points on the left side are at the same location as those of $\psi(\eta_1)$, the first $|\ul{\rho}^R|$ force points on the right side are at the same location as those of $\psi(\eta_2)$, and the force point with weight $\kappa-4-\ol{\rho}^R$ is located at $y_2$.  We then take $\wt{\eta}_2$ to be an $\SLE_\kappa(\ul{\rho}^R)$ in the right connected component of $D \setminus \wt{\eta}_1$, where the force points are at the same location as those of $\psi(\eta_2)$.  The justification that $(\wt{\eta}_1,\wt{\eta}_2)$ is stationary follows from a GFF construction analogous to that considered in Figure~\ref{fig::bi_chordal} (see also Figure~\ref{fig::monotonicity} and the nearby discussion in Section~\ref{subsec::imaginary}).
This allows us to write down the law of $\eta_1$ given $A_\epsilon, A_\epsilon'$.  It is an $\SLE_\kappa(\ul{\rho}^L,\tfrac{\kappa}{2} - 2-\ol{\rho}^L;2+\ul{\rho}^R,\kappa-4-\ol{\rho}^R)$ process from $\eta(\tau_\epsilon)$ to $\eta'(\tau_\epsilon')$ where $\ol{\rho}^R = \sum_i \rho^{i,R}$.  The extra force points are at the left and right most points of $A_\epsilon' \cap \partial D$ and $\eta_1'(\tau_\epsilon)$.  Since the law of such processes are continuous in the locations of their force points \cite[Section~2]{MS_IMAG}, sending $\epsilon \to 0$, we see that $\eta_1$ has to be an $\SLE_\kappa(\ul{\rho}^L; 2 + \ul{\rho}^R)$.  This completes the proof since, of course, we know the conditional law of~$\eta_2$ given~$\eta_1$.
\end{proof}

\begin{remark}
Theorem~\ref{thm::bi_chordal} can be extended to $n$ paths for any $n>2$.  That is, suppose $h$ is an instance of the GFF on $\h$ with piecewise constant boundary conditions, and we have a collection of flow lines $\eta_1, \eta_2, \ldots \eta_n$ of $\h$ starting at $0$ but with different angles $\theta_1 > \theta_2 > \ldots >  \theta_n$.  Then the joint law is characterized by the conditional law of each path given the others.  In other words, if $\wt{\eta}_1, \wt{\eta}_2, \ldots, \wt{\eta}_n$ is any collection of paths such that the conditional law of each $\wt{\eta}_j$ given the others is the same as the law of a flow line of the appropriate component of $\h \setminus (\cup_{k \neq j} \wt{\eta}_k)$ (with the corresponding boundary conditions) then the $\wt{\eta}_j$ have the same joint law as the $\eta_j$.  This can be shown by induction.  Assume the statement is true for $n-1$ paths.  Then given $\eta_1$, the inductive hypothesis implies that the conditional law of each of the other paths is determined.  Moreover, given $\eta_2$, the conditional law of $\eta_1$ is determined.  This in particular determines the marginal law of $\eta_1$.  The result then follows since, as already mentioned, the conditional of the other paths given $\eta_1$ is determined.
\end{remark}

\begin{remark}
\label{rem::rate_of_convergence}  In the context of the proof of Theorem~\ref{thm::bi_chordal}, a natural question to ask is how long it takes the Markov chain in question to converge to stationarity (after we have separated the initial and terminal points of the paths).  It turns out that it is possible to construct a coupling between two different realizations of the chain which requires a finite number of steps to couple with positive probability \emph{uniformly} in the initial configuration.  Although we will not need this statement for our main results, we sketch a proof in Figure~\ref{fig::division}.
\end{remark}

\begin{figure}[ht!]
\begin{center}
\subfigure{
\includegraphics[scale=0.85,page=1]{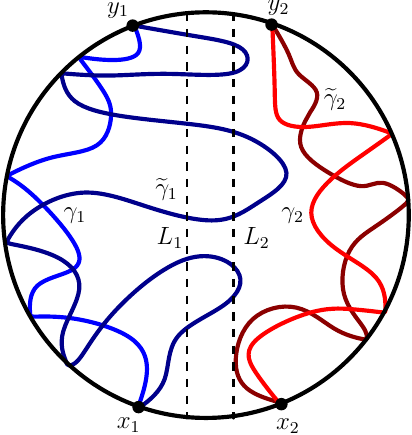}}
\hspace{0.025\textwidth}
\subfigure{
\includegraphics[scale=0.85,page=2]{figures/mixing_proof.pdf}}
\end{center}
\caption{\label{fig::division} It is in fact possible to use another coupling in the proof of Theorem~\ref{thm::bi_chordal} which requires only a finite number of steps to couple with positive probability which is \emph{uniform} in the initial configuration (the coupling described there couples with positive probability, but thiss probability \emph{a priori} depends on the initial configuration).  The idea to prove this is the following.  Let $(\gamma_1^n,\gamma_2^n)$ and $(\wt{\gamma}_1^n,\wt{\gamma}_2^n)$ denote the $n$th step of this Markov chain with initial data $(\gamma_1,\gamma_2)$ and $(\wt{\gamma}_1,\wt{\gamma}_2)$, respectively.  We couple together the two chains by taking $i_n$ to be the same for both.  When $i_{n+1}=1$, we pick $\gamma_1^{n+1},\wt{\gamma}_1^{n+1}$ to maximize the probability that they are equal and, when $i_{n+1}=2$, we pick $\gamma_2^{n+1},\wt{\gamma}_2^{n+1}$ independently unless $\gamma_1^n = \wt{\gamma}_1^n$, in which case we take them to be the same.  It is possible to argue that when $i_{n+1}=2$, with positive probability uniform in the realization of $\gamma_1^{n+1}$ and $\wt{\gamma}_1^{n+1}$, we have that both $\gamma_2^{n+1}$ and $\wt{\gamma}_2^{n+1}$ lie in the right side of $D$ (to the right of $L_2$ depicted in the left panel).  Conditional on this event happening, $\gamma_1^{n+2},\wt{\gamma}_1^{n+2}$ can be coupled to be equal with uniformly positive probability.  The technical estimate that this proof requires is a quantitative estimate on the Radon-Nikodym derivative which appears implicitly in the proof of Lemma~\ref{lem::coupling}.}
\end{figure}

\section{Paths conditioned to avoid the boundary}
\label{sec::conditioned_not_to_hit}

\subsection{Weighting by martingales}
\label{subsec::martingales}

As illustrated later in Figure~\ref{fig::abswap} and Figure~\ref{fig::abrhocalculation}, an $\SLE_\kappa(\rho^L;\rho^R)$ process from the bottom to top of the vertical strip $\vstrip$ can also be understood as a flow line of a Gaussian free field on $\vstrip$ with boundary conditions of $-a$ on the left side $\vstripleft$ of $\vstrip$ and $b$ on the right side $\vstripright$ of $\vstrip$, where $\rho^L = \tfrac{\kappa}{4}-2 + \tfrac{a}{\lambda}$ and $\rho^R = \tfrac{\kappa}{4}-2 + \tfrac{b}{\lambda}$.  Recalling~\eqref{eqn::fullrevolution}, the constraint that $\rho^q > -2$ for $q \in \{L,R \}$ corresponds to $a > - \lambda'$ and $b > - \lambda'$.  If this constraint holds, then the path hits both $\vstripleft$ and $\vstripright$ almost surely provided that $ \rho^q < \tfrac{\kappa}{2}-2$ for $q \in \{L,R \}$, which corresponds to $a < \lambda'$ and $b < \lambda'$.

In this section, we explore the following question: what does it mean to consider such a path {\em conditioned} not to hit $\vstripleft$ and $\vstripright$?  One has to be careful with this question because it involves conditioning on an event of probability zero.  We would like to argue that after this conditioning, appropriately interpreted, the $\SLE_\kappa(\rho^L;\rho^R)$ process becomes an $\SLE_\kappa(\refrho{\rho}^L;\refrho{\rho}^R)$ process where $\refrho{\rho}^q = (\kappa -4) -\rho^q$ for $q \in \{L,R\}$.

Observe that $\rho^q$ and $\refrho{\rho}^q$ are equidistant from the critical value $\tfrac{\kappa}{2}-2$ for boundary intersection.  We note that the effect that the conditioning has on the weights of the force points is natural in view of the some standard facts about Bessel processes.  The driving function for a boundary-intersecting one-sided $\SLE_\kappa(\rho)$ from $0$ to $\infty$ in $\h$ can be described completely in terms of a Bessel process $X$ of dimension $\delta < 2$ as in~\eqref{eqn::deflist}, where $X_t$ reaches zero precisely at the times when the curve hits the boundary.  The reader may recall that $|X_t|^\alpha$ is a local martingale when $\alpha= 2 - \delta > 0$ \cite{RY04}.  Girsanov's theorem then implies that if $|X_0| > 0$ then {\em conditioning} $|X_t|$ to reach $N$ before $0$ amounts to adding a drift term to $X_t$ given by the spatial derivative of $\log |X_t|^\alpha$, i.e., adding $(\alpha/X_t)dt$ to the RHS of~\eqref{eqn::bessel}, which changes $X_t$ from a Bessel process of dimension $\delta$ to a Bessel process of dimension $\delta + 2 \alpha = 4 - \delta$.  This corresponds to changing the weight of the force point to $(\kappa-4)-\rho$.

Alternatively, one can note that conditioning on the curve avoiding the boundary until a given {\em capacity} time $t$ is equivalent to conditioning $X$ on the event $\Omega_t^+ = \{X|_{[0,t]} > 0\}$, an event that has positive probability as long as the initial force point is {\em not} the SLE seed.  A similar argument shows that the conditional law of $X$ given $\Omega_t^+$ converges as $t \to \infty$ to a Bessel process of dimension $4-\delta$.

The following proposition (expressed using $\h$ instead of $\vstrip$) will give us one way to make sense of this conditioning in the two-sided case.

\begin{proposition}
\label{prop::mtweight}
Consider an $\SLE_\kappa(\rho^L;\rho^R)$ process $\eta$ in $\h$ from $0$ to $\infty$ with the initial force points $x^L < 0$ and $x^R > 0$, where $\rho^q < \tfrac{\kappa}{2}-2$ for $q \in \{L,R\}$, so that at least one of the force points is almost surely absorbed in finite time.  Let $V_t^L,V_t^R$ denote the evolution of the force points corresponding to $x^L,x^R$, respectively, under the Loewner flow.  If we weight the law of $\eta$ by the local martingale
\begin{equation}
\label{eq::Mtdef}
M_t:= \frac{1}{\CZ} |V^L_t - W_t|^{(\kappa-4-2 \rho^L)/\kappa} |V^R_t - W_t|^{(\kappa-4-2 \rho^R)/\kappa} |V_t^R - V^L_t|^{(\kappa-4 - \rho^L - \rho^R)(\kappa-4)/2\kappa},
\end{equation}
where $\CZ$ is chosen so that $M_0 = 1$, then we obtain an $\SLE_\kappa(\refrho{\rho}^L;~\refrho{\rho}^R)$ process with $\refrho{\rho}^q = (\kappa -4) -\rho^q$ for $q \in \{L,R\}$.  In particular, we obtain a process in which the force points are almost surely not absorbed in finite time.
\end{proposition}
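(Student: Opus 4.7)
The plan is a direct It\^o--Girsanov calculation. I would first introduce the positive processes $X_t = W_t - V_t^L$, $Y_t = V_t^R - W_t$, and $Z_t = V_t^R - V_t^L$ (well-defined until a force point is absorbed), deriving from (\ref{eqn::sle_kappa_rho_eqn}) the SDEs
\begin{equation*}
dX_t = \sqrt\kappa\, dB_t + \frac{\rho^L+2}{X_t}dt - \frac{\rho^R}{Y_t}dt, \qquad dY_t = -\sqrt\kappa\, dB_t + \frac{\rho^R+2}{Y_t}dt - \frac{\rho^L}{X_t}dt,
\end{equation*}
and noting that $Z = X + Y$ has no Brownian component and evolves by $dZ_t = 2 X_t^{-1}dt + 2 Y_t^{-1}dt$. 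So $X$ and $Y$ are driven by the same Brownian motion with opposite signs, while $Z$ is of finite variation.

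Writing $M_t = X_t^\alpha Y_t^\beta Z_t^\gamma$ where $(\alpha,\beta,\gamma)$ are the three exponents appearing in (\ref{eq::Mtdef}), It\^o's formula gives
\begin{equation*}
\frac{dM_t}{M_t} = \sqrt\kappa\Bl(\frac{\alpha}{X_t} - \frac{\beta}{Y_t}\Br)dB_t + D_t\,dt,
\end{equation*}
where the drift $D_t$ is a linear combination of $X_t^{-2}$, $Y_t^{-2}$, and $(X_tY_t)^{-1}$. Matching the $X_t^{-2}$ and $Y_t^{-2}$ coefficients to zero gives quadratic equations whose nontrivial roots force $\alpha = (\kappa-4-2\rho^L)/\kappa$ and $\beta = (\kappa-4-2\rho^R)/\kappa$. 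Substituting these values and expanding the product $(\kappa-4-2\rho^L)(\kappa-4-2\rho^R)$, the $(X_tY_t)^{-1}$ coefficient collapses to the single condition $\gamma = (\kappa-4)(\kappa-4-\rho^L-\rho^R)/(2\kappa)$, matching (\ref{eq::Mtdef}) exactly. Hence $M_t$ is a positive local martingale up to the first absorption time.

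The final step is Girsanov's theorem: after weighting by $M_t$, the driving SDE for $W$ picks up the extra drift $\kappa(\alpha/X_t - \beta/Y_t)dt$, so the new equation reads
\begin{equation*}
dW_t = \sqrt\kappa\, d\widetilde B_t + \frac{\rho^L + \kappa\alpha}{W_t - V_t^L}dt + \frac{\rho^R + \kappa\beta}{W_t - V_t^R}dt,
\end{equation*}
and a direct substitution yields $\rho^q + \kappa\alpha_q = (\kappa-4) - \rho^q = \widehat\rho^q$ for $q \in \{L,R\}$, so the reweighted law solves the $\SLE_\kappa(\widehat\rho^L;\widehat\rho^R)$ SDE. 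The hypothesis $\rho^q < \tfrac\kappa2-2$ translates to $\widehat\rho^q > \tfrac\kappa2-2$, and the boundary-hitting criterion recalled in Figure~\ref{fig::hittingrange} then shows that the $\SLE_\kappa(\widehat\rho^L;\widehat\rho^R)$ process almost surely avoids both boundary arcs carrying force points, which gives the final claim that no force point is absorbed in finite time. The main technical point to watch is justifying Girsanov globally in time: the change of measure is \emph{a priori} valid only on the pre-absorption filtration, but since under the candidate $\SLE_\kappa(\widehat\rho^L;\widehat\rho^R)$ law the stopping times $\tau_n = \inf\{t : X_t \wedge Y_t \leq 1/n\}$ tend to $\infty$ almost surely, the identification of laws extends from each $\CF_{\tau_n}$ to all finite times.
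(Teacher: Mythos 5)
Your proof is correct, but it follows a different route from the one the paper actually records. The paper's proof of Proposition~\ref{prop::mtweight} is essentially a citation: it invokes the Radon--Nikodym derivative of $\SLE_\kappa(\rho^L;\rho^R)$ with respect to ordinary $\SLE_\kappa$ computed in \cite[Theorem 6 and Remark 7]{SW05}, and obtains \eqref{eq::Mtdef} as the ratio of this quantity for the weights $(\rho^L,\rho^R)$ and for $(\refrho{\rho}^L,\refrho{\rho}^R)$, valid as long as the curve has not hit the boundary; the direct It\^o verification is mentioned there only as an alternative and is not carried out. You carry out exactly that alternative, and the computation checks out: the SDEs for $X$, $Y$ and $Z=X+Y$ are right, the drift of $X^\alpha Y^\beta Z^\gamma$ is indeed a combination of $X^{-2}$, $Y^{-2}$ and $(XY)^{-1}$ (the $Z$ term contributes $2\gamma/(XY)$ because $Z^{-1}\,dZ = 2\,dt/(XY)$), killing the three coefficients forces precisely the exponents in \eqref{eq::Mtdef}, and Girsanov shifts each $\rho^q$ to $\rho^q+\kappa\alpha_q=\kappa-4-\rho^q$ as claimed. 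What the paper's route buys is brevity and immediate generalization: the same ratio-of-RN-derivatives argument gives Proposition~\ref{prop::mtweight3} with arbitrarily many force points, where a bare-hands It\^o computation would be considerably messier. What your route buys is a self-contained argument that does not lean on the Schramm--Wilson formulas, together with an explicit treatment of the localization issue -- Girsanov a priori applies only up to stopping times preceding absorption, and you correctly observe that the stopping times $\tau_n$ tend to infinity under the target $\SLE_\kappa(\refrho{\rho}^L;\refrho{\rho}^R)$ law because $\refrho{\rho}^q>\tfrac{\kappa}{2}-2$ makes that process avoid the boundary arcs carrying the force points -- a point the paper leaves implicit.
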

\begin{proof}
This is essentially shown in \cite[Theorem~6]{SW05} and \cite[Remark~7]{SW05}.  In that paper the Radon-Nikodym derivative of $\SLE_\kappa(\rho^L;\rho^R)$ with respect to ordinary $\SLE_\kappa$ (stopped at an appropriate stopping time) was computed for any $\rho^L$ and $\rho^R$.  Taking the ratio of this quantity for $\SLE_\kappa(\rho^L;\rho^R)$ and for $\SLE_\kappa(\refrho{\rho}^L;\refrho{\rho}^R)$ yields the Radon-Nikodym derivative of one of these processes with respect to the other provided the curve has not hit the boundary ---and this has the form given in~\eqref{eq::Mtdef}.  Alternatively, the fact that $M_t$ is a local martingale --- and that weighting the law of an $\SLE_\kappa(\rho^L;\rho^R)$ by $M_t$ produces an $\SLE_\kappa(\refrho{\rho}^L;\refrho{\rho}^R)$ --- can be verified directly with \Ito/ calculus.
\end{proof}

A similar argument yields the following:

\begin{proposition} \label{prop::mtweight2}
Consider an $\SLE_\kappa(\rho^L;\rho^R)$ process $\eta$ in $\h$ from $0$ to $\infty$ with the initial force points $x^L < 0$ and $x^R > 0$, where $\rho^q < \tfrac{\kappa}{2}-2$ for $q \in \{L,R\}$ so that at least one of the force points, say $x^L$, is almost surely absorbed in finite time.  Let $V_t^L ,V_t^R$ denote the evolution of the force points corresponding to $x^L,x^R$, respectively, under the Loewner flow.  If we weight the law of $\eta$ by the local martingale
\begin{equation}
\label{eq::Mtdef2}
M^1_t:= \frac{1}{\CZ} |V^L_t - W_t|^{(\kappa-4-2 \rho^L)/\kappa}  |V_t^R - V^L_t|^{(\kappa-4 - 2\rho^L )\rho^R/2\kappa},
\end{equation}
where $\CZ$ is chosen so that $M^1_0 = 1$, then we obtain an $\SLE_\kappa(\refrho{\rho}^L; \rho^R)$ process with $\refrho{\rho}^L = (\kappa -4) -\rho^L$.  In particular, we obtain a process in which the left force point is almost surely not absorbed in finite time.
\end{proposition}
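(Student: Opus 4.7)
The proof will follow the same two-pronged strategy outlined for Proposition~\ref{prop::mtweight}: either one extracts $M^1_t$ as a ratio of Radon--Nikodym derivatives of two $\SLE_\kappa(\rho^L;\rho^R)$-type laws with respect to plain $\SLE_\kappa$ using the formula from \cite{SW05}, or one verifies the claim directly via \Ito/ calculus. I would carry out the direct \Ito/ verification because the ratio method from \cite{SW05} contains all the necessary computations but the bookkeeping is somewhat easier if we work with $M^1_t$ on its own. Set $U_t := W_t - V^L_t$ and $D_t := V^R_t - V^L_t$, which are positive on the event that the left force point has not yet been absorbed; then, up to the normalizing constant $\CZ$,
\[
  M^1_t = U_t^{\alpha} D_t^{\beta}, \qquad \alpha := \frac{\kappa - 4 - 2\rho^L}{\kappa}, \qquad \beta := \frac{(\kappa - 4 - 2\rho^L)\rho^R}{2\kappa}.
\]

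First I would compute $dU_t$ and $dD_t$ from \eqref{eqn::sle_kappa_rho_eqn}: one has $dU_t = \sqrt{\kappa}\,dB_t + \tfrac{\rho^L + 2}{U_t}\,dt + \tfrac{\rho^R}{W_t - V^R_t}\,dt$, while $D_t$ has finite variation with $dD_t = \tfrac{2}{V^R_t - W_t}\,dt - \tfrac{2}{V^L_t - W_t}\,dt$. Applying \Ito/'s formula to $\log M^1_t = \alpha \log U_t + \beta \log D_t$ and collecting terms, the $dt$ coefficient becomes a rational function of $U_t$ and $V^R_t - W_t$ alone (using $D_t = U_t + (V^R_t - W_t)$ with the appropriate sign). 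The pair $(\alpha,\beta)$ in the statement is chosen exactly so that this drift vanishes, which is the content of the local-martingale assertion; this is a direct computation and is essentially the same calculation used for $M_t$ in Proposition~\ref{prop::mtweight} but with the $V^R_t - W_t$ exponent set to zero.

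Next I would apply Girsanov's theorem. Writing $dM^1_t / M^1_t = \tfrac{\sqrt{\kappa}\,\alpha}{U_t}\,dB_t$ (no contribution from $D_t$ since it is of finite variation), the Brownian motion $\widetilde B_t$ under the weighted measure satisfies $d\widetilde B_t = dB_t - \tfrac{\sqrt{\kappa}\,\alpha}{U_t}\,dt$. Substituting this into the $\SLE_\kappa(\rho^L;\rho^R)$ SDE for $W_t$ produces an extra drift term $\tfrac{\kappa \alpha}{U_t}\,dt = \tfrac{\kappa - 4 - 2\rho^L}{U_t}\,dt$, which combines with the existing $\tfrac{\rho^L}{U_t}\,dt = \tfrac{\rho^L}{W_t - V^L_t}\,dt$ drift to give $\tfrac{\kappa - 4 - \rho^L}{W_t - V^L_t}\,dt$. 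The drift at $V^R_t$ is unchanged. By Theorem~\ref{thm::conformal_markov} (or simply by matching \eqref{eqn::sle_kappa_rho_eqn}) this is exactly the driving SDE of an $\SLE_\kappa(\refrho{\rho}^L;\rho^R)$ process with $\refrho{\rho}^L = (\kappa - 4) - \rho^L$.

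The main subtlety, as in Proposition~\ref{prop::mtweight}, is justifying the Girsanov change of measure up to the continuation threshold, since $M^1_t$ blows up precisely when $U_t \to 0$, i.e.\ when the left force point would be absorbed. One handles this by localizing: stop at $\tau_n = \inf\{t : U_t \leq 1/n \text{ or } U_t \geq n\}$, apply Girsanov on $[0, \tau_n]$ where $M^1_{t \wedge \tau_n}$ is a true bounded martingale, and then pass to the limit, noting that under the new measure $\refrho{\rho}^L > \tfrac{\kappa}{2} - 2$ so that $U_t$ almost surely does not reach $0$ (cf.\ the boundary-hitting criterion recalled in Figure~\ref{fig::hittingrange}) and hence $\tau_n \to \infty$. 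This is the only place where the argument uses the specific value $\refrho{\rho}^L = \kappa - 4 - \rho^L$ rather than just the algebraic identity of SDEs.
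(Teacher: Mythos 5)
Your proposal is correct and follows essentially the same route as the paper: the paper's proof of this proposition is simply ``a similar argument'' to that of Proposition~\ref{prop::mtweight}, namely either taking ratios of the \cite{SW05} Radon--Nikodym derivatives or verifying the claim directly with \Ito/ calculus, and you have carried out the latter computation (drift cancellation for $\log M^1_t$ plus Girsanov) correctly, including the identification $\refrho{\rho}^L=\kappa-4-\rho^L>\tfrac{\kappa}{2}-2$ which keeps the left force point from being absorbed.
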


Note that in the setting of Proposition~\ref{prop::mtweight}, $M_t$ vanishes precisely when $\eta$ touches either $(-\infty,x^L]$ or $[x^R,\infty)$.  (Similarly, in the setting of Proposition~\ref{prop::mtweight2} $M^1_t$ vanishes precisely when $\eta$ touches $(-\infty,x^L]$.)  For each $N \geq 0$, we let $\tau_N = \inf\{t \geq 0 : M_t = N\}$.  Since $M_{t \wedge \tau_N \wedge \tau_0}$ is a bounded and continuous martingale (provided $N \geq 1$), the optional stopping theorem implies that $\E[M_{\tau_N \wedge \tau_0}] = 1$ and hence the event $E_N := \{\tau_N < \tau_0\}$ occurs with probability $1/N$.  Reweighting the law of $\eta_{t \wedge \tau_N \wedge \tau_0}$ by $M_{\tau_N \wedge \tau_0}$ corresponds to conditioning on $E_N$.  Thus Proposition~\ref{prop::mtweight} implies that $\eta_{t \wedge \tau_N \wedge \tau_0}$ conditioned on $E_N$ evolves as an $\SLE_\kappa(\refrho
{\rho}^L;\refrho{\rho}^R)$ process from $0$ to $\infty$ until time $\tau_N$. Since $M_t$ is a continuous local martingale, it evolves as a Brownian motion when parameterized by its quadratic variation.  Conditioning $M_t$ to stay positive for all time (by conditioning it to stay positive until $\tau_N$ and taking the $N \to \infty$ limit) corresponds to replacing this Brownian motion by a Bessel process of dimension $3$.  (Recall that a Bessel process of dimension $1$ evolves a Brownian motion when it is away from $0$, and that conditioning it to stay non-negative produces a Bessel process of dimension $4-1=3$.)  As explained above, this amounts to replacing the $\SLE_\kappa(\rho^L;\rho^R)$ process by an $\SLE_\kappa(\refrho{\rho}^L;\refrho{\rho}^R)$ process.  In this sense, at least, Proposition~~\ref{prop::mtweight} implies that an $\SLE_\kappa(\rho^L; \rho^R)$ process conditioned not to hit the boundary is an $\SLE_\kappa(\refrho{\rho}^L;\refrho{\rho}^R)$ process.  (And Proposition
~\ref{prop::mtweight2} yields an analogous statement for $\SLE_\kappa(\rho^L; \rho^R)$ conditioned not to hit one side of the boundary.)  Note also that Propositions~\ref{prop::mtweight} and~\ref{prop::mtweight2} make sense even for $\rho^q \leq -2$, which is the threshold for which an $\SLE_\kappa(\rho^L;\rho^R)$ process cannot be continued after hitting the boundary \cite[Section~2]{MS_IMAG}.

Finally, we point out that in fact both of the above results are special cases of the following more general proposition, which also follows from \cite[Theorem~6]{SW05} and \cite[Remark~7]{SW05} (and exactly the same argument given in the proof of Proposition~\ref{prop::mtweight}).

\begin{proposition}
\label{prop::mtweight3}
Consider an $\SLE_\kappa(\ul{\rho})$ process $\eta$ in $\h$ from $0$ to $\infty$ with finitely many initial force points $x^{j,L} < 0$ and finitely many force points $x^{i,R} > 0$.  Let $V_t^{j,q}$ denote the evolution of the force point corresponding to $x^{j,q}$ under the Loewner flow.  For notational purposes, we also write $V^{0,0}_t := W_t$ and $\rho^{0,0} := 2$ and treat this as an ``extra force point''.  Let $\wt{\ul{\rho}}$ be obtained from $\ul{\rho}$ by replacing some subset of the weights $\rho^{j,q}$ (other than $\rho^{0,0}$) with the ``dual weights'' $\refrho{\rho}^{j,q} := \kappa-4 - \rho^{j,q}$.
Define the local martingale
\begin{equation}
\label{eq::Mtdef3}
M_t:= \prod |V_t^{j,q} - V_t^{j',q'}|^{\bigl(\wt{\rho}^{j,q}~\wt{\rho}^{j',q'} - \rho^{j,q}\rho^{j',q'}\bigr)/2\kappa },
\end{equation}
where the product is taken over all distinct pairs $(j,q)$ and $(j',q')$ corresponding to force points (including the ``extra'' one $(0,0)$) and where $\CZ$ is chosen so that $M_0 = 1$.  Then if we weight the law of $\eta$ by $M_t$ we obtain an $\SLE_\kappa(\wt{\ul \rho})$ process.
\end{proposition}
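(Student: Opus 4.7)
The plan is to realize $M_t$ as the Radon--Nikodym derivative of $\SLE_\kappa(\wt{\ul{\rho}})$ with respect to $\SLE_\kappa(\ul{\rho})$, by taking the ratio of two partition-function-style martingales and observing a key algebraic cancellation. Concretely, I would start by recalling the formula from \cite{SW05}: the Radon--Nikodym derivative of $\SLE_\kappa(\ul{\rho})$ with respect to ordinary chordal $\SLE_\kappa$ (run to a common stopping time) is, up to a multiplicative constant, of the form
\[
N_t \;=\; \prod_{(j,q)} g_t'(x^{j,q})^{\rho^{j,q}(\rho^{j,q}+4-\kappa)/(4\kappa)}\; Z(\ul{V}_t,W_t),
\]
where, using the ``extra force point'' convention $V^{0,0}_t:=W_t$ and $\rho^{0,0}:=2$, the partition function admits the uniform expression
\[
Z(\ul{V},W) \;=\; \prod_{\{(j,q),(j',q')\}} |V^{j,q}-V^{j',q'}|^{\rho^{j,q}\rho^{j',q'}/(2\kappa)},
\]
the product being taken over unordered pairs of distinct indices. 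The fact that this is a local martingale under chordal $\SLE_\kappa$, whose weighting produces $\SLE_\kappa(\ul{\rho})$, is exactly \cite[Theorem 6]{SW05}.

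Next, I would form the ratio $M_t := \wt{N}_t/N_t$, where $\wt{N}_t$ is the analogous quantity with each $\rho^{j,q}$ replaced by $\wt{\rho}^{j,q}$ (recalling that $\wt{\rho}^{j,q}=\rho^{j,q}$ when the weight is not dualized). The cornerstone of the argument is the algebraic identity
\[
\wt{\rho}(\wt{\rho}+4-\kappa) \;=\; (\kappa-4-\rho)(-\rho) \;=\; \rho(\rho+4-\kappa),
\]
which shows that the capacity exponent $\rho(\rho+4-\kappa)/(4\kappa)$ is invariant under dualization. Hence the $g_t'(x^{j,q})^{(\cdot)}$ factors cancel in the ratio, and what remains is exactly
\[
M_t \;=\; \prod_{\{(j,q),(j',q')\}} |V^{j,q}-V^{j',q'}|^{(\wt{\rho}^{j,q}\wt{\rho}^{j',q'}-\rho^{j,q}\rho^{j',q'})/(2\kappa)},
\]
which is the expression in \eqref{eq::Mtdef3}. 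Since $N_t, \wt{N}_t$ are positive local martingales under chordal $\SLE_\kappa$ whose Girsanov weightings produce $\SLE_\kappa(\ul{\rho})$ and $\SLE_\kappa(\wt{\ul{\rho}})$ respectively, it follows by the change-of-measure chain rule that $M_t$ is a local martingale under $\SLE_\kappa(\ul{\rho})$ and that weighting by it yields $\SLE_\kappa(\wt{\ul{\rho}})$.

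As an alternative, purely self-contained route, one can bypass \cite{SW05} and verify the claim directly by \Ito/ calculus. Apply \Ito/'s formula to $\log M_t$ using the SDE \eqref{eqn::sle_kappa_rho_eqn} for $W_t$ and the flow equation for each $V_t^{j,q}$. For a pair $\{(0,0),(j,q)\}$ the increment $d(W_t-V_t^{j,q})$ has a Brownian component $\sqrt{\kappa}\,dB_t$, while pairs not involving $(0,0)$ contribute only drift. Collecting terms, the Brownian part of $d\log M_t$ is $\sum_{(j,q)}(\wt{\rho}^{j,q}-\rho^{j,q})/(\sqrt{\kappa}(W_t-V_t^{j,q}))\,dB_t$. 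The Girsanov shift induced by weighting by $M_t$ therefore adds exactly $(\wt{\rho}^{j,q}-\rho^{j,q})/(W_t-V_t^{j,q})\,dt$ to $dW_t$ for each force point, converting each $\rho^{j,q}$ to $\wt{\rho}^{j,q}$. The drift terms of $d\log M_t$ must then sum to zero (so that $M_t$ is genuinely a local martingale); this is the familiar ``partition function PDE'' computation, and the main thing to check is that all the $1/(V^{j,q}-V^{j',q'})(W-V^{j,q})$ cross-terms cancel after using the identity $\rho^{0,0}=2$.

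The step requiring most care, in either approach, is the accounting of pairs involving the extra point $(0,0)$: the drift cancellation needed to certify that $M_t$ is a local martingale, and the clean emergence of the exponent $(\wt{\rho}\wt{\rho}'-\rho\rho')/(2\kappa)$, both rely crucially on treating $(0,0)$ on the same footing as the other force points but with its weight frozen at $2$. Once this bookkeeping is in place the argument is essentially algebraic, with the miracle that $h(\rho):=\rho(\rho+4-\kappa)/(4\kappa)$ is invariant under $\rho\mapsto\kappa-4-\rho$ being the one nontrivial identity that makes the whole construction work.
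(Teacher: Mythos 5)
Your proposal is correct and follows essentially the same route as the paper: the paper also obtains \eqref{eq::Mtdef3} by taking the ratio of the \cite[Theorem 6, Remark 7]{SW05} Radon--Nikodym derivatives for $\SLE_\kappa(\ul{\rho})$ and $\SLE_\kappa(\wt{\ul\rho})$ with respect to ordinary $\SLE_\kappa$ (the conformal-derivative factors cancelling precisely because $\rho(\rho+4-\kappa)$ is invariant under $\rho\mapsto\kappa-4-\rho$), and it likewise notes the alternative direct verification by \Ito/ calculus.
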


\subsection{A single path avoiding the boundary}

Let $\eta$ be a flow line started at $0$ and targeted at $\infty$ of a GFF $h$ on $\h$ with piecewise constant boundary conditions.  Then $\eta$ is an $\SLE_\kappa(\ul \rho)$ process.  Let $\ul{x} = (\ul{x}^L;\ul{x}^R)$ denote the locations of the force points of $\eta$.  By adding zero-weight force points if necessary, we may assume without loss of generality that $x^{1,L} = 0^-$ and $x^{1,R} = 0^+$.  Let $I = [x^{2,L},x^{2,R}]$.  Then the boundary conditions for $h$ are constant on the left and right sides of $I  \setminus \{0\}$, which we denote by $I_L = [x^{2,L},0)$ and $I_R = (0,x^{2,R}]$.  Assume that either $h|_{I_L} > -\lambda+\pi\chi$ or $h|_{I_R} < \lambda - \pi \chi$ so that $\eta$ can hit at least one of $I_L$ or $I_R$ (or immediately hits the continuation threshold), see Figure~\ref{fig::hittingrange}.  We would like to define the law of $\eta$ conditioned {\em not} to hit $I$ before reaching $\infty$.   Since this amounts to conditioning on an event of probability zero, we have to state carefully what we mean.

The main result of this subsection is the following proposition which states that one can characterize the law by a ``Gibbs property'' that says if we are given any initial segment of the path, then the conditional law of the remainder of the path is what we expect it to be.  We will assume that the boundary conditions for $h$ are such that given a finite (boundary avoiding) segment of $\eta$, the continuation of $\eta$ as a flow line would have a positive probability of reaching $\infty$ before hitting the continuation threshold.  Concretely, this means that the boundary data for $h$ is constant on $(-\infty,x^{k,L})$ for $k = |\ul{\rho}^L|$ and at most $-\lambda+\pi \chi$ and constant in $(x^{\ell,R},\infty)$ for $\ell = |\ul{\rho}^R|$ and at least $\lambda-\pi \chi$.  Equivalently, we have both $\sum_{i=1}^k \rho^{i,L} \geq \tfrac{\kappa}{2}-2$ and $\sum_{i=1}^\ell \rho^{i,R} \geq \tfrac{\kappa}{2}-2$ (see~\eqref{eqn::ac_eq_rel} and Figure~\ref{fig::hittingrange}).

\begin{figure}[h!]
\begin{center}
\includegraphics[scale=0.85]{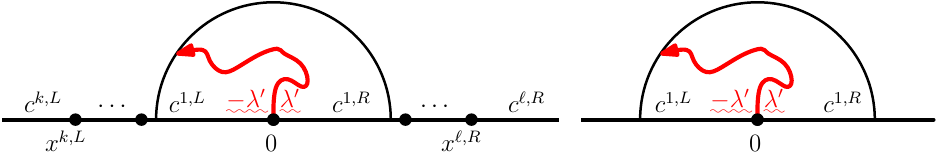}
\end{center}
\caption{\label{fig::two_half_discs}
In both panels, $\eta$ is ``conditioned'' in some sense to avoid hitting the intervals with values $c^{1,L}$ and $c^{1,R}$ before reaching $\infty$.  In the left panel, the boundary conditions are such that given the portion of the path shown, there is a positive probability that its continuation after hitting $\partial B(0,1)$ will reach $\infty$ before hitting these intervals.  Concretely, this means that $c^{k,L} \leq -\lambda+\pi \chi$ and $c^{\ell,R} \geq \lambda - \pi \chi$ (see~\eqref{eqn::ac_eq_rel} and Figure~\ref{fig::hittingrange}).  The law of a path segment that ends on $\partial B(0,1)$ and avoids the boundary is characterized by a certain path-resampling invariance (Proposition~\ref{prop::conditionalunique}).  In the right panel, the intervals with values $c^{1,L}$ and $c^{1,R}$ are infinite, and the ``conditioning not to hit'' these intervals is achieved by weighting by a martingale (recall Proposition~\ref{prop::mtweight}) that amounts to replacing the corresponding $\rho^{1,L}$ and $\rho^{1,R}$ with $\refrho{\rho}^{1,L}$ and $\refrho{\rho}^{1,R}$, respectively.  The law of the boundary-avoiding path on the right is simply that of an $\SLE_\kappa(\refrho{\rho}^{1,L};\refrho{\rho}^{1,R})$ process up until the first time that it exits the unit disk.  The idea of the proof of Proposition~\ref{prop::conditionalunique} is to show that the law described on the left side (assuming it exits) is absolutely continuous with respect to the law on the right side and we will explicitly identify the Radon-Nikodym derivative.}
\end{figure}

\begin{proposition}
\label{prop::conditionalunique}
Suppose that $\ul{\rho} = (\ul{\rho}^L;\ul{\rho}^R)$ is a collection of weights corresponding to force point locations $\ul{x} = (\ul{x}^L;\ul{x}^R)$.  Assume that $x^{1,L}=0^-$, $x^{1,R} = 0^+$, that both $\rho^{1,L} < \tfrac{\kappa}{2}-2$ and $\rho^{1,R} < \tfrac{\kappa}{2}-2$, and that both $\sum_{i=1}^k \rho^{i,L} \geq \tfrac{\kappa}{2}-2$ and $\sum_{i=1}^\ell \rho^{i,R} \geq \tfrac{\kappa}{2}-2$ where $k=|\ul{\rho}^L|$ and $\ell = |\ul{\rho}^R|$.  There is at most one law for a random continuous path $\eta$ in $\ol{\h}$ from $0$ to $\infty$ with the following Gibbs property.  For every almost surely positive stopping time $\tau$, the conditional law of $\eta$ given $\eta([0,\tau])$ is that of an ordinary $\SLE_\kappa(\ul{\rho})$ process in $\h \setminus \eta([0,\tau])$ from $\eta(\tau)$ to $\infty$ conditioned on the positive probability event that it reaches $\infty$ before hitting $I = [x^{2,L},x^{2,R}]$.
\end{proposition}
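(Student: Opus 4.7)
The plan is to identify any law $\p$ satisfying the stated Gibbs property with the law $\wt{\p}$ of the $\SLE_\kappa(\wt{\ul{\rho}})$ process in $\h$ from $0$ to $\infty$, where $\wt{\ul{\rho}}$ agrees with $\ul{\rho}$ except that the two adjacent weights have been replaced by their duals $\wt{\rho}^{1,q} := \kappa - 4 - \rho^{1,q}$ for $q \in \{L,R\}$.  The hypothesis $\rho^{1,q} < \tfrac{\kappa}{2}-2$ translates to $\wt{\rho}^{1,q} > \tfrac{\kappa}{2}-2$, so by the boundary-hitting criterion reviewed in Figure~\ref{fig::hittingrange} a sample path from $\wt{\p}$ almost surely avoids $I = [x^{2,L},x^{2,R}]$ at every positive time.

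The first step will be to verify that $\wt{\p}$ itself enjoys the stated Gibbs property.  Given an a.s.\ positive stopping time $\tau$, the domain Markov property of $\SLE_\kappa(\wt{\ul{\rho}})$ combined with conformal invariance gives that, conditional on $\wt{\eta}|_{[0,\tau]}$, the continuation of $\wt{\eta} \sim \wt{\p}$ is an $\SLE_\kappa(\wt{\ul{\rho}})$ in $\h \setminus \wt{\eta}([0,\tau])$ from $\wt{\eta}(\tau)$ to $\infty$, with the force points transported by the Loewner flow.  Applying Proposition~\ref{prop::mtweight3} in this subdomain with $\wt{\rho}^{1,L}$ and $\wt{\rho}^{1,R}$ taken as the pair of weights one ``duals back'' identifies this law with that of an $\SLE_\kappa(\ul{\rho})$ in the same subdomain, conditioned on the positive-probability event that it reaches $\infty$ before hitting $I$; positivity of the conditioning event is guaranteed by $\wt{\eta}(\tau) \notin I$ under $\wt{\p}$.

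The heart of the argument will be to show that any $\p$ satisfying the Gibbs property must coincide with $\wt{\p}$.  Fix a bounded continuous functional $f$ of the path (say in the sup-norm topology after parameterizing by half-plane capacity), and for $\delta > 0$ set $\tau_\delta := \inf\{t \geq 0 : \eta(t) \notin \ol{B(0,\delta)}\}$, which is a.s.\ positive by continuity of $\eta$.  Define $F(\gamma) := \E\bigl[ f(\gamma \cup \wt{\eta}^{\gamma}) \bigr]$, where $\wt{\eta}^{\gamma}$ denotes an $\SLE_\kappa(\wt{\ul{\rho}})$ in $\h \setminus \gamma$ from the tip of $\gamma$ with the naturally transported force points.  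The Gibbs property for $\p$, combined with the identification from the previous paragraph, yields
\[ \E_\p[f(\eta)] = \E_\p\bigl[ F(\eta|_{[0,\tau_\delta]}) \bigr], \]
with the identical identity holding with $\wt{\p}$ in place of $\p$.  Almost surely under either $\p$ or $\wt{\p}$, the image of $\eta|_{[0,\tau_\delta]}$ lies in $\ol{B(0,\delta)}$ and so collapses to $\{0\}$ as $\delta \to 0$.  Invoking the stability of the $\SLE_\kappa(\wt{\ul{\rho}})$ law under small perturbations of the starting point, domain, and force-point locations from \cite[Section 2]{MS_IMAG}, we then have $F(\eta|_{[0,\tau_\delta]}) \to \E_{\wt{\p}}[f(\eta)]$ almost surely, and bounded convergence gives $\E_\p[f(\eta)] = \E_{\wt{\p}}[f(\eta)]$.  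Since $f$ was arbitrary, this forces $\p = \wt{\p}$.

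The main technical hurdle will be the continuity statement used in the last step: as the initial segment $\gamma$ shrinks uniformly to $\{0\}$, the two adjacent force points collide with the seed and the domain $\h \setminus \gamma$ deforms, and one must show that the associated $\SLE_\kappa(\wt{\ul{\rho}})$ laws, together with the concatenated paths, converge to $\wt{\p}$ in a topology strong enough to pair with bounded continuous test functions of the full path.  This is precisely the content of the continuity and stability analysis of \cite[Section 2]{MS_IMAG}, which supplies both the requisite convergence in law and the uniform tightness needed to push the bounded-convergence argument through.
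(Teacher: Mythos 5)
The decisive step of your argument --- that the $\SLE_\kappa(\wt{\ul{\rho}})$ process obtained by dualizing only the two adjacent weights satisfies the stated Gibbs property --- is not justified by Proposition~\ref{prop::mtweight3}, and it is in fact false. Proposition~\ref{prop::mtweight3} says that reweighting $\SLE_\kappa(\ul{\rho})$ by an explicit \emph{local martingale}, a product of powers of the quantities $|V_t^{j,q}-V_t^{j',q'}|$, yields $\SLE_\kappa(\wt{\ul{\rho}})$; it says nothing about conditioning on the positive-probability event that the continuation reaches $\infty$ before hitting $I$. That conditioning is a Doob $h$-transform by the true escape probability, and once the force points have separated from the tip (as they have after an a.s.\ positive stopping time) this escape probability is not the power product, so the conditioned law is not $\SLE_\kappa(\wt{\ul{\rho}})$. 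Concretely, in the one-sided variant (covered by the same statement, as remarked after the proposition) with two right force points at $0^+$ and $x^{2,R}$ of weights $\rho_1 < \tfrac{\kappa}{2}-2$ and $\rho_2$ with $\rho_1+\rho_2 \geq \tfrac{\kappa}{2}-2$, scale invariance gives that the probability of reaching $\infty$ before hitting the near interval is $f(s)$ with $s = (V^1_t-W_t)/(V^2_t-W_t)$, where $V^1_t,V^2_t$ are the images of the force points; the martingale ODE for $f$ yields $f'(s) = C\, s^{-2(2+\rho_1)/\kappa}(1-s)^{2(\rho_1+\rho_2-\kappa+4)/\kappa}$, so $f$ is an incomplete Beta function and is a pure power only when $\rho_1+\rho_2=\kappa-4$, which is excluded since $\tfrac{\kappa}{2}-2 > \kappa-4$. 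The power-law form of the $h$-function emerges only in the limit $s \to 0$, i.e.\ when the force point sits at the seed and the conditioning event has probability zero --- which is exactly why the ``conditioning equals dualizing'' interpretation in Section~\ref{subsec::martingales} is formulated only in that degenerate situation. With the first step gone, the identity $\E_\p[f(\eta)] = \E_\p[F(\eta|_{[0,\tau_\delta]})]$ has no justification and the rest of the argument collapses.

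For comparison, the paper's proof deliberately does not name the law: it applies the Gibbs property at the exit times of $B(0,r)$, shows that the law of the initial stub must be absolutely continuous with respect to the stub of an $\SLE_\kappa(\refrho{\rho}^{1,L};\refrho{\rho}^{1,R})$ process (only the two adjacent force points), and identifies the density as proportional to $F_{\tau_1}P/M_{\tau_1}$, where $P$ is precisely the non-power-law escape probability; it then sends $r \to 0$ using the weak convergence of Lemma~\ref{lem::weak_convergence}, continuity and boundedness of the density on the sets $\Omega_\epsilon$, and a uniform bound on the normalizing constants $c_r$. The presence of the factor $P$ in that density is exactly the obstruction to your identification. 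Note also that your route, were it correct, would establish existence as well as uniqueness, whereas Remark~\ref{rem::cond_not_hit_existence} emphasizes that existence does not follow from this argument --- a warning sign that the conditioned law should not be expected to coincide with any $\SLE_\kappa(\wt{\ul{\rho}})$. (Your final bounded-convergence step would in any case need more than the cited stability in force-point locations, since under the unknown law $\p$ the initial stub is an arbitrary random segment and the conditioning degenerates as $\delta \to 0$; but the fatal gap is the first step.)
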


\begin{figure}[h!]
\begin{center}
\includegraphics[scale=0.85]{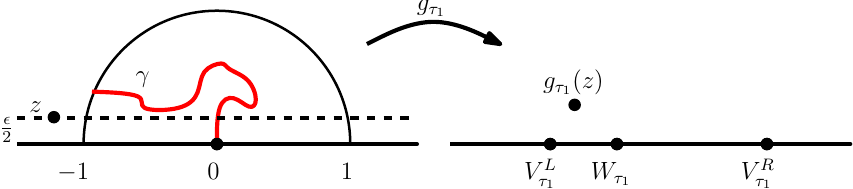}
\end{center}
\caption{\label{fig::radon_bound} Let $\Omega_\epsilon$ be the set of boundary avoiding, continuous, simple paths in $\h$ which connect $0$ to $\partial B(0,1)$, modulo reparameterization whose endpoint on $\partial B(0,1)$ has distance at least $\epsilon$ to $\partial \h$.  Fix $\gamma \in \Omega_\epsilon$.  Since $\gamma$ is simple, it admits a continuous Loewner driving function $W$ \cite[Proposition~6.12]{MS_IMAG}.  Let $(g_t)$ be the corresponding Loewner chain and let $V_t^L$ (resp.\ $V_t^R$) be the image of $0^-$ (resp.\ $0^+$) under $g_t$.  Then there exists positive and finite constants $C_\epsilon^1 \leq C_\epsilon^2$ which depend only on $\epsilon > 0$ such that $C_\epsilon^1 \leq |V_{\tau_1}^q - W_{\tau_1}| \leq C_\epsilon^2$ for $q \in \{L,R\}$ where $\tau_1$ is the first time that $\gamma$ hits $\partial B(0,1)$.  To see this, we let $z$ be the point on $\partial B(0,1+\epsilon)$ closest to $-1$ with $\im(z) = \epsilon/2$.  Then it is easy to see that there exists $p=p(\epsilon)> 0$ such that a Brownian motion starting at $z$ has probability at least $p(\epsilon)$ of exiting $\h \setminus \gamma$ in $(-\infty,0)$ (resp.\ the left side of $\gamma$, the right side of $\gamma$, or $(0,\infty)$).  Thus the conformal invariance of Brownian motion implies that the probability that a Brownian motion starting at $g_{\tau_1}(z)$ first exits $\h$ in $(-\infty,V_{\tau_1}^L)$ (resp.\ $(V_{\tau_1}^L,W_{\tau_1})$, $(W_{\tau_1},V_{\tau_1}^R)$, or $(V_{\tau_1}^R,\infty)$) is also at least $p$.  Since $g_{\tau_1}(z) = z + o(1)$ as $z \to \infty$, it follows from \cite[Theorem~3.20]{LAW05} that there exists finite and positive constants $D_\epsilon^1 \leq D_\epsilon^2$ depending only on $\epsilon > 0$ such that $D_\epsilon^1 \leq |g_{\tau_1}'(z)| \leq D_\epsilon^2$.  Thus \cite[Corollary 3.23]{LAW05} then implies that there exists a constant $E_\epsilon > 0$ depending only on $\epsilon > 0$ such that $\im(g_{\tau_1}(z)) \geq E_\epsilon$.  Combining everything implies the claim.
}
\end{figure}

We note that Proposition~\ref{prop::conditionalunique} is stated for $\SLE_\kappa(\ul{\rho})$ processes which can hit both immediately to the left and right of $0$ (or immediately hit the continuation threshold).  The same proof, however, works in the case that the curve can only hit on one side or is conditioned not to hit only one side.  Before we give its proof, we will need to prove several auxiliary lemmas; we recommend that the reader skips to the main body of the proof of Proposition~\ref{prop::conditionalunique} on a first reading to see how everything fits together.  Throughout, we let $\Omega$ be the set of simple, continuous, boundary avoiding  path segments in $\ol{\h}$ which connect $0$ to $\partial B(0,1)$, modulo reparameterization.  By \cite[Proposition~6.12]{MS_IMAG}, we know that if $\gamma \in \Omega$ then $\gamma$ (parameterized by capacity) admits a continuous Loewner driving function $W \colon [0,T] \to \R$.  We equip $\Omega$ with the following metric.  For $\gamma_i \in \Omega$ for $i=1,2$ with continuous Loewner driving functions $W_i \colon [0,T_i] \to \R$ we write
\[ d(\gamma_1,\gamma_2) = \| W_1(\cdot \wedge T_1) - W_2(\cdot \wedge T_2)\|_\infty + |T_1 - T_2|.\]
Then $(\Omega,d)$ is a metric space.

\begin{lemma}
\label{lem::distance_continuous}
Let $D \colon \Omega \to \R$ be the function which associates $\gamma \colon [0,T] \to \ol{\h}$ in $\Omega$ with the distance of its tip $\gamma(T)$ in $\partial B(0,1)$ to $\partial \h$, i.e.\ $\im(\gamma(T))$.  Then $D$ is a continuous function on $(\Omega,d)$.
\end{lemma}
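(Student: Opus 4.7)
The plan is to prove the stronger statement that $\gamma\mapsto\gamma(T)\in\ol{\h}$ is itself continuous on $(\Omega,d)$; taking imaginary parts then gives the claim for $D$.

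By \cite[Proposition 6.12]{MS_IMAG} every $\gamma\in\Omega$ (being a simple curve in $\ol{\h}$) admits a continuous Loewner driving function $W\colon[0,T]\to\R$, and if $(g_t)_{t\in[0,T]}$ is the associated Loewner chain then the tip satisfies the recovery formula $\gamma(T)=\lim_{y\downarrow 0}g_T^{-1}(W_T+iy)$. Fix $\gamma_n\to\gamma$ in $(\Omega,d)$ with driving functions $W_n\colon[0,T_n]\to\R$, so that $W_n(\cdot\wedge T_n)\to W(\cdot\wedge T)$ uniformly on $[0,\infty)$ and $T_n\to T$. For each fixed $y>0$, continuous dependence of the Loewner flow on its driving function (Gr\"onwall applied to the Loewner ODE, whose initial condition $W_n(T_n)+iy$ remains at positive imaginary distance from the driving singularity) yields
\[ g_{T_n}^{-1}(W_n(T_n)+iy)\;\longrightarrow\;g_T^{-1}(W_T+iy)\qquad(n\to\infty), \]
and the right-hand side tends to $\gamma(T)$ as $y\downarrow 0$.

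What remains is to interchange the limits in $n$ and $y$. For this it suffices to exhibit a modulus of continuity for $y\mapsto g_{T_n}^{-1}(W_n(T_n)+iy)$ at $y=0^+$ that is uniform in $n$. Since every curve in $\Omega$ is contained in $\ol{B(0,1)}\cap\ol{\h}$, the half-plane capacities $T_n$ and the sup-norms of the driving functions $W_n$ are bounded uniformly in $n$. Viewed as conformal maps from $\h$ into domains of diameter at most $2$, the inverse Loewner maps $g_{T_n}^{-1}$ therefore satisfy Koebe-type distortion estimates with constants depending only on these uniform bounds, supplying the required uniform modulus of continuity. Interchanging limits then gives $\gamma_n(T_n)\to\gamma(T)$, proving the lemma.

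The main obstacle is precisely this uniform-modulus step: one must verify that $g_{T_n}^{-1}(W_n(T_n)+iy)$ approaches $\gamma_n(T_n)$ at a rate independent of $n$ as $y\downarrow 0$. The geometric constraint built into the definition of $\Omega$ --- all paths are contained in a single compact subset of $\ol{\h}$ --- is exactly what makes this uniformity available, and note in particular that no lower bound on $\im(\gamma_n(T_n))$ is needed along the sequence, so the argument applies equally well to sequences whose tips approach $\partial\h$.
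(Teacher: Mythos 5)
Your reduction to tip convergence and the fixed-$y$ step (local uniform convergence of $g_{T_n}^{-1}$ once the driving functions converge uniformly and $T_n \to T$) are fine, but the crucial step --- interchanging the limits in $n$ and $y$ --- rests on a claim that is false. There is no modulus of continuity for $y \mapsto g_{T}^{-1}(W_T + iy)$ at $y = 0^+$ that depends only on a diameter bound for the hull together with bounds on $T$ and $\|W\|_\infty$. The rate at which $g_T^{-1}(W_T+iy)$ approaches the tip is governed by the geometry of the curve near its endpoint: if $\gamma$ nearly touches itself just below its tip, so that the tip sits at the end of a channel of width $\epsilon$ and length $L$, then the Koebe estimate gives $|(g_T^{-1})'(W_T+iy)| \leq 4\epsilon/y$ while the image point is inside the channel, so one needs $y$ smaller than roughly $e^{-cL/\epsilon}$ before $g_T^{-1}(W_T+iy)$ is even close to $\gamma(T)$ --- all while the curve stays in $\ol{B(0,1)}$ and its capacity and driving function remain bounded. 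Hence the ``Koebe-type distortion estimates with constants depending only on these uniform bounds'' do not exist, and since your argument invokes only class-wide bounds on $T_n$ and $\|W_n\|_\infty$, it also does not yield uniformity along the particular convergent sequence. This is precisely the well-known failure of (uniform) continuity of the map from Loewner driving functions to traces, so no argument at this level of generality can close the gap.

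What makes the lemma true is structure of $\Omega$ that your proof never uses: every path in $\Omega$ is stopped upon hitting $\partial B(0,1)$, so all tips lie on a fixed compact circle, and the limit $\gamma$, being itself an element of $\Omega$, meets $\partial B(0,1)$ only at its own tip $u$. The paper's proof exploits exactly this: pass to a subsequence along which $u_n \to u_0 \in \partial B(0,1)$, note that uniform convergence of the driving functions gives Carath\'eodory convergence of $\h\setminus\gamma_n$ to $\h\setminus\gamma$ (\cite[Proposition 4.43]{LAW05}), and observe that any $z \in \partial B(0,1)$ with $z \neq u$ admits a compact neighborhood in $\h\setminus\gamma$ that is eventually disjoint from $\gamma_n$, so $u_0 \neq z$; hence $u_0 = u$. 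To repair your approach you would have to replace the uniform-modulus claim with an argument of this kind (or some other argument using the constraint that the tips lie on $\partial B(0,1)$); distortion estimates depending only on diameter and capacity cannot supply the missing uniformity.
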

\begin{proof}
Suppose that $(\gamma_n)$ is a sequence in $\Omega$ converging to $\gamma \in \Omega$ with respect to the topology induced by $d$.  Let $u_n$ be the tip of $\gamma_n$ in $\partial B(0,1)$ and $u$ the tip of $\gamma$.  We are actually going to show that $u_n$ converges to $u$.  By passing to a subsequence, we may assume without loss of generality that $u_n$ converges to $u_0 \in \partial B(0,1)$.  It suffices to show that $u_0 = u$.  For each $n$, let $g_n$ be the conformal map which takes $\h \setminus \gamma_n$ to $\h$ with $g_n(z) = z + o(1)$ as $z \to \infty$ and let $g$ be the corresponding conformal map for $\h \setminus \gamma$.  Then \cite[Proposition~4.43]{LAW05} implies that $g_n$ converges to $g$ in the Caratheodory sense; see \cite[Section~4.7]{LAW05}.  This means that for each compact set $K \subseteq \h \setminus \gamma$, there exists $n = n(K)$ such that $n \geq n(K)$ implies that $K \subseteq \h \setminus \gamma_n$ and $g_n \to g$ uniformly on $K$.  In particular, if $z \in \partial B(0,1)$ is not equal to $u$, there exists a compact neighborhood $K_z$ of $z$ in $\h \setminus \gamma$ which $\gamma_n$ does not intersect for all sufficiently large $n$.  Hence $u_0 \neq z$ and therefore $u_0 = u$, which completes the proof.
\end{proof}

For each $\epsilon > 0$, we let $\Omega_\epsilon = \{ \gamma \in \Omega : D(\gamma) \geq \epsilon\}$ be the set of continuous, boundary avoiding, simple paths in $\ol{\h}$ connecting $0$ to $\partial B(0,1)$ whose tip on $\partial B(0,1)$ has distance at least $\epsilon$ from $\partial \h$, modulo reparameterization.  Note that Lemma~\ref{lem::distance_continuous} implies that $\Omega_\epsilon$ is a closed subset of $(\Omega,d)$.

\begin{lemma}
\label{lem::images_of_zero}
Suppose that $\gamma \in \Omega$ and let $(g_t)$ be the corresponding Loewner chain.  Let $W$ be the Loewner driving function of $\gamma$ and $V_t^L,V_t^R$ be the images of $0^-,0^+$, respectively, under $g_t$.  There exists $0 < C_\epsilon^1 \leq C_\epsilon^2 < \infty$ depending only $\epsilon > 0$ such that
\[ C_\epsilon^1 \leq |V_{\tau_1}^q - W_{\tau_1}| \leq C_\epsilon^2 \quad\text{for}\quad q \in \{L,R\}\]
where $\tau_1$ is the first time $t$ that $\gamma$ exits $B(0,1)$.
\end{lemma}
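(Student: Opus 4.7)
The strategy is to use conformal invariance of planar Brownian motion to transfer geometric information about $\gamma \in \Omega_\epsilon$ into harmonic-measure, and hence metric, estimates on the configuration $(V_{\tau_1}^L, W_{\tau_1}, V_{\tau_1}^R)$. I will carry this out in three steps.

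For the first step, I will find a reference point (or a small collection of reference points) $z_0 \in \h \setminus \gamma$ whose position is controlled in terms of $\epsilon$ only --- a natural choice is the point of $\partial B(0,1)$ with negative real part and imaginary part $\epsilon/2$, complemented if necessary by its mirror image on the right side of the unit circle. The key claim is that there exists $p = p(\epsilon) > 0$ such that a Brownian motion started from $z_0$ first exits $\h \setminus \gamma$ through each of the four prime-end arcs $(-\infty, 0)$, the left side of $\gamma$, the right side of $\gamma$, and $(0, \infty)$ with probability at least $p$, uniformly in $\gamma \in \Omega_\epsilon$. This uses that $\gamma \cap \R = \emptyset$, that $\gamma$ is simple, and that the tip of $\gamma$ has distance at least $\epsilon$ from $\R$, so that each of the four arcs remains ``visible'' from one of the chosen reference points.

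For the second step, I will control $w := g_{\tau_1}(z_0) = u + iv$. Because $\gamma([0,\tau_1]) \subseteq \overline{B(0,1)}$ and $g_{\tau_1}(z) = z + o(1)$ as $z \to \infty$, the standard distortion theorems for hydrodynamically normalized univalent maps, in the form of \cite[Theorem 3.20]{LAW05} and \cite[Corollary 3.23]{LAW05}, give two-sided bounds on $|g_{\tau_1}'(z_0)|$, on $\im w$, and on $|u|$, with constants depending only on $\epsilon$. For the third step, conformal invariance of Brownian motion then implies that the harmonic measures from $w$ in $\h$ of the four real intervals $(-\infty, V_{\tau_1}^L)$, $(V_{\tau_1}^L, W_{\tau_1})$, $(W_{\tau_1}, V_{\tau_1}^R)$, $(V_{\tau_1}^R, \infty)$ are each at least $p$. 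Using the explicit $\arctan$ formula for harmonic measure in $\h$, the lower bounds for the two bounded intervals combined with the upper bound on $\im w$ give a lower bound $|V_{\tau_1}^q - W_{\tau_1}| \geq C_\epsilon^1$; the lower bounds for the two unbounded intervals combined with the lower bound on $\im w$ and the upper bound on $|u|$ give the corresponding upper bound $|V_{\tau_1}^q - W_{\tau_1}| \leq C_\epsilon^2$.

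The main obstacle is the uniform four-arc harmonic-measure lower bound in the first step: a priori $\gamma$ could curl tightly inside $B(0,1)$ and cut off a given reference point's access to one or more of the four arcs. Handling this will require carefully exploiting the boundary-avoidance property of $\gamma$ and the $\epsilon$-separation of its tip from $\R$, and combining the estimates from two reference points (one accessible to each side of $\gamma$) via the strong Markov property for Brownian motion.
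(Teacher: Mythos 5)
Your outline is the same as the paper's: the reference point at height $\epsilon/2$ on $\partial B(0,1)$, conformal invariance of Brownian motion, the distortion estimates of \cite[Theorem 3.20 and Corollary 3.23]{LAW05}, and the explicit harmonic measure formula in $\h$ (this is exactly the argument sketched in Figure~\ref{fig::radon_bound}).  The problem is that the step you yourself flag as the main obstacle is a genuine gap, and the way you have set it up cannot be repaired by your suggested mitigation.  Membership in $\Omega_\epsilon$ constrains only the tip $\gamma(\tau_1)$; the curve is free to pass within an arbitrarily small distance $\delta$ of any \emph{fixed} reference point $z_0$ on the circle at height $\epsilon/2$ (it may hug the circle there without touching it and still exit later at height at least $\epsilon$).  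In that case the Beurling estimate shows that a Brownian motion from $z_0$ hits the nearby strand before travelling distance $\epsilon/8$ except with probability $O(\sqrt{\delta/\epsilon})$; since every point of $\R$ lies at distance at least $\epsilon/2$ from $z_0$, the harmonic measures from $z_0$ of $(-\infty,0)$, of $(0,\infty)$, and of whichever side of $\gamma$ lies behind the nearby strand are \emph{not} uniformly bounded below on $\Omega_\epsilon$.  The same scenario defeats your step 2: since the image of the two sides of $\gamma$ under $g_{\tau_1}$ is an interval of universally bounded length, the fact that the Brownian motion from $z_0$ hits $\gamma$ with probability $1-O(\sqrt{\delta/\epsilon})$ before moving distance $\epsilon/8$ forces $\im g_{\tau_1}(z_0) \to 0$ as $\delta \to 0$, so no lower bound $E_\epsilon$ on $\im g_{\tau_1}(z_0)$ is available for a fixed $z_0$.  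Adding the mirror point and using the strong Markov property does not help, because a single curve can shadow both fixed points.

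What a correct implementation requires is a reference point chosen as a function of $\gamma$, placed where a definite distance to $\gamma \cup \R$ is guaranteed by the definition of $\Omega_\epsilon$; the natural choice is a point just outside the unit disk near the tip, e.g.\ $z_\gamma = (1+\tfrac{\epsilon}{4})\gamma(\tau_1)$, which automatically satisfies $\dist(z_\gamma,\gamma) \geq \epsilon/4$ (as $\gamma \subseteq \ol{B(0,1)}$) and $\im z_\gamma \geq \epsilon$.  From such a point the two half-lines of $\R$ are reached with probability $p(\epsilon)$ through the exterior of the unit disk, and the substantive remaining task --- which neither your sketch nor the mitigation you describe addresses --- is to show that \emph{each} of the two sides of $\gamma$ is hit from $z_\gamma$ with probability at least $p(\epsilon)$, by approaching the tip and passing around it; this is exactly where the hypothesis that the tip lies on $\partial B(0,1)$ at height at least $\epsilon$ must be used.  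Note also that the upper bound $C_\epsilon^2$ needs none of this machinery: $V_{\tau_1}^L$, $W_{\tau_1}$, $V_{\tau_1}^R$ are all images of points of $\ol{B(0,1)} \cap \ol{\h}$ under the hydrodynamically normalized map, hence lie in an interval of universally bounded length.  Finally, a small slip in your step 3: the lower bound on $|V_{\tau_1}^q - W_{\tau_1}|$ uses the \emph{lower} bound on $\im g_{\tau_1}(z)$ (the harmonic measure of an interval from $u+iv$ is at most its length divided by $\pi v$), while the upper bound uses the \emph{upper} bounds on $\im g_{\tau_1}(z)$ and $|\re g_{\tau_1}(z)|$; you cite the two directions the other way around, though since you claim two-sided control this is a slip rather than a further gap.
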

\begin{proof}
Let $B$ be a Brownian motion starting at $z \in \partial B(0,1)$ with $\im(z) = \epsilon/2$.  It is easy to see that the probability that $B$ first exits $\h \setminus \gamma$ in $(-\infty,0)$ (resp.\ in $(0,\infty)$, the left side of $\gamma$, or the right side of $\gamma$) is at least a positive constant $p = p(\epsilon)$ which depends only on $\epsilon$.  Consequently, the desired result follows by the conformal invariance of Brownian motion as well as elementary derivative estimates \cite[Theorem~3.20 and Corollary 3.23]{LAW05} for conformal maps which imply that $g_{\tau_1}(z)$ is not too close to $\partial \h$ (see Figure~\ref{fig::radon_bound} for additional explanation).
\end{proof}

\begin{lemma}
\label{lem::prob_escape_infinity}
For each $\epsilon,M > 0$, there exists $p=p(\epsilon,M) > 0$ such that the following is true.  Let $\ul{\rho} = (\ul{\rho}^L;\ul{\rho}^R)$ be weights such that $|\sum_{i=1}^j \rho^{i,q}| \leq M$ for all $1 \leq j \leq |\ul{\rho}^q|$ and $\sum_i \rho^{i,q} \geq \tfrac{\kappa}{2}-2$ for $q \in \{L,R\}$.  Let $\ul{x} = (\ul{x}^L;\ul{x}^R)$ be a corresponding vector of force point locations in $\partial \h$ with $|x^{i,q}| \leq M$ for all $1 \leq i \leq |\ul{\rho}^q|$ and $q \in \{L,R\}$.  Let $P \colon \Omega \to [0,1]$ be the probability that an $\SLE_\kappa(\ul{\rho})$ process in $\h \setminus \gamma$ starting at the tip of $\gamma$ in $\partial B(0,1)$ with force points of weight $\ul{\rho}$ located at $\ul{x}$ makes it to $\infty$ without otherwise hitting $\partial \h$.  Then $P|_{\Omega_\epsilon} \geq p > 0$.
\end{lemma}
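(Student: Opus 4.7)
The strategy is to reduce via the Loewner map of $\gamma$ to an $\SLE_\kappa(\ul{\rho})$ question in $\h$ with configuration in a fixed compact set, and then combine a Girsanov comparison along a safe tube with a continuity/compactness argument for the tail behavior.

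First, since $\gamma \in \Omega_\epsilon$ is simple, \cite[Proposition~6.12]{MS_IMAG} produces a conformal map $g_{\tau_1} \colon \h \setminus \gamma \to \h$. Pushing the $\SLE_\kappa(\ul{\rho})$ through $g_{\tau_1}$ and translating so that $W_{\tau_1} = 0$ reduces the question to giving a uniform lower bound on the probability that an $\SLE_\kappa(\ul{\rho})$ in $\h$ from $0$, with force points at $g_{\tau_1}(\ul{x}) - W_{\tau_1}$, avoids $(-\infty, V^L_{\tau_1} - W_{\tau_1}) \cup (V^R_{\tau_1} - W_{\tau_1}, \infty)$. Lemma~\ref{lem::images_of_zero} gives $|V^q_{\tau_1} - W_{\tau_1}| \in [C^1_\epsilon, C^2_\epsilon]$, and standard half-plane-capacity distortion estimates (using $\gamma \subset \closure{B(0,1)}$) give $|g_{\tau_1}(x^{i,q})| \leq C(M)$. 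So the configuration after reduction lives in a fixed compact subset of $\R$ depending only on $\epsilon$ and $M$.

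Next, fix a smooth tube $T \subset \h$ going from $0$ to $\infty$ at Euclidean distance at least $\delta = \delta(\epsilon, M) > 0$ from $\partial \h$ and from the fixed compact set of possible force point locations. On the event that the trace stays in $T$, the drift $\sum_i \rho^{i,q}/(W_t - V_t^{i,q})$ of the $\SLE_\kappa(\ul{\rho})$ SDE is uniformly bounded. This is the main technical point since the number of force points is a priori unbounded, and it is proved by Abel summation: rewriting the drift as the Stieltjes integral $\int dF^q(V)/(W_t - V_t)$ against the cumulative weight function $F^q$ (a step function satisfying $\|F^q\|_\infty \leq M$ and supported in a bounded interval) and then integrating by parts gives a bound depending only on $M$ and $\delta$, independent of the number of force points. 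By Girsanov, the Radon--Nikodym derivative of $\SLE_\kappa(\ul{\rho})$ relative to ordinary $\SLE_\kappa$ is then bounded above and below by constants depending only on $\epsilon$ and $M$ on the event that the trace stays in $T$ up to a large but finite capacity time $T_0 = T_0(\epsilon, M)$.

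For $\kappa \in (0, 4)$, ordinary $\SLE_\kappa$ has uniformly positive probability of its trace remaining in $T$ up to $T_0$ by the standard support statement for Brownian driving functions, so combined with the Girsanov bound, so does $\SLE_\kappa(\ul{\rho})$. Choose $T_0$ large enough that the trace has exited a ball $B(0, R_0)$ with $R_0 = R_0(\epsilon, M)$ large through its upper portion without hitting $\partial \h$. For the tail after $T_0$, one uses continuity of $\SLE_\kappa(\ul{\rho})$ in its parameters (\cite[Section~2]{MS_IMAG}) together with compactness of the parameter space (in the appropriate weak topology on the cumulative weight function $F^q$, which is uniformly bounded in sup norm). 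In the limit $R_0 \to \infty$, the continuation converges in law to a single-force-point $\SLE_\kappa(\ol{\rho}^L; \ol{\rho}^R)$ with $\ol{\rho}^q = \sum_i \rho^{i,q} \geq \tfrac{\kappa}{2}-2$, which by the non-hitting criterion (\cite[Lemma~5.2 and Remark~5.3]{MS_IMAG}) almost surely avoids $\partial \h$. Thus for $R_0$ sufficiently large depending only on $\epsilon$ and $M$, the continuation avoids $\partial \h$ with probability at least $1/2$, uniformly in the remaining configuration; multiplying the two uniform bounds yields the desired $p(\epsilon, M) > 0$.
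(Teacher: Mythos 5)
The main gap is at the final step, where the real content of the lemma lies. You assert that since the continuation after the tube phase ``converges in law'' (as $R_0\to\infty$, by continuity in the force point configuration and compactness of the weight data) to a two-force-point $\SLE_\kappa(\ol{\rho}^L;\ol{\rho}^R)$ process which almost surely avoids the boundary, the continuation must avoid $\partial \h$ with probability at least $1/2$, uniformly, once $R_0=R_0(\epsilon,M)$ is large. But weak convergence in the local uniform (or driving function) topology only controls the curve on compact capacity windows, while the event ``never hits $(-\infty,V_{\tau_1}^L]\cup[V_{\tau_1}^R,\infty)$'' constrains the whole infinite time horizon and is not a continuity set; almost sure avoidance for the limit process therefore yields no quantitative bound for the approximating processes.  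To close this one needs a uniform tail/transience estimate for $\SLE_\kappa(\ul{\rho})$ with clustered force points --- e.g.\ an absolute continuity comparison with the non-boundary-hitting two-force-point process together with uniform control of the Radon--Nikodym derivative --- and such an estimate is essentially the statement of the lemma itself, so as written the step is circular.  The appeal to ``continuity of the avoidance probability in the parameters'' plus compactness has the same problem: continuity of this particular functional is what has to be proved.

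There are secondary soft spots in the tube stage as well.  Boundedness of the drift on the event that the \emph{trace} stays in $T$ is not just Abel summation over the weights: you also need that confinement of the trace (to the tube and to a bounded ball, up to capacity time $T_0$) forces $|W_t-V_t^{q}|$ for the innermost force points to stay bounded below for all $t\le T_0$; this is a harmonic measure estimate in the spirit of Lemma~\ref{lem::images_of_zero} which is true here but is nowhere argued.  Moreover, the Girsanov Radon--Nikodym derivative is \emph{not} ``bounded above and below'' on the tube event: only its quadratic variation part is bounded there, while the stochastic integral has Gaussian tails, so the transfer of positive probability must go through moment or Jensen-type bounds rather than pointwise bounds (and the tube cannot literally stay at distance $\delta$ from $\partial\h$, since it must start at the seed).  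For contrast, the paper's proof is a single step: view $\eta$ as a flow line of a GFF on $\h\setminus\gamma$ and use absolute continuity of the GFF under a change of boundary data supported on a compact subset of $\R$ \cite[Proposition 3.2 and Remark 3.3]{MS_IMAG} to compare directly with the $\SLE_\kappa(\ol{\rho}^L;\ol{\rho}^R)$ process with force points at the seed, which does not hit the boundary and is transient \cite[Theorem 1.3]{MS_IMAG}.  Grafting that comparison into your argument would repair the tail step, but at that point the tube/Girsanov stage becomes unnecessary.
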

\begin{proof}
This can be seen by viewing $\eta$ as a flow line of a GFF on $\h \setminus \gamma$ and then invoking the absolute continuity properties of the GFF \cite[Proposition~3.4 and Remark~3.5]{MS_IMAG}.  In particular, if $\sigma_\delta$ is the first time $t$ that $\im(\eta(t)) = \delta$ for $\delta > 0$, then the law of $\eta|_{[0,\sigma_\delta]}$ is mutually absolutely continuous with respect to that of an $\SLE_\kappa(\ol{\rho}^L;\ol{\rho}^R)$ process with force points $\ol{\rho}^L,\ol{\rho}^R$ located at $0^-,0^+$, respectively, where $\ol{\rho}^q = \sum_i \rho^{i,q}$ for $q \in \{L,R\}$ stopped at the corresponding time.  Under the latter law, the probability of the event $\{\sigma_\delta = \infty\}$ converges to $1$ as $\delta \downarrow 0$ because such processes do not hit the boundary and are transient \cite[Theorem~1.3]{MS_IMAG}.
\end{proof}

Let $(\ol{\Omega},\ol{d})$ be the completion of the metric space $(\Omega,d)$.  Then $(\ol{\Omega},\ol{d})$ consists of all growth processes in $\ol{\h}$ targeted at $\infty$ which admit a continuous Loewner driving function \cite[Proposition~4.44]{LAW05} stopped upon first hitting $\partial B(0,1)$.

\begin{lemma}
\label{lem::weak_convergence}
Suppose that $\eta \in \Omega$.  For each $r \in (0,1)$, let $\tau_r$ be the first time that $\eta$ hits $\partial B(0,r)$ and let $\eta_r = \eta([0,\tau_r])$.  Let $\nu_r$ be the law of an $\SLE_\kappa(\rho^L;\rho^R)$ process in $\h \setminus \eta_r$ with $\rho^L,\rho^R > -2$ starting at the tip of $\eta_r$ and with force points located at $0^-$ and $0^+$ stopped upon exiting $B(0,1)$.  Let $\nu$ be the law of an $\SLE_\kappa(\rho^L;\rho^R)$ process in $\h$ from $0$ to $\infty$ with force points located at $0^-,0^+$, also stopped upon exiting $B(0,1)$.  Then $\nu_r$, viewed as a measure on $\ol{\Omega}$ equipped with the topology induced by $\ol{d}$, converges to $\nu$ weakly as $r \to 0$.
\end{lemma}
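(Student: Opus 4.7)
The plan is to transfer the question, via a conformal map, to a statement about continuity of $\SLE_\kappa(\rho^L;\rho^R)$ in its force-point configuration. Let $(g_t)$, $W_t$, and $V^L_t, V^R_t$ denote the Loewner maps, driving function, and images of $0^-,0^+$ under the Loewner flow of $\eta$ parameterized by capacity. Define $\psi_r := g_{\tau_r} - W_{\tau_r}$; this is a conformal map from $\h \setminus \eta_r$ onto $\h$ carrying the tip $\eta(\tau_r)$ to $0$ and carrying $0^-,0^+$ to $x^L_r := V^L_{\tau_r} - W_{\tau_r}$ and $x^R_r := V^R_{\tau_r} - W_{\tau_r}$. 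By conformal invariance, the pushforward $(\psi_r)_*\nu_r$ is the law $\widehat\nu_r$ of an $\SLE_\kappa(\rho^L;\rho^R)$ in $\h$ from $0$ to $\infty$ with force points at $x^L_r, x^R_r$, stopped upon first exiting the open set $U_r := \psi_r(B(0,1) \cap (\h \setminus \eta_r))$.

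I would then verify that the auxiliary quantities converge to their expected limits as $r \to 0$. Because $\tau_r = \hcap(\eta_r) \to 0$, we have $W_{\tau_r} \to 0$ and $\sup_{0 \leq t \leq \tau_r}|W_t| \to 0$; standard conformal distortion estimates (e.g.\ \cite[Theorem 3.20 and Corollary 3.23]{LAW05}) give $\psi_r \to \mathrm{id}$ uniformly on compact subsets of $\h$, so that $x^L_r \to 0^-$, $x^R_r \to 0^+$, and $U_r$ converges to $B(0,1) \cap \h$ in the Hausdorff sense on compact subsets of $\overline\h \setminus \{0\}$. Invoking the continuity of $\SLE_\kappa(\rho^L;\rho^R)$ laws in the starting force-point configuration established in \cite[Section 2]{MS_IMAG}, the laws $\widehat\nu_r$ converge weakly to $\nu$.

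Finally I would pass back to the original coordinates. For $\gamma_r \sim \nu_r$, additivity of the Loewner flow under composition implies that the driving function of the concatenation $\eta_r \cup \gamma_r$ equals $W_t$ on $[0,\tau_r]$ and $W_{\tau_r} + \widehat W_{t-\tau_r}$ on $[\tau_r, \tau_r + \widehat T_r]$, where $\widehat W$ and $\widehat T_r$ are the driving function and exit time of $\widehat\gamma_r = \psi_r(\gamma_r)$. Since $\sup_{0 \leq t \leq \tau_r}|W_t| \to 0$ and $W_{\tau_r} \to 0$ deterministically, while $(\widehat W, \widehat T_r)$ converges weakly to the corresponding driving data under $\nu$, the combined data $(W^r, T^r)$ for $\eta_r \cup \gamma_r$ converges jointly in law to that of a sample from $\nu$, yielding $\nu_r \to \nu$ in $\overline\Omega$.

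The main obstacle will be the continuity step, where one requires joint weak convergence of the $\widehat\gamma_r$ driving functions \emph{together with} their stopping times at $\partial U_r$, rather than convergence of unstopped paths alone. I would address this by combining the perturbation bounds on the $\SLE_\kappa(\ul\rho)$ law in initial data from \cite[Section 2]{MS_IMAG} with the observation that a sample from $\nu$ almost surely exits $B(0,1) \cap \h$ transversally at a point in $\partial B(0,1) \cap \h$ (Lemma~\ref{lem::images_of_zero} then ensures the relevant force-point displacements $|x^q_r - W|$ stay uniformly bounded away from zero at the exit, so that the drift in the $\SLE_\kappa(\rho^L;\rho^R)$ SDE for $\widehat W$ is controlled and exit times depend continuously on the driving function at the limiting law).
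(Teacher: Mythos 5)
Your argument is correct and is essentially the paper's own: the proof given there is simply a citation to the continuity results of \cite[Section 2]{MS_IMAG}, i.e.\ continuity of $\SLE_\kappa(\rho^L;\rho^R)$ in its force-point configuration, which is exactly the statement you reduce to after applying $g_{\tau_r}-W_{\tau_r}$ and handling the concatenation/stopping bookkeeping. Your final paragraph correctly identifies and addresses the only real technical point (joint convergence of the stopped driving data, using the almost surely non-degenerate exit from $B(0,1)$), so nothing essential is missing.
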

\begin{proof}
This is proved in \cite[Section~2]{MS_IMAG}.
\end{proof}

Lemma~\ref{lem::weak_convergence} holds more generally for $\SLE_\kappa(\ul{\rho})$ processes.  The reason that we introduced the space $\ol{\Omega}$ just before its statement was to allow for the possibility of boundary hitting curves, though in its application below we will only need to refer to the space $\Omega$.

\begin{proof}[Proof of Proposition~\ref{prop::conditionalunique}]
It suffices to show that the Gibbs property determines the law $\mu$ of $\eta([0,\tau])$ where $\tau$ is some almost surely positive stopping time (since the Gibbs property determines the law of $\eta|_{[\tau,\infty)}$ given $\eta([0,\tau])$).  By rescaling if necessary, we may assume without loss of generality that $[-1,1]$ is contained in the interior of $I$.  For concreteness, we take $\tau$ to be the first time that $\eta$ reaches $\partial B(0,1)$.  Fix $r \in (0,1)$, let $\tau_r$ be the first time that $\eta$ exits $B(0,r)$, and let $\eta_r = \eta([0,\tau_r])$.  Conditional on $\eta_r$, we let $\mu(\cdot;\eta_r)$ be the law of $\eta|_{[\tau_r,\tau]}$ given $\eta_r$.  Let $\refrho{\rho}^{1,L} = \kappa - 4 - \rho^{1,L} \geq \tfrac{\kappa}{2}-2$ and $\refrho{\rho}^{1,R} = \kappa-4-\rho^{1,R} \geq \tfrac{\kappa}{2}-2$ be the reflected values of $\rho^{1,L}$ and $\rho^{1,R}$, respectively.  We let $\nu(\cdot;\eta_r)$ be the law of an $\SLE_\kappa(\refrho{\rho}^{1,L};\refrho{\rho}^{1,R})$ process in $\h \setminus \eta_r$ starting at $\eta(\tau_r)$ with force points located at $0^-$ and $0^+$ stopped upon exiting $B(0,1)$.  We view $\mu(\cdot;\eta_r)$ and $\nu(\cdot;\eta_r)$ as probability measures on $\Omega$.

We are now going to compute $Z_r(\cdot) = d\mu(\cdot;\eta_r) / d\nu(\cdot;\eta_r)$, the Radon-Nikodym derivative of $\mu(\cdot;\eta_r)$ with respect to $\nu(\cdot;\eta_r)$.  We will do so by computing both $Z_r^\mu(\cdot) = d\mu(\cdot;\eta_r) / d\wt{\nu}(\cdot;\eta_r)$ and $Z_r^\nu(\cdot) = d\nu(\cdot;\eta_r) / d \wt{\nu}(\cdot;\eta_r)$ where $\wt{\nu}(\cdot;\eta_r)$ is the law of an $\SLE_\kappa(\rho^{1,L};\rho^{1,R})$ process in $\h \setminus \eta_r$ starting at $\eta(\tau_r)$ and stopped upon exiting $B(0,1)$, viewed as a probability on $\Omega$ by including the initial segment $\eta_r$.  Recall from Proposition~\ref{prop::mtweight} that we know how to weight an $\SLE_\kappa(\rho^{1,L}; \rho^{1,R})$ process by a martingale in a way that makes the path non-boundary intersecting and amounts to changing (one or both of the) $\rho^{1,q}$ to $\refrho{\rho}^{1,q}$.  That is, $Z_r^\nu = \tfrac{M_{\tau_1}}{M_{\tau_r}}$ where $M_t = M_t(\gamma)$ is the functional associated with the martingale described in~\eqref{eq::Mtdef} (since $\gamma \in \Omega$, it follows that the quantities $W_t,V_t^L,V_t^R$ in the definition of $M_t(\gamma)$ make sense for $\gamma$).  Note that $M_{\tau_r}(\gamma)$ depends only on $\gamma$ until it first hits $\partial B(0,r)$.  We can write $Z_r^\mu = c_r^\mu P \tfrac{F_{\tau_1}}{F_{\tau_r}}$ where
\begin{enumerate}
 \item $c_r^\mu$ is a normalizing constant,
 \item $F_t = F_t(\gamma)$ is the martingale which weights an $\SLE_\kappa(\rho^L;\rho^R)$ process in $\h$ starting at $0$ and targeted at $\infty$ with force points located at $0^-,0^+$ to yield the law of an $\SLE_\kappa(\ul{\rho}^{L};\ul{\rho}^{R})$ process in $\h$ with force points located at $\ul{x}$ \cite{SW05}, and
 \item $P = P(\gamma)$ is the probability that an $\SLE_\kappa(\ul{\rho}^L;\ul{\rho}^R)$ process in $\h \setminus \gamma$ starting at $\gamma(\tau_1)$ and with force points located at $\ul{x}$ makes it to $\infty$ without otherwise hitting $\partial \h$ (in particular, $P(\gamma) = 0$ if $\gamma$ hits $\partial \h$).
 \end{enumerate}
Therefore
\[ Z_r = c_r \wt{Z} \quad\text{where}\quad \wt{Z} = \frac{F_{\tau_1} P}{ M_{\tau_1}}\]
where $c_r$ is a normalizing constant which depends only on $\eta_r$.  We emphasize that $F_{\tau_1}$, $M_{\tau_1}$, and $P$ are functions $\Omega \to (0,\infty)$ which do not depend on $r$, so that $\wt{Z}$ does not depend on $r$.  Our goal now is to take a limit as $r \to 0$ to argue that $d\mu/d\nu = Z$ where $Z = c \wt{Z}$ for some constant $c > 0$.  In order to do so, we are going to argue that $\wt{Z}$ is continuous on $(\Omega,d)$, that $\wt{Z}$ is positive and bounded on $\Omega_\epsilon$, and that $c_r$ is bounded as $r \to 0$ (explained in the next three paragraphs).

We begin by showing that $\wt{Z}$ is a continuous function on $(\Omega,d)$ by observing that $M_{\tau_1}$, $F_{\tau_1}$, and $P$ are each individually continuous.  Indeed, it is clear that $\gamma \mapsto M_{\tau_1}(\gamma)$ is a continuous because Lemma~\ref{lem::x_differential} implies that $V_t^q$ for $q \in \{L,R\}$ is given by the Loewner flow associated with $\gamma$, which is a continuous function of the Loewner driving function $W$ of $\gamma$.  That $F_{\tau_1}$ is continuous follows from the same argument: all the terms in the formula for $F_{\tau_1}$ \cite{SW05} can all be expressed in terms of the Loewner flow, so continuously depend on $W$.  We now turn to argue that $P$ is continuous on $(\Omega,d)$.  Fix $\gamma \in \Omega$ and let $g$ be the conformal map which takes $\h \setminus \gamma$ to $\h$ with $g(z) = z + o(1)$ as $z \to \infty$.  Then $P(\gamma)$ is equal to the probability that an $\SLE_\kappa(\ul{\rho})$ process in $\h$ with force points located at $\ul{y} = g(\ul{x})$ makes it to $\infty$ without otherwise hitting $\partial \h$.  Let $W$ be the Loewner driving function of $\gamma$.  Then we know from \cite[Proposition~4.43]{MS_IMAG} that $g$ depends cotinuously on $W$ hence on $\gamma$, so we just need to argue that this probability depends continuously on $\ul{y}$.  This in turn can be seen through the coupling of $\SLE_\kappa(\ul{\rho})$ with the GFF: jiggling the force points $\ul{y}$ corresponds to jiggling the boundary data of the field, which affects its law continuously away from $\ul{y}$ (see \cite[Proposition~3.4 and Remark~3.5]{MS_IMAG} as well as Section~\ref{sec::multiple_force_points}).

Next, we are going to argue that there exists positive and finite constants $C_\epsilon^1 \leq C_\epsilon^2$ which only depend on $\epsilon > 0$ such that
\[ C_\epsilon^1 \leq \wt{Z}(\gamma) \leq C_\epsilon^2 \quad\text{for all}\quad \gamma \in \Omega_\epsilon.\]
Observe that on $\Omega_\epsilon$, $F_{\tau_1}$ is bounded between two positive and finite constants $C_\epsilon^1 \leq C_\epsilon^2$ (this is clear from the explicit formula for $F_{\tau_1}$ given in \cite{SW05}; it is also possible to see this using the GFF discussion in Section~\ref{sec::multiple_force_points}).  By possibly decreasing $C_\epsilon^1$ and increasing $C_\epsilon^2$, Lemma~\ref{lem::images_of_zero} implies that $M_{\tau_1}$ is bounded from below and above by $C_\epsilon^1$ and $C_\epsilon^2$, respectively, on $\Omega_\epsilon$.  Lemma~\ref{lem::prob_escape_infinity} gives the corresponding bound for $P$.  Hence, by possibly again decreasing $C_\epsilon^1$ and increasing $C_\epsilon^2$, the same holds for $\wt{Z}$ (in particular, the bounds do not depend on either $\eta_r$ or $r$).

Finally, we will show that $c_r$ has a limit $c$ as $r \to 0$ and simultaneously complete the proof.  Note that, for each $\epsilon > 0$ and $r \geq 0$, $\Omega_\epsilon$ is a continuity set for $\nu(\cdot;\eta_r)$.  This means that $\nu(\partial \Omega_\epsilon;\eta_r) = 0$; the reason for this is that $\partial \Omega_\epsilon$ consists of those paths in $\Omega$ which terminate at either of the two points in $\h \cap \partial B(0,1)$ with distance $\epsilon$ to $\partial \h$ and the probability that an $\SLE_\kappa(\ul{\rho})$ process exits $\h \cap \partial B(0,1)$ at a particular (interior) point is zero.  Thus since $\nu(\cdot;\eta_r) \to \nu(\cdot)$ weakly (Lemma~\ref{lem::weak_convergence}), the Portmanteau theorem implies that we have that $\nu(\Omega_\epsilon;\eta_r) \to \nu(\Omega_\epsilon)$ as $r \downarrow 0$.  Since $\nu(\Omega) = 1$ and $\Omega = \cup_{\epsilon > 0} \Omega_\epsilon$, it thus follows that for each $\delta > 0$ there exists $\epsilon_0 = \epsilon_0(\delta) > 0$ such that $\nu(\Omega_\epsilon;\eta_r) \geq 1-\delta$ for every $r \in (0,\tfrac{1}{2})$ and $\epsilon \in (0,\epsilon_0)$.  Let $D$ be the function as in Lemma~\ref{lem::distance_continuous} and let $G_\epsilon \colon \Omega \to [0,\infty)$ be the function which is equal to $0$ if $D(\gamma) \leq \epsilon$, equal to $1$ if $D(\gamma) \geq 2\epsilon$, and given by linearly interpolating between $0$ and $1$ for $D(\gamma) \in [\epsilon,2\epsilon]$.  Since $G_\epsilon$ can be expressed as a composition of a continuous function $[0,\infty) \to [0,1]$ with $D$, Lemma~\ref{lem::distance_continuous} implies that it is a bounded, continuous function on $(\Omega,d)$. Thus since
\begin{equation}
\label{eqn::rn_deriv}
 G_\epsilon d\mu(\cdot;\eta_r) = c_r \wt{Z} G_\epsilon d\nu(\cdot;\eta_r)
\end{equation}
it follows that
\[ c_r C_\epsilon^1(1-\delta) \leq \mu(\Omega_\epsilon;\eta_r) \leq 1\]
for each $\delta > 0$, all $r \in (0,\tfrac{1}{2})$, and all $\epsilon \in (0,\epsilon_0(\delta))$.  Therefore there exists $Y < \infty$ non-random which does not depend on $r$ such that $c_r \leq Y$ for all $r \in (0,\tfrac{1}{2})$.  Consequently, there exists a sequence $(r_k)$ in $(0,\tfrac{1}{2})$ so that $\lim_{k \to \infty} r_k = 0$ and $\lim_{k \to \infty} c_{r_k} = c \leq Y$, almost surely for $\mu$.  Using~\eqref{eqn::rn_deriv} and that $\nu(\cdot;\eta_r) \to \nu(\cdot)$ as $r \to 0$ (Lemma~\ref{lem::weak_convergence}), we have that $G_\epsilon d\mu = Z G_\epsilon d\nu$ where $Z = c \wt{Z}$.  Since this holds for any $\epsilon > 0$, we see that $d\mu/d\nu = Z$.  Therefore any measure satisfying the resampling property must be an $\SLE_\kappa(\refrho \rho^{1,L};\refrho \rho^{1,R})$ process weighted by $Z$, which completes the proof.
\end{proof}

\begin{remark}
\label{rem::cond_not_hit_existence}  We remark that the proof of Proposition~\ref{prop::conditionalunique} does not imply the existence of $\SLE_\kappa(\ul{\rho})$ conditioned not to hit the boundary.  Rather, the proof shows that \emph{if this process exists}, then the Radon-Nikodym derivative of its initial stub with respect to the law of the corresponding stub of an $\SLE_\kappa(\refrho{\rho}^{1,L};\refrho{\rho}^{1,R})$ process has to take a specific form.  Namely, it must be by $\tilde Z$, times a normalizing constant.  In order to establish existence, one must show that the $\nu$ expectation of $\tilde Z$ is finite, so that an appropriate choice of normalizing constant indeed produces a probability measure.
One way to establish this would be to show that the $c_r$ of~\eqref{eqn::rn_deriv} converges to a positive constant as $r \to 0$.
\end{remark}

\subsection{A pair of paths avoiding each other}

\begin{figure}[ht!]
\begin{center}
\includegraphics[scale=0.85]{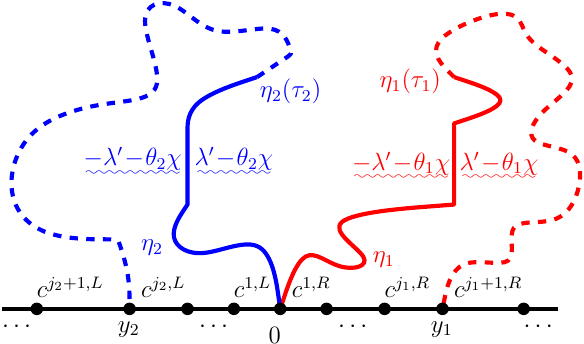}
\end{center}
\caption{\label{fig::two_paths_gibbs} Suppose that $y_2 < 0 < y_1$.  In Proposition~\ref{prop::conditionalunique2}, we prove that there exists at most one measure on pairs of non-intersecting paths $(\eta_1,\eta_2)$ in $\h$ where $\eta_i$ connects $0$ to $y_i$ for $i=1,2$ which satisfies the following Gibbs property.  Suppose that $\tau_i$ for $i=1,2$ is a positive and finite stopping time for $\eta_i$.  Then the conditional law of $(\eta_1,\eta_2)$ given $(\eta_1([0,\tau_1]), \eta_2([0,\tau_2]))$ is given by the flow lines of a GFF $h$ on $\h \setminus (\eta_1([0,\tau_1]) \cup \eta_2([0,\tau_2]))$ whose boundary data is as depicted above starting at $\eta_i(\tau_i)$ and with angle $\theta_i$, $i=1,2$, conditioned on the positive probability event that they do not hit each other and terminate at $y_1,y_2$, respectively.  We assume that $\theta_2-\theta_1 < 2\lambda'/\chi$ so that $\eta_1$ and $\eta_2$ can hit each other with positive probability (recall Figure~\ref{fig::hittingrange}).}
\end{figure}

\begin{figure}[ht!]
\begin{center}
\includegraphics[scale=0.85]{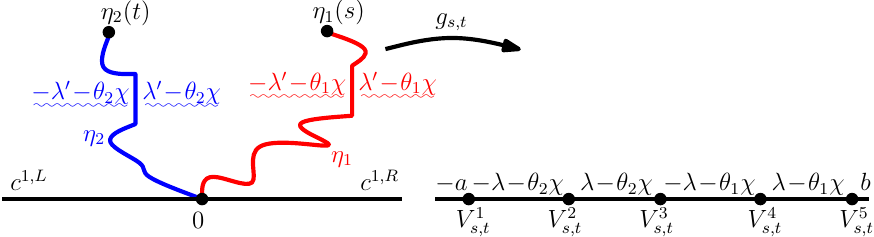}
\end{center}
\caption{\label{fig::two_path_martingale}
Suppose that $h$ is a GFF on $\h$ whose boundary data is as depicted in the left side (this can be viewed as the change of coordinates of a simplified version of the setup described in Figure~\ref{fig::two_paths_gibbs}).  For angles $\theta_1,\theta_2$, we let $\eta_i$ be the flow line of $h$ starting at $0$ with angle $\theta_i$, $i=1,2$.  Assume that $\theta_1,\theta_2$ are chosen so that $\eta_1$ can hit $\eta_2$, i.e.\ $\theta_2-\theta_1 < 2\lambda'/\chi$ (recall Figure~\ref{fig::hittingrange}).  Let $g_{s,t}$ be the conformal map which takes the unbounded connected component of $\h \setminus (\eta_1([0,s]) \cup \eta_2([0,t]))$ back to $\h$ satisfying $g_{s,t}(z) = z+o(1)$ as $z \to \infty$.  Let $V_{s,t}^i$ for $i \in \{1,\ldots,5\}$ be the image points shown.  Let $\rho_i$ be such that as one traces $\R$ from left to right, the heights in the right figure jump by $\rho_i \lambda$ at the points $V_{s,t}^i$.  Proposition~\ref{prop::mtweight3} implies that if we are given the paths in the figure up to some positive stopping times and then we generate their continuations as flow lines, then the product $M_{s,t} := \prod_{j \not = 3} |V_{s,t}^j - V_{s,t}^3|^{(\refrho{\rho}_3 - \rho_3)\rho_j }$ evolves as a martingale in each of $s$ and $t$ separately.  Moreover, Proposition~\ref{prop::mtweight3} implies that reweighting the law of the pair of paths $(\eta_1,\eta_2)$ by $M_{s,t}$ yields the law of a new pair of paths which do not intersect each other.  When one path is fixed the evolution of the other in the weighted law is the same as in the unweighted law except with $\rho_3$ replaced by $\refrho{\rho}_3=\kappa-4-\rho_3$.  This new pair of paths can be constructed as flow lines of a GFF with modified boundary data (see Figure~\ref{fig::two_path_martingale}).
}
\end{figure}

\begin{figure}[ht!]
\begin{center}
\includegraphics[scale=0.85]{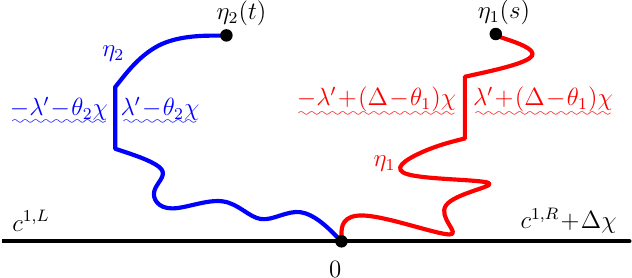}
\end{center}
\caption{\label{fig::two_path_gff_rep} (Continuation of Figure~\ref{fig::two_path_martingale}.)  Here, we will describe the law of the pair of paths from Figure~\ref{fig::two_path_martingale} after reweighting by $M_{s,t}$ as GFF flow lines.  Suppose that $h$ is a GFF on $\h$ whose boundary data is as depicted above.  Let $\eta_1$ be the flow line of $h$ with angle $\theta_1-\Delta$ and let $\eta_2$ be the flow line of $h$ with angle $\theta_2$, both starting from $0$.  Assume, as in Figure~\ref{fig::two_path_martingale}, that $\theta_2-\theta_1 < 2\lambda'/\chi$.  We choose $\Delta$ as follows.  First, we take $\rho_3 \lambda = (\theta_2-\theta_1)\chi - 2\lambda$ (recall the right side of Figure~\ref{fig::two_path_martingale}), then take $\refrho{\rho}_3 = \kappa-4-\rho_3$ so that $\refrho{\rho}_3\lambda = -2\pi\chi - \rho_3\lambda$, and then set $\Delta = (\refrho{\rho}_3-\rho_3)\lambda/\chi$.  Explicitly, $\Delta = 4\lambda/\chi+ 2(\theta_1-\theta_2-\pi)$.  Then the law of the pair $(\eta_1,\eta_2)$ is equal to the law of the corresponding pair from Figure~\ref{fig::two_path_martingale} after weighting by $M_{s,t}$ (and taking a limit as the initial segments tend to $0$).}
\end{figure}

\begin{figure}[ht!]
\begin{center}
\includegraphics[scale=0.85]{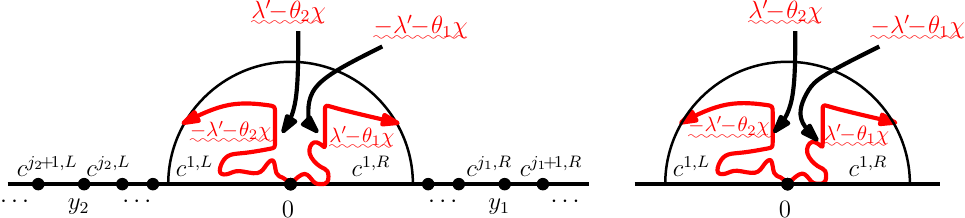}
\end{center}
\caption{\label{fig::two_half_discs2}
The strategy to prove Proposition~\ref{prop::conditionalunique2} is similar to that used to prove Proposition~\ref{prop::conditionalunique}.  In particular, we will show that the law of the pair of paths $(\eta_1,\eta_2)$ stopped at times $\tau_1,\tau_2$ (left side) is mutually absolutely continuous with respect to the another pair $(\wt{\eta}_1,\wt{\eta}_2)$ stopped at the corresponding times (right side) and simultaneously identify the Radon-Nikodym derivative explicitly.  For concreteness, we take $\tau_1,\tau_2$ to be the first time that $\eta_1,\eta_2$ exit $B(0,1)$, respectively.  We take $\wt{\eta}_1,\wt{\eta}_2$ to be the flow lines of the GFF whose boundary data is on the right side with angles $\theta_1,\theta_2$, respectively, conditioned to avoid each other (see Figure~\ref{fig::two_path_martingale} and Figure~\ref{fig::two_path_gff_rep}).  In the right figure, given initial segments of the paths we can accomplish the conditioning by reweighing by the martingale $M$ described in Figure~\ref{fig::two_path_martingale}.  In analogy with the proof of Proposition~\ref{prop::conditionalunique}, this means that the Radon-Nikodym derivative takes the form $FP/M$ where $F$ accounts for the change in boundary data of the GFFs and $P$ is the conditional probability that the paths make it to their target point without touching each other.}
\end{figure}

We will now present Proposition~\ref{prop::conditionalunique2}, which is a version of Proposition~\ref{prop::conditionalunique} but instead of having one path that we condition not to hit the boundary, we have two paths that we condition not to hit each other (see Figure~\ref{fig::two_paths_gibbs}).  The proof is very similar to that of Proposition~\ref{prop::conditionalunique}.  Since we will reference a modification of this result at the end of Section~\ref{sec::multiple_force_points}, we are careful to spell out the necessary modifications.

\begin{proposition}
\label{prop::conditionalunique2}
Fix points $\ul{x} = (\ul{x}^L;\ul{x}^R)$ in $\partial \h$ and constants $\ul{c} = (\ul{c}^L;\ul{c}^R)$.  We assume that $x^{1,L} = 0^-$, $x^{1,R} = 0^+$, $|\ul{x}^L| = |\ul{c}^L|$, and $|\ul{x}^R| = |\ul{c}^R|$.  Suppose that $y_1 = x^{j_1+1,R}$ and $y_2 = x^{j_2+1,L}$ for some $0 \leq j_1 \leq |\ul{x}^R|-1$ and $0 \leq j_2 \leq |\ul{x}^L|-1$.  There exists at most one measure on pairs of non-intersecting paths $(\eta_1,\eta_2)$ where $\eta_i$ connects $0$ to $y_i$ for $i=1,2$ such that for every pair of positive stopping times $\tau_1,\tau_2$ for $\eta_1,\eta_2$, respectively, the following is true.  The law of the pair $(\eta_1,\eta_2)$ conditional on $\eta_1([0,\tau_1])$ and $\eta_2([0,\tau_2])$ is given by the flow lines of the GFF $h$ on $\h \setminus (\eta_1([0,\tau_1]) \cup \eta_2([0,\tau_2]))$ starting at $\eta_i(\tau_i)$, $i=1,2$, with angles $\theta_i$ satisfying $\theta_2-\theta_1 < 2\lambda'/\chi$, whose boundary data is as depicted in Figure~\ref{fig::two_paths_gibbs} conditioned on the positive probability event that they do not hit each other and terminate at $y_1$, $y_2$, respectively.
\end{proposition}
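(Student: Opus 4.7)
The plan is to mirror closely the proof of Proposition~\ref{prop::conditionalunique}. Since the Gibbs property propagates, it suffices to determine the joint law $\mu$ of the stopped pair $(\eta_1([0,\tau_1]),\eta_2([0,\tau_2]))$ for one convenient choice of positive stopping times; I take $\tau_i$ to be the first time $\eta_i$ exits $B(0,1)$, after rescaling so that $B(0,1)$ contains no force points other than those at $0^\pm$.

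As a reference measure I take the joint law $\nu$ of the non-intersecting flow-line pair from Figure~\ref{fig::two_path_gff_rep}, stopped at $\tau_1,\tau_2$; this is a well-defined measure by the imaginary geometry construction of \cite{MS_IMAG}. Let $\wt\nu$ be the joint law of a pair of flow lines of the unmodified GFF started at $0^\pm$ with angles $\theta_1,\theta_2$ (with no conditioning that they avoid each other), also stopped at the $\tau_i$. By Proposition~\ref{prop::mtweight3} applied to the two-curve Loewner evolution, $d\nu/d\wt\nu$ equals the martingale $M=M_{\tau_1,\tau_2}$ depicted in Figure~\ref{fig::two_path_martingale}. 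On the other hand, the Gibbs hypothesis says that $d\mu/d\wt\nu$ equals a normalizing constant times $F\cdot P$, where $F$ is the martingale from \cite{SW05} that converts the immediate boundary data $(-a,b,c,-d)$ in Figure~\ref{fig::two_path_martingale} to the full force-point configuration of Figure~\ref{fig::two_paths_gibbs}, and $P$ is the (positive) conditional probability that the continuation of the pair reaches $(y_1,y_2)$ without the paths ever touching. Taking the ratio,
\[
\frac{d\mu}{d\nu} \;=\; c\cdot \wt Z, \qquad \wt Z \;:=\; \frac{F\cdot P}{M},
\]
where $c$ is a constant depending only on the initial stubs.

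The heart of the proof is then to show that this identification really determines $\mu$. Let $\Omega$ denote the space of pairs of disjoint simple paths from $0$ to $\partial B(0,1)$ that avoid $\partial\h\setminus\{0\}$, metrized by the sup distance of joint Loewner driving functions, and let $\Omega_\epsilon\subset\Omega$ consist of pairs in which each of the two tips lies at distance at least $\epsilon$ from $\partial\h$ and from the opposite path. I need to check that each of the three factors in $\wt Z$ is continuous on $\Omega$ and is bounded above and below by positive constants on $\Omega_\epsilon$. The factor $M$ is handled exactly as in Lemma~\ref{lem::images_of_zero}, replacing the one-curve harmonic measure estimate by its analogue in the complement of a two-curve union, using \cite[Theorem 3.20 and Corollary 3.23]{LAW05}; the factor $F$ is handled by the same estimates applied to the explicit product formula of \cite{SW05}. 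With these in hand, the final paragraph of the proof of Proposition~\ref{prop::conditionalunique} transfers verbatim: I approximate by the conditional measures $\nu_r$ obtained by first evolving both paths to $\partial B(0,r)$, use the weak convergence $\nu_r\to\nu$ as $r\to 0$ (the two-path analogue of Lemma~\ref{lem::weak_convergence}, proved in \cite[Section 2]{MS_IMAG}), and apply the Portmanteau theorem to a bounded continuous test function adapted to $\Omega_\epsilon$ to conclude that the normalizing constants $c_r$ stay bounded. A subsequential limit yields $d\mu/d\nu = c\,\wt Z$, uniquely determining $\mu$.

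The main obstacle is the two-path positivity estimate for $P$, which is the analogue of Lemma~\ref{lem::prob_escape_infinity}. In the one-path setting one only had to keep a single curve away from $\partial\h$, which was achieved via a scalar lower bound on the imaginary part of the trace. Here I must simultaneously keep the two flow-line continuations away from $\partial\h$ and from each other as they proceed to distinct targets $y_1,y_2$. The natural approach is: for any configuration in $\Omega_\epsilon$, construct a deterministic reference pair of trajectories that stays separated from the boundary and from each other all the way to $(y_1,y_2)$, then transfer positive probability to this configuration via the GFF absolute continuity estimates \cite[Proposition 3.2 and Remark 3.3]{MS_IMAG}. Extra care is required near the endpoints $y_1,y_2$, where one path terminates while the other continues past, so that the boundary data of the conditional GFF is close to the hitting thresholds described in Figures~\ref{fig::hittingrange}--\ref{fig::hittingsinglepoint}; the condition $\theta_2-\theta_1<2\lambda'/\chi$ is precisely what makes the event of non-intersection have positive probability, and one needs a uniform lower bound on this probability across $\Omega_\epsilon$.
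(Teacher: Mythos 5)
Your overall strategy is the same as the paper's (factor the Radon--Nikodym derivative as $\wt Z = FP/M$ against the Figure~\ref{fig::two_path_gff_rep} reference pair, approximate by the conditional laws given initial segments, use weak convergence and Portmanteau to bound the normalizing constants, and pass to the limit), but there is a genuine gap coming from the choice of state space. You define $\Omega$ to consist of pairs of paths that avoid $\partial\h\setminus\{0\}$ and define $\Omega_\epsilon$ by requiring the tips to be far from $\partial\h$, i.e.\ you have transplanted the state space from the one-path Proposition~\ref{prop::conditionalunique}. In the present setting, however, the conditioning is only that the two paths avoid \emph{each other} and terminate at $y_1,y_2$; nothing prevents them from touching $\partial\h$, and for admissible boundary data (e.g.\ $c^{1,R}+\theta_1\chi\in(-\lambda,\lambda)$, or data near the targets in the hitting range of Figure~\ref{fig::hittingsinglepoint}) the paths hit $\partial\h$ inside $B(0,1)$ with positive probability --- in some configurations they are even forced to bounce off boundary intervals before reaching $y_1,y_2$. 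Consequently $\mu$ and $\nu$ are not supported on your $\Omega$, the exhaustion $\Omega=\cup_\epsilon\Omega_\epsilon$ and the bound $\nu(\Omega_\epsilon;\cdot)\geq 1-\delta$ used to control $c_r$ fail, and your limiting identity $d\mu/d\nu=c\,\wt Z$ is only established on the boundary-avoiding event, which does not determine $\mu$ on the rest of its support. The paper is explicit on this point: its space $\Gamma$ allows $\gamma_i$ to hit $\partial\h$ (in contrast to the one-path case), and $\Gamma_\epsilon$ is defined by the tip-to-tip distance alone.

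The same misconception infects your treatment of what you call the main obstacle, the positivity of $P$ on $\Omega_\epsilon$. You propose to force both continuations to stay away from $\partial\h$ and from each other along a deterministic corridor; but $P$ is the probability that the continuations avoid \emph{each other} and terminate at $y_1,y_2$, and in the configurations just mentioned the sub-event ``stay away from $\partial\h$'' has probability zero, so your corridor argument yields only the trivial bound. The correct route (and the one the paper takes, by quoting the proof of Lemma~\ref{lem::prob_escape_infinity}) is to bound below the probability of the actual event, allowing whatever boundary bouncing the boundary data dictates: compare, via the GFF absolute continuity statements \cite[Proposition 3.2 and Remark 3.3]{MS_IMAG}, the conditional field in the complement of the stubs with one whose boundary data makes the two continuations almost surely mutually avoiding and transient toward $y_1,y_2$, with the comparison uniform over $\Gamma_\epsilon$ because the tips are $\epsilon$-separated. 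With the state space corrected to the paper's $\Gamma$ and the positivity estimate rephrased this way, the rest of your argument (boundedness of $M$ and $F$ from tip separation, weak convergence of the conditional laws, the continuity-set/Portmanteau step, and the bound on $c_r$) does go through as you describe.
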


We note that Proposition~\ref{prop::conditionalunique2} only has content in the case that the boundary data of $h$ is such that conditional on the initial segments of $\eta_1,\eta_2$, the corresponding flow lines of $h$ terminate at $y_1$ and $y_2$ with positive probability.  This translates into the following requirements (see Figure~\ref{fig::hittingrange}, Figure~\ref{fig::hittingsinglepoint}, and~\eqref{eqn::ac_eq_rel}):
\begin{enumerate}
\item $c^{j_2,L} + \theta_2 \chi < \lambda$ ($\eta_2$ does not hit the continuation threshold upon hitting $[x^{j_2,L},x^{j_2-1,L})$),
\item $c^{j_2+1,L} + \theta_2 \chi > -\lambda + 2\pi \chi$ (upon hitting $[x^{j_2+1,L},x^{j_2,L})$, $\eta_2$ can be continued towards $y_2$)
\item $c^{j_1,R} + \theta_1 \chi > -\lambda$ ($\eta_1$ does not hit the continuation threshold upon hitting $[x^{j_1-1,R},x^{j_1,R})$), and
\item $c^{j_1+1,R} + \theta_1 \chi < \lambda - 2\pi \chi$ (upon hitting $[x^{j_1,R},x^{j_1+1,R})$, $\eta_1$ can be continued towards $y_1$)
\end{enumerate}

\begin{proof}[Proof of Proposition~\ref{prop::conditionalunique2}]
We let $\Gamma$ be the set which consists of the pairs $(\gamma_1,\gamma_2)$ where $\gamma_i$ is a simple, continuous, path in $\ol{\h}$ which connects $0$ to $\partial B(0,1)$ for $i=1,2$, modulo reparameterization.  We also assume that $\gamma_1$ intersects $\gamma_2$ only at $0$ (though $\gamma_i$ may hit $\partial \h$, in contrast to the space $\Omega$ considered in the previous subsection).  We note that $\Gamma \subseteq \ol{\Omega}^2$ where $\ol{\Omega}$ is the completion of $\Omega$ under $d$, as in the previous subsection; we equip $\Gamma$ with the product metric induced by $\ol{\Omega}^2$, which we will also denote by $d$.  Let $\mu$ be a law on pairs $(\eta_1,\eta_2) \in \Gamma$ as described in the statement of the proposition, where the paths are stopped upon exiting $B(0,1)$, and let $\nu$ be the measure on pairs of paths as described in Figure~\ref{fig::two_path_gff_rep} (or Figure~\ref{fig::two_path_martingale} after weighting by the martingale), also stopped upon exiting $B(0,1)$.  As in the proof of Proposition~\ref{prop::conditionalunique}, we are going to prove the result by explicitly showing that $\mu$ is absolutely continuous with respect to $\nu$ and, at the same time, explicitly identifying the Radon-Nikodym derivative $d\mu/d\nu$.

Let $D \colon \Gamma \to \R$ be the function where $D(\gamma_1,\gamma_2)$ is the distance between the tips of the paths $\gamma_1,\gamma_2$ in $\partial B(0,1)$.  Then, using the same proof as in Lemma~\ref{lem::distance_continuous} (uniform convergence of Loewner driving functions implies the convergence of the tips of the paths on $\partial B(0,1)$), we see that $D$ is a continuous function on $(\Gamma,d)$.  Consequently, for each $\epsilon >0$ the set $\Gamma_\epsilon = \{ (\gamma_1,\gamma_2) \in \Gamma : D(\gamma_1,\gamma_2) \geq \epsilon\}$ is a closed subset of $\Gamma$ and $\partial \Gamma_\epsilon$ consists of the set of pairs of paths whose tips on $\partial B(0,1)$ have distance exactly $\epsilon$.  Let $M_{s,t} = M_{s,t}(\gamma_1,\gamma_2)$ be the functional on pairs in $\Gamma$ as defined in Figure~\ref{fig::two_path_martingale}.  The same proof as Lemma~\ref{lem::images_of_zero} implies that there exists positive and finite constants $C_\epsilon^1 \leq C_\epsilon^2$ such that $C_\epsilon^1 \leq M_{\tau_{1,1},\tau_{1,2}}(\gamma_1,\gamma_2) \leq C_\epsilon^2$ for all $(\gamma_1,\gamma_2) \in \Gamma_\epsilon$ where $\tau_{r,i}$ is the first time that $\gamma_i$ hits $\partial B(0,r)$ for $r > 0$ (simple Brownian motion estimates imply that the image points under $g_{\tau_{1,1},\tau_{1,2}}$ have uniformly positive and finite distance from each other for each fixed $\epsilon > 0$).  We let $P \colon \Gamma \to [0,1]$ be the probability that the pair of flow lines $(\eta_1,\eta_2)$ of a GFF in $\h \setminus (\gamma_1 \cup \gamma_2)$ with boundary data as described in the statement of the proposition starting at the tips of $\gamma_1,\gamma_2$ with angles $\theta_1,\theta_2$, respectively, terminate at $y_1,y_2$ without hitting each other.  Then the same proof as Lemma~\ref{lem::prob_escape_infinity} implies that there exists $p > 0$ which depends only on $\epsilon > 0$ and $\ul{x}$ such that $P|_{\Gamma_\epsilon} \geq p$.  Finally, we define $F_{s,t} = F_{s,t}(\gamma_1,\gamma_2)$ to be the two-sided martingale which reweights a pair $(\eta_1,\eta_2)$ of flow lines of a GFF on $\h$ with boundary data which is constant $c^{1,L}$ to the left of $0$ and constant $c^{1,R}$ to the right of $0$ with angles $\theta_1,\theta_2$ (as in the statement of the proposition) to the corresponding pair of flow lines of a GFF whose boundary data is as in the statement of the proposition (see Figure~\ref{fig::two_path_martingale} for a similar martingale).  By adjusting $C_\epsilon^1 \leq C_\epsilon^2$ if necessary, we also have that $C_\epsilon^1 \leq F_{\tau_{1,1},\tau_{1,2}} \leq C_\epsilon^2$ on $\Gamma_\epsilon$ (this can be seen from the explicit form of $F$; it can also be seen via the GFF as in Section~\ref{sec::multiple_force_points}).

Assume that $(\eta_1,\eta_2) \sim \mu$.  Let $\eta_{r,i} = \eta_i([0,\tau_{r,i}])$ where $\tau_{r,i}$ is as defined in the previous paragraph.  Let $\mu(\cdot;\eta_{r,1},\eta_{r,2})$ denote the conditional law of $(\eta_1,\eta_2)$ stopped upon exiting $B(0,1)$ given $\eta_{r,1}$ and $\eta_{r,2}$.  Let $\nu(\cdot;\eta_{r,1},\eta_{r,2})$ denote the law of the pair of flow lines as described in the right side of Figure~\ref{fig::two_half_discs2} given that their initial segments up to hitting $\partial B(0,r)$ are given by $\eta_{r,1},\eta_{r,2}$, respectively (i.e., conditioned not to hit by weighting by the martingale $M_{s,t}$).  The same argument as was used in the proof of Proposition~\ref{prop::conditionalunique} implies that
\[ Z_r := \frac{d\mu(\cdot;\eta_{r,1},\eta_{r,2})}{d\nu(\cdot;\eta_{r,1},\eta_{r,2})} = c_r \wt{Z} \quad\text{where}\quad \wt{Z} = \frac{F_{\tau_{1,1},\tau_{1,2}} P}{M_{\tau_{1,1},\tau_{1,2}}}\]
where $c_r$ is a normalizing constant.  The reference measure $\wt{\nu}(\cdot;\eta_{r,1},\eta_{r,2})$ that we use in this computation is given by the flow lines of a GFF on $\h \setminus (\eta_{r,1} \cup \eta_{r,2})$ with constant boundary data $c^{1,L},c^{1,R}$ to the left and right of $0$, respectively, stopped upon exiting $\partial B(0,1)$ though without any conditioning (see the right side of Figure~\ref{fig::two_half_discs2}).

As in the proof of Lemma~\ref{lem::weak_convergence}, the results of \cite[Section~2]{MS_IMAG} also imply that $\nu(\cdot;\eta_{r,1},\eta_{r,2}) \to \nu(\cdot)$ weakly as $r \downarrow 0$.  The reason is that these results imply that we have the desired weak convergence for the law of $\eta_1$ given $(\eta_{r,1},\eta_{r,2})$ as $r \to 0$ as well as the weak convergence of the conditional law of $\eta_2$ given $\eta_1$ and $\eta_{r,2}$ as $r \to 0$.  Since $\Gamma_\epsilon$ is a continuity set for $\nu(\cdot;\eta_{r,1},\eta_{r,2})$ for all $r \in (0,1)$ as well as for $\nu(\cdot)$ (conditional on $\eta_1([0,\tau_{1,1}])$, the probability that $\eta_2$ exits $\partial B(0,1) \cap \h$ at a point with distance exactly $\epsilon$ to $\eta_1(\tau_{1,1})$ is zero), we know, using the same argument as in the proof of Proposition~\ref{prop::conditionalunique}, that $c_r \leq Y$ for $Y < \infty$ non-random and all $r \in (0,\tfrac{1}{2})$.  The rest of the argument of Proposition~\ref{prop::conditionalunique} goes through verbatim.
\end{proof}

\begin{remark}
\label{rem::conditionalunique2_interval_exit}
The statement of Proposition~\ref{prop::conditionalunique2} also holds (by the same proof) in the case that $\eta_1$ and $\eta_2$ are conditioned to exit $\h$ in intervals $I_1$ and $I_2$, respectively.  The same result holds if we replace one (or both) of the $\eta_i$ with a counterflow line.  We will make use of this in Section~\ref{sec::multiple_force_points}.
\end{remark}

\subsection{Resampling properties and dual flow lines}

\begin{figure}[h!]
\begin{center}
\includegraphics[scale=0.85]{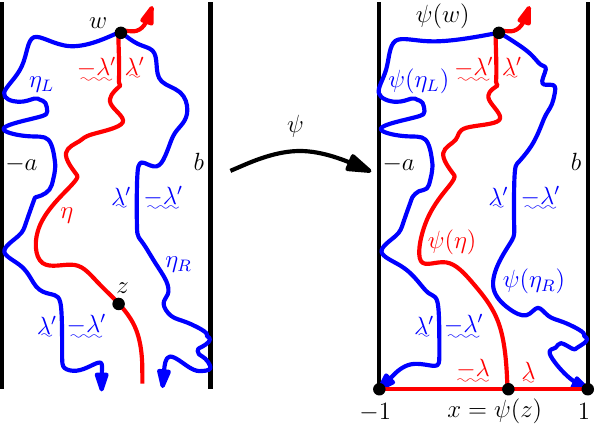}
\end{center}
\caption{\label{fig::triplepath}
Assume $a,b \in (-\lambda'-\pi \chi,\lambda')$; this is precisely the range of boundary data so that a flow line of a GFF with the boundary data above can hit both the left and right sides of the vertical strip $\vstrip$ (recall Figure~\ref{fig::hittingrange}).  In the left panel, $\eta$ is an $\SLE_\kappa(\refrho{\rho}^L;\refrho{\rho}^R)$ process from the bottom to the top of $\vstrip$ interpreted as a flow line of a GFF with the given boundary data \emph{conditioned} not to hit $\partial \vstrip$.  Let $\tau$ be a stopping time for $\eta$, $z = \eta(\tau)$, $\tau'$ a reverse stopping time for $\eta|_{[\tau,\infty)}$, and $w = \eta(\tau')$.  Given $\eta$, we let $\eta_L,\eta_R$ be flow lines from $w$ of a GFF in $\vstrip \setminus \eta$ with the boundary conditions as shown with angles $\pi$, $-\pi$, respectively.  We prove in Proposition~\ref{prop::middlepath} that given $\eta_L$, $\eta_R$, $\eta([0,\tau])$, and $\eta([\tau',\infty))$ the conditional law of $\eta$ is that of an $\SLE_\kappa(\tfrac{\kappa}{2}-2;\tfrac{\kappa}{2}-2)$ process in the region of $\vstrip \setminus \eta([0,\tau])$ which lies between $\eta_L$ and $\eta_R$.  We can resample each of the blue paths given both the other and $\eta|_{[\tau',\infty)}$ (and, as we just mentioned, we can resample the red path given both blue paths).  The same result holds when $\tau'=\infty$ so that the dual flow lines emanate from $\infty$ and/or when $\tau=0$ so that $z=-\infty$.  We will explain in the proof of Theorem~\ref{thm::usual_sle_reversible} that this resampling property characterizes the joint law of these paths.  In order to avoid mentioning $\tau$ and $\eta([0,\tau])$, throughout this section we will work on the half infinite vertical strip $\vhstrip$ by applying the conformal change of coordinates $\psi$ which takes $\vstrip \setminus \eta([0,\tau])$ to the half-infinite vertical strip $\vhstrip$ where $+\infty$ is fixed and the left and right sides of $-\infty$ are sent to $-1$ and $+1$, respectively; $x = \psi(z) \in (-1,1)$.}
\end{figure}

The purpose of this subsection is to establish another type of resampling result.  Throughout, we will often use the half-infinite vertical strip $\vhstrip = [-1,1] \times \R_+$ as our ambient domain.  We will write $\vhstripleft = -1 + i\R_+$ for its left boundary and $\vhstripright = 1 + i\R_+$ for its right boundary.  We let $\eta$ be an $\SLE_\kappa(\refrho{\rho}^L;\refrho{\rho}^R)$ process with $\refrho{\rho}^L,\refrho{\rho}^R > \tfrac{\kappa}{2}-2$ in $\vhstrip$ starting at $x \in (-1,1)$.  By Proposition~\ref{prop::conditionalunique}, we can view $\eta$ as a flow line of a GFF on $\vhstrip$ whose boundary data is as depicted in Figure~\ref{fig::local_dual_flow} with $a,b \in (-\lambda'-\pi \chi,\lambda')$ \emph{conditioned} not to hit the boundary.  Let $\tau'$ be any reverse stopping time for $\eta$, i.e.\ a stopping time for the filtration $\CF_t^\CR = \sigma(\eta(s) : s \geq t)$.  The main result of this subsection is Proposition~\ref{prop::middlepath} (stated in the slightly more general setting of an $\SLE_\kappa(\refrho{\rho}^L;\refrho{\rho}^R)$ process on the vertical strip $\vhstrip$), which gives us a way to resample the initial segment $\eta|_{[0,\tau']}$ of $\eta$.

\begin{proposition}
\label{prop::middlepath}
Suppose that we sample $\eta$ as an $\SLE_\kappa(\refrho{\rho}^L;\refrho{\rho}^R)$ process in the vertical strip $\vstrip$ from $-\infty$ to $\infty$, interpreted as a flow line of the GFF on $\vstrip$ with boundary conditions $-a$ and $b$ for $a,b \in (-\lambda'-\pi\chi, \lambda')$ conditioned not to hit the left and right boundaries $\vstripleft,\vstripright$, as in the left side of Figure~\ref{fig::triplepath}.  Let $\tau$ be a forward stopping for $\eta$ and let $\tau'$ be a reverse stopping time for $\eta|_{[\tau,\infty)}$.  Then, conditioned on $\eta$, we sample $\eta_L$ and $\eta_R$ as flow lines of a GFF in $\vstrip \setminus \eta$ starting from $\eta(\tau')$ targeted at $-\infty$ with the boundary conditions as shown in the left side of Figure~\ref{fig::triplepath} (the angles of $\eta_L,\eta_R$ are $\pm \pi$).  Given $\eta_L$, $\eta_R$, and $\eta([0,\tau])$, the conditional law of $\eta$ is that of an $\SLE_\kappa(\tfrac{\kappa}{2}-2;\tfrac{\kappa}{2}-2)$ process starting at $\eta(\tau)$ and targeted at $\eta(\tau')$ in the region of $\strip \setminus (\eta_L \cup \eta_R \cup \eta([0,\tau])$ bounded between $\eta_L$ and $\eta_R$ with force points at the left and right sides of $-\infty$.  The same statement also holds if we take $\tau' = \infty$, which corresponds to drawing the dual flow lines $\eta_L$ and $\eta_R$ all the way from the top to the bottom of $\vstrip$ (one on either side of $\eta$), and/or we take $\tau=0$.
\end{proposition}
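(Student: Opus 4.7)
The plan is to realize $\eta$, $\eta_L$, $\eta_R$ simultaneously as flow lines of a single Gaussian free field and then read off the conditional law of the middle portion of $\eta$ by reversing the order of exploration and applying the imaginary geometry calculus of Section~\ref{subsec::imaginary} together with Proposition~\ref{prop::conditionalunique}. As a first step I would conformally map $\vstrip \setminus \eta([0,\tau])$ to the half-infinite strip $\vhstrip$ as described in the caption of Figure~\ref{fig::triplepath}; by conformal invariance of $\SLE_\kappa(\ul\rho)$ and the transformation rule~\eqref{eqn::ac_eq_rel}, this reduces the general case to $\tau=0$ with $\eta$ starting at some $x\in(-1,1)$ on the bottom edge of $\vhstrip$. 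I would then invoke Proposition~\ref{prop::conditionalunique} to couple $\eta$ with a GFF $h$ on $\vhstrip$ with boundary values $-a$ on $\vhstripleft$ and $b$ on $\vhstripright$ for $a,b\in(-\lambda'-\pi\chi,\lambda')$: under this coupling $\eta$ is the flow line of $h$ from $x$ to $+\infty$ conditioned not to hit $\vhstripleft\cup\vhstripright$.

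Conditional on $\eta$, the Markov property of the GFF says that $h|_{\vhstrip\setminus\eta}$ is a GFF with boundary data on $\eta$ given by the imaginary geometry rule ($\pm\lambda'+\chi\cdot\text{winding}$ on the two sides), so I can sample $\eta_L$ and $\eta_R$ as flow lines of $h|_{\vhstrip\setminus\eta}$ starting at $\eta(\tau')$ with angles $+\pi$ and $-\pi$ respectively, targeted at the bottom of the strip, just as in the statement. This produces $\eta,\eta_L,\eta_R$ as a triple of flow lines of the single field $h$. By the strict monotonicity of flow lines with distinct angles (Figure~\ref{fig::monotonicity}), $\eta_L$ lies to the left of $\eta$ and $\eta_R$ to the right, so the subregion $R$ bounded between $\eta_L$ and $\eta_R$ inside $\vhstrip$ is disjoint from $\vhstripleft\cup\vhstripright$.

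The main step is to reverse the order of exploration. After conditioning on $(\eta_L,\eta_R,\eta([0,\tau]))$, the domain Markov property of the GFF implies that $h|_R$ is a GFF whose boundary data on $\eta_L\cup\eta_R$ is the explicit one determined by the flow-line rule and depicted on the left of Figure~\ref{fig::triplepath}. By the flow-line coupling of \cite{MS_IMAG}, the flow line of this conditional GFF starting at $x$ and targeted at $\eta(\tau')$ is nothing but $\eta|_{[0,\tau']}$; and because $R$ is separated from $\vhstripleft\cup\vhstripright$, the ``conditioning not to hit'' that originally defined $\eta$ is automatic inside $R$, so no further reweighting is needed. The boundary data on $\partial R$ determines this flow line as an $\SLE_\kappa(\rho^L;\rho^R)$ in $R$ targeted at $\eta(\tau')$ with two force points at the left and right sides of the common root of $\eta_L,\eta_R$ at $-\infty$. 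A direct calculation using the angle shift $\theta\chi$ induced by the $\pm\pi$ angles and the identities \eqref{eqn::deflist}--\eqref{eqn::fullrevolutionrho} yields $\rho^L=\rho^R=\tfrac{\kappa}{2}-2$, which is the expected value because $\tfrac{\kappa}{2}-2$ is precisely the threshold weight at which a flow line almost surely avoids the corresponding boundary arc (Figure~\ref{fig::hittingrange}) and we already know $\eta$ does not cross either $\eta_L$ or $\eta_R$. The degenerate cases $\tau=0$ and $\tau'=\infty$ require only cosmetic modifications: for $\tau'=\infty$ the dual flow lines run all the way to the top of $\vstrip$ and the target $\eta(\tau')$ is $+\infty$, and for $\tau=0$ the preliminary conformal reduction is vacuous.

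The main technical obstacle will be the careful handling of the reverse stopping time $\tau'$: since $\tau'$ is measurable with respect to the future of $\eta$, one cannot apply the forward GFF/flow-line coupling directly at $\eta(\tau')$. I would address this by first establishing the result for a pair of \emph{forward} stopping times $\tau_L,\tau_R$ for $\eta_L,\eta_R$ (where the flow-line coupling of \cite{MS_IMAG} applies verbatim in the subdomain bounded by the three initial segments) and then taking $\tau_L,\tau_R$ up to the natural exit times of $\eta_L,\eta_R$ at $\eta(\tau')$, using the continuity of $\SLE_\kappa(\ul\rho)$ traces in their driving data recorded in \cite[Section 2]{MS_IMAG}. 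A secondary subtlety is the precise meaning of the conditioning ``$\eta$ avoids $\vhstripleft\cup\vhstripright$'' under which the GFF coupling was set up; this is rendered vacuous inside $R$ by the separation provided by $\eta_L$ and $\eta_R$, which is why the weights come out to $\tfrac{\kappa}{2}-2$ rather than to their ``dual'' values.
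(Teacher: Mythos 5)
Your overall strategy (realize the three paths inside one imaginary geometry and then reverse the order of exploration) is in the spirit of the paper's argument, but the two places where you wave your hands are exactly where the proof actually lives. First, the ``reversal of exploration'' at a \emph{reverse} stopping time is not justified, and your proposed fix does not engage the real difficulty: the data you condition on ($\eta([0,\tau])$, $\eta|_{[\tau',\infty)}$, $\eta_L$, $\eta_R$) is generated using the whole path $\eta$, including the very middle segment whose conditional law you want, so the forward flow-line coupling gives no a priori Markov structure rooted at $\eta(\tau')$. The paper handles this (for the \emph{unconditioned} $\SLE_\kappa(\rho^L;\rho^R)$, Lemma~\ref{lem::local_dual_flow}) by realizing $\eta|_{[\tau',\sigma]}$ together with the dual flow lines from $\eta(\tau')$ as outer boundaries of two counterflow lines started from $-1$ and $1$ and stopped upon reaching $\eta(\tau')$ --- these trace $\eta$ in reverse chronological order --- and then proving that the resulting hull $K$ is a \emph{local set} for $h$, after which \cite[Propositions 3.6--3.7, Theorem 2.4]{MS_IMAG} identify the conditional law of $\eta|_{[0,\tau']}$. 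Your substitute (forward stopping times $\tau_L,\tau_R$ for $\eta_L,\eta_R$ and a limit) does not remove the dependence of the conditioning data on the middle of $\eta$, since $\eta_L,\eta_R$ are already rooted at the reverse-stopping-time point; some locality argument of the above type is needed even in the unconditioned case.

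Second, and more seriously, your assertion that because the region between $\eta_L$ and $\eta_R$ is separated from $\partial\vstrip$ ``the conditioning is automatic inside $R$, so no further reweighting is needed'' is precisely the statement that must be proved, and it does not follow from separation. Note also that Proposition~\ref{prop::conditionalunique} is only a \emph{uniqueness} (Gibbs-property) statement, not a coupling: the conditioned path is realized as an $\SLE_\kappa(\refrho{\rho}^L;\refrho{\rho}^R)$ via the martingale of Proposition~\ref{prop::mtweight}, i.e.\ as a flow line of a field with \emph{different} boundary data than $-a,b$, so the ``single field $h$'' of which your conditioned $\eta$ and the dual flow lines (sampled from the field with data $-a,b$) are simultaneous flow lines does not exist as described. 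A priori, resampling the middle segment could perturb the (infinitesimal) probability of the conditioning event, which would change the weights away from $\tfrac{\kappa}{2}-2$. The paper devotes Lemmas~\ref{lem::dualflow_limit}--\ref{lem::resampling_martingale} to exactly this: it conditions on $E_N=\{\tau_N<\tau_0\}$, shows the triple $(\eta,\eta_L,\eta_R)$ given $E_N$ converges as $N\to\infty$ to the conditioned one, and proves the stability estimate $M_{\tau_N}(\gamma)/M_{\tau_N}(\eta)\to 1$ uniformly over re-choices $\gamma$ of the segment below height $r$ lying between $\eta_L$ and $\eta_R$ (Lemma~\ref{lem::functional_not_changed}), which is what makes ``the conditioning does not feel the resampled middle segment'' rigorous. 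Without an approximation/stability argument of this kind, your proof has a genuine gap at its central step; the remaining ingredients (conformal reduction to $\tau=0$, the computation of the weights $\tfrac{\kappa}{2}-2$, the degenerate cases $\tau'=\infty$, $\tau=0$) are fine.
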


In the setting of Proposition~\ref{prop::middlepath}, once we have fixed a forward stopping time $\tau$, we can apply a conformal map $\psi \colon \vstrip \setminus \eta([0,\tau]) \to \vhstrip$ which takes the left and right sides of $-\infty$ to $\pm 1$, respectively, and fixes $+\infty$.  Then $\psi(\eta|_{[\tau,\infty)})$ is an $\SLE_\kappa(\refrho{\rho}^L;\refrho{\rho}^R)$ process in $\vhstrip$ starting from $x = \psi(\eta(\tau)) \in (-1,1)$ to $+\infty$, as is depicted on the right side of Figure~\ref{fig::triplepath}.  This perspective has the notational advantage that we do not need to refer to the stopping time $\tau$ or the initial segment of $\eta$.  For this reason, we will work on $\vhstrip$ throughout the rest of this section and think of $\eta$ as an $\SLE_\kappa(\refrho{\rho}^L;\refrho{\rho}^R)$ process on $\vhstrip$ from a point $x \in (-1,1)$ to $\infty$ with force points located at $\pm 1$.

We remark that if we instead sample $\eta$ as an ordinary flow line (so that we are not conditioning $\eta$ to be boundary avoiding), then due to the choice of boundary data in Proposition~\ref{prop::middlepath}, the dual flow lines $\eta_L$ and $\eta_R$ may intersect both $\vhstripleft$ and $\vhstripright$, but almost surely terminate upon reaching $\pm 1$ (see Figure~\ref{fig::hittingrange}).  The proof of Proposition~\ref{prop::middlepath} has two steps.  The first is Lemma~\ref{lem::local_dual_flow} (see Figure~\ref{fig::local_dual_flow}), which proves a statement analogous to Proposition~\ref{prop::middlepath} except for $\eta \sim \SLE_\kappa(\rho^L;\rho^R)$ rather than $\eta \sim \SLE_\kappa(\refrho{\rho}^L;\refrho{\rho}^R)$ (recall that $\rho,\refrho{\rho}$ are related to each other by $\refrho{\rho} = \kappa-4-\rho$ and $\refrho{\rho} \geq \tfrac{\kappa}{2}-2$ corresponds to the non-boundary intersecting case).  This can be thought of as the analogous result but for the \emph{unconditioned path}.  The bulk of the remainder of the proof is contained in Lemmas~\ref{lem::dualflow_limit}--\ref{lem::resampling_martingale}, which we will show ultimately imply that the result of Lemma~\ref{lem::local_dual_flow} holds even when we condition $\eta$ not to hit the boundary (in the sense of Proposition~\ref{prop::conditionalunique}).

\begin{figure}[ht!]
\begin{center}
\includegraphics[scale=0.85]{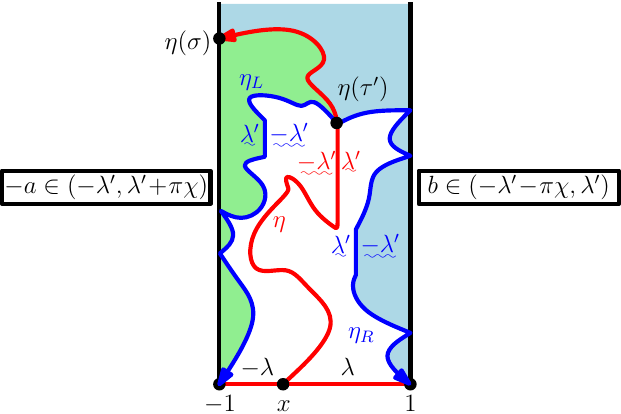}
\end{center}
\caption{\label{fig::local_dual_flow} Suppose that $h$ is a GFF on the half-infinite vertical strip $\vhstrip$ with boundary data as shown above.  Let $\eta$ be the flow line of $h$ from $x$ targeted at $\infty$.  Then $\eta$ is an $\SLE_\kappa(\rho^L;\rho^R)$ with force points at $-1$ and $1$ which almost surely hits either the left or the right side of the strip ($\vhstripleft$, $\vhstripright$, respectively).  Let $\sigma$ be the first time that this occurs, let $\tau'$ be any reverse stopping time for $\eta|_{[0,\sigma]}$, and let $\eta_L,\eta_R$ be the left and right dual flow lines of $h$ which start at $\eta(\tau')$ (i.e., flowing at angles $\pi$ and $-\pi$, respectively) and are targeted at $-1$ and $1$, respectively.  Due to the choice of boundary data, $\eta_L,\eta_R$ almost surely reach $\pm 1$, although they may bounce off both $\vhstripleft$ and $\vhstripright$.  We prove in Lemma~\ref{lem::local_dual_flow} that the conditional law of $\eta$ given $\eta_L$ and $\eta_R$ is that of an $\SLE_\kappa(\tfrac{\kappa}{2}-2;\tfrac{\kappa}{2}-2)$ process targeted at $\eta(\tau')$ in the connected component of $\vhstrip \setminus (\eta_L \cup \eta_R)$ which contains $x$.  The idea of the proof is that $\eta_L$ can be realized as the left outer boundary of a counterflow line $\eta_L'$ of $h$ starting from $-1$ and targeted at $x$.  That is, if $\tau_L'$ is the first time that $\eta_L'$ hits $\eta(\tau')$, then $\eta_L$ is equal to the segment of the boundary of the component of $\vhstrip \setminus \eta_L'([0,\tau_L'])$ with $x$ on its boundary which runs from $-1$ to $\eta'(\tau')$ in the clockwise direction.  (The hull of $\eta_L'([0,\tau_L'])$ viewed as a path targeted at $x$ is shaded green in the illustration.)  Likewise, we can view $\eta_R$ as the right outer boundary of a counterflow line $\eta_R'$ of $h$ starting from $1$ and targeted at $x$ stopped at the first time $\tau_R'$ that it hits $\eta(\tau')$.  (The hull of $\eta_R'([0,\tau_R'])$ viewed as a path targeted at $x$ is shaded in light blue in the illustration.)  The common segment of the outer boundaries of $\eta_L',\eta_R'$ is equal to $\eta$ (recall Figure~\ref{fig::counterflowline}).}
\end{figure}

\begin{lemma}
\label{lem::local_dual_flow}
Suppose that $h$ is a GFF on $\vhstrip$ with boundary data as in Figure~\ref{fig::local_dual_flow} with $a,b \in (-\lambda'-\pi \chi,\lambda')$.  Recall that the flow line~$\eta$ of~$h$ starting from~$x$ is an $\SLE_\kappa(\rho^L;\rho^R)$ process with force points located at~$-1$ and~$1$, respectively.  Moreover, $\eta$ almost surely intersects either $\vhstripleft$ or $\vhstripright$; let $\sigma$ be the time of the first such intersection.  Let $\tau'$ be any reverse stopping time for $\eta|_{[0,\sigma]}$, i.e.\ a stopping time for the filtration $\CF_t^\CR = \sigma(\eta(s) : s \in [t,\sigma])$, and let $\eta_L$ and $\eta_R$ be the left and right dual flow lines of $h$ starting at $\eta(\tau')$.  The conditional law of $\eta|_{[0,\tau']}$ given $\eta_L$, $\eta_R$, and $\eta|_{[\tau',\infty)}$ is an $\SLE_\kappa(\tfrac{\kappa}{2}-2;\tfrac{\kappa}{2}-2)$ process in the connected component of $\vhstrip \setminus (\eta_L \cup \eta_R)$ which contains $x$.
\end{lemma}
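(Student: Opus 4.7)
The plan is to follow the hint in Figure~\ref{fig::local_dual_flow}: realize $\eta_L$ and $\eta_R$ as outer boundaries of a pair of counterflow lines $\eta_L',\eta_R'$ of $h$ emanating from $-1$ and $+1$, each having $\eta$ as its other outer boundary. Using the counterflow-line coupling of \cite{MS_IMAG}, I would first introduce counterflow lines $\eta_L'$ and $\eta_R'$ of $h$ started at $-1$ and $+1$ and targeted at $+\infty$, whose existence and basic properties come from the imaginary-geometry construction in \cite{MS_IMAG} (these are well defined because the starting points $\pm 1$ are deterministic). By $\SLE$ duality (the phenomenon recalled in Figure~\ref{fig::counterflowline}), the right boundary of $\eta_L'$ merges into $\eta$ and the left boundary of $\eta_R'$ merges into $\eta$; stopping each counterflow line at the instant it reaches $\eta(\tau')$, the opposite (``far'') outer boundaries can be identified with $\eta_L$ and $\eta_R$.

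Next, I would condition simultaneously on $\eta_L$, $\eta_R$, and $\eta|_{[\tau',\infty)}$. Each of these curves is a local set for $h$, and their union is again a local set (via the local-set calculus for angle-varying flow lines developed in \cite{MS_IMAG}). Let $U$ denote the connected component of $\vhstrip \setminus(\eta_L \cup \eta_R \cup \eta([\tau',\infty)))$ that contains $x$. By the Markov property of the GFF with respect to local sets, the conditional law of $h|_U$ is that of a GFF in $U$ whose boundary data is inherited from $\partial \vhstrip$ on $\partial U \cap \partial \vhstrip$ (including any segments where $\eta_L$ or $\eta_R$ have bounced off $\vhstripleft$ or $\vhstripright$), is equal to the usual flow-line values on the arcs coming from $\eta|_{[\tau',\infty)}$, and equal to the usual flow-line values shifted by $\mp \pi \chi$ on the arcs coming from $\eta_L$ and $\eta_R$ (reflecting the fact that they are flow lines of $h$ at angle $\pm \pi$). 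Inside $U$, the restriction $\eta|_{[0,\tau']}$ is precisely the flow line from $x$ to $\eta(\tau')$ of this conditional GFF.

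The final step is to read off the boundary values on the sides of $\eta_L$ and $\eta_R$ that border $U$ and translate into $\SLE_\kappa(\rho)$ weights using the dictionary from Section~\ref{subsec::imaginary}. The $\pm \pi \chi$ shift coming from the $\pm\pi$ angles, together with the identity $\pi\chi = (2 - \tfrac{\kappa}{2})\lambda$ from \eqref{eqn::fullrevolutionrho}, yields weights $\tfrac{\kappa}{2}-2$ on each side---precisely the critical weight at which the flow line tangentially meets, but does not otherwise intersect, the bordering curves, which is consistent with the fact that $\eta|_{[0,\tau']}$ meets $\eta_L \cup \eta_R$ only at the common endpoint $\eta(\tau')$. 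After conformally mapping $U$ to a vertical strip which sends $\eta(\tau')$ to $+\infty$, the force points coalesce to the two prime ends at the bottom of the strip, identifying the conditional law of $\eta|_{[0,\tau']}$ as that of an $\SLE_\kappa(\tfrac{\kappa}{2}-2;\tfrac{\kappa}{2}-2)$ process in the desired component.

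The most technically delicate step will be the duality argument in the first paragraph. Because $\tau'$ is a reverse stopping time for $\eta$, the target point $\eta(\tau')$ is random, whereas the imaginary-geometry results of \cite{MS_IMAG} are typically phrased in terms of paths with deterministic starting and target points. The route around this is to run $\eta_L', \eta_R'$ with the deterministic target $+\infty$ first, use duality to identify $\eta_L, \eta_R$ as portions of outer boundaries of $\eta_L', \eta_R'$ stopped upon swallowing $\eta(\tau')$, and then invoke the reverse strong Markov property of $\eta$ to handle the conditioning on $\eta|_{[\tau',\infty)}$. A secondary technicality is to check that $\eta_L, \eta_R$ almost surely reach $\pm 1$ rather than being terminated at the continuation threshold; this is guaranteed by the boundary-data hypothesis $a, b \in (-\lambda'-\pi\chi, \lambda')$ together with Figure~\ref{fig::hittingrange} and Figure~\ref{fig::hittingsinglepoint}.
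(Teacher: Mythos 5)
Your high-level strategy is the same as the paper's (realize $\eta_L,\eta_R$ as outer boundaries of counterflow lines, apply the GFF Markov property to a local set, and read off the weights $\tfrac{\kappa}{2}-2$), but the mechanism is set up incorrectly in a way that breaks the argument. In the paper, the counterflow lines are $\eta_L'$, the counterflow line of $h-\tfrac{\pi}{2}\chi$ started at $-1$, and $\eta_R'$, the counterflow line of $h+\tfrac{\pi}{2}\chi$ started at $1$, and both are targeted at $x$, the seed of $\eta$ (not at $+\infty$). With this choice, duality gives that $\eta|_{[0,\sigma]}$ is the right outer boundary of $\eta_L'$ and the left outer boundary of $\eta_R'$, and --- crucially --- each counterflow line absorbs the points of $\eta|_{[0,\sigma]}$ in reverse chronological order. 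That reverse-order property is exactly what makes a reverse stopping time usable: stopping $\eta_q'$ at the first time $\tau_q'$ it hits $\eta(\tau')$ produces $K=\eta_L'([0,\tau_L'])\cup\eta_R'([0,\tau_R'])$, whose outer boundaries are $\eta([\tau',\sigma])$, $\eta_L$ and $\eta_R$, and for which the locality criterion of \cite[Lemma 3.4]{MS_IMAG} can be verified by hand (whether $K$ meets an open set $U$ is determined by the counterflow lines stopped upon hitting $U$, hence by $h|_{U^c}$). Counterflow lines from $\pm 1$ targeted at $+\infty$, as you propose, do not have this structure: their outer boundaries are flow lines emanating from $+\infty$, not from $x$, so $\eta$ is not one of their boundaries and, upon stopping, the far boundary is not the dual flow line started at $\eta(\tau')$; moreover they run in the same direction as $\eta$, so the compatibility with a reverse stopping time is lost. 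The appeal to a ``reverse strong Markov property of $\eta$'' has no counterpart in the framework and does not substitute for the locality argument.

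Relatedly, your assertion that $\eta_L$, $\eta_R$ and $\eta|_{[\tau',\infty)}$ are each local sets whose union is again local is precisely the non-trivial point and cannot be taken for granted: all three objects are seeded at the random point $\eta(\tau')$ determined by a reverse stopping time, so the standard locality statements for flow lines started at deterministic points do not apply. The paper obtains locality only for the set $K$ built from the stopped counterflow lines, and even then the Markov property yields the conditional law of $\eta|_{[0,\tau']}$ given $K$ and $h|_K$, which is a larger $\sigma$-algebra than the one in the statement of Lemma~\ref{lem::local_dual_flow}; a further step (via \cite[Proposition 3.7]{MS_IMAG}, applied to the sets $K(r\vee\tau')$ for rational $r$) is needed to reduce to conditioning on the three paths alone, and your sketch does not address this. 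Finally, before reading off the weights one must rule out pathological boundary behavior of the conditional field at the tip $\eta(\tau')$ (done in the paper by the argument of \cite[Lemma 7.8]{MS_IMAG}) and check that $\eta|_{[0,\tau']}$ admits a continuous Loewner driving function in the complementary component containing $x$ (\cite[Proposition 6.12]{MS_IMAG}); these are what license the identification of the conditional law as $\SLE_\kappa(\tfrac{\kappa}{2}-2;\tfrac{\kappa}{2}-2)$.
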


Recall Figure~\ref{fig::counterflowline}, which tells us that a flow line of a GFF $h$ from $x$ to $y$ with angle $-\tfrac{\pi}{2}$ is equal to the right boundary of the counterflow line of the same GFF starting at $y$.  An equivalent formulation of this fact is that the zero angle flow line is equal to the right boundary of the counterflow line $\eta_L'$ of $h+\tfrac{\pi}{2}\chi$.  Moreover, the left boundary of this counterflow line is the flow line with angle $\pi$ --- the left dual flow line emanating from $x$.  Analogously, the zero angle flow line is equal to the left boundary of the counterflow line $\eta_R'$ of $h-\tfrac{\pi}{2}$ and the right boundary of this counterflow line is a flow line with angle $-\pi$.  This means that $\eta_L'$ and $\eta_R'$ together determine both $\eta$ as well as the dual flow lines emanating from $x$, the seed of $\eta$.  The idea of the proof of Lemma~\ref{lem::local_dual_flow} is to extend this one step further, to show that we can generate $\eta$ up to any reverse stopping time $\tau'$ along with the dual flow lines $\eta_L$ and $\eta_R$ emanating from $\eta(\tau')$ by stopping $\eta_L',\eta_R'$ at appropriate times and showing that the result seeing $K$ is local for $h$.  Then we can quote the results of \cite{MS_IMAG} (which are summarized in Section~\ref{sec::preliminaries}), which allow us to compute the conditional law of $\eta|_{[0,\tau']}$ given $K$ and $h|_K$ (this corresponds to conditioning on a larger $\sigma$-algebra than the one generated by just $\eta|_{[\tau',\infty)}$, $\eta_L$, and $\eta_R$).

One particularly interesting aspect of the proof is the following.  Even though the coupling of $\SLE_\kappa(\ul{\rho})$ with the GFF is non-reversible in the sense that reparameterizing the path in the opposite direction does not yield a flow line of the field (recall Section~\ref{subsec::naive_time_reversal}), it is still nevertheless possible to generate a flow line in the reverse direction by looking at the values of the field in increasing neighborhoods of its terminal point (unlike when we generate the path in the forward direction, we need to see values of the field which lie off the path).

\begin{proof}[Proof of Lemma~\ref{lem::local_dual_flow}]
See Figure~\ref{fig::local_dual_flow} for an illustration of the setup of the proof (recall also Figure~\ref{fig::counterflowline}).  Let $\eta_L'$ be the counterflow line of $h-\tfrac{\pi}{2} \chi$ starting at $-1$ and let $\eta_R'$ be the counterflow line of $h + \tfrac{\pi}{2} \chi$ starting at $1$.  We assume that both $\eta_L'$ and $\eta_R'$ are targeted at~$x$.

Both $\eta_L'$ and $\eta_R'$ are $\SLE_{\kappa'}(\ul{\rho})$ processes.  For the convenience of the reader (these exact values are not important for the rest of the proof), we are now going to determine the $\rho$ values for $\eta_L'$; one computes the $\rho$ values for $\eta_R'$ in an analogous manner.  We begin by mapping $\vhstrip$ to $\h$ (with $-1$ sent to $0$, $x$ sent to $\infty$, $1$ sent to $-1$) via the conformal transformation $\psi$ and then apply~\eqref{eqn::ac_eq_rel} to $h$.  Note that $\psi(\infty) \in (-1,0)$.  We then obtain a GFF on~$\h$ with boundary conditions given by:
\begin{itemize}
\item $-a+\tfrac{\pi \chi}{2}$ on $(\psi(\infty),0]$,
\item $b+\tfrac{3\pi \chi}{2}$ on $(-1,\psi(\infty)]$,
\item $\lambda + 2\pi \chi$ on $(-\infty,-1)$, and
\item $-\lambda$ on $(0,\infty)$.
\end{itemize}
Recall that $\eta_L'$ (resp.\ $\eta_R'$) is the counterflow line of the field minus $\tfrac{\pi \chi}{2}$ (resp.\ plus $\tfrac{\pi \chi}{2}$).  We sent $x$ to $\infty$ because we want to view $\psi(\eta_L')$ as the counterflow line targeted at~$\infty$.  That is, the boundary data for $\psi(\eta_L')$ is given by
\begin{itemize}
\item $-a$ on $(\psi(\infty),0]$,
\item $b+\pi \chi$ on $(-1,\psi(\infty)]$,
\item $\lambda + \tfrac{3\pi \chi}{2} = \lambda' + 2\pi \chi$ on $(-\infty,-1)$, and
\item $-\lambda - \tfrac{\pi \chi}{2} = -\lambda' - \pi \chi$ on $(0,\infty)$.
\end{itemize}
This means that $\eta_L'$ is an $\SLE_{\kappa'}(\rho^{3,L},\rho^{2,L},\rho^{1,L}; \rho^{1,R})$ process where:
\begin{align*}
   \rho^{1,R} &= \frac{\pi \chi}{\lambda'} = \frac{\kappa'}{2}-2\\
   \rho^{1,L} &= -\frac{a}{\lambda'}-1 \in \left(-2,\frac{\kappa'}{2}-2\right)\\
   \rho^{1,L}+\rho^{2,L} &= \frac{b+\pi\chi}{\lambda'} -1 \in \left(-2,\frac{\kappa'}{2}-2 \right) \\
   \rho^{1,L} + \rho^{2,L} + \rho^{3,L} &= \frac{2\pi \chi}{\lambda'} = \kappa'-4
   \end{align*}
The force point for~$\eta_L'$ which corresponds to~$\rho^{1,R}$ is located immediately to the right of~$-1$, the one to~$\rho^{1,L}$ is immediately to the left of~$-1$, the one to~$\rho^{2,L}$ is at~$\infty$, and the one to~$\rho^{3,L}$ is at~$1$.  These values imply that~$\eta_L'$ cannot hit~$(-1,x)$ and also cannot hit~$(x,1)$ but it eventually terminates at~$x$.  On the other hand, $\eta_L'$ necessarily hits both $\vhstripleft$ and $\vhstripright$.  (Recall Figure~\ref{fig::hittingrange}.)

One can similarly calculate the $\rho$ values for $\eta_R'$ and see that $\eta_R'$ cannot hit $(-1,x)$ or $(x,1)$ but eventually terminates at $x$ and that $\eta_R'$ hits both~$\vhstripleft$ and~$\vhstripright$.  This, of course, makes sense in view of what is proved below: that $\eta$ is given by the intersection of the right boundary of $\eta_L'$ and the left boundary of $\eta_R'$ and $a,b$ are such that $\eta$ can hit both the left and right sides of $\partial \vhstrip$.

It follows from \cite[Theorem~1.4]{MS_IMAG} that $\eta([0,\sigma])$ is equal to the right outer boundary of $\eta_L'$ as well as to the left outer boundary of $\eta_R'$ (see also Figure~\ref{fig::counterflowline}).  That is, $\eta([0,\sigma])$ is equal to the segment of the boundary of the component of $\vhstrip \setminus \eta_L'$ with $x$ on its boundary which runs in the counterclockwise direction from $x$ to $\eta(\sigma)$ and $\eta([0,\sigma])$ is also equal to the segment of the boundary of the component of $\vhstrip \setminus \eta_R'$ with $x$ on its boundary which runs in the clockwise direction from $x$ to $\eta(\sigma)$.  Let $\tau_q'$ be the first time $t$ that $\eta_q'$ hits $\eta(\tau')$ for $q \in \{L,R\}$.  Then $\eta_L$ is equal to the segment of the boundary of the connected component $A_L'$ of $\vhstrip \setminus \eta_L'([0,\tau_L'])$ which contains $x$ on its boundary which runs from $-1$ to $\eta'(\tau_L') = \eta(\tau')$ in the counterclockwise direction.  Similarly, $\eta_R$ is equal to the segment of the boundary of the connected component $A_R'$ of $\vhstrip \setminus \eta_R'([0,\tau_R'])$ which contains $x$ on its boundary which runs from $1$ to $\eta_R'(\tau_R') = \eta(\tau')$ in the clockwise direction.  Moreover, we also have that the right outer boundary of $\eta_L'([0,\tau_L'])$ is equal to $\eta([\tau',\sigma])$.  That is, $\eta([\tau',\sigma])$ is equal to the segment of $\partial A_L'$ which runs in the counterclockwise direction from $\eta_L'(\tau_L') = \eta(\tau')$ to $\eta(\sigma)$.  Likewise, the left outer boundary of $\eta_R'([0,\tau_R'])$ is also equal to $\eta([\tau',\sigma])$.  That is, $\eta([\tau',\sigma])$ is equal to the segment of $\partial A_R'$ which runs in the clockwise direction from $\eta_R'(\tau_R') = \eta(\tau')$ to $\eta(\sigma)$.  Combining, we have that $\eta([\tau',\sigma])$ is equal to $\partial A_L' \cap \partial A_R'$, i.e., the common part of the outer boundaries of $\eta_L'([0,\tau_L'])$ and $\eta_R'([0,\tau_R'])$.

We are now going to compute the conditional law of $\eta|_{[0,\tau']}$ given $K = K(\tau') = \eta_L'([0,\tau_L']) \cup \eta_R'([0,\tau_R'])$.  The first step is to show that $K$ is a local set for $h$ (the notion of a local set is explained in \cite[Section~3.2]{MS_IMAG}; an argument similar to the one that we will give here is used to prove \cite[Lemma~7.7]{MS_IMAG}).  To see this, we will check the criteria of \cite[Lemma~3.6]{MS_IMAG}.  Fix any open set $U \subseteq \vhstrip$ and, for $q \in \{L,R\}$, we let $(\tau_q^U)'$ be the first time $t$ that $\eta_q'$ hits $U$.  Then $\eta_L'|_{[0,(\tau_L^U)']}$ and $\eta_R'|_{[0,(\tau_R^U)']}$ are both determined by $h|_{U^c}$ by \cite[Theorem~1.2]{MS_IMAG}.  (In particular, this implies that given $h|_{U^c}$, $\eta_L'|_{[0,(\tau_L^U)']}$ and $\eta_R'|_{[0,(\tau_R^U)']}$ are independent of the orthogonal projection of $h$ onto the closure of the space of functions compactly supported in $U$.)  Since $\eta_L'$ and $\eta_R'$ hit the points of $\eta|_{[0,\sigma]}$ in reverse chronological order, it follows that the event $\{K \cap U = \emptyset\}$ is determined by $\eta_L'|_{[0,(\tau_L^U)']}$ and $\eta_R'|_{[0,(\tau_R^U)']}$ hence also by $h|_{U^c}$ (by looking at the intersection of the right boundary of $\eta_L'|_{[0,(\tau_L^U)']}$ and the left boundary of $\eta_R'|_{[0,(\tau_R^U)']}$ we can tell whether or not $\eta([\tau',\sigma])$ is contained in $\vhstrip \setminus U$).  Therefore $K$ is a local set for $h$, as desired.

Let $D$ be the complementary connected component of $K$ which contains $x$ (note that this is the same as the complementary connected component of $\eta_L \cup \eta_R$ which contains $x$).  Then the law of $h|_D$ conditional on $K$ and $h|_K$ is that of a GFF on $D$ whose boundary conditions are as depicted in Figure~\ref{fig::local_dual_flow}.  Indeed, \cite[Proposition~3.8]{MS_IMAG} implies that $h|_D$ does in fact have this boundary behavior at every point, except possibly at $\eta(\tau')$, since at other boundary points we can compare the conditional mean of $h|_D$ with that of $h$ given $\eta_q'([0,s])$ for $q \in \{L,R\}$ and $s \geq 0$.  One could worry that the boundary behavior exhibits pathological behavior right at $\eta(\tau')$, though the argument used to prove \cite[Lemma~7.8]{MS_IMAG} rules this out (we draw $\eta_L'$ up to time $r > \tau_L'$, $\eta_R'$ up to time $s < \tau_R'$, and then use \cite[Proposition~6.5]{MS_IMAG} to get the continuity in the conditional mean as we first take the limit $s \uparrow \tau_R'$ and then take the limit $t \downarrow \tau_L'$).  Since $\eta_L$ and $\eta_R$ almost surely do not hit $\eta|_{[0,\tau']}$, it follows from \cite[Proposition~6.12]{MS_IMAG} that $\eta|_{[0,\tau']}$ has a continuous Loewner driving function viewed as a path in $D$.  Thus, \cite[Theorem~2.4 and Proposition~6.5]{MS_IMAG} together imply that the conditional law of $\eta$ given $K$ and $h|_K$ is that of an $\SLE_\kappa(\tfrac{\kappa}{2}-2;\tfrac{\kappa}{2}-2)$ process in the complementary component which contains $x$, where the extra force points are as described in the lemma statement (recall also Figure~\ref{fig::monotonicity} and Figure~\ref{fig::different_starting_point}).

Moreover, this holds when we condition on $\eta|_{[\tau',\sigma]}$ because \cite[Proposition~3.9]{MS_IMAG} implies that the conditional law of $h|_D$ given $K$ and $h|_K$ is equal to the conditional law of $h|_D$ given $K$, $h|_K$, along with $K'$ and $h|_{K'}$ where $K'$ is another local set for $h$ with $K' \cap D = \emptyset$ almost surely.  In particular, if $\tau''$ is any reverse stopping time for $\eta|_{[0,\sigma]}$ with $\tau'' < \tau'$, the conditional law of $h|_D$ given $K$ and $h|_K$ is equal to the conditional law of $h|_D$ given $K$, $h|_K$, $K(\tau'')$, and $h|_{K(\tau'')}$.  This implies that the conditional law of $\eta|_{[0,\tau']}$ given $K$ and $h|_K$ is the same as the conditional law of $\eta|_{[0,\tau']}$ given $K$, $h|_K$, $K(\tau'')$, and $h|_{K(\tau'')}$.  The claim follows since we can apply this to the collection of reverse stopping times which are of the form $r \vee \tau'$ for $r$ is a positive rational and it is clear that $\eta|_{[\tau',\sigma]}$ is determined by $\sigma(K(r \vee \tau'),\ h|_{K(r \vee \tau')} : r \in \Q,\ r > 0)$.  This completes the proof.
\end{proof}

\begin{figure}[ht!]
\begin{center}
\includegraphics[height=0.32\textheight]{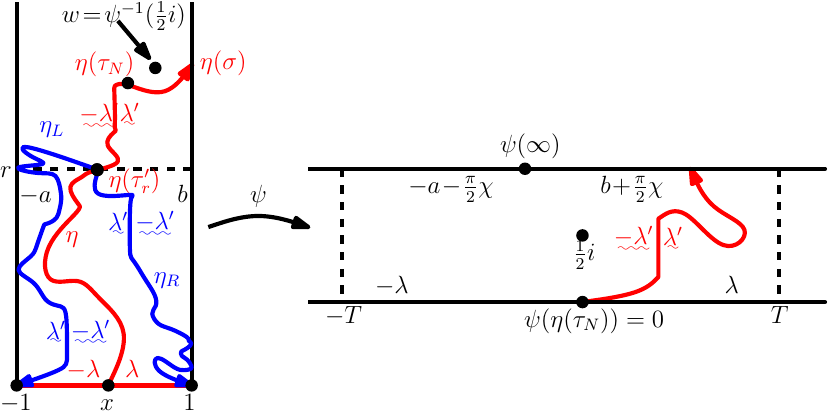}
\end{center}
\caption{\label{fig::approximatetriplepath} The setup for the proof of Lemma~\ref{lem::dualflow_limit}.  Suppose that $a,b \in (-\lambda'-\pi\chi,\lambda')$ and let $h$ be a GFF whose boundary data is as depicted on the left side.  Then the flow line $\eta$ of $h$ starting at $x$ is an $\SLE_\kappa(\rho^L;\rho^R)$ process with force points at $-1$ and $1$ of weight $\rho^L,\rho^R \in (\tfrac{\kappa}{2}-4,\tfrac{\kappa}{2}-2)$.  Let $M_t$ be the martingale as in the statement of Proposition~\ref{prop::mtweight} which upon reweighting the law of $\eta$ by yields an $\SLE_\kappa(\refrho{\rho}^L;\refrho{\rho}^R)$ process.  For each $N \geq 0$, let $\tau_N$ be the first time that $M$ hits $N$ and let $E_N = \{\tau_N < \tau_0\}$.  Let $\sigma$ be the first time that $\eta$ hits either $\vhstripleft$ or $\vhstripright$ and let $\tau_r'$ be the largest time that $\eta|_{[0,\sigma]}$ hits the horizontal line with height $r$.  Let $\eta_L$ and $\eta_R$ be the dual flow lines emanating from $\eta(\tau_r')$.  We prove in Lemma~\ref{lem::dualflow_limit} that the law of $\eta_L$ and $\eta_R$ converge to what it would be if $\eta$ were an $\SLE_\kappa(\refrho{\rho}^L;\refrho{\rho}^R)$ process.  The idea of the proof is to argue that $\im(\eta(\tau_N))$ is likely to be very large compared to $r$ and that $\eta$ quickly exits $\vhstrip$ after time $\tau_N$ so that the law of the dual flow lines is not affected by $\eta|_{[\tau_N,\sigma]}$.  The argument for this last point is illustrated above.  In particular, we let $\psi$ be the conformal map from $\vhstrip \setminus \eta([0,\tau_N])$ to the horizontal strip $\strip = \R \times [0,1]$ with $\psi(\eta(\tau_N)) = 0$, $\psi(-1) = -\infty$, and $\psi(1) = \infty$.  Brownian motion estimates then imply that the part of $\vhstrip$ which lies below height $r$ is mapped by $\psi$ very far away from $0$.  The claim follows since $\psi(\eta|_{[\tau_N,\sigma]})$ is an $\SLE_\kappa(\rho^L;\rho^R)$ process on $\strip$ which almost surely hits its upper boundary in finite time; by continuity, it with high probability does so without traveling far in the lateral direction.}
\end{figure}

The purpose of the next series of results, Lemmas~\ref{lem::dualflow_limit}--\ref{lem::resampling_martingale}, is to justify that the result of Lemma~\ref{lem::local_dual_flow} holds when we condition $\eta$ not to hit the boundary (in the sense of Proposition~\ref{prop::conditionalunique}).  For simplicity, we will restrict our attention to reverse stopping times of the form $\tau_r'$ --- the first time that the time-reversal of $\eta|_{[0,\sigma]}$ hits the horizontal line through $ir$.  We will first prove in Lemma~\ref{lem::dualflow_limit} that the law of the triple $(\eta,\eta_L,\eta_R)$ conditional on the event that the martingale $M_t$ of Proposition~\ref{prop::mtweight} hits level $N$ before hitting $0$ converges as $N \to \infty$ to the corresponding triple where $\eta$ is replaced with an $\SLE_\kappa(\refrho{\rho}^L;\refrho{\rho}^R)$ process.  Lemma~\ref{lem::martingale_gets_large} is a basic result about continuous time martingales which will then be used to prove Lemma~\ref{lem::functional_not_changed}.  The latter states that $M_t(\gamma)$ (the functional of Proposition~\ref{prop::mtweight} applied to a simple path $\gamma$) does not change much when we jiggle the initial segment of $\gamma$.  This is then employed in Lemma~\ref{lem::resampling_martingale} to show that resampling $\eta|_{[0,\tau_r']}$ does not affect the probability that $M_t = M_t(\eta)$ will exceed a certain large threshold.  We then combine all of these ingredients to complete the proof of Proposition~\ref{prop::middlepath} at the end of the subsection.

\begin{lemma}
\label{lem::dualflow_limit}
Suppose that $\eta$ is an $\SLE_\kappa(\rho^L;\rho^R)$ process interpreted as a flow line of a GFF on $\vhstrip$ with boundary conditions $-a$ and $b$ for $a,b \in (-\lambda'-\pi \chi,\lambda')$ as in Figure~\ref{fig::local_dual_flow}.  Let $M_t$ be the martingale as in Proposition~\ref{prop::mtweight} which, upon reweighting the law of $\eta$ by $M$, yields an $\SLE_\kappa(\refrho{\rho}^L;\refrho{\rho}^R)$ process.  For each $N \geq 0$, let $\tau_N = \inf\{t \geq 0 : M_t = N\}$ and let $\sigma$ be the first time that $\eta$ hits either $\vhstripleft$ or $\vhstripright$.  For each $r \geq 0$, we let $\tau_r'$ be the largest time $t \leq \sigma$ that $\im(\eta(t)) = r$ (this corresponds to the first time that the time-reversal of $\eta|_{[0,\sigma]}$ hits the horizontal line through $ir$).  Let $\eta_L$ and $\eta_R$ be the left and right dual flow lines of $h$ conditional on $\eta([0,\sigma])$ emanating from $\eta(\tau_r')$ (i.e., the flow lines with angles $\pi$ and $-\pi$).  Then the law of the triple $(\eta,\eta_L,\eta_R)$ conditional on $E_N = \{\tau_N < \tau_0\}$ converges weakly as $N \to \infty$ to the corresponding triple with $\eta$ replaced by an $\SLE_\kappa(\refrho{\rho}^L;\refrho{\rho}^R)$ process with respect to the topology of local uniform convergence, modulo parameterization.
\end{lemma}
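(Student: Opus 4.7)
The plan is to combine Proposition~\ref{prop::mtweight} with the harmonic-measure estimate sketched in Figure~\ref{fig::approximatetriplepath}. First I would use the martingale $M$ to identify the conditional law on $E_N$ explicitly: since $M_{t \wedge \tau_N \wedge \tau_0}$ is a bounded nonnegative martingale with $M_0 = 1$, optional stopping gives $\p[E_N] = 1/N$, and the Radon--Nikodym derivative of the law $\nu$ of $\SLE_\kappa(\refrho{\rho}^L;\refrho{\rho}^R)$ with respect to the law $\mu$ of $\SLE_\kappa(\rho^L;\rho^R)$, restricted to the $\sigma$-algebra of events determined by $\eta$ up to time $\tau_N \wedge \tau_0$, is $N \cdot \one_{E_N}$ by Proposition~\ref{prop::mtweight}. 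Consequently the law of $\eta|_{[0,\tau_N]}$ under $\mu$ conditioned on $E_N$ agrees with the law of $\hat{\eta}|_{[0,\hat\tau_N]}$, where $\hat\eta \sim \SLE_\kappa(\refrho{\rho}^L;\refrho{\rho}^R)$ and $\hat\tau_N$ is the corresponding hitting time of $N$ by the analogous functional. Since $\hat\eta$ almost surely does not intersect $\vhstripleft \cup \vhstripright$ and is transient in $\vhstrip$, one has $\hat\tau_N \to \infty$ and $\im(\hat\eta(\hat\tau_N)) \to \infty$ almost surely as $N \to \infty$.

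Second, I would reduce the convergence of the whole triple $(\eta,\eta_L,\eta_R)$ to showing that, for each fixed $r$ and each $\epsilon > 0$, one has $\eta([\tau_N,\sigma]) \cap S_r = \emptyset$ with conditional probability at least $1-\epsilon$ for all sufficiently large $N$, where $S_r := \{z \in \vhstrip : \im(z) \leq r+1\}$. Once this is established, on the good event $\tau_r' \leq \tau_N$, and the initial path $\eta|_{[0,\tau_r']}$ together with the tip $\eta(\tau_r')$ at which the dual flow lines emanate agree in law with the analogous objects for $\hat\eta$. The dual flow lines $\eta_L, \eta_R$, conditional on $\eta([0,\sigma])$, are determined by the GFF $h$ on $\vhstrip \setminus \eta([0,\sigma])$ with the boundary data depicted in Figure~\ref{fig::triplepath}; by \cite[Proposition 3.6 and Proposition 6.5]{MS_IMAG} and Carath\'eodory continuity of flow-line laws \cite[Section 2]{MS_IMAG}, the joint law of $(\eta_L,\eta_R)$ restricted to $S_r$ depends continuously on the complement of $\eta$ near $\eta(\tau_r')$, so matching $\eta \cap S_r$ matches the triple in $S_r$ in law. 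Letting $r \to \infty$ then yields the claimed weak convergence modulo reparameterization.

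The content of Figure~\ref{fig::approximatetriplepath} is the third and main step. Let $\psi \colon \vhstrip \setminus \eta([0,\tau_N]) \to \strip = \R \times (0,1)$ be the conformal map sending $\eta(\tau_N)$ to $0$, $-1$ to $-\infty$, and $1$ to $+\infty$. By conformal invariance, $\psi(\eta|_{[\tau_N,\sigma]})$ is an $\SLE_\kappa(\rho^L;\rho^R)$ process in $\strip$ from $0$ to the top boundary $\R + i$, which almost surely terminates at a finite capacity time. A standard harmonic-measure estimate --- comparable to the one used in Lemma~\ref{lem::images_of_zero} --- shows that the probability under a Brownian motion starting at any point in $S_r$ of exiting $\vhstrip \setminus \eta([0,\tau_N])$ through any fixed neighborhood of $\eta(\tau_N)$ tends to $0$ as $\im(\eta(\tau_N)) \to \infty$; by conformal invariance and the derivative estimates of \cite[Theorem 3.20 and Corollary 3.23]{LAW05}, this forces $\psi(S_r \cap (\vhstrip \setminus \eta([0,\tau_N])))$ to be contained in $\{|\re(z)| \geq R\}$ for any prescribed $R$, with probability tending to $1$. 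Combined with the almost sure continuity of $\psi(\eta|_{[\tau_N,\sigma]})$ in $\ol{\strip}$ \cite{MS_IMAG} and the uniform tightness of its lateral excursion that follows from its termination at a bounded capacity time, this forces $\psi(\eta([\tau_N,\sigma])) \cap \{|\re(z)| \geq R\} = \emptyset$, equivalently $\eta([\tau_N,\sigma]) \cap S_r = \emptyset$, with probability tending to $1$.

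The main obstacle is precisely this third step: obtaining a quantitative escape estimate for $\eta|_{[\tau_N,\sigma]}$ which is uniform in the realization of $\eta([0,\tau_N])$ with $\im(\eta(\tau_N))$ large. Everything else is bookkeeping: Proposition~\ref{prop::mtweight} supplies the identification of the reweighted law, and the absolute continuity and Carath\'eodory continuity results of \cite{MS_IMAG} transfer agreement of $\eta$ in the slab $S_r$ into weak convergence of the full triple $(\eta,\eta_L,\eta_R)$ modulo reparameterization.
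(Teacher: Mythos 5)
Your proposal is correct and follows essentially the same route as the paper's proof: identify the law given $E_N$ via the martingale weighting as an $\SLE_\kappa(\refrho{\rho}^L;\refrho{\rho}^R)$ up to time $\tau_N$, show $\im(\eta(\tau_N))$ is large by transience, control the continuation $\eta|_{[\tau_N,\sigma]}$ by mapping to the horizontal strip and using Beurling/harmonic-measure and derivative estimates so that it stays above height $r$ with high probability, and then transfer convergence of $\eta$ in a bounded region to convergence of the triple via continuity of the conditional GFF law and of the dual flow lines. The only differences are presentational (e.g.\ your explicit $\p[E_N]=1/N$ computation versus the paper's driving-function bound showing $\p[\tau_N \geq t \mid E_N] \to 1$), not substantive.
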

\begin{proof}
Conditional on $E_N$, Proposition~\ref{prop::mtweight} implies that $\eta$ evolves as an $\SLE_\kappa(\refrho{\rho}^L;\refrho{\rho}^R)$ process up until time $\tau_N$; afterwards it evolves as an $\SLE_\kappa(\rho^L;\rho^R)$ process.  We first observe that, for every $t > 0$,
\[ \liminf_{N \to \infty} \p[ \tau_N \geq t \giv E_N] = 1.\]
Indeed, the reason for this is that if $M_t \geq N$ then either $|V_t^L-W_t| \geq N^{\alpha}$ or $|V_t^R - W_t| \geq N^{\alpha}$ for $\alpha = \alpha(\rho^L,\rho^R) > 0$.  It follows from the definition of the driving function of an $\SLE_\kappa(\refrho{\rho}^L;\refrho{\rho}^R)$ process \cite[Section~2]{MS_IMAG} that with $\xi_N = \inf\{t \geq 0 : |V_t^L-W_t| \wedge |V_t^R - W_t| \geq N^\alpha\}$ we have that $\lim_{N \to \infty} \p[ \xi_N \geq t \giv E_N] = 1$ for every $t > 0$.  Let $\vhstrip_r = \{z \in \vhstrip : \im(z) \leq r\}$ and $F_{r,N} = \{ \eta(\tau_N) \notin \vhstrip_r\}$.  By the transience of $\SLE_\kappa(\wh{\rho}^L;\wh{\rho}^R)$ processes \cite[Theorem~1.3]{MS_IMAG}, it thus follows that
\[ \liminf_{N \to \infty} \p[F_{r,N} \giv E_N] = 1 \quad\text{for every}\quad r > 0.\]

With $G_{r,N} = \{\eta([\tau_N,\sigma]) \cap \vhstrip_r = \emptyset\}$, we further claim that
\begin{equation}
\label{eqn::tail_far_away_whp}
 \liminf_{N \to \infty} \p[G_{r,N} \giv E_N] = 1 \quad\text{for every}\quad r > 0.
\end{equation}
To see this, we let $\psi$ be the conformal map from $\vhstrip \setminus \eta([0,\tau_N])$ to the horizontal strip $\strip$ with $\psi(\eta(\tau_N)) = 0$, $\psi(-1)=-\infty$, and $\psi(1) = \infty$ (see Figure~\ref{fig::approximatetriplepath}).  For $T > 0$, we let $\strip_T = \{z \in \strip : |\re(z)| \leq T\}$.  We are going to use Brownian motion estimates and the conformal invariance of Brownian motion to show that $\p[\psi(\vhstrip_r) \subseteq \strip_T^c \giv E_N] \to 1$ as $N \to \infty$ but with $r,T > 0$ fixed.  Let $w = \psi^{-1}(\tfrac{1}{2}i)$.  By symmetry, the probability that a Brownian motion starting at $\tfrac{1}{2}i$ first exits $\strip$ in $(-\infty,0)$ is $\tfrac{1}{4}$ (the same is likewise true for $(0,\infty)$).  Consequently, by the conformal invariance of Brownian motion, the probability that a Brownian motion starting at $w$ first exits $\vhstrip \setminus \eta([0,\tau_N])$ on the left side of $\eta([0,\tau_N])$ or in $[-1,x]$ is $\tfrac{1}{4}$ (the same is likewise true for the right side of $\eta([0,\tau_N])$ or $[x,1]$).  Moreover, the probability that a Brownian motion starting at $w$ exits first in $\vhstripleft \cup \vhstripright$ is $\tfrac{1}{2}$ (since a Brownian motion starting at $\tfrac{1}{2}i$ in $\strip$ first exits $\strip$ in $\R+i$ with probability $\tfrac{1}{2}$).  The Beurling estimate \cite[Theorem~3.69]{LAW05} thus implies that there exists a universal constant $d_0 > 0$ such that
\begin{equation}
\label{eqn::height_bound}
|\im(\eta(\tau_N))-\im(w)| \leq d_0.
\end{equation}
Indeed, if $\im(w)$ differs from $\im(\eta(\tau_N))$ by too much, the Beurling estimate implies that it is much more likely for a Brownian motion starting from $w$ either to hit one side of $\eta([0,\tau_N])$ or $\vhstripleft \cup \vhstripright$ before hitting the other side of $\eta([0,\tau_N])$.  Using the Beurling estimate again, we also see that for every $\epsilon > 0$ there exists $r_0' = r_0'(r,\epsilon)$ such that for all $r' \geq r_0'$ on the event $F_{r',N}$ the probability that a Brownian motion starting at $w$ first exits $\vhstrip \setminus \eta([0,\tau_N])$ in $\vhstrip_r$ is at most $\epsilon$.  The reason for this is that~\eqref{eqn::height_bound} implies that, on $F_{r',N}$, we have that $\im(w) \geq r'-d_0$.  Consequently, conformal invariance of Brownian motion implies that, for each $T > 0$, there exists $\epsilon_0 = \epsilon_0(T) > 0$ such that for all $\epsilon \in (0,\epsilon_0)$ we have that $\psi(\vhstrip_r) \subseteq \strip_T^c$ on $F_{r',N}$ for $r' \geq r_0$.

Note that $\psi(\eta|_{[\tau_N,\sigma]})$ has the law of an $\SLE_\kappa(\rho^L;\rho^R)$ process in $\strip$ with force points at $\pm \infty$ which almost surely hits the upper boundary of $\strip$.  By the continuity of such processes \cite[Theorem~1.3]{MS_IMAG}, it follows that $\p[\psi(\eta|_{[\tau_N,\sigma]}) \subseteq \strip_T] \to 1$ as $T \to \infty$.  This completes the proof of~\eqref{eqn::tail_far_away_whp}.

It follows from what we have shown far that, for each $S > 0$, the conditional law of the GFF $h$ given $\eta$ and $E_N$ restricted to $\vhstrip_S \setminus \eta$ converges in the $N \to \infty$ limit to the corresponding GFF with $\eta$ replaced by an $\SLE_\kappa(\refrho{\rho}^L;\refrho{\rho}^R)$ process.  The result follows since the continuity of $\eta_L$ and $\eta_R$ \cite[Theorem~1.3]{MS_IMAG} implies that $\p[ \eta_L \cup \eta_R \subseteq \vhstrip_S \giv E_N]$ can be made as close to $1$ as we like for large enough $S$.
\end{proof}

\begin{lemma}
\label{lem::martingale_gets_large}
Suppose that $M_t$ is a continuous time martingale taking values in $[0,\infty)$ with continuous sample paths.  For each $\alpha \geq 0$, let $\tau_\alpha = \inf\{t \geq 0 : M_t = \alpha\}$.  Assume that $\p[\tau_\alpha < \infty] > 0$ for all $\alpha > 0$.  Then for each $\epsilon > 0$ we have that
\[ \p[ \tau_{(1+\epsilon)\alpha} < \tau_0 \giv \tau_\alpha < \tau_0] = \frac{1}{1+\epsilon}.\]
\end{lemma}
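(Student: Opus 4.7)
The plan is to apply the strong Markov property at $\tau_\alpha$ together with the optional stopping theorem for bounded martingales. Conditionally on $\CF_{\tau_\alpha}$ and on the positive-probability event $A := \{\tau_\alpha < \tau_0\}$, continuity of $M$ forces $M_{\tau_\alpha} = \alpha$. The strong Markov property then implies that the shifted process $\wt{M}_t := M_{\tau_\alpha + t}$ is a continuous nonnegative martingale with $\wt{M}_0 = \alpha$. Because a continuous path starting strictly below $\alpha$ must cross $\alpha$ before reaching $(1+\epsilon)\alpha$, on $A$ the event $\{\tau_{(1+\epsilon)\alpha} < \tau_0\}$ coincides with the event that $\wt M$ hits $(1+\epsilon)\alpha$ strictly before hitting $0$, so it suffices to show that this probability equals $\tfrac{1}{1+\epsilon}$.

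Set $T := \inf\{t \geq 0 : \wt M_t \in \{0,(1+\epsilon)\alpha\}\}$ and consider the stopped process $\wt M^T_t := \wt M_{t \wedge T}$, which is a continuous martingale bounded in $[0,(1+\epsilon)\alpha]$. Optional stopping applied at the bounded time $T \wedge n$ gives $\E[\wt M^T_{T \wedge n}] = \alpha$, and the bounded martingale convergence theorem produces an almost sure (and $L^1$) limit $L := \lim_{t \to \infty} \wt M^T_t$, so that $\E[L] = \alpha$. On $\{T < \infty\}$ continuity forces $L = \wt M_T \in \{0,(1+\epsilon)\alpha\}$. The hypothesis $\p[\tau_\beta < \infty] > 0$ for every $\beta > 0$ is invoked to rule out the possibility that $\wt M$ gets trapped in the open interval $(0,(1+\epsilon)\alpha)$ forever (equivalently, that $T=\infty$ with positive probability); one clean route is to invoke the Dubins--Schwarz representation $M_t = M_0 + B_{\langle M \rangle_t}$ for a Brownian motion $B$ and observe that a time-changed Brownian motion exits any bounded interval almost surely.

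Granting that $L \in \{0,(1+\epsilon)\alpha\}$ almost surely, the identity $\alpha = \E[L] = (1+\epsilon)\alpha\cdot \p[L = (1+\epsilon)\alpha]$ solves to give $\p[L = (1+\epsilon)\alpha] = \tfrac{1}{1+\epsilon}$, which is the desired formula. The main delicate point is the step justifying that $\wt M$ almost surely exits the open interval $(0,(1+\epsilon)\alpha)$ in finite time; the remainder of the argument is routine bounded-martingale bookkeeping.
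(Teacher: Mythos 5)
Your argument is, at its core, the same as the paper's: condition on $\CF_{\tau_\alpha}$ on the event $\{\tau_\alpha < \tau_0\}$, where continuity forces $M_{\tau_\alpha} = \alpha$, apply optional stopping to the martingale stopped at the exit time of $[0,(1+\epsilon)\alpha]$, observe that the exit value is either $0$ or $(1+\epsilon)\alpha$, and solve $\alpha = (1+\epsilon)\alpha \cdot \p[\text{exit at top}]$. The paper does exactly this in two displayed equations. Two remarks on your write-up. First, the ``strong Markov property'' is not available here: $M$ is only assumed to be a martingale, not a Markov process. What you actually use --- and what suffices --- is that, conditionally on $\CF_{\tau_\alpha}$, the shifted process $M_{\tau_\alpha + t}$ is again a continuous martingale started from $\alpha$ (for the shifted filtration); this is an optional sampling statement, not a Markov one. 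This is a wording issue rather than a gap.

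The more substantive problem is the step you yourself single out as the main delicate point. The justification you propose is false as stated: Dubins--Schwarz gives $M_t = M_0 + B_{\langle M \rangle_t}$, but the time change $\langle M \rangle_t$ may converge to a finite limit with positive probability, in which case $M$ converges to a point of the open interval $(0,(1+\epsilon)\alpha)$ and never exits. A time-changed Brownian motion does \emph{not} exit every bounded interval almost surely, and the hypothesis $\p[\tau_\beta < \infty] > 0$ for all $\beta > 0$ does not rule this out (consider a Brownian motion started at $1$, stopped at $0$ and additionally frozen at a suitable later stopping time at an interior level: all hitting probabilities stay positive, yet the stated conclusion fails). So on the event $\{T = \infty\}$ your limit $L$ need not lie in $\{0,(1+\epsilon)\alpha\}$, and the bookkeeping identity breaks. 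To be fair, the paper's own proof silently makes the same assumption --- it writes $M_{\tau_{(1+\epsilon)\alpha} \wedge \tau_0}$ and treats it as $\{0,(1+\epsilon)\alpha\}$-valued --- and in the intended application (Lemma~\ref{lem::resampling_martingale}) this is harmless because the martingale of Proposition~\ref{prop::mtweight} almost surely vanishes in finite time under the unweighted law, so the exit time is almost surely finite. The honest fix is to add (or verify in the application) that $\tau_{(1+\epsilon)\alpha} \wedge \tau_0 < \infty$ almost surely on $\{\tau_\alpha < \tau_0\}$, or equivalently to rule out $M$ converging inside the interval; the Dubins--Schwarz remark as written does not accomplish this.
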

\begin{proof}
This is a basic fact about martingales.  Indeed, we let $\CF_t$ be the filtration generated by $M$.  The optional stopping theorem implies that
\[ \E[ M_{\tau_{(1+\epsilon)\alpha} \wedge \tau_0} \giv \CF_{\tau_\alpha}]\one_{\{\tau_\alpha < \tau_0\}} = \alpha \one_{\{\tau_\alpha < \tau_0\}}.\]
On the other hand, we also have that
\[ \E[ M_{\tau_{(1+\epsilon)\alpha} \wedge \tau_0} \giv \CF_{\tau_\alpha}]\one_{\{\tau_\alpha < \tau_0\}} = (1+\epsilon)\alpha \p[\tau_{(1+\epsilon)\alpha} < \tau_0 \giv \CF_{\tau_\alpha}] \one_{\{\tau_\alpha < \tau_0\}}.\]
Combining the two equations implies the result.
\end{proof}

\begin{figure}[ht!]
\begin{center}
\includegraphics[scale=0.85]{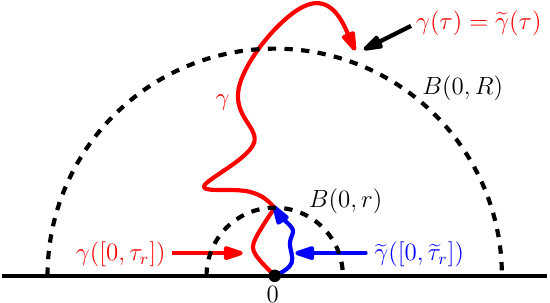}
\end{center}
\caption{\label{fig::functional_not_changed}  Suppose that $r > 0$ and that $\gamma,\wt{\gamma}$ are simple paths in $\ol{\h}$ starting at $0$ that agree with each other after leaving $B(0,r)$.  Let $\tau_r,\wt{\tau}_r$ be the first time that $\gamma,\wt{\gamma}$ exit $B(0,r)$, respectively.  By reparameterizing $\gamma,\wt{\gamma}$, we may assume without loss of generality that $\tau_r = \wt{\tau}_r$ and that $\gamma|_{[\tau_r,\infty)} = \wt{\gamma}_{[\tau_r,\infty)}$.  Let $M_t(\gamma)$ be the functional on paths described in Proposition~\ref{prop::mtweight} applied to $\gamma([0,t])$.  That is, if $W_t(\gamma)$ is the Loewner driving function of $\gamma$ and $V_t^L(\gamma)$ (resp.\ $V_t^R(\gamma)$) denotes the image of the leftmost (resp.\ rightmost) point of $\gamma([0,t]) \cap \R$ under the Loewner flow, then $M_t(\gamma) = |V_t^L(\gamma)-W_t(\gamma)|^{\alpha_L} |V_t^R(\gamma) - W_t(\gamma)|^{\alpha_R} | V_t^L(\gamma) - V_t^R(\gamma)|^{\alpha_{LR}}$ for constants $\alpha_L,\alpha_R,\alpha_{LR} \in \R$.   We prove in Lemma~\ref{lem::functional_not_changed} that for every $\epsilon,r > 0$ there exists $R_0 = R_0(r,\epsilon) > 0$ such that for every $R \geq R_0$ we have that $1-\epsilon \leq M_{\tau}(\gamma) / M_{\tau}(\wt{\gamma}) \leq 1+\epsilon$ where $\tau \geq \tau_r$ is a time such that $\gamma(\tau) = \wt{\gamma}(\tau) \notin B(0,R)$.}
\end{figure}

\begin{lemma}
\label{lem::functional_not_changed}
Suppose that $\gamma$ is a simple path in $\ol{\h}$ which admits a continuous Loewner driving function $W(\gamma)$; let $g_t$ be the corresponding family of conformal maps.  For each $t$, let $V_t^L(\gamma)$ (resp.\ $V_t^R(\gamma)$) be the image under $g_t$ of the leftmost (resp.\ rightmost) point of $\gamma([0,t]) \cap \R$.  Let
\[ M_t(\gamma) = |V_t^L(\gamma) - W_t(\gamma)|^{\alpha_L} | V_t^R(\gamma) - W_t(\gamma)|^{\alpha_R} | V_t^L - V_t^R|^{\alpha_{LR}}\]
for constants $\alpha_L,\alpha_R,\alpha_{LR} \in \R$ (this should be thought of as the functional on paths described in Proposition~\ref{prop::mtweight} applied to $\gamma([0,t])$).  For every $r,\epsilon > 0$, there exists $R_0 = R_0(r,\epsilon) > 0$ such that for every $R \geq R_0$ the following is true.  Suppose that $\gamma,\wt{\gamma}$ are simple paths in $\ol{\h}$ which agree with each other after exiting $B(0,r)$.  Assume further that $\gamma, \wt{\gamma}$ are parameterized so that with $\tau_r,\wt{\tau}_r$ the first time that each exits $B(0,r)$, we have that $\tau_r = \wt{\tau}_r$ and $\gamma|_{[\tau_r,\infty)} = \wt{\gamma}|_{[\wt{\tau}_r,\infty)}$.  Let $\tau \geq \tau_r$ be a time so that $\gamma(\tau) = \wt{\gamma}(\tau) \notin B(0,R)$.  Then
\[ 1-\epsilon \leq \frac{M_{\tau}(\gamma)}{M_{\tau}(\wt{\gamma})} \leq 1+\epsilon.\]
\end{lemma}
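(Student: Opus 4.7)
The plan is to compare $g_K$ and $g_{\wt K}$, where $K \defeq \gamma([0,\tau])$ and $\wt K \defeq \wt\gamma([0,\tau])$, via the conformal map $\Phi \defeq g_K \circ g_{\wt K}^{-1}$. Since $\gamma$ and $\wt\gamma$ agree after time $\tau_r$, the symmetric difference $K \triangle \wt K$ is contained in the half-disk $\ol{B(0,r)} \cap \ol\h$. In particular, the half-plane capacities satisfy $|\hcap(K) - \hcap(\wt K)| \leq C_0 r^2$ by the Brownian motion characterization of $\hcap$, and $\Phi$ is a hydrodynamically normalized conformal self-map of $\h$ (defined off a small exceptional set) with expansion $\Phi(w) = w + a/w + O(1/|w|^2)$ at infinity, where $|a| \leq C_0 r^2$.

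Next, I would observe that because the common tip $p \defeq \gamma(\tau) = \wt\gamma(\tau)$ lies outside $B(0,r)$, and, for $r$ small, the leftmost and rightmost points of $K \cap \R$ coincide with those of $\wt K \cap \R$ (they agree outside $B(0,r)$), taking the appropriate boundary limits yields
\[
W_\tau(\gamma) = \Phi(W_\tau(\wt\gamma)), \quad V_\tau^L(\gamma) = \Phi(V_\tau^L(\wt\gamma)), \quad V_\tau^R(\gamma) = \Phi(V_\tau^R(\wt\gamma)),
\]
with the edge cases $x_K^L = 0$ or $x_K^R = 0$ handled by the corresponding one-sided limit. Setting $\Delta(w) \defeq \Phi(w) - w$ and writing $V_\tau^L(\gamma) - W_\tau(\gamma) = (V_\tau^L(\wt\gamma) - W_\tau(\wt\gamma)) + (\Delta(V_\tau^L(\wt\gamma)) - \Delta(W_\tau(\wt\gamma)))$ (and analogously for the other two differences), each of the three factor-ratios appearing in $M_{\tau}(\gamma)/M_{\tau}(\wt\gamma)$ is $1$ plus an error controlled by $|\Delta|$ at the relevant image points divided by the relevant separation. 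It suffices to make each factor-ratio within $1 \pm \epsilon'$ of $1$ for $\epsilon'$ chosen small in terms of $\epsilon$ and the fixed exponents $\alpha_L, \alpha_R, \alpha_{LR}$.

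The main obstacle, which I expect to be the most delicate part of the argument, is producing uniform lower bounds, in terms of $R$, on the separations $|V^L - W|, |V^R - W|, |V^L - V^R|$ and on the Euclidean distances in $\ol\h$ from $W_\tau(\wt\gamma), V_\tau^L(\wt\gamma), V_\tau^R(\wt\gamma)$ to the exceptional set $g_{\wt K}(K \triangle \wt K)$. My plan is to invoke the conformal invariance of extremal length: since $p \notin B(0,R)$ while $K \triangle \wt K \subseteq \ol{B(0,r)}$, the extremal distance in $\h \setminus \wt K$ separating $\{p, x_{\wt K}^L, x_{\wt K}^R\}$ from $B(0,r) \cap (\h \setminus \wt K)$ is bounded below by a positive function of $R/r$ that tends to $\infty$; pushing forward through $g_{\wt K}$ translates this into the desired lower bounds in the image plane. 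A Schwarz-reflection argument then upgrades the hydrodynamic expansion at infinity to the pointwise estimate $|\Delta(w)| \leq C_1 r^2 / \dist(w, g_{\wt K}(K \triangle \wt K))$ valid whenever $w \in \ol\h$ lies at distance at least a universal multiple of $r$ from the exceptional set. Combining, each factor-ratio lies within $1 \pm C r^2/f(R)$ of $1$ for some explicit $f(R) \to \infty$, and choosing $R_0$ large enough in terms of $r$, $\epsilon$, and the exponents yields the lemma.
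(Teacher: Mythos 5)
Your strategy --- comparing $g_K$ and $g_{\wt K}$ through $\Phi = g_K \circ g_{\wt K}^{-1}$ and an additive estimate $|\Phi(w)-w| \lesssim r^2/\dist(w,\cdot)$ --- is genuinely different from the paper's, but it has a gap that I do not see how to close, and it is exactly at the step you flag as delicate. First, the identification $V^q_\tau(\gamma) = \Phi(V^q_\tau(\wt\gamma))$ for $q \in \{L,R\}$ is not valid in general: the leftmost/rightmost points of $\gamma([0,\tau]) \cap \R$ and $\wt\gamma([0,\tau]) \cap \R$ typically lie in $\ol{B(0,r)}$ (often they are $0^\pm$ or points created by the initial segments), which is precisely where the two hulls differ, so they need not coincide; moreover these marked points sit adjacent to the exceptional set $g_{\wt K}(K \triangle \wt K)$, where the bound $|\Phi(w)-w| \lesssim r^2/\dist(w,\cdot)$ gives nothing. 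Second, and more fundamentally, the ``uniform lower bounds in terms of $R$'' on $|V^L-W|$, $|V^R-W|$, $|V^L-V^R|$ that your additive scheme needs do not exist under the hypotheses: the only constraint is that the common tip lies outside $B(0,R)$, and the common portion $\gamma|_{[\tau_r,\tau]}$ may approach (or even touch) $\R$ far from the origin, or may simply have tiny harmonic measure from infinity, so these separations can be arbitrarily small (even zero) for every fixed $R$. Extremal length cannot rescue this: conformal invariance only produces scale-invariant ratio information (e.g., the distance from the image of $B(0,r)$ to $W_\tau$ relative to the lengths of nearby image intervals), never an absolute lower bound, because the whole image configuration near the marked points can live at an arbitrarily small scale. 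Hence the concluding step ``error $\leq C r^2/f(R) \leq \epsilon'\cdot$separation'' cannot be carried out as proposed.

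The paper's proof avoids absolute bounds entirely by making the comparison multiplicative from the start. Using $|V^L_t - V^R_t| = |V^L_t - W_t| + |W_t - V^R_t|$, it suffices to control the two ratios $|V^q_\tau(\gamma)-W_\tau(\gamma)|/|V^q_\tau(\wt\gamma)-W_\tau(\wt\gamma)|$; each such quantity is written, via conformal invariance of Brownian motion, as $\lim_{y\to\infty} y\,\p_{iy}[B \hbox{ exits in the corresponding boundary arc}]$, i.e., a normalized harmonic measure from infinity. For $\gamma$ and $\wt\gamma$ these arcs share the contribution of the common piece $\gamma|_{[\tau_r,\tau]}$ and differ only by the sides of the initial segments inside $B(0,r)$; a hitting argument through $\partial B(0,\tfrac12 R)$, comparing $\arg(B_{\zeta_R})$ with $\arg(B_{\zeta_R}-r)-\arg(B_{\zeta_R}+r)$, shows that the differing contribution is at most an $\epsilon$-fraction of the total once $R/r$ is large, uniformly in the configuration, with no lower bound on the separations required. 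If you want to salvage your approach you would need to replace the pointwise additive bound by a similarly relative (fraction-of-harmonic-measure) statement, at which point you are essentially reproducing the paper's argument.
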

See Figure~\ref{fig::functional_not_changed} for an explanation of the setup of Lemma~\ref{lem::functional_not_changed}.  The idea of the proof is to use a Brownian motion estimate to show that the ratio of $|V_{\tau}^q(\gamma) - W_{\tau}(\gamma)|/|V_{\wt{\tau}}^q(\wt{\gamma}) - W_{\wt{\tau}}(\wt{\gamma})|$ is close to $1$ for $q \in \{L,R\}$.
\begin{proof}[Proof of Lemma~\ref{lem::functional_not_changed}]
Since $|V_t^R(\gamma) - V_t^L(\gamma)| = |V_t^R(\gamma) - W_t(\gamma)| + |W_t(\gamma) - V_t^L(\gamma)|$ (and likewise with $\wt{\gamma}$ in place of $\gamma$), it suffices to show that there exists $R_0 = R_0(r,\epsilon) > 0$ such that $R \geq R_0$ implies
\[ 1-\epsilon \leq \frac{|V_{\tau}^q(\gamma) - W_{\tau}(\gamma)|}{|V_{\tau}^q(\wt{\gamma}) - W_{\tau}(\wt{\gamma})|} \leq 1+\epsilon \quad\text{for}\quad q \in \{L,R\}.\]
Let $L_t(\gamma)$ (resp.\ $R_t(\gamma)$) be the left (resp.\ right) side of $\gamma([0,t])$ and let $L_t(\wt{\gamma}),R_t(\wt{\gamma})$ denote the analogous quantities with $\wt{\gamma}$ in place of $\gamma$.  Let $\p_{iy}$ denote the law of a Brownian motion $B$ starting at $iy$ and let $\zeta(t;\gamma)$ be the first time that $B$ exits $\h \setminus \gamma([0,t])$.  The conformal invariance of Brownian motion implies that
\[ |V_t^q(\gamma) - W_t(\gamma)| = \lim_{y \to \infty} y\p_{iy}[ B_{\zeta(t;\gamma)} \in q_t(\gamma)] \quad\text{for}\quad q \in \{L,R\}\]
and likewise with $\wt{\gamma}$ in place of $\gamma$.  Thus it suffices to show that there exists $R_0 = R_0(r,\epsilon) > 0$ such that $R \geq R_0$ implies
\begin{equation}
\label{eqn::bm_exit_ratio}
 1-\epsilon \leq \frac{\p_{iy}[B_{\zeta(\tau;\gamma)} \in q_{\tau}(\gamma)]}{\p_{iy}[B_{\zeta(\tau;\wt{\gamma})} \in q_{\tau}(\wt{\gamma})]} \leq 1+\epsilon
\end{equation}
for all large $y > 0$ and $q \in \{L,R\}$.

To see that~\eqref{eqn::bm_exit_ratio} holds, we are going to argue that by making $R$ large enough, the conditional probability that $B$ first exits $\h \setminus \gamma([0,\tau])$ in the left side of $\gamma([0,\tau_r])$ given that it first exits in $R_\tau(\gamma)$ is at most $\epsilon$ (the case when $R$ and $L$ are swapped is analogous).  Let $\zeta_R$ be the first time $t$ that $B$ hits the connected component of $\partial B(0,\tfrac{1}{2}R) \setminus \gamma([0,\tau])$ which lies to the right of $\gamma([0,\tau])$.  We are going to establish our claim by arguing that there exists constants $C_1,C_2 > 0$ such that
\[ \p_{B_{\zeta_R}}[B_{\zeta(\tau;\gamma)} \in R_\tau(\gamma)] \geq C_1 \arg(B_{\zeta_R})\]
and
\[ \p_{B_{\zeta_R}}[B_{\zeta(\tau;\gamma)} \in R_{\tau_r}(\gamma))] \leq C_2 \big(\arg(B_{\zeta_R}-r) - \arg(B_{\zeta_R} + r) \big).\]
This suffices because by making $R$ sufficiently large, we can make the latter as small as we want relative to the former uniformly in the realization of $B_{\zeta_R}$.  To see the former, we note that it is easy to see that $\p_{B_{\zeta_R}}[B_{\zeta(\tau;\gamma)} \in R_\tau(\gamma)]$ is at least a constant $C_1 > 0$ times the probability that a Brownian motion starting at $B_{\zeta_R}$ first exits $\h$ in $(-\infty,0]$.  This probability is explicitly given by $\tfrac{1}{\pi}\arg(B_{\zeta_R})$.  To see the latter, we also note that it is also easy to see that $\p_{B_{\zeta_R}}[B_{\zeta(\tau;\gamma)} \in R_{\tau_r}(\gamma))]$ is at most a constant $C_2 > 0$ times the probability a Brownian motion starting at $B_{\zeta_R}$ first exits $\h$ in $[-r,r]$.  This probability is explicitly given by $\tfrac{1}{\pi}(\arg(B_{\zeta_R}-r) - \arg(B_{\zeta_R}+r))$, which completes the proof.
\end{proof}

\begin{lemma}
\label{lem::resampling_martingale}
We suppose that we have the same setup as described in the statement of Lemma~\ref{lem::dualflow_limit}.  Let $M_t(\gamma)$ be the functional on paths described in Proposition~\ref{prop::mtweight} (so that $M_t := M_t(\eta)$ is a martingale for $\eta \sim \SLE_\kappa(\rho^L;\rho^R)$).  Fix $r > 0$ and let $\eta_L,\eta_R$ be the left and right dual flow lines emanating from $\eta(\tau_r')$.  For a simple path $\gamma$, let $\tau_N(\gamma) = \inf\{t \geq 0 : M_t(\gamma) = N\}$.  Let $\ol{E}_{N,r}$ be the event that $\cap_\gamma \{\tau_N(\gamma) < \tau_0(\gamma)\}$ where the intersection is over the set of paths $\gamma$ which can be written as a concatenation of a simple path which connects $x$ to $\eta(\tau_r')$ and lies in the region of $\vhstrip$ between $\eta_L$ and $\eta_R$ with $\eta|_{[\tau_r',\infty)}$.  Then $\liminf_{N \to \infty} \p[\ol{E}_{N,r} \giv E_N] = 1$.
\end{lemma}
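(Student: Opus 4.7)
The plan is to combine three ingredients: the uniform stability of the functional $M_t$ under compactly-supported perturbations of the path (Lemma~\ref{lem::functional_not_changed}), the tightness of $(\eta,\eta_L,\eta_R)$ conditional on $E_N$ (Lemma~\ref{lem::dualflow_limit}), and the level-boosting identity for continuous martingales (Lemma~\ref{lem::martingale_gets_large}). Throughout, transfer to $\h$ via a conformal map $\phi\colon\vhstrip\to\h$ with $\phi(x)=0$ and $\phi(\pm 1)$ the two force points, so that $M_t$ is the Proposition~\ref{prop::mtweight} martingale computed from the Loewner driving function of $\phi(\eta)$.

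The first observation is that each admissible concatenation $\gamma$ agrees with $\eta$ from time $\tau_r'$ onwards, and that its alternative initial segment is constrained to lie inside the bounded region $K\subseteq\vhstrip$ which is cut off by $\eta_L\cup\eta_R$ and contains $\eta([0,\tau_r'])$. By Lemma~\ref{lem::dualflow_limit} together with the continuity and boundedness of the limiting dual flow lines, for every $\delta_1>0$ there exists $R_1=R_1(\delta_1)>0$ so that $\p[\phi(K)\subseteq B(0,R_1)\mid E_N]\geq 1-\delta_1$ for all $N$ sufficiently large. Given such $R_1$ and any $\epsilon\in(0,1)$, Lemma~\ref{lem::functional_not_changed} produces $R_0=R_0(R_1,\epsilon)$ so that on the event $\{\phi(K)\subseteq B(0,R_1)\}$, whenever $\tau\geq\tau_r'$ satisfies $\phi(\eta(\tau))\notin B(0,R_0)$ one has
\[
M_\tau(\gamma)\;\geq\;(1-\epsilon)\,M_\tau(\eta)
\]
\emph{uniformly} over every admissible $\gamma$, since all such $\gamma$ coincide with $\phi(\eta)$ outside $B(0,R_1)$ after a suitable reparameterization.

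Next, set $N'=N/(1-\epsilon)$. Lemma~\ref{lem::martingale_gets_large} applied to $M$ yields $\p[E_{N'}\mid E_N]=1-\epsilon$, and the driving-function estimate together with the transience of $\SLE_\kappa(\refrho{\rho}^L;\refrho{\rho}^R)$ (as in the proof of Lemma~\ref{lem::dualflow_limit}) show that $\im(\eta(\tau_{N'}))\to\infty$ in $\vhstrip$, hence $\phi(\eta(\tau_{N'}))\notin B(0,R_0)$, with conditional probability tending to $1$ given $E_{N'}$ as $N\to\infty$. On the intersection of $\{\phi(K)\subseteq B(0,R_1)\}$, $E_{N'}$, and this last exit event, applying the displayed inequality at $\tau=\tau_{N'}$ yields $M_{\tau_{N'}}(\gamma)\geq(1-\epsilon)N'=N$ simultaneously in $\gamma$. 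Positivity of $M_\cdot(\gamma)$ on $[0,\tau_{N'}]$ holds because on $[0,\tau_r']$ the alternative initial segment lies in $K$, which is separated from the force points $\pm 1$ by $\eta_L$ and $\eta_R$, and on $[\tau_r',\tau_{N'}]$ we have $\gamma=\eta$ with $\tau_{N'}<\tau_0$. Continuity of $M_\cdot(\gamma)$ and the intermediate value theorem then give $\tau_N(\gamma)<\tau_{N'}<\tau_0(\gamma)$ for every admissible $\gamma$, i.e., $\ol{E}_{N,r}$ holds.

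Combining the three conditional probabilities, one obtains $\liminf_{N\to\infty}\p[\ol{E}_{N,r}\mid E_N]\geq(1-\epsilon)(1-\delta_1)$ for every $\epsilon,\delta_1>0$; letting $\epsilon,\delta_1\downarrow 0$ gives the claimed limit $1$. The main obstacle will be securing the bound uniformly over the infinite-dimensional family of admissible $\gamma$; Lemma~\ref{lem::functional_not_changed} is designed precisely for this, but only under the compact confinement $\{\phi(K)\subseteq B(0,R_1)\}$, which in turn relies on the tightness supplied by Lemma~\ref{lem::dualflow_limit}.
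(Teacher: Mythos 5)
Your proof is correct and follows essentially the same route as the paper: Lemma~\ref{lem::dualflow_limit} gives tightness of the dual flow lines (hence confinement of all admissible initial segments), Lemma~\ref{lem::functional_not_changed} gives the uniform comparability $M_\tau(\gamma) \approx M_\tau(\eta)$ once the tip is far away, and Lemma~\ref{lem::martingale_gets_large} supplies the level boost (your $N' = N/(1-\epsilon)$ versus the paper's $(1+\epsilon)N$), after which one sends $\epsilon, \delta_1 \to 0$. Your write-up is somewhat more explicit than the paper's (e.g., in checking positivity of $M_\cdot(\gamma)$ up to the relevant time and in tracking the conditional events), but the underlying argument is the same.
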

\begin{proof}
This follows by combining Lemmas~\ref{lem::dualflow_limit}-\ref{lem::functional_not_changed}. Indeed, Lemma~\ref{lem::dualflow_limit} implies that the law of the dual flow lines $\eta_L$ and $\eta_R$ conditional on $E_N$ has a limit as $N \to \infty$.  This in turn implies that the maximal height $Y_r$ reached by $\eta_L$ and $\eta_R$ before exiting $\vhstrip$ is tight conditional on $E_N$ as $N \to \infty$.  By the argument in the proof of Lemma~\ref{lem::dualflow_limit}, we also know that $\im(\eta(\tau_N))$ converges to $\infty$ conditional on $E_N$ as $N \to \infty$.  Consequently, Lemma~\ref{lem::functional_not_changed} implies that if $\gamma$ is any path which arises by concatenating any simple path which lies between $\eta_L$ and $\eta_R$ and connects $x$ to $\eta(\tau_r')$ with $\eta|_{[\tau_r',\sigma]}$ then
\begin{equation}
\label{eqn::martingale_bound}
 \frac{M_{\tau_N}(\gamma)}{M_{\tau_N}(\eta)} = 1+o(1)
\end{equation}
with high probability conditional on $E_N$ as $N \to \infty$.  The result now follows since Lemma~\ref{lem::martingale_gets_large} implies that, on the event $\{\tau_N < \tau_0\}$ we have that $\{\tau_{(1+\epsilon)N} < \tau_0\}$ occurs with probability $(1+\epsilon)^{-1}$.  Combining this with~\eqref{eqn::martingale_bound} implies that $\liminf_{N \to \infty} \p[ \ol{E}_{N,r} \giv E_N] \geq (1+\epsilon)^{-1}$.  This completes the proof of the result since this holds for all $\epsilon > 0$.
\end{proof}

We now have all of the ingredients to complete the proof of Proposition~\ref{prop::middlepath}.

\begin{proof}[Proof of Proposition~\ref{prop::middlepath}]
Fix $r > 0$.  We first assume that our reverse stopping time $\tau'$ satisfies $\tau' \leq \tau_r'$, the first time that the reversed path hits the horizontal line through $ir$.  Once we prove the result for reverse stopping times of this form, the result follows for general reverse stopping times because we can take a limit as $r \to \infty$.  Let $\eta$ be the flow line of a GFF $h$ whose boundary data is as depicted in Figure~\ref{fig::approximatetriplepath} and we let $M_t$ be the martingale as in Proposition~\ref{prop::mtweight}.  For each $N \geq 0$, let $\tau_N = \{t \geq 0: M_t = N\}$ and let $E_N = \{ \tau_N < \tau_0\}$.  Then, for $N \geq 1$, the law of $\eta$ conditional on $E_N$ is an $\SLE_\kappa(\refrho{\rho}^L;\refrho{\rho}^R)$ process until time $\tau_N$, after which it evolves as an $\SLE_\kappa(\rho^L;\rho^R)$ process.  Lemma~\ref{lem::local_dual_flow} implies that the law of the (unconditioned) triple $(\eta,\eta_L,\eta_R)$ is invariant under the kernel $\CK$ which resamples $\eta|_{[0,\tau']}$ as an $\SLE_\kappa(\tfrac{\kappa}{2}-2;\tfrac{\kappa}{2}-2)$ process in the connected component of $\vhstrip \setminus (\eta_L \cup \eta_R)$ which contains $x$.  This implies that the law of the triple $(\eta,\eta_L,\eta_R)$ conditional on $E_N$ is invariant under the kernel $\CK^N$ which resamples $\eta|_{[0,\tau']}$ as an $\SLE_\kappa(\tfrac{\kappa}{2}-2;\tfrac{\kappa}{2}-2)$ process but restricted to the set of paths which do not affect whether the event $E_N$ occurs.  The result then follows from Lemma~\ref{lem::resampling_martingale}.
\end{proof}

%An additional useful fact, illustrated in Figure~\ref{fig::triplepath}, is a sort of ``tri-chordal'' statement that one gets from Lemma~\ref{prop::conditionalunique2} and the fact that in the figure the two blue paths fail to merge before reaching their endpoints if and only if the red path makes it to its endpoint without hitting the boundary.  Applying this fact to Figure~\ref{fig::approximatetriplepath} in the case that $a = b = \lambda' - \pi \chi$ immediately yields the conformal Markov property for ordinary $\SLE_\kappa$.  Applying this when only one of $a$ and $b$ is equal to $\lambda' - \pi \chi$ allows us to determine a similar result for , which immediately implies that the time-reversal of a one-force point $\SLE_\kappa(\rho)$ satisfies the conformal Markov property of Theorem~\ref{thm::conformal_markov}, which in turn implies that it must be an $\SLE_\kappa(\rho')$ for some $\rho'$.

\section{The time-reversal satisfies the conformal Markov property}
\label{sec::time_reversal_conformal_markov}

\begin{figure}[h!]
\begin{center}
\includegraphics[scale=0.85]{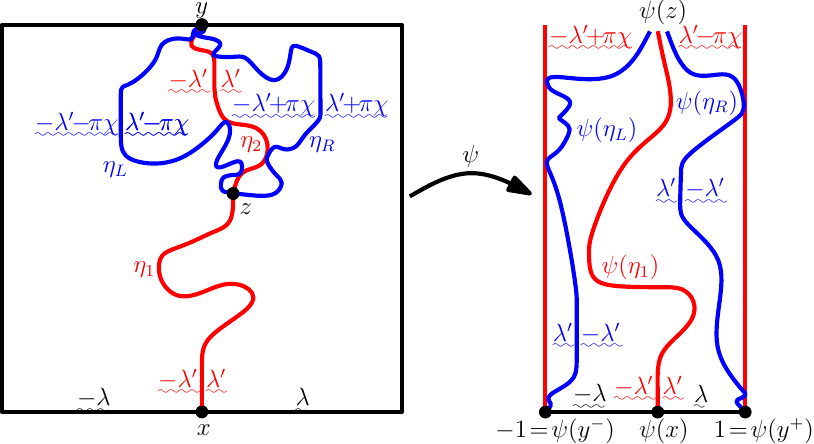}
\end{center}
\caption{\label{fig::reverse_stopped_plus_dual}  Suppose that $\eta$ is an $\SLE_\kappa$ process in a Jordan domain $D$ from $x$ to $y$ with $x,y \in \partial D$ distinct.  Let $\tau'$ be a reverse stopping time for $\eta$ and let $\eta_1 = \eta|_{[0,\tau']}$ and $\eta_2 = \eta|_{[\tau',\infty)}$.  Let $\eta_L$ and $\eta_R$ be the dual flow lines starting at $z = \eta(\tau')$ as in the statement of Proposition~\ref{prop::middlepath} (conditional on $\eta$, we sample a GFF in the left and right complementary connected components with the boundary data as shown and take $\eta_L,\eta_R$ to be the flow lines starting at $\eta(\tau')$ with angles $\pi,-\pi$, respectively).  Proposition~\ref{prop::middlepath} implies that if we condition on $\eta_2 \cup \eta_L \cup \eta_R$, we can resample $\eta_1$ as an $\SLE_\kappa(\tfrac{\kappa}{2}-2;\tfrac{\kappa}{2}-2)$ process.  Similarly, if we condition on $\eta_1 \cup \eta_2$, we can resample either $\eta_L$ or $\eta_R$ as described above.  As explained in the proof of Theorem~\ref{thm::usual_sle_reversible}, this invariance under resampling uniquely determines the conditional law of the triple $(\eta_L, \eta_R, \eta_1)$, given $\eta_2$.  In particular, this implies that $\eta_1$ is an $\SLE_\kappa$ process in $D \setminus \eta_2$ from $x$ to $z$ since $\SLE_\kappa$ from $x$ to $z$ in $D \setminus \eta_2$ satisfies the same resampling property.}
\end{figure}

We are now ready to prove the special case of Theorem~\ref{thm::all_reversible} for ordinary $\SLE_\kappa$ by showing that the time-reversal of $\SLE_\kappa$ satisfies the domain Markov property and is conformally invariant.  This suffices since it was proved by Schramm \cite{S0} that these properties characterize $\SLE_\kappa$ (the value of $\kappa$ is preserved because the almost sure Hausdorff dimension of the $\SLE_\kappa$ trace is $1+\tfrac{\kappa}{8}$ \cite{BEF_DIM} and this property is obviously invariant under time-reversal).  We will subsequently explain how this argument can be extended to prove that the time-reversal of $\SLE_\kappa(\rho)$ for $\rho \in (-2,0]$ also satisfies the domain Markov property and is conformally invariant.  The proof that $\SLE_\kappa(\rho)$ (and more generally that $\SLE_\kappa(\rho_1;\rho_2)$) is reversible will be postponed until the next section, however, since we will need some additional machinery.

\begin{theorem}
\label{thm::usual_sle_reversible}
Suppose that $D$ is a Jordan domain and $x,y \in \partial D$ are distinct.  Suppose that $\eta$ is an $\SLE_\kappa$ process in $D$ from $x$ to $y$.  Then the law of the time-reversal $\CR(\eta)$ of $\eta$ has the law of an $\SLE_\kappa$ process from $y$ to $x$ in $D$, up to reparameterization.  In other words, ordinary $\SLE_\kappa$ has time-reversal symmetry.
\end{theorem}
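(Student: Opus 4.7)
The plan is to establish that the time-reversal $\CR(\eta)$ satisfies conformal invariance and the domain Markov property characterizing ordinary $\SLE_\kappa$ \cite{S0}. Conformal invariance is immediate from the conformal invariance of $\eta$ itself (anti-conformal maps that swap $x,y$ commute with time-reversal up to reparameterization), so the substance lies in the domain Markov property for $\CR(\eta)$: fixing a reverse stopping time $\tau'$ and writing $z = \eta(\tau')$, $\eta_1 = \eta|_{[0,\tau']}$, $\eta_2 = \eta|_{[\tau',\infty)}$, we must show that conditional on $\eta_2$, the path $\eta_1$ (viewed as running from $x$ to $z$) is an $\SLE_\kappa$ in the component of $D \setminus \eta_2$ containing $x$ from $x$ to $z$. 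The parameter $\kappa$ itself is preserved because it is determined by the almost sure Hausdorff dimension of the trace, which is invariant under time-reversal.

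To identify the conditional law of $\eta_1$, I will enlarge the configuration by auxiliary dual flow lines. Couple $\eta$ with a GFF $h$ on $D$ so that $\eta$ is the flow line from $x$ to $y$ with the boundary data of Figure~\ref{fig::reverse_stopped_plus_dual}; conditional on $\eta$, let $\eta_L$ and $\eta_R$ be the flow lines of the restricted field starting at $z$ with angles $\pi$ and $-\pi$. Proposition~\ref{prop::middlepath} then gives that conditional on $(\eta_2, \eta_L, \eta_R)$, the law of $\eta_1$ is $\SLE_\kappa(\tfrac{\kappa}{2}-2;\tfrac{\kappa}{2}-2)$ in the component of $D \setminus (\eta_2 \cup \eta_L \cup \eta_R)$ containing $x$, targeted at $z$. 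Symmetrically, the imaginary geometry calculus of Section~\ref{subsec::imaginary} gives the conditional law of each of $\eta_L, \eta_R$ given the other paths as an explicit $\SLE_\kappa(\ul{\rho})$ process whose parameters depend only on the boundary data of $h$ on $\eta_2$ (determined by $\eta_2$) and on the other dual flow line.

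Next, I will produce a candidate joint law and match it by characterization. Sample $\tilde\eta_1$ as $\SLE_\kappa$ in $D \setminus \eta_2$ from $x$ to $z$, couple with a GFF $\tilde h$ on $D \setminus \eta_2$ whose boundary data agrees with that of $h$ conditioned on $\eta_2$, and let $\tilde\eta_L, \tilde\eta_R$ be the corresponding angle $\pm \pi$ dual flow lines from $z$. Applying Proposition~\ref{prop::middlepath} and the imaginary geometry computations again, the triple $(\tilde\eta_1, \tilde\eta_L, \tilde\eta_R)$ satisfies exactly the same system of conditional laws as $(\eta_1, \eta_L, \eta_R)$ given $\eta_2$. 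A three-path extension of Theorem~\ref{thm::bi_chordal}, proved by the same alternating-resampling Markov chain argument (with Lemma~\ref{lem::coupling} providing a positive coupling probability for each resampling step), shows that these conditional laws jointly characterize the law of the triple. Therefore $(\eta_1, \eta_L, \eta_R) \stackrel{d}{=} (\tilde\eta_1, \tilde\eta_L, \tilde\eta_R)$ given $\eta_2$, and marginalizing over the dual flow lines yields the domain Markov property for $\CR(\eta)$.

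The main obstacle is verifying the three-path uniqueness rigorously in the presence of the dual flow lines, which may touch $\partial D$ and create configurations for which the candidate resampling kernels are only defined in specific simply connected subcomponents. Carrying out the Markov chain coupling argument requires showing that under any two starting configurations, after one round of resampling the $\SLE_\kappa(\tfrac{\kappa}{2}-2;\tfrac{\kappa}{2}-2)$ middle paths coincide on a uniformly positive probability event (via Lemma~\ref{lem::coupling}), and thereafter the dual flow line resamplings can also be coupled; a second subtlety is ensuring measurability of the joint law under the Hausdorff topology on triples of paths, for which the continuity results of \cite{MS_IMAG} suffice. Once this characterization is in place, Schramm's original characterization \cite{S0} (or equivalently Theorem~\ref{thm::conformal_markov} with trivial force point) applied to $\CR(\eta)$ completes the proof.
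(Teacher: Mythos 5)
Your overall skeleton is the same as the paper's: reduce to a reverse domain Markov property, introduce the angle $\pm\pi$ dual flow lines from $z=\eta(\tau')$, use Proposition~\ref{prop::middlepath} to resample the middle path as an $\SLE_\kappa(\tfrac{\kappa}{2}-2;\tfrac{\kappa}{2}-2)$, compare with the candidate triple generated from an ordinary $\SLE_\kappa$ in $D\setminus \eta_2$ from $x$ to $z$, and finish with Schramm's characterization plus the Hausdorff-dimension argument for the value of $\kappa$. The gap is in the step you yourself flag: the ``three-path extension of Theorem~\ref{thm::bi_chordal}, proved by the same alternating-resampling Markov chain argument with Lemma~\ref{lem::coupling}'' does not go through as stated. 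All three paths share the endpoint $z$ (and the dual flow lines terminate immediately to the left and right of $x$), and Lemma~\ref{lem::coupling} requires the two domains being compared to agree in neighborhoods of \emph{both} endpoints of the path being resampled. When you try to couple the two middle-path resamplings coming from two different configurations, the relevant domains are bounded by the two \emph{different} pairs of dual flow lines emanating from $z$, so they disagree in every neighborhood of $z$ and no positive-probability exact coupling is available there --- this is exactly the ``behavior near the endpoints cannot be mixed in finitely many steps'' obstruction that forced the $\SLE$-duality endpoint-separation trick in the proof of Theorem~\ref{thm::bi_chordal}. (Also, Lemma~\ref{lem::coupling} gives a coupling probability that is positive but not uniform in the configuration; uniformity is neither available from it nor needed, cf.\ Remark~\ref{rem::rate_of_convergence}.)

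The paper closes this gap differently, and your argument needs some version of the same device: first condition on initial segments of $\eta_L$ and $\eta_R$ near $z$, which simultaneously freezes the geometry at the shared tip and turns ``$\eta_L$ and $\eta_R$ avoid each other'' into a positive-probability event; the resampling rules then force the conditional law of the remaining triple to be that of ordinary GFF flow lines conditioned on this event, and the uniqueness of the law of the conditioned pair $(\eta_L,\eta_R)$ is supplied not by a mixing chain but by the Gibbs/Radon--Nikodym uniqueness statement of Proposition~\ref{prop::conditionalunique2}; given the pair, the conditional law of $\eta_1$ is explicit. Without this endpoint separation and the appeal to Proposition~\ref{prop::conditionalunique2} (or an equivalent argument pinning down the behavior of the triple near the common endpoints), the characterization of the conditional law of $(\eta_1,\eta_L,\eta_R)$ given $\eta_2$ --- which is the heart of the proof --- is not established.
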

\begin{proof}
It is enough to show a reverse domain Markov property by \cite{S0} namely that if (in the setting of Figure~\ref{fig::reverse_stopped_plus_dual}) we condition on $\eta_2 = \eta|_{[\tau',\infty)}$ where $\tau'$ is a reverse stopping time for $\eta$, then the conditional law of $\eta_1 = \eta|_{[0,\tau']}$ is that of an ordinary $\SLE_\kappa$ in $D \setminus \eta_2$ from $x$ to $z=\eta(\tau')$.  Let $\eta_L,\eta_R$ be dual flow lines emanating from $z = \eta(\tau')$ as in the statement of Proposition~\ref{prop::middlepath} (see Figure~\ref{fig::reverse_stopped_plus_dual} for further explanation of the setup).

We know how to resample any one of $\eta_1$ or $\eta_L$ or $\eta_R$ given the realization of the other two ($\eta_2$ is fixed).  We claim that these resampling rules determine the joint law of these three paths.  Indeed, if we condition on initial segments of $\eta_L$ and $\eta_R$, then the fact that the resampling law determines the law of the remainder of these paths and of $\eta_1$ follows from argument used to prove Theorem~\ref{thm::bi_chordal}.  To be more explicit, the conditional law of $(\eta_1,\eta_L,\eta_R)$ (given the two initial segments) must be that of ordinary flow lines off a GFF conditioned on the positive probability event that $\eta_L$ and $\eta_R$ do not intersect (since this conditioned law satisfies the same resampling properties).  In particular, the law of $\eta_L$ and $\eta_R$ is in the setting of Proposition~\ref{prop::conditionalunique2}, so it is indeed determined; and given these two paths, we know how to sample $\eta_1$.  This completes the proof of the domain Markov property because we know that ordinary $\SLE_\kappa$ in $D \setminus \eta_2$ from $x$ to $z$ satisfies the same resampling rule.

The conformal Markov characterization implies that $\CR(\eta)$ is an $\SLE_{\wt{\kappa}}$ process from some $\wt{\kappa}$.  We note that $\wt{\kappa} = \kappa$ (the same value as for $\eta$) because the dimension of an $\SLE_\kappa$ process is almost surely $1+\tfrac{\kappa}{8}$ \cite{BEF_DIM} and this property is obviously preserved under time-reversal.
\end{proof}

\begin{figure}[h!]
\begin{center}
\includegraphics[scale=0.85]{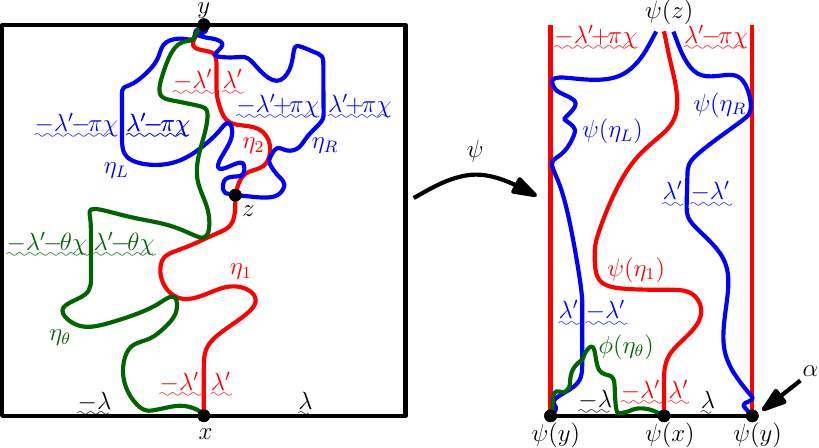}
\end{center}
\caption{\label{fig::reverse_stopped_plus_dual_theta} The left side of the figure is the same as Figure~\ref{fig::reverse_stopped_plus_dual} except with an additional path: a flow line of angle $\theta > 0$ from $x$ to $y$ that goes to the left $\eta_1 \cup \eta_2$.  (Note that $\eta_\theta$ either crosses $\eta_L$, as shown in the figure, or stays to its left, depending on $\theta$.)  We can conformally map $D \setminus \eta_2$ to the half-infinite vertical strip $\vhstrip$ in the right panel, just as in Figure~\ref{fig::reverse_stopped_plus_dual}.  The right panel has four marked boundary points, and we already have a recipe for producing the paths: namely, we draw $\psi(\eta_1)$ as an ordinary $\SLE_\kappa$, then we sample the GFF with the illustrated boundary conditions, and then draw the other paths as flow lines with illustrated angles.  This recipe immediately implies that law of the triple $(\psi(\eta_\theta), \psi(\eta_1), \psi(\eta_L))$ is a conformal invariant that does not depend on the point labeled $\alpha$ on the right.  In particular, this implies that the conditional law of $\psi(\eta_1)$ given $\psi(\eta_\theta)$ does not depend on $\alpha$.  We have not given an explicit description of this law but we will not need one.  One indirect way to describe it (that actually uses $\alpha$) is to fix $\psi(\eta_\theta)$ and note that we have a recipe for resampling each of the paths $\psi(\eta_1)$, $\psi(\eta_R)$ and $\psi(\eta_L)$ (stopped when it first hits $\psi(\eta_\theta)$) as GFF flow lines once we condition on the other two.  By Proposition~\ref{prop::middlepath} this determines the law of the triple given $\psi(\eta_\theta)$, and in particular the law of $\psi(\eta_1)$ given $\psi(\eta_\theta)$.}
\end{figure}

We are now going to extend the proof of Theorem~\ref{thm::usual_sle_reversible} to show that the time-reversal of an $\SLE_\kappa(\rho)$ process for $\rho \in (-2,0]$ is conformally invariant and satisfies the domain Markov property with one extra marked point (see also Figure~\ref{fig::reverse_stopped_plus_dual_theta}).

\begin{theorem}
\label{thm::time_reversal_conformal_markov}
Let $D \subseteq \C$ be a Jordan domain and fix $x,y \in \partial D$ distinct.  Suppose that $\kappa \in (0,4)$ and $\rho \in (-2,0]$.  Let $\eta$ be an $\SLE_\kappa(\rho)$ process in $D$ from $x$ to $y$ with the single force point located at $x$.  Then the time-reversal $\CR(\eta)$ of $\eta$ is conformally invariant and satisfies the domain Markov property with one extra marked point.
\end{theorem}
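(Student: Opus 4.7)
The plan is to extend the argument of Theorem~\ref{thm::usual_sle_reversible} by enriching the configuration with an auxiliary flow line that carries the extra force-point information, and then running the same type of resampling characterization. Fix a reverse stopping time $\tau'$ of $\eta$, set $\eta_1 = \eta|_{[0,\tau']}$, $\eta_2 = \eta|_{[\tau',\infty)}$, and $z = \eta(\tau')$. Let $z'$ denote the point on $\partial(D \setminus \eta_2)$ obtained by following the original force point at $x$ through the Loewner evolution; equivalently, $z'$ is the appropriate extremal point where $\eta_2$ meets $\partial D$ together with the side of $\eta$ on which the force point lies. The goal is to show that, conditional on $\eta_2$, the law of $\eta_1$ is a conformal invariant of $(D \setminus \eta_2, x, z; z')$; since $\tau'$ is an arbitrary reverse stopping time, this gives both Definition~\ref{def::conf_invariance} and Definition~\ref{def::domain_markov} for $\CR(\eta)$.

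Following Figure~\ref{fig::reverse_stopped_plus_dual_theta}, couple $\eta$ with a GFF $h$ on $D$ whose boundary data makes $\eta$ the zero-angle flow line from $x$ to $y$ (as in \cite[Theorem~1.1]{MS_IMAG}). Fix a small $\theta > 0$ and let $\eta_\theta$ be the flow line of $h$ from $x$ to $y$ with angle $\theta$; by \cite[Theorem~1.5]{MS_IMAG}, $\eta_\theta$ lies strictly on the side of $\eta$ opposite the force point, and its interaction with $\eta$ is governed by the explicit $\SLE_\kappa(\ul{\rho})$ conditional laws of Figure~\ref{fig::monotonicity}. Conditional on $\eta$, sample the dual flow lines $\eta_L$ (angle $\pi$) and $\eta_R$ (angle $-\pi$) from $z$ exactly as in Proposition~\ref{prop::middlepath}. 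Apply a conformal map $\psi$ sending $D \setminus \eta_2$ to $\vhstrip$ with $\psi(z) = 0$, $\psi(y^\pm) = \pm 1$, and $\psi(x) \in (-1,1)$, in the configuration of the right panel of Figure~\ref{fig::reverse_stopped_plus_dual_theta}.

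The heart of the proof is a resampling characterization of the conditional joint law of the quadruple $(\psi(\eta_1), \psi(\eta_\theta), \psi(\eta_L), \psi(\eta_R))$ given $\eta_2$. By construction this tuple can be produced as flow lines of the image GFF, so it satisfies: (i) given the other three, $\psi(\eta_1)$ is an $\SLE_\kappa(\tfrac{\kappa}{2}-2;\tfrac{\kappa}{2}-2)$ in the pocket cut out by $\psi(\eta_L)$ and $\psi(\eta_R)$, by Proposition~\ref{prop::middlepath}; (ii) given the others, each of $\psi(\eta_L)$ and $\psi(\eta_R)$ stopped on first hitting $\psi(\eta_\theta)$ is a flow line in the appropriate complementary component conditioned not to meet the other, whose joint law is characterized by Proposition~\ref{prop::conditionalunique2} together with Remark~\ref{rem::conditionalunique2_interval_exit}; and (iii) given the others, $\psi(\eta_\theta)$ is a bi-chordal $\SLE_\kappa(\ul{\rho})$ flow line in the component containing it, whose conditional law is pinned down by Theorem~\ref{thm::bi_chordal}. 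Uniqueness of the joint law satisfying (i)--(iii) follows by coupling two such laws and iterating the resamplings as in the proof of Theorem~\ref{thm::bi_chordal}; finitely many applications of Lemma~\ref{lem::coupling} (adapted to each of the conditional steps above) give positive-probability coupling.

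With this uniqueness in hand, conformal invariance is immediate because rules (i)--(iii) are stated intrinsically in terms of the marked points on $\partial \vhstrip$; any conformal map of two configurations of the type $(D,x,y;z)$ transports the recipe to the recipe. Marginalizing out $\psi(\eta_\theta), \psi(\eta_L), \psi(\eta_R)$ gives a conditional law of $\psi(\eta_1)$ that depends only on the marked points in $\vhstrip$, which, translated back through $\psi$, is exactly the domain Markov property with one extra marked point for $\CR(\eta)$ as required by Definition~\ref{def::domain_markov}. The main obstacle is verifying the resampling characterization in the presence of $\eta_\theta$: Proposition~\ref{prop::middlepath} was established only for the three-path triple $(\eta, \eta_L, \eta_R)$, and to invoke it after further conditioning on $\eta_\theta$ one must check that the induced boundary data on the pocket bounded by $\eta_\theta$ on one side remains in the range $(-\lambda'-\pi\chi,\lambda')$ for which the proposition applies. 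The restriction $\rho \in (-2,0]$ enters precisely here: it guarantees that a small positive $\theta$ can be chosen so that $\eta_\theta$ exists as a continuous curve all the way to $y$ without hitting the continuation threshold, and so that the boundary data induced on the dual side of $\eta_\theta$ lies in the required interval, making the uniqueness step work verbatim.
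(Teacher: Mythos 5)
There is a genuine gap, and it lies in the way you set up the configuration. You take the $\SLE_\kappa(\rho)$ process $\eta$ itself to be the zero-angle flow line of the GFF and then try to resample $\eta_1=\eta|_{[0,\tau']}$ between the dual flow lines $\eta_L,\eta_R$ via Proposition~\ref{prop::middlepath} (your rule (i)).  But Proposition~\ref{prop::middlepath} and Lemma~\ref{lem::local_dual_flow} are only proved for weights in the ranges $\refrho{\rho}^q\in(\tfrac{\kappa}{2}-2,\tfrac{\kappa}{2})$, resp.\ $\rho^q\in(\tfrac{\kappa}{2}-4,\tfrac{\kappa}{2}-2)$, and an $\SLE_\kappa(\rho;0)$ with $\rho\in(-2,0]$ does not fit into either range in general.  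Worse, in the boundary-hitting regime $\rho\in(-2,\tfrac{\kappa}{2}-2]$ --- which is precisely the new case the theorem is supposed to cover --- the rule is simply false: when $\eta_1$ touches $\partial D$ on the force-point side, the pocket between $\eta_L$ and $\eta_R$ has arcs of $\partial D$ carrying boundary value $\lambda(1+\rho)\neq\lambda$ on its boundary, so the conditional law of $\eta_1$ given $(\eta_2,\eta_L,\eta_R)$ acquires extra force points and is not the intrinsic $\SLE_\kappa(\tfrac{\kappa}{2}-2;\tfrac{\kappa}{2}-2)$ law depending only on the marked points.  Since your uniqueness and conformal-invariance conclusions are derived from the claim that rules (i)--(iii) are ``intrinsic,'' the argument collapses exactly where it matters.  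Placing $\eta_\theta$ on the side \emph{opposite} the force point does nothing to shield $\eta_1$ from $\partial D$ on the force-point side (and for small $\theta$ the two flow lines can in fact bounce off each other, so ``strictly'' is also inaccurate); nor is the role you assign to the hypothesis $\rho\in(-2,0]$ the correct one.

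The paper's proof reverses the roles of the two paths.  One takes $\eta$ to be an \emph{ordinary} $\SLE_\kappa$ (so $\refrho{\rho}^L=\refrho{\rho}^R=0$ does lie in the range of Proposition~\ref{prop::middlepath}), draws $\eta_\theta$ with angle $\theta>0$ to its left, and identifies the $\SLE_\kappa(\rho)$ of the theorem with the \emph{conditional law} of $\eta$ given $\eta_\theta$, where $\rho=\theta\chi/\lambda-2$; this is exactly why the hypothesis $\rho\in(-2,0]$ appears.  All boundary contact of the $\SLE_\kappa(\rho)$ is then contact with $\eta_\theta$, never with $\partial D$, so the reverse-resampling machinery is only ever applied to the non-boundary-hitting ordinary $\SLE_\kappa$.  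The remaining key step --- showing that the law of $\eta_1$ given $\eta_2\cup\eta_\theta$ depends only on the three points $x$, $z$ and $w$ (the first point after $x$ where $\eta_\theta$ meets $\eta_2$) and \emph{not} on the fourth marked point at $y$ --- is handled by the recipe of Figure~\ref{fig::reverse_stopped_plus_dual_theta}: first draw $\psi(\eta_1)$ as an ordinary $\SLE_\kappa$ from $\psi(x)$ to $\psi(z)$ (using Theorem~\ref{thm::usual_sle_reversible}, i.e.\ the already-proved $\rho=0$ case), then sample the GFF given it and draw $\psi(\eta_\theta),\psi(\eta_L)$ as flow lines; this recipe visibly never references the fourth point.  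Your proposal contains no substitute for this reduction, and your appeal to ``iterating resamplings as in Theorem~\ref{thm::bi_chordal}'' for a four-path system with coincident endpoints is also asserted rather than proved; but the primary defect is the misapplication of Proposition~\ref{prop::middlepath} to the boundary-hitting $\SLE_\kappa(\rho)$ itself.
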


We remark that it is possible to extend the proof of Theorem~\ref{thm::time_reversal_conformal_markov} to include the case that $\rho > 0$.  The reason we chose to omit this particular case is that, using a technique which will be relevant for proving the reversibility of $\SLE_\kappa(\rho^L;\rho^R)$, we are also able to deduce the reversibility of $\SLE_\kappa(\rho)$ for $\rho > 0$ from the reversibility of $\SLE_\kappa(\rho)$ for $\rho \in (-2,0]$.  We also note that it is possible to extend the proof of Theorem~\ref{thm::time_reversal_conformal_markov} to show that the law of an $\SLE_\kappa(\rho^L;\rho^R)$ process given the realization of a terminal segment is conformally invariant and satisfies the domain Markov property with two extra marked points.  We do not include this here because these properties actually do not single out the class of $\SLE_\kappa(\rho_1;\rho_2)$ processes and we use another approach in the next section to prove reversibility in this setting.

\begin{proof}[Proof of Theorem~\ref{thm::time_reversal_conformal_markov}]
As explained in the caption of Figure~\ref{fig::reverse_stopped_plus_dual_theta}, we can take $\eta = \eta_1 \cup \eta_2$ to be an ordinary $\SLE_\kappa$ process from $x$ to $y$ in $D$, coupled with the GFF in the usual way, and then draw a flow line $\eta_\theta$ with angle $\theta > 0$ to its left.  By Figure~\ref{fig::monotonicity}, we know that the conditional law of $\eta$ given $\eta_\theta$ is that of an $\SLE_\kappa(\theta \chi/\lambda-2)$ process in the right connected component of $D \setminus \eta_\theta$.  We want to show that the law of $\eta_1$ given $\eta_2 \cup \eta_\theta$ is a conformal invariant of the connected component of the domain $D \setminus (\eta_2 \cup \eta_\theta)$ which contains $x$ that depends only on the three points on the boundary: $x$, $z$, and the first place $w$ after $x$ at which $\eta_\theta$ hits $\eta_2$.  Since we know how to resample $\eta_1$ given $\eta_{\theta}$ and $\eta_2$ (as explained in Figure~\ref{fig::reverse_stopped_plus_dual}, it follows that the law of $\eta_1$ given $\eta_2 \cup \eta_\theta$ is conformally invariant and depends on four boundary points ($x$, $z$, $w$, and the right side of $y$).  Thus we just need to show that the right side of $y$ is irrelevant.  This is explained in caption of Figure~\ref{fig::reverse_stopped_plus_dual_theta}, which completes the proof.
\end{proof}

\section{Proof of Theorem~\ref{thm::all_reversible}}
\label{sec::proof}

The completion of the proof of Theorem~\ref{thm::all_reversible} will require several steps.  First, we will observe that with a little thought, Theorem~\ref{thm::time_reversal_conformal_markov} implies the reversibility of $\SLE_\kappa(\rho)$ with a single force point of any weight $\rho \in (-2,0]$.  Then, we will make use of more conditioning tricks using flow lines of the GFF in order to obtain the result for all $\rho > -2$ and, more generally, two force points $\rho^L,\rho^R > -2$.

\subsection{One boundary force point; $\rho \in (-2,0]$}

Suppose that $\eta \sim \SLE_\kappa(\rho)$ in a Jordan domain $D$ from $x$ to $y$ with a single force point at $x^+$ with weight $\rho \in (-2,0]$.  By Theorem~\ref{thm::time_reversal_conformal_markov}, we know that the time-reversal $\CR(\eta)$ of $\eta$ is conformally invariant and satisfies the domain Markov property with one force point.  Moreover, \cite[Lemma~7.16]{MS_IMAG} implies that $\eta \cap D$ almost surely has zero Lebesgue measure if $\partial D$ is smooth.  Therefore Theorem~\ref{thm::conformal_markov} implies that there exists $\wt{\rho}$ such that $\CR(\eta) \sim \SLE_\kappa(\wt{\rho})$ in $D$ from $y$ to $x$.  We know that the value of $\kappa$ associated with $\CR(\eta)$ is the same as that associated with $\eta$ since the Girsanov theorem implies that the law of $\eta$ is mutually absolutely continuous with respect to the law of ordinary $\SLE_\kappa$ when it is not hitting the boundary.  Since the latter almost surely has Hausdorff dimension $1+\tfrac{\kappa}{8}$ \cite{BEF_DIM}, so does the former.  The claim follows because this property is obviously invariant under time-reversal.  Thus to complete the proof of Theorem~\ref{thm::all_reversible} in this case, we just need to show that $\rho = \wt{\rho}$.

Let $P = P(\kappa)$ be the set weights $\rho > -2$ such that there exists $\wt{\rho} > -2$ so that the time-reversal of an $\SLE_\kappa(\rho)$ process is an $\SLE_\kappa(\wt{\rho})$ process.  Note that $(-2,0] \subseteq P$.  Let $R \colon P \to P$ be the function that maps $\rho \in P$ to the corresponding weight for the time-reversal.  We are now going to argue that $R$ is the identity.  Observe that $R$ has the following properties:

\begin{enumerate}[(1)]
\item $R(R(\rho)) = \rho$ [the time-reversal of the time-reversal is the original path]
\item $R$ is injective [if $R(\rho) = R(\wt\rho)$ then $\rho = R(R(\rho)) = R(R(\wt\rho)) = \wt\rho$]
\item $R$ is continuous [if $\rho_n \to \rho$ and $\eta_n \sim \SLE_\kappa(\rho_n)$, then $\eta_n \stackrel{d}{\to} \eta \sim \SLE_\kappa(\rho)$ so $\CR(\eta_n) \stackrel{d}{\to} \CR(\eta)$; the convergence is weak convergence with the topology induced by local uniform convergence on the Loewner driving function.]
\item $R(0) = 0$ [the time-reversal symmetry of ordinary $\SLE_\kappa$ is Theorem~\ref{thm::usual_sle_reversible} and was already known \cite{Z_R_KAPPA, DUB_DUAL}.]
\item $\lim_{\rho \downarrow -2} R(\rho) = -2$ [if $\rho_n \downarrow -2$ and $\eta_n \sim \SLE_\kappa(\rho_n)$, then $\eta_n$ converges to $\R_+$ so that $\CR(\eta_n)$ converges to $\R^+$; recall Figure~\ref{fig::monotonicity}.]
\end{enumerate}
Properties (2) and (3) imply that $R|_{(-2,0]}$ is monotonic and properties (4) and (5) imply that $R|_{(-2,0]}$ is increasing.  The only increasing function on $(-2,0]$ satisfying (1) is the identity.  Indeed, if $R(\rho) > \rho$ for some $\rho > -2$, then since $R$ is increasing, we must have that $R(R(\rho)) > R(\rho)$.  But $R(R(\rho)) = \rho$, an obvious contradiction.  Likewise, it cannot be that $\rho > R(\rho)$.

\qed

\subsection{Two boundary force points}

\begin{figure}[ht!]
\begin{center}
\includegraphics[scale=0.85]{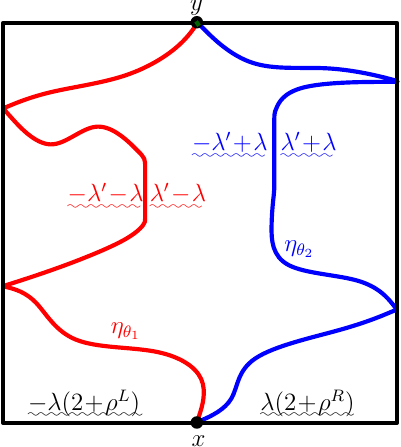}
\end{center}
\caption{\label{fig::two_force_point_non_boundary}  Fix $\rho^L,\rho^R \in (-2,0]$.  The configuration of paths used to prove the reversibility of $\SLE_\kappa(\rho^L;\rho^R+2)$ and $\SLE_\kappa(\rho^L+2;\rho^R)$ is illustrated above.  Suppose that $h$ is a GFF on a Jordan domain $D$ with the boundary data depicted above.  Let $\eta_{\theta_1},\eta_{\theta_2}$ be the flow lines of $h$ with angles $\theta_1= \lambda/\chi$ and $\theta_2=-\lambda/\chi$.  Then $\eta_{\theta_1} \sim \SLE_\kappa(\rho^L;2+\rho^R)$.  Moreover, the conditional law of $\eta_{\theta_1}$ given $\eta_{\theta_2}$ is an $\SLE_\kappa(\rho^L)$ and the conditional law of $\eta_{\theta_2}$ given $\eta_{\theta_1}$ is an $\SLE_\kappa(\rho^R)$.  Consequently, by the one boundary force point case for $\rho \in (-2,0]$, the conditional law of the time-reversal $\CR(\eta_{\theta_1})$ given the time-reversal $\CR(\eta_{\theta_2})$ of $\eta_{\theta_2}$ is an $\SLE_\kappa(\rho^L)$ and the conditional law of $\CR(\eta_{\theta_2})$ given $\CR(\eta_{\theta_1})$ is an $\SLE_\kappa(\rho^R)$.  By Theorem~\ref{thm::bi_chordal}, this characterizes the law of $(\CR(\eta_{\theta_1}),\CR(\eta_{\theta_2}))$ and, in particular, implies that $\CR(\eta_{\theta_1}) \sim \SLE_\kappa(2+\rho^R;\rho^L)$ from $y$ to $x$ in $D$ and similarly $\CR(\eta_{\theta_2}) \sim \SLE_\kappa(\rho^R;2+\rho^L)$.  By iterating this argument, by can prove the reversibility of $\SLE_\kappa(\rho^L;\rho^R)$ for all $\rho^L > -2$ and $\rho^R \geq 0$.
}
\end{figure}

We are now going to complete the proof of Theorem~\ref{thm::all_reversible} by deducing the reversibility of $\SLE_\kappa(\rho^L;\rho^R)$ for all $\rho^L,\rho^R > -2$ from the reversibility of $\SLE_\kappa(\rho)$ for $\rho \in (-2,0]$.  This will require two steps.  We will first prove the reversibility of $\SLE_\kappa(\rho^L;\rho^R)$ when $\rho^L > -2$ and $\rho^R \geq 0$.  This, in particular, will fill in the missing gap of the reversibility of $\SLE_\kappa(\rho)$ when $\rho \geq 0$.  We will prove this by considering a configuration of flow lines $\eta_{\theta_1},\eta_{\theta_2}$ of a GFF such that $\eta_{\theta_1}$ is reversible given $\eta_{\theta_2}$ and $\eta_{\theta_2}$ is reversible given $\eta_{\theta_1}$ and then use Theorem~\ref{thm::bi_chordal}, our characterization of bi-chordal $\SLE$, to deduce the reversibility of the whole configuration.  Second, we will deduce from this the reversibility of $\SLE_\kappa(\rho^L;\rho^R)$ for general $\rho^L,\rho^R > -2$ by considering another configuration of flow lines.

\begin{lemma}
\label{lem::two_force_point_non_boundary}
Suppose that $D \subseteq \C$ is a Jordan domain and fix $x,y \in \partial D$ distinct.  Suppose that $\eta \sim \SLE_\kappa(\rho^L;\rho^R)$ from $x$ to $y$ with $\rho^L > -2$ and $\rho^R \geq 0$ and with the weights located at $x^-,x^+$, respectively.  The time-reversal $\CR(\eta)$ of $\eta$ is an $\SLE_\kappa(\rho^R;\rho^L)$ process in $D$ with the weights located at $y^-,y^+$, respectively.
\end{lemma}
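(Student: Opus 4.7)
The plan is to follow the strategy sketched in Figure~\ref{fig::two_force_point_non_boundary}: realize $\eta$ as one strand of a bi-chordal pair obtained as two flow lines of a GFF, chosen so that the conditional law of each strand given the other is a single-force-point $\SLE_\kappa(\rho)$ process with weight $\rho \in (-2,0]$, to which the reversibility already established in the one-sided $(-2,0]$ case applies. The uniqueness statement of Theorem~\ref{thm::bi_chordal} then lifts the two individual reversibilities into a reversibility statement for the joint law of the pair, and reading off the marginals yields the lemma.

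Concretely, fix $\rho^L, \rho^R \in (-2,0]$ and let $h$ be a GFF on $D$ with the piecewise constant boundary data depicted in Figure~\ref{fig::two_force_point_non_boundary}. Let $\eta_{\theta_i}$ be the flow line of $h$ from $x$ to $y$ with angle $\theta_i$, with $\theta_1 = \lambda/\chi$ and $\theta_2 = -\lambda/\chi$. The imaginary-geometry identifications from Section~\ref{subsec::imaginary} (cf. Figure~\ref{fig::monotonicity}) give marginal laws $\eta_{\theta_1} \sim \SLE_\kappa(\rho^L;\,2+\rho^R)$ and $\eta_{\theta_2} \sim \SLE_\kappa(2+\rho^L;\,\rho^R)$ from $x$ to $y$ in $D$, together with conditional laws $\eta_{\theta_1} \mid \eta_{\theta_2} \sim \SLE_\kappa(\rho^L)$ in the left connected component of $D\setminus\eta_{\theta_2}$ and $\eta_{\theta_2} \mid \eta_{\theta_1} \sim \SLE_\kappa(\rho^R)$ in the right connected component of $D\setminus\eta_{\theta_1}$. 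Since $\rho^L,\rho^R\in(-2,0]$, each conditional law is time-reversible by what was proved in the preceding subsection, so the conditional law of $\CR(\eta_{\theta_1})$ given $\CR(\eta_{\theta_2})$ is again $\SLE_\kappa(\rho^L)$ (run from $y$ to $x$ in the appropriate complementary component), and symmetrically for $\CR(\eta_{\theta_2})\mid \CR(\eta_{\theta_1})$. On the other hand, applying Theorem~\ref{thm::bi_chordal} in $D$ with $x$ and $y$ swapped produces an explicit bi-chordal pair obtained as flow lines of the analogous GFF from $y$ to $x$, whose marginals are $\SLE_\kappa(2+\rho^R;\rho^L)$ and $\SLE_\kappa(\rho^R;\,2+\rho^L)$ and whose conditional laws coincide with those just identified for $(\CR(\eta_{\theta_1}),\CR(\eta_{\theta_2}))$. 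The uniqueness clause of Theorem~\ref{thm::bi_chordal} therefore identifies the two joint laws; reading off marginals gives $\CR(\eta_{\theta_1}) \sim \SLE_\kappa(2+\rho^R;\rho^L)$ and $\CR(\eta_{\theta_2}) \sim \SLE_\kappa(\rho^R;\,2+\rho^L)$ from $y$ to $x$. This proves the lemma whenever the weights take the form $(a,b)$ with $a \in (-2,0]$ and $b \in (0,2]$, and, by the symmetric roles of $\theta_1$ and $\theta_2$, also for $a \in (0,2]$ and $b \in (-2,0]$.

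To extend from $\rho^R \in (0,2]$ to arbitrary $\rho^R \geq 0$ (and simultaneously to arbitrary $\rho^L > -2$), iterate the construction. At stage $n$, choose the angles and boundary data of a new GFF so that the conditional laws of each flow line given the other are multi-force-point $\SLE_\kappa(\ul{\rho})$ processes whose reversibility was already established at stage $n-1$. Theorem~\ref{thm::bi_chordal} is stated for vector weights and so applies without change, and each iteration enlarges the admissible range of $\rho^R$ by $2$; finitely many iterations therefore suffice to cover any $\rho^R \geq 0$. Throughout, the flexibility on the left side (angle $\theta_1$ together with the first boundary height) allows $\rho^L$ to be chosen as any weight $> -2$ already known to be reversible on the single-force-point side, giving the full range $\rho^L > -2$.

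The main obstacle is the bookkeeping in the iteration: one must verify at each stage that the extra marginal force point produced by the bi-chordal lift is located immediately adjacent to the $\SLE$ seed (the adjacency that is built into the hypothesis of Theorem~\ref{thm::all_reversible}) and that the conditional-law weight vectors at the next stage remain in the range for which reversibility is known. The first issue is automatic because $\eta_{\theta_1}$ and $\eta_{\theta_2}$ emanate from the common seed $x$, so the force points contributed by one path to the marginal law of the other accumulate at $x^\pm$. The second is what forces the $2$-step pattern $\rho \mapsto 2+\rho$ of the extension and the resulting need for finitely many iterations rather than a single step.
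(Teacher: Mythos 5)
Your proposal is correct and takes essentially the same route as the paper: the same GFF bi-chordal configuration with angles $\pm\lambda/\chi$, the one-force-point reversibility for weights in $(-2,0]$ applied to the conditional laws, the uniqueness clause of Theorem~\ref{thm::bi_chordal} to identify the joint law of the time-reversals with the explicit flow-line pair from $y$ to $x$, and then iteration that enlarges the admissible weight range by $2$ at each stage.  The only cosmetic difference is that in the paper's iteration the conditional laws remain (effectively) single-force-point processes $\SLE_\kappa(\rho^L;0)$ and $\SLE_\kappa(0;\rho^R)$ with force points adjacent to the seed and progressively larger admissible weights, rather than genuinely ``multi-force-point'' ones as your wording suggests; this matters because Theorem~\ref{thm::bi_chordal} requires the force points of each conditional law to lie on the corresponding boundary arc, and reversibility at that stage is only known for force points immediately adjacent to the seed.
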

\begin{proof}
Fix $\rho^L,\rho^R \in (-2,0]$.  Suppose that $h$ is a GFF on $D$ whose boundary data is as described in Figure~\ref{fig::two_force_point_non_boundary}.  Let $\theta_1 =  \lambda/\chi$ and $\theta_2 = -\lambda/\chi$.  For $i=1,2$, let $\eta_{\theta_i}$ be the flow line of $h$ with angle $\theta_i$.  Then $\eta_{\theta_1}$ is an $\SLE_\kappa(\rho^L;2+\rho^R)$ process in $D$ from $x$ to $y$ with the force points located at $x^-,x^+$, respectively, and likewise $\eta_{\theta_2}$ is an $\SLE_\kappa(2+\rho^L;\rho^R)$ process.  Moreover, the law of $\eta_{\theta_1}$ given $\eta_{\theta_2}$ is an $\SLE_\kappa(\rho^L;0)$ in the left connected component $D_2$ of $D \setminus \eta_{\theta_2}$ and the law of $\eta_{\theta_2}$ given $\eta_{\theta_1}$ is an $\SLE_\kappa(0;\rho^R)$ in the right connected component $D_1$ of $D \setminus \eta_{\theta_1}$ (see Figure~\ref{fig::monotonicity}).  Therefore by Theorem~\ref{thm::all_reversible} for one boundary force point whose weight is in $(-2,0]$, which we proved in the previous subsection, we know that the law of the time-reversal $\CR(\eta_{\theta_1})$ of $\eta_{\theta_1}$ given the time-reversal $\CR(\eta_{\theta_2})$ of $\eta_{\theta_2}$ has the law of an $\SLE_\kappa(0;\rho^L)$ in $D_2$ from $y$ to $x$.   Likewise, the law of $\CR(\eta_{\theta_2})$ given $\CR(\eta_{\theta_1})$ is that of an $\SLE_\kappa(\rho^R;0)$ in $D_1$ from $y$ to $x$.  By Theorem~\ref{thm::bi_chordal}, we therefore have that $(\CR(\eta_{\theta_1}),\CR(\eta_{\theta_2})) \stackrel{d}{=} (\wt{\eta}_{1},\wt{\eta}_{2})$ where $\wt{\eta}_1 \sim \SLE_\kappa(2+\rho^R;\rho^L)$ and $\wt{\eta}_2 \sim \SLE_\kappa(\rho^R;2+\rho^L)$.  This proves the statement of the lemma when $\rho^L \in (-2,0]$ and $\rho^R \in [0,2)$.

The argument in the previous paragraph in particular implies the reversibility of $\SLE_\kappa(\rho)$ processes for $\rho \in (0,2)$.  Thus we can re-run the argument but now with the restriction $\rho^L,\rho^R \in (-2,2)$.  Iterating this proves the result for all $\rho^L > -2$ and $\rho^R \geq 0$.
\end{proof}

\begin{figure}[h!]
\begin{center}
\includegraphics[scale=0.85]{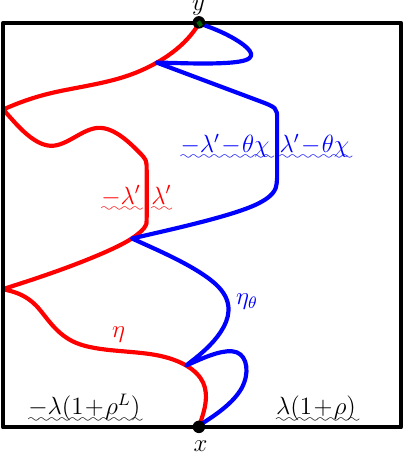}
\end{center}
\caption{\label{fig::two_force_point_boundary}  The configuration of paths used to prove the reversibility of $\SLE_\kappa(\rho^L;\rho^R)$ process for $\rho^L,\rho^R > -2$ assuming Lemma~\ref{lem::two_force_point_non_boundary}.  Suppose that $h$ is a GFF on a Jordan domain $D$ with the boundary data depicted above for $\rho^L > -2$ and $\rho > 0$ very large.  Let $\eta$ be the flow line of $h$ and $\eta_\theta$ the flow line with angle $\theta < 0$ chosen so that $\rho^R = -\theta \chi/ \lambda -2$.  Then $\eta \sim \SLE_\kappa(\rho^L;\rho)$, so $\CR(\eta) \sim \SLE_\kappa(\rho;\rho^L)$ since $\rho$ is large.  Moreover, conditionally on $\CR(\eta)$, we know from Figure~\ref{fig::monotonicity} that $\eta_\theta \sim \SLE_\kappa(-\theta \chi /\lambda -2;\rho+ \theta \chi/\lambda)$.  Thus the law of $\CR(\eta_\theta)$ conditionally on $\CR(\eta)$ is an $\SLE_\kappa(\rho+ \theta \chi/\lambda;- \theta \chi/\lambda-2)$.  Therefore we know the joint law of $(\CR(\eta),\CR(\eta_\theta))$, from which it is easy to see that $\CR(\eta)$ conditionally on $\CR(\eta_\theta)$ is an $\SLE_\kappa(\rho^R;\rho^L)$ process.  This proves the theorem since $\eta$ conditionally on $\eta_\theta$ is an $\SLE_\kappa(\rho^L;\rho^R)$ process (again by Figure~\ref{fig::monotonicity}).}
\end{figure}

We now have all of the ingredients needed to complete the proof of Theorem~\ref{thm::all_reversible}.

\begin{proof}[Proof of Theorem~\ref{thm::all_reversible}]
Suppose that $h$ is a GFF on $D$ whose boundary data is as indicated in Figure~\ref{fig::two_force_point_boundary} where $\rho > 0$ is very large and $\rho^L > -2$.  Fix $\rho^R > -2$ and let $\theta < 0$ be such that $\rho^R =  -\theta \chi/\lambda  - 2$.  Let $\eta$ be the flow line of $h$ and let $\eta_\theta$ be the flow line of $h$ with angle $\theta$.  Then $\eta \sim \SLE_\kappa(\rho^L;\rho)$ and the law of $\eta_\theta$ given $\eta$ is an $\SLE_\kappa(-\theta \chi/\lambda -2; \rho +  \theta \chi/ \lambda)$ from $x$ to $y$ in the right connected component $D_0$ of $D \setminus \eta$ (recall Figure~\ref{fig::monotonicity}).  Thus, assuming we chose $\rho$ large enough, Lemma~\ref{lem::two_force_point_non_boundary} implies $\CR(\eta) \sim \SLE_\kappa(\rho;\rho^L)$ from $y$ to $x$ in $D$ and the law of $\CR(\eta_\theta)$ given $\CR(\eta)$ is an $\SLE_\kappa(\rho+ \theta \chi/\lambda; - \theta \chi/\lambda -2)$ from $y$ to $x$ in $D_0$.  This implies that the law of $\CR(\eta)$ given $\CR(\eta_\theta)$ is an $\SLE_\kappa(\rho^R;\rho^L)$ from $y$ to $x$ in the left connected component $D_1$ of $D \setminus \eta_\theta$ (recall Figure~\ref{fig::monotonicity}).  The theorem follows since the law of $\eta$ given $\eta_\theta$ is an $\SLE_\kappa(\rho^L;\rho^R)$ in $D_1$ from $x$ to $y$ by Figure~\ref{fig::monotonicity}.
\end{proof}

\subsection{Whole plane SLE and variants}

If $\eta \colon [0,\infty] \to \C$ is a continuous path from $0$ to $\infty$, then we consider the map $\CR(\eta)(t) = [1/\overline{\eta(1/t)}]$ (up to monotone reparameterization of time) to be the ``time-reversal'' of $\eta$.  We state Theorem~\ref{thm::wholeplanetimereversal} below as a contingent result about whole-plane time-reversal: the conclusion will depend on the existence and uniqueness of a law for a random pair of paths that has certain properties.  In a subsequent work, we will explicitly construct pairs of paths with these properties as flow lines of the Gaussian free field (plus a multiple of the argument function) and establish the uniqueness of their laws an arguments similar to the one used for bichordal SLE processes here \cite{MS_INTERIOR}.  One result of this form was already proved by Zhan in \cite{2010arXiv1004.1865Z}.

\begin{theorem}
\label{thm::wholeplanetimereversal}
Suppose that $\eta_1$ and $\eta_2$ are random continuous paths in $\C$ from $0$ to $\infty$ for which the following hold:
\begin{enumerate}
\item Conditioned on $\eta_2$, the law of $\eta_1$ is that of an $\SLE_\kappa(\rho^L;\rho^R)$ from $0$ to $\infty$ in $\C \setminus \eta_2$.
\item Conditioned on $\eta_1$, the law of $\eta_2$ is that of an $\SLE_\kappa(\rho^R;\rho^L)$ from $0$ to $\infty$ in $\C \setminus \eta_1$.
\end{enumerate}
If $\eta_1$ and $\eta_2$ are the unique random paths that have these properties then the law of the pair $(\eta_1, \eta_2)$ is equal to the law of its time-reversal $(\CR(\eta_1), \CR(\eta_2))$ (up to monotone reparameterization).
\end{theorem}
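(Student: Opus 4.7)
\proofof{Theorem~\ref{thm::wholeplanetimereversal} (proposal)}
The plan is to verify directly that the time-reversed pair $(\CR(\eta_1),\CR(\eta_2))$ satisfies the same two conditional-law properties assumed for $(\eta_1,\eta_2)$, and then invoke the uniqueness hypothesis. The only ingredients are Theorem~\ref{thm::all_reversible} (chordal reversibility of $\SLE_\kappa(\rho_1;\rho_2)$) applied in the slit domain $\C\setminus \eta_j$, together with the simple observation that the anti-conformal map $\phi(z):=1/\overline z$ is an involution of the Riemann sphere that swaps $0$ and $\infty$ and, being orientation-reversing, swaps left and right.

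Fix a realization of $\eta_2$. Since $\kappa\in(0,4)$ the curve $\eta_2$ is almost surely simple, so $\C\setminus \eta_2$ is a simply connected domain on whose (prime-end) boundary $0$ and $\infty$ are distinct marked points; moreover $\phi(\C\setminus\eta_2)=\C\setminus \CR(\eta_2)$. By hypothesis, conditionally on $\eta_2$, the path $\eta_1$ is $\SLE_\kappa(\rho^L;\rho^R)$ in $\C\setminus\eta_2$ from $0$ to $\infty$, with force points immediately to the left and right of $0$. Theorem~\ref{thm::all_reversible}, applied in this slit domain (via any conformal identification with a Jordan domain), says that the reverse parameterization $\widetilde\eta_1$ of $\eta_1$ is an $\SLE_\kappa(\rho^R;\rho^L)$ in $\C\setminus\eta_2$ from $\infty$ to $0$, with force points immediately to the left and right of $\infty$. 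Pushing $\widetilde\eta_1$ forward by $\phi$ produces a path from $\phi(\infty)=0$ to $\phi(0)=\infty$ in $\C\setminus \CR(\eta_2)$; because $\phi$ reverses orientation, left and right are interchanged a second time, so the law of this pushed-forward path is $\SLE_\kappa(\rho^L;\rho^R)$ in $\C\setminus \CR(\eta_2)$ from $0$ to $\infty$. But $\phi(\widetilde\eta_1)$ is, by the very definition of $\CR$, the same parameterized path as $\CR(\eta_1)$. Since $\eta_2$ and $\CR(\eta_2)$ determine each other deterministically, we conclude that conditionally on $\CR(\eta_2)$,
\[
\CR(\eta_1)\sim \SLE_\kappa(\rho^L;\rho^R)\ \text{in}\ \C\setminus \CR(\eta_2)\ \text{from}\ 0\ \text{to}\ \infty.
\]

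Running the same argument with the roles of the two indices swapped yields that, conditionally on $\CR(\eta_1)$, the law of $\CR(\eta_2)$ is $\SLE_\kappa(\rho^R;\rho^L)$ in $\C\setminus \CR(\eta_1)$ from $0$ to $\infty$. Thus $(\CR(\eta_1),\CR(\eta_2))$ satisfies properties (1) and (2) of the theorem, and the uniqueness hypothesis gives $(\CR(\eta_1),\CR(\eta_2))\stackrel{d}{=}(\eta_1,\eta_2)$ up to monotone reparameterization, as desired. There is no genuine obstacle in this proof: the substantive content lies entirely in Theorem~\ref{thm::all_reversible} and in the uniqueness hypothesis (the latter being the real input to be supplied in \cite{MS_INTERIOR}); what remains is the bookkeeping of the two orientation swaps, which cancel so that the pair of force-point weights is preserved.
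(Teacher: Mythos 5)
Your proposal is correct and follows essentially the same route as the paper's own proof: show that $(\CR(\eta_1),\CR(\eta_2))$ satisfies the two conditional-law (resampling) properties via Theorem~\ref{thm::all_reversible} applied in the slit domains, then invoke the assumed uniqueness. The only difference is that you spell out the bookkeeping of the anti-conformal inversion $z\mapsto 1/\overline{z}$ and the two left/right swaps, which the paper leaves implicit.
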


\begin{proof}
The law of the pair of paths is invariant under the operation of successively resampling the two paths as $\SLE_\kappa(\rho^L;\rho^R)$ and $\SLE_\kappa(\rho^R;\rho^L)$ processes, which have time-reversal symmetry by Theorem~\ref{thm::all_reversible}.  We conclude  $(\CR(\eta_1), \CR(\eta_2))$ satisfies the properties described in the theorem statement, which were assumed to uniquely characterize the law of $(\eta_1, \eta_2)$.  Thus the pair $(\CR(\eta_1), \CR(\eta_2) )$ must have the same law as the pair $(\eta_1, \eta_2)$.
\end{proof}

In particular this will imply that each of $\eta_1$ and $\eta_2$ separately has time-reversal symmetry.  As we show in \cite{MS_INTERIOR}, there is a natural one-parameter family of laws for $\eta_1$ that can be constructed in this way, by considering the Gaussian free field couplings of Section~\ref{sec::multiple_force_points} but replacing the strip with a cylinder on which a multi-valued version of the Gaussian free field is defined (so that height changes by a constant as one makes a revolution around the cylinder), and taking $\eta_1$ and $\eta_2$ to be flow lines of different angles.

\section{Multiple force points}
\label{sec::multiple_force_points}

\subsection{Time reversal and free field perturbations}

\begin{figure}[h!]
\begin{center}
\includegraphics[scale=0.85]{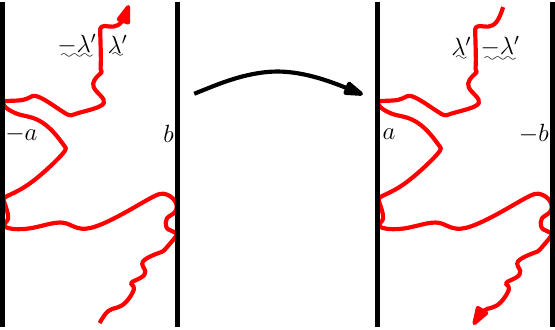}
\end{center}
\caption{\label{fig::abswap}
Consider a GFF $h$ on the infinite vertical strip $\vstrip = [-1,1] \times \R$ whose boundary values are depicted on the left side above.  The flow line $\eta$ of $h$ from the bottom to the top is an $\SLE_\kappa(\rho^L;\rho^R)$ process (we will calculate the values of $\rho^L,\rho^R$ in Figure~\ref{fig::abrhocalculation}).  To go from the left side of the figure to the right side, we add to $h$ on the left side of $\eta$ the harmonic function whose boundary values are equal to $2\lambda'$ along left side of $\eta$ and $2a$ along the left strip boundary $\vstripleft$.  On the right side of $\eta$, we add to $h$ the harmonic function whose boundary values are equal to $-2\lambda'$ along the right side of $\eta$ and $-2b$ along the right strip boundary $\vstripright$.  By the reversibility of $\SLE_\kappa(\rho^L;\rho^R)$ processes (Theorem~\ref{thm::all_reversible}), the resulting field is a GFF with the boundary data indicated on the right side and the time-reversal $\CR(\eta)$ of $\eta$ is the flow line of this new field from the top to the bottom.}
\end{figure}

\begin{figure}[h!]
\begin{center}
\includegraphics[scale=0.85]{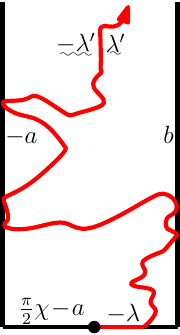}
\end{center}
\caption{\label{fig::abrhocalculation} Extending the black and red paths horizontally at the bottom (as though the domain were the half strip $\vhstrip =[-1,1] \times \R_+$) we find that the gap between the heights (red height minus black height) along the lower boundary is $-\lambda +a - \tfrac{\pi}{2}\chi = a - \lambda - \lambda (4-\kappa)/4 = a - (2 - \kappa/4) \lambda$.  Dividing by $\lambda$ gives a force point strength of $\rho^L = a/\lambda - 2 + \kappa/4$ (see Figure~\ref{fig::conditional_boundary_data}).  Symmetrically we find $\rho^R = b/\lambda + \kappa/4 - 2$.  In particular, if $a = b = \lambda'$ we find $\rho^L = \rho^R = \tfrac{\kappa}{2}-2$.  }
\end{figure}

In this section, we will make a series of observations that will lead to a proof of Theorem~\ref{thm::multiple_force_points}.  We will now illustrate what Theorem~\ref{thm::all_reversible} tells us about the couplings of $\SLE_\kappa(\rho^L;\rho^R)$ processes with the GFF described in Section~\ref{subsec::imaginary} and \cite{MS_IMAG}.  Consider the infinite vertical strip $\vstrip = [-1,1] \times \R$ with boundary conditions $-a$ on the left side $\vstripleft$ and $b$ on the right side $\vstripright$, as in the left panel of Figure~\ref{fig::abswap}.

\begin{figure}[h!]
\begin{center}
\includegraphics[scale=0.85]{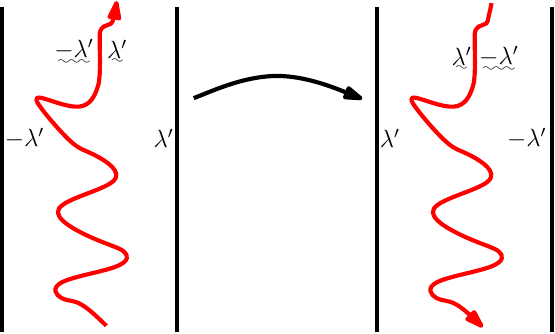}
\end{center}
\caption{\label{fig::simpleswap} To go from the left figure to the right figure, add the constant function $2\lambda'$ to left side of $\vstrip$ minus the path and $-2\lambda'$ to the right side.}
\end{figure}

By Figure~\ref{fig::conditional_boundary_data} the flow line path corresponds to an $\SLE_\kappa(\rho^L;\rho^R)$ process from the bottom of $\vstrip$ to the top with the $\rho^L$ and $\rho^R$ values described in Figure~\ref{fig::abrhocalculation} and initial force points at the seed of the path.
If we consider the boundary conditions on the right side of Figure~\ref{fig::abswap} and draw a path in the reverse direction, then the path corresponds to an $\SLE_\kappa(\rho^R;\rho^L)$ process from the top of $\vstrip$ to the bottom, with initial force points at the seed of the path.  Theorem~\ref{thm::all_reversible} implies that (up to time-reversal) the law of the path on the left and the law of the path on the right are exactly the same.  This implies that the transformation depicted in Figure~\ref{fig::abswap} is a {\em measure preserving map} from the space of GFF configurations with the boundary conditions on the left (recall from \cite[Theorem~1.2]{MS_IMAG} that the GFF determines the path almost surely) to the space of GFF configurations with the boundary conditions on the right (which again determine the path shown).  That is, if we sample an instance $h_1$ of the GFF with the left boundary conditions, and we then transform it according to the illustrated rule, then we obtain an instance $h_2$ of the GFF with the right boundary conditions.  This produces a {\em coupling} of the two Gaussian free fields $h_1$ and $h_2$ in such a way that the flow line on the left is the time-reversal of the flow line on the right and the two fields agree (up to harmonic functions) in the complement of the paths.  Indeed, $h_1 - h_2$ is the harmonic extension of the function that is $-2a$ on $\vstripleft$, $-2\lambda'$ on the left side of the path $2\lambda'$ on the right side of the path and $2b$ on $\vstripright$.

\begin{figure}[h!]
\begin{center}
\includegraphics[scale=0.85]{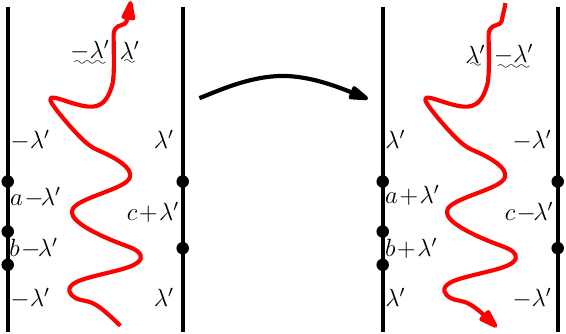}
\end{center}
\caption{\label{fig::simpleswapphi} In this case, the boundary conditions (on the left side) are equal to $-\lambda'$ on $\vstripleft$ and $\lambda'$ on $\vstripright$ except along compact intervals where they disagree.  To go from the left figure to the right figure, add $2\lambda'$ to left side of $\vstrip$ minus the path and $-2\lambda'$ to right side.}
\end{figure}

This construction takes a particularly simple form when we take $a = b = \lambda'$ as shown in Figure~\ref{fig::simpleswap}.  In this case, we have $\rho^L = \rho^R = -\pi\chi/\lambda = -(4-\kappa)/2 = \tfrac{\kappa}{2}-2$ (i.e., the $\rho$ value corresponding to a ``half-turn'').  The $\SLE_\kappa(\rho^L;\rho^R)$ is non-boundary intersecting in this case, but these values are {\em critical} for that to be the case (i.e., if the $\rho^q$ were any smaller then we would have boundary intersection; see \cite[Figure 4.1]{MS_IMAG}).  What makes this case simpler than the general one in Figure~\ref{fig::abswap} is that the functions one adds to the left and right sides of the path, in order to map from one type of boundary condition to another, are {\em constant}, as explained in Figure~\ref{fig::simpleswap}.  In this case $h_1 - h_2$ is simply $-2\lambda'$ to the left of the path and $2\lambda'$ to the right of the path.

\begin{figure}[h!]
\begin{center}
\includegraphics[scale=0.85]{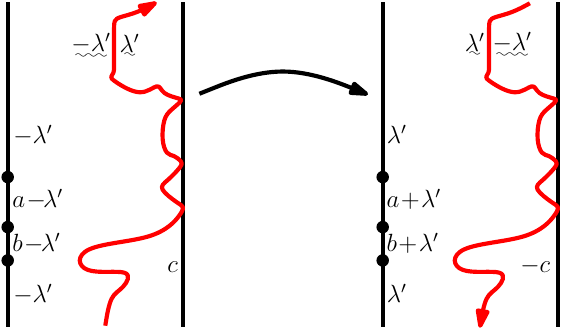}
\end{center}
\caption{\label{fig::abswapphi} In this case, the boundary conditions (on the left side) are equal to $-\lambda'$ on $\vstripleft$ except along a compact intervals where they disagree.  On $\vstripright$, the boundary conditions are given by a constant $c$.  To go from the left figure to the right figure, add $2\lambda'$ to left side of $\vstrip$ minus the path.  On the right side, add the harmonic function that is equal to $-2c$ on $\vstripright$ and $-2\lambda'$ on the right side of the path.}
\end{figure}

\begin{figure}[hbt!!]
\begin{center}
\includegraphics[scale=0.85]{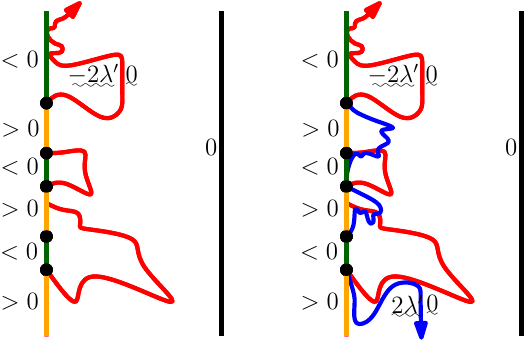}
\end{center}
\caption{\label{fig::sidematching} Procedure for producing a matching height condition on the opposite side of a strip.  In the left figure, one draws a certain flow line $\eta_1$ from the bottom to the top of the strip as shown, but whenever the flow line reaches a boundary location where the height is greater than or equal to zero (i.e., it reaches the continuation threshold) it is allowed to ``skip'' along the boundary of the strip to the first point where this is no longer the case and then start again at that point.  The path $\eta_1$ disconnects all of the left boundary intervals on which the height is {\em strictly less than zero} from the right boundary of the strip.  As shown on the right, we can repeat the same procedure with a path $\eta_2$ from top to bottom, which necessarily disconnects all intervals on which the height is strictly greater than zero from the right side of the strip.  Both $\eta_i$ avoid the right boundary of the strip almost surely so that the strip minus the pair of paths contains a ``large component'' that has the right boundary of the strip as its right boundary.  The angles are chosen so that conditioned on the paths the GFF boundary conditions on the left boundary of this component are $0$ (plus winding).}
\end{figure}

\begin{figure}[hbt!!]
\begin{center}
\includegraphics[scale=0.85]{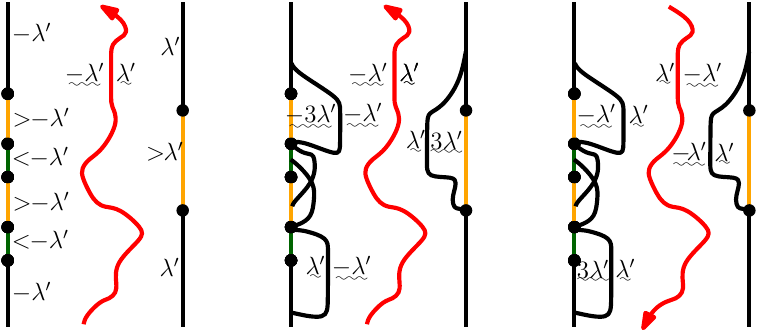}
\end{center}
\caption{\label{fig::shields} This illustrates an alternate proof of Lemma~\ref{lem::multiple_force_special_case}.  In the left figure, the red flow line avoids the boundary. In the middle figure, the {\em shields} (drawn using the procedure described in Figure~\ref{fig::sidematching}) are ``open'' in the sense that their union does not contain a crossing of the strip.  Given that the shields are open, the red path is almost surely boundary avoiding.  Conversely, given that the red path is boundary avoiding, the shields are almost surely open, so these events are equivalent.  One can fix the shields and resample the GFF in between as a GFF (producing a new red flow line) or fix the red flow line and resample the GFF on either side (producing new shields).  The argument used to prove Theorem~\ref{thm::bi_chordal} shows that this resampling invariance characterizes the joint law of the shields and the red path.  The third figure above is obtained by adding $2\lambda'$ to the left of the red path and subtracting $2\lambda'$ from the right, reversing the orientation of the red path.  We can view this as a reversed red path conditioned to be boundary avoiding (with the modified boundary conditions).  The resampling characterization mentioned above and the time reversal symmetry of $\SLE_\kappa(\tfrac{\kappa}{2}-2; \tfrac{\kappa}{2}-2)$ imply that the law is the same as in the second picture, which implies Lemma~\ref{lem::multiple_force_special_case}.}
\end{figure}

Now, what happens if we consider the simple setting of Figure~\ref{fig::simpleswap} but we replace $h$ with $h + \phi$ where $\phi$ is a smooth function whose Laplacian is supported on a subset $S$ contained in the interior of $\vstrip$?  This is equivalent to weighting the law of $h$ by (a constant times) $e^{(h, \phi)_\nabla}$ (this follows, for example, by~\eqref{eqn::gff_definition}).  By integrating by parts, we note that $(h, \phi)_\nabla = (h, -\Delta \phi)$.  Write $A = \int_S -\Delta \phi(z) dz$.  We thus observe:

\begin{proposition} \label{prop::radonforwardreverse}
The Radon-Nikodym derivative of the forward flow line $\eta$ of $h_1 + \phi$ with respect to the reverse flow line of $h_2 + \phi$, as depicted in Figure~\ref{fig::abswap}, is given by $e^{(h_1 - h_2, -\Delta \phi)}$ conditioned on the event that $\eta$ does not hit the boundary.
\end{proposition}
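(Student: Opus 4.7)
The plan is to combine the Cameron--Martin (Girsanov) theorem for the GFF with the reversibility theorem (Theorem~\ref{thm::all_reversible}) and the explicit coupling of $h_1$ and $h_2$ described in Figure~\ref{fig::abswap}. Since $\phi$ is smooth with $-\Delta\phi$ supported in a compact subset $S$ of the interior of $\vstrip$, integration by parts gives $(h,\phi)_\nabla=(h,-\Delta\phi)$, and the Cameron--Martin theorem yields that reweighting the law of $h_i$ by $e^{(h_i,-\Delta\phi)}$, modulo a deterministic normalization, produces the law of $h_i+\phi$. Theorem~\ref{thm::all_reversible} combined with the construction in Figure~\ref{fig::abswap} provides a coupling of $(h_1,h_2,\eta)$ in which $\eta$ is simultaneously the forward flow line of $h_1$ and the reverse flow line of $h_2$, and in which, conditional on $\eta$, the difference $h_1-h_2$ is the explicit deterministic harmonic function $F(\eta)$ whose boundary values on $\vstripleft$, the left side of $\eta$, the right side of $\eta$, and $\vstripright$ are $-2a$, $-2\lambda'$, $2\lambda'$, $2b$ respectively.

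Given this coupling, the calculation is a direct application of these two ingredients. Fixing $\eta$ and using that $F(\eta)$ is deterministic given $\eta$, we have
\[
(h_1,-\Delta\phi)\;=\;(F(\eta),-\Delta\phi)\;+\;(h_2,-\Delta\phi),
\]
and the conditional laws of $h_1$ and $h_2$ given $\eta$ agree after subtracting this deterministic shift. Taking conditional expectations and then ratios, the factor $\E[e^{(h_2,-\Delta\phi)}\mid \eta]$ cancels between the two sides and what remains is $e^{(F(\eta),-\Delta\phi)}=e^{(h_1-h_2,-\Delta\phi)}$. Since by Theorem~\ref{thm::all_reversible} the unweighted path laws of the forward flow line of $h_1$ and the reverse flow line of $h_2$ coincide, pushing forward through the Cameron--Martin densities converts this equality of conditional expectations into the claimed identity for the Radon--Nikodym derivative of the two perturbed path laws.

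The principal obstacle is the careful bookkeeping of deterministic normalization constants together with the correct interpretation of the boundary-avoiding conditioning. The harmonic extensions of the boundary data of $h_1$ and $h_2$ differ by a fixed harmonic function, and the Cameron--Martin partition factors (of the form $e^{-\tfrac{1}{2}\|\phi\|_\nabla^2}$ together with cross terms involving the harmonic means) must cancel between the two sides for the clean identity to hold. The qualifier ``conditioned on the event that $\eta$ does not hit the boundary'' indicates that we take as base measures the boundary-avoiding versions of the path laws, constructed as in Section~\ref{sec::conditioned_not_to_hit} by reweighting with the martingale of Proposition~\ref{prop::mtweight}; in this setting $F(\eta)$ is well-defined with the stated boundary values, the pairing $(h_1-h_2,-\Delta\phi)=\int_{\vstrip}(h_1-h_2)(z)(-\Delta\phi)(z)\,dz$ is a bounded function of $\eta$ alone, and the normalizations align so that the comparison reduces cleanly to the Cameron--Martin identity above.
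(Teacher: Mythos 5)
Your argument is correct and is essentially the paper's own: the proposition is presented there as an immediate observation obtained by combining the Cameron--Martin reweighting of the field law by (a constant times) $e^{(h,-\Delta\phi)}$ with the coupling of $h_1$ and $h_2$ from Figure~\ref{fig::abswap}, in which $\eta$ is simultaneously the forward flow line of $h_1$ and the reverse flow line of $h_2$ (by Theorem~\ref{thm::all_reversible}) and $h_1-h_2$ is a deterministic harmonic function of $\eta$, which is exactly the computation you give. The cancellation of $\E[e^{(h_2,-\Delta\phi)}\mid\eta]$, the normalizing constants, and the precise meaning of the boundary-avoiding conditioning are left implicit in the paper, so your more explicit bookkeeping is consistent with, and if anything slightly more careful than, the original.
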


\begin{figure}[htb!!]
\begin{center}
\includegraphics[scale=0.85]{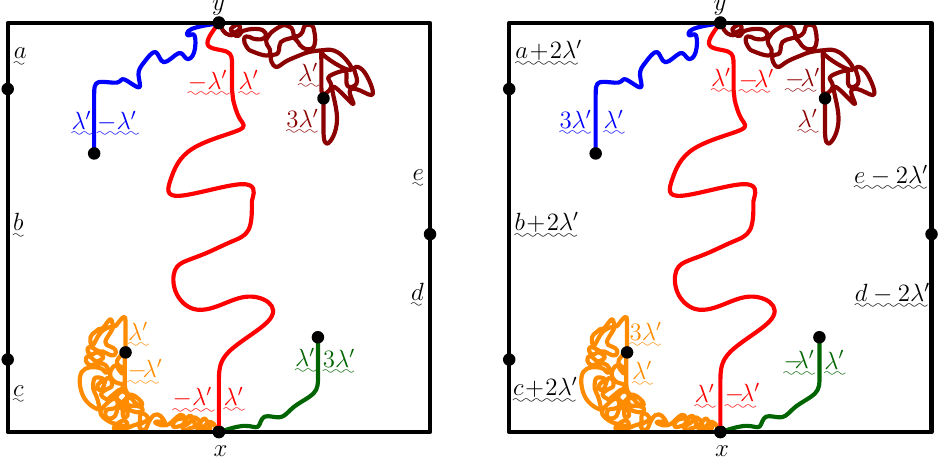}
\end{center}
\caption{\label{fig::reverse_sampling} Consider a version of Figure~\ref{fig::shields} in which the non-matching intervals (where height is not $\pm \lambda'$) extend all the way to infinity on each side.  In this case the ``shields'' go all the way to the top and bottom of the strip; each shield is potentially made up of two types of flow lines, as illustrated in Figure~\ref{fig::sidematching}, but within each shield there is only one type reaching the lower endpoint and one type reach the upper endpoint.  Depending on the type we can trace small pieces of the shield near the endpoints of the red path (conditioned on the red path itself) by using either flow lines or counterflow lines (with the shield as a boundary) as shown.  Definition~\ref{def::avoiding} then admits an alternate but equivalent formulation: instead of resampling the entire shield given the red path, and then resampling the red path given the shield, we can sample these four stubs given the red path (up to some small stopping times), and then resample the red path given the four stubs as a path {\em conditioned} on the (positive probability) event that it avoids the boundary and the stubs.
}
%The red flow line ``conditioned not to hit the boundary'' can be formally characterized by a natural Gibbs property: namely, if we condition on the path up to a forward stopping time (see for the green shown) and reverse stopping time (seed for the blue paths shown) and we draw in the blue and green paths as flow lines of the GFF in the complement of the red line, then the conditional law of the red line in between these two seeds is that of a GFF flow line conditioned on the positive probability event that it does not hit the boundary except at these points.  To get the right figure, we add $2\lambda'$ everywhere left of the red line and $-2\lambda'$ everywhere right of the red line.  Lemma~\ref{lem::multiple_force_special_case} implies that this property holds for the forward path if and only if it holds for its time reversal with boundary data modified as shown.
\end{figure}

One interesting observation is that on the event that the path goes completely to the right of $S$ we have deterministically $(h_1, -\Delta \phi)_\nabla - (h_2, -\Delta \phi)_\nabla = -2\lambda' A$.  Since this is a constant, we find that the Radon-Nikodym derivatives of the two weighted measures with respect to each other are constant on this event.  In particular, this means that the law of a forward flow line of $h_1 + \phi$ is equal to the law of a reverse flow line of $h_2 + \phi$ {\em on the event} that the path goes to the left of $S$.  A similar result holds when we condition on the event that the path goes to the right of $S$, or when we write $S = S_1 \cup S_2$ and condition the path to go left of $S_1$ and right of $S_2$.

Now suppose that $\wt{\phi}$ is a function that is harmonic on $\vstrip$ (with non-zero boundary conditions on some compact subset $I$ of the boundary, zero boundary conditions elsewhere).  We can then find a function $\phi$ that agrees with $\wt{\phi}$ outside of a neighborhood $U_I$ of $I$, has Laplacian supported inside of $U_I$, has finite Dirichlet energy, and extends continuously to zero on $\partial \vstrip$.  Using the arguments above for $\phi$ allows us to conclude that the time-reversal of the flow line of $h_1 + \wt{\phi}$ has a law that agrees with that of the flow line of $h_2 + \wt{\phi}$ run in the reverse direction on the event that the path does not hit $I$.  Indeed, this can be written as a limit of the functions discussed above, which allows us to deduce similarly that the flow line of $h_1 + \phi$ has the same law as the reverse flow line of $h_2 + \phi$ (as in Figure~\ref{fig::simpleswapphi}) if we {\em condition} on the path staying outside of $U_I$.  Since this holds for arbitrarily small choices of $U_I$, this proves the following:

\begin{lemma}
\label{lem::multiple_force_special_case}
Theorem~\ref{thm::multiple_force_points} holds in the setting depicted in Figure~\ref{fig::simpleswapphi}, that is, provided the boundary conditions are $-\lambda'$ and $\lambda'$ outside of the finite interval.
\end{lemma}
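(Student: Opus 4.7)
The plan is to formalize the density argument already sketched in the paragraphs preceding the lemma statement, building outward from the base case of constant boundary conditions $-\lambda'$ and $\lambda'$ (Figure~\ref{fig::simpleswap}). First I would invoke the base case directly: the flow line there is an $\SLE_\kappa(\tfrac{\kappa}{2}-2;\tfrac{\kappa}{2}-2)$ process and the forward-to-reverse swap amounts to adding the constants $\pm 2\lambda'$ on the two sides of the path. Because those constants do not depend on any interior data, the swap defines a measure preserving map on GFF configurations precisely when $\SLE_\kappa(\tfrac{\kappa}{2}-2;\tfrac{\kappa}{2}-2)$ has time-reversal symmetry, which is the content of Theorem~\ref{thm::all_reversible}.

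Next I would pass to the perturbed case via a Cameron–Martin (Girsanov) argument. The general boundary data of Figure~\ref{fig::simpleswapphi} differs from the base case by the harmonic extension $\wt\phi$ of a function supported on a compact interval $I \subseteq \partial\vstrip$. For each $\epsilon>0$, choose a smooth function $\phi_\epsilon$ on $\vstrip$ that agrees with $\wt\phi$ outside the $\epsilon$-neighborhood $U_I^\epsilon$ of $I$, vanishes continuously on $\partial\vstrip\setminus I$, has $-\Delta\phi_\epsilon$ supported in $\ol{U_I^\epsilon}\cap\vstrip$, and has finite Dirichlet energy. As explained before Proposition~\ref{prop::radonforwardreverse}, shifting $h$ to $h+\phi_\epsilon$ is the same as weighting the law of $h$ by a normalized multiple of $e^{(h,\phi_\epsilon)_\nabla}=e^{(h,-\Delta\phi_\epsilon)}$.

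Then I would apply Proposition~\ref{prop::radonforwardreverse} on the event $A_\epsilon$ that the flow line avoids $U_I^\epsilon$. On $A_\epsilon$, one can partition $\supp(-\Delta\phi_\epsilon)$ into the portions lying to the left and the right of the path; by the explicit forms $h_1-h_2=-2\lambda'$ to the left and $h_1-h_2=+2\lambda'$ to the right of the path (Figure~\ref{fig::simpleswap}), the pairing $(h_1-h_2,-\Delta\phi_\epsilon)$ is the deterministic constant $\pm 2\lambda'\int_{\vstrip}(-\Delta\phi_\epsilon)$ determined by that partition. Hence the forward-versus-reverse Radon-Nikodym derivative between the flow line of $h_1+\phi_\epsilon$ and the reverse flow line of $h_2+\phi_\epsilon$ is constant on each piece of the partition, so the two laws agree when conditioned on $A_\epsilon$ and on the partition.

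The final step is to let $\epsilon\to 0$. In the non-boundary-intersecting regime, the continuity and boundary-avoidance results of \cite{MS_IMAG} give $\p[A_\epsilon]\to 1$, so the conditional laws converge to the unconditional ones and Theorem~\ref{thm::multiple_force_points} follows for this configuration. In the boundary-intersecting regime one has to interpret ``conditioned not to hit $I$'' via the martingale reweighting of Proposition~\ref{prop::mtweight} and the Gibbs characterization of Proposition~\ref{prop::conditionalunique}; the forward-reverse equality on $A_\epsilon$ passes to the limit because both limiting laws satisfy the same Gibbs resampling property, so uniqueness in Proposition~\ref{prop::conditionalunique} forces them to coincide. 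The main obstacle is precisely this last limit, where the conditioning event has vanishing probability and one must rely on the framework of Section~\ref{sec::conditioned_not_to_hit} to identify the common limit with an $\SLE_\kappa(\refrho\rho^L;\refrho\rho^R)$-type process; a secondary technical point is checking that the indicator of $A_\epsilon$ is a measurable function of the unoriented trace so that it has the same meaning under the forward and reverse descriptions.
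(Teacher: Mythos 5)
Your proposal follows the paper's own proof almost verbatim through its first three steps: the $a=b=\lambda'$ base case of Figure~\ref{fig::simpleswap} via Theorem~\ref{thm::all_reversible}, the interpretation of the boundary perturbation as a reweighting by $e^{(h,-\Delta \phi_\epsilon)}$, and the application of Proposition~\ref{prop::radonforwardreverse} on the event $A_\epsilon$ that the path avoids a neighborhood $U_I^\epsilon$ of the non-matching intervals, where $(h_1-h_2,-\Delta\phi_\epsilon)$ is deterministic so the forward and reverse laws agree after conditioning on $A_\epsilon$. The one place you diverge is the limit $\epsilon \to 0$, and there you have misdiagnosed the setting. In the configuration of Figure~\ref{fig::simpleswapphi} the event that the path avoids the boundary has \emph{positive} probability -- this is precisely why the lemma is restricted to boundary data $-\lambda'$ and $\lambda'$ off a finite interval, and the paper records this immediately after the statement. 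Consequently there is no regime split: the events $A_\epsilon$ increase (up to a null set) to the positive-probability event $\{\eta \cap I = \emptyset\}$, the conditional laws given $A_\epsilon$ converge to the conditional law given this event under both the forward and the reversed descriptions (your observation that $A_\epsilon$ is a function of the unoriented trace is what makes the comparison legitimate), and the lemma follows.

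Your fallback argument for the ``boundary-intersecting regime'' is therefore unnecessary, and as written it is also the only genuine gap in the write-up: you assert, without proof, that the $\epsilon \to 0$ limits of the conditioned forward and reversed laws exist and satisfy the Gibbs resampling property of Proposition~\ref{prop::conditionalunique}, so that its uniqueness statement applies. Establishing exactly that kind of claim is the nontrivial content of Section~\ref{sec::conditioned_not_to_hit}, and note moreover that Proposition~\ref{prop::mtweight} concerns force points immediately adjacent to the seed (where boundary hitting is almost sure), not a compact interval at positive distance from it. The genuinely measure-zero conditioning only enters for the general form of Theorem~\ref{thm::multiple_force_points}, which the paper treats separately via the shield construction, Definition~\ref{def::avoiding}, and Proposition~\ref{prop::uniqueboundaryavoidinglaw}. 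If you simply replace your final paragraph by the remark that $\p[\eta \cap I = \emptyset] > 0$ in this setting and take the monotone limit of positive-probability conditionings, your argument coincides with the paper's proof.
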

Note that in this setting there is no difficulty in making the statement of Theorem~\ref{thm::multiple_force_points} precise because the event that the path avoids the boundary has positive probability.  The same argument also applies in the setting of Figure~\ref{fig::abswapphi}, where the boundary conditions are equal to some constant $c$ on the right side and on the left side are equal to $-\lambda'$ outside of a finite interval.

%Interestingly, if the constants $a$ and $b$ in Figure~\ref{fig::simpleswapphi} or  Figure~\ref{fig::abswapphi} are positive, then the forward path has a positive probability to hit the interval $I$ but the reverse path does not.  Conversely, if the constants are negative, then the reverse path has a positive probability to hit the interval but the forward path does not.  Explicitly conditioning not to hit the interval is therefore always necessary in either the forward or the backward direction.

\subsection{Time reversals and shields}

In this section, we describe another way to prove Lemma~\ref{lem::multiple_force_special_case}.  First we observe that if we are given zero boundary conditions on the right side of a strip and piecewise constant boundary conditions on the left side of a strip (with finitely many pieces), then we can draw a collection of flow lines on the left side of the strip (which necessarily avoid the right side) so that conditioned on these flow lines the boundary conditions are $0$ (plus winding) on the left boundary of the original strip minus this collection of flow lines).  This is illustrated and explained in Figure~\ref{fig::sidematching}.  Using this construction, the remainder of the alternate proof of Lemma~\ref{lem::multiple_force_special_case} is sketched in Figure~\ref{fig::shields}.

The resampling characterization in Figure~\ref{fig::sidematching} suggests a way to make sense of Theorem~\ref{thm::multiple_force_points} in a more general setting.

\begin{definition}
\label{def::avoiding}
Given piecewise constant boundary conditions on $\vstrip$ (with finitely many pieces), we say that a random path $\eta$ from the bottom to top of the strip has the law of a flow line {\em conditioned to avoid the boundary} if its law is invariant under the following resampling procedure: first resample the GFF off $\eta$ and generate the corresponding shields, as explained in Figure~\ref{fig::sidematching} and Figure~\ref{fig::shields}.  Then resample the GFF in the region between the shields and take $\eta$ to be the new flow line going from the bottom to the top of the in-between region.
\end{definition}

When we use Definition~\ref{def::avoiding}, Theorem~\ref{thm::multiple_force_points} is immediate from the argument in Figure~\ref{fig::shields}.  However, Definition~\ref{def::avoiding} is not entirely satisfying because we have not shown that there exists a unique law for $\eta$ with this resampling property.  To remedy this, we need the following:

\begin{proposition} \label{prop::uniqueboundaryavoidinglaw}
Given any set of piecewise constant boundary conditions on $\vstrip$ (with finitely many pieces), there exists a unique law for a random path $\eta$ that satisfies the conditions of Definition~\ref{def::avoiding}.
\end{proposition}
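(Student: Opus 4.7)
The plan is to establish uniqueness by a coupling and ergodic-decomposition argument modeled on the proof of Theorem~\ref{thm::bi_chordal}, with the endpoint singularity handled by $\SLE$ duality together with Proposition~\ref{prop::conditionalunique2}. The resampling in Definition~\ref{def::avoiding} defines a two-step Markov kernel on triples (left shield, right shield, $\gamma$): step (a) resamples the two shields given $\gamma$ (draw the GFF off $\gamma$ and extract the shield flow lines as in Figure~\ref{fig::sidematching}), and step (b) resamples $\gamma$ as an $\SLE_\kappa(\tfrac{\kappa}{2}-2;\tfrac{\kappa}{2}-2)$ process in the component of the complement of the shields that crosses the strip (as in Figure~\ref{fig::shields}). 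Any law $\mu$ on $\gamma$ satisfying Definition~\ref{def::avoiding} extends canonically to a stationary distribution for this chain, and conversely any stationary distribution for this chain, restricted to its $\gamma$-marginal, satisfies the conditions of the definition.

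Suppose now that $\mu$ and $\wt{\mu}$ are two such laws. By the ergodic decomposition theorem (invoked exactly as in the proof of Theorem~\ref{thm::bi_chordal}), it is enough to show that any two ergodic stationary measures $\nu$ and $\wt{\nu}$ for the chain cannot be mutually singular. Draw initial configurations independently from $\nu$ and $\wt{\nu}$ and run the chain with the coupling that first performs step (b) so as to maximize the probability that $\gamma$ and $\wt{\gamma}$ agree given the shields, and then performs step (a) on the (now common) path. For fixed shield configurations, $\gamma$ is a flow line of a GFF with matching boundary data in a fixed simply connected region, so by Lemma~\ref{lem::coupling} (applied to the two ambient domains cut out by the two shield configurations, which share a boundary arc near the target side) the two realizations of $\gamma$ can be coupled to coincide with positive probability. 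A subsequent application of step (a) then produces matching shields.

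The main obstacle is the endpoint singularity already present in the proof of Theorem~\ref{thm::bi_chordal}: the top and bottom of $\vstrip$ are shared initial and terminal points of $\gamma$ and of the innermost strands of both shields, and neighborhoods of these points cannot be mixed in finitely many steps of the chain. I would remove this obstruction in the same way as there: using $\SLE$ duality (Figure~\ref{fig::counterflowline}) to sample, conditional on $\gamma$, short counterflow-line stubs at both ends whose outer boundaries merge into the innermost shield strands, as illustrated in Figure~\ref{fig::reverse_sampling}. Conditioning on these four stubs produces a configuration in which the path $\gamma$ runs between distinct interior endpoints and the shield extensions are required to exit in prescribed intervals. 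In this setup, the version of Proposition~\ref{prop::conditionalunique2} allowed by Remark~\ref{rem::conditionalunique2_interval_exit} applies to determine, in any measure satisfying the resampling of Definition~\ref{def::avoiding}, the conditional law of the remaining part of $\gamma$ together with the innermost shield strands. With the endpoints separated in this way, the coupling argument of the previous paragraph applies to the bulk configuration and yields coincidence with positive probability after finitely many steps.

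Finally, to recover uniqueness of the unconditional law of $\gamma$, I would take a limit as the four stubs shrink back to the bottom and top of the strip, using continuity of $\SLE_\kappa(\ul{\rho})$ in the locations of its force points (\cite[Section~2]{MS_IMAG}) as in the end of the proof of Theorem~\ref{thm::bi_chordal}. The delicate step is the last one: some care is needed to verify that the stub-conditioned laws depend continuously on the stub data all the way to the degenerate limit, but this is exactly the situation already treated in Theorem~\ref{thm::bi_chordal} and Proposition~\ref{prop::conditionalunique2}, so no new ideas are required.
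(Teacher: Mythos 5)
Your proposal only addresses half of the statement. Proposition~\ref{prop::uniqueboundaryavoidinglaw} asserts existence \emph{and} uniqueness, and your coupling/ergodic-decomposition scheme, even if completed, would only show that there is at most one law satisfying Definition~\ref{def::avoiding}. Existence is not a formality in this setting: the definition encodes a conditioning on what is in general a probability-zero event, and Remark~\ref{rem::cond_not_hit_existence} makes explicit that Radon--Nikodym identifications of the type appearing in Propositions~\ref{prop::conditionalunique} and~\ref{prop::conditionalunique2} do not by themselves produce a probability measure. The paper's proof spends its second half on exactly this point: it identifies the law of the four stubs as absolutely continuous with respect to a reference measure $\nu_1 \otimes \nu_2$ (two independent pairs of flow-or-counterflow lines conditioned not to intersect, one pair per endpoint), and then shows that the $\nu$-expectation of the resulting Radon--Nikodym derivative is finite, via a repeated-resampling argument (using the bi-chordal resampling, Remark~\ref{rem::rate_of_convergence}, and a tightness estimate preventing the shield strands from travelling past levels $\pm R$) which shows the candidate measure is not infinite. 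Nothing in your plan addresses this.

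The uniqueness route also has a gap at its final step. Knowing, for every stub size $\epsilon$, the conditional law of the remaining configuration given the four stubs (whether obtained by your coupling argument or by a four-stub variant of Proposition~\ref{prop::conditionalunique2}) pins down the unconditional law only if you also know the marginal law of the stub quadruple, or if the stub-conditioned laws converge, as the stubs shrink, to a limit that does not depend on the stub data. In Theorem~\ref{thm::bi_chordal} that limit was harmless because the conditional law given the stubs was an explicit $\SLE_\kappa(\ul{\rho})$ depending on the stubs only through force-point locations, so continuity in the force points applied; here the stub-conditioned law of $\gamma$ is a law conditioned on a positive-probability but non-explicit avoidance event, and two candidate laws with different stub marginals would share all the stub-conditioned kernels while having different unconditional laws. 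The marginal law of the stubs is precisely what the paper extracts from the four-stub Radon--Nikodym computation (up to the normalizing constant, whose finiteness is the existence half), so your plan silently presupposes the paper's key step rather than replacing it. Two smaller problems: conditioning on the shield stubs does not move the endpoints of $\gamma$ into the interior --- $\gamma$ still joins the two ends of the strip, where the stubs are rooted --- and Lemma~\ref{lem::coupling} as stated does not apply to the two domains cut out by two different shield pairs, since its hypotheses require the domains to agree along an entire boundary arc and in neighborhoods of both endpoints; a two-sided variant would have to be formulated and proved before the mixing step goes through.
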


\begin{proof} In the special case that the boundary data is constant on each of the two sides of $\vstrip$, Proposition~\ref{prop::uniqueboundaryavoidinglaw} is an obvious consequence of our reversibility and bi-chordal results.  The general proof essentially builds on this observation.  The proof is rather similar to other proofs given in this paper, so we will only sketch the argument here.

First, we note that Definition~\ref{def::avoiding} can be equivalently formulated using ``stubs'' instead of entire shields, as illustrated in Figure~\ref{fig::reverse_sampling}.  Once we condition on the four stubs up to any positive stopping times, we know the conditional laws of the entire shields: namely, they can be drawn using flow or counterflow lines of the Gaussian free field conditioned on a (conditionally positive probability) event that the union of the two shields leaves a space for flow line to get from one endpoint to the other without hitting the boundary.  Given this formulation, we will prove that there is {\em at most one} law satisfying the conditions of Definition~\ref{def::avoiding} using a variant of the proof of Proposition~\ref{prop::conditionalunique2} that involves four stubs instead of two.  To explain this, recall that in the proof of Proposition~\ref{prop::conditionalunique2}, we started with a measure $\nu$ that was (in a natural sense) the law of two GFF flow lines conditioned not to ever intersect each other (where the boundary conditions for this GFF are constant on the two sides of the strip) with the law restricted to the portion of the paths that occurs before the first time they exit some small ball.  Precisely, $\nu$ was just a coupled pair of stopped $\SLE_\kappa(\rho_1, \rho_2)$ curves with parameters depending on the boundary values of the field near the corresponding endpoint.  We then showed that the joint law of the paths in the measure $\mu$ we wanted to understand must be absolutely continuous with respect to $\nu$, with a Radon-Nikodym derivative we could explicitly write down.

In this proof, we replace $\nu$ with $\nu_1 \otimes \nu_2$, where $\nu_1$ corresponds to the law of a pair of flow-or-counterflow lines started from the lower endpoint $x$, conditioned not to intersect each other, and $\nu_2$ is a law (independent of $\nu_1$) of a pair of flow-or-counterflow lines started from the lower endpoint $x$, conditioned not to intersect each other.  Again, each $\nu_i$ has precisely the law of the $\nu$ in the proof Proposition~\ref{prop::conditionalunique2}, given the boundary data near the corresponding endpoint.  As before, it is the law we get by conditioning the paths not to intersect each other while pretending that their are no other force points.

Then we let $\mu$ be the law of the quadruple of stubs in Figure~\ref{fig::reverse_sampling} (where stubs are stopped when they exit the same ball).  The exact arguments used in the proof of Proposition~\ref{prop::conditionalunique2} and Remark~\ref{rem::conditionalunique2_interval_exit} then give us a formula for the Radon-Nikodym derivative of $\mu$ with respect to $\nu$ (up to multiplicative constant).

The {\em existence} half of Proposition~\ref{prop::uniqueboundaryavoidinglaw} can then be established by showing that the $\nu$ expectation of the Radon-Nikodym derivative discussed above is finite.  In other words, we must show that the $\mu$ defined by this Radon-Nikodym derivative is not an infinite measure.  One way to prove this is to start with a configuration as in Figure~\ref{fig::shields} (but with shields running all the way to the endpoints as in Figure~\ref{fig::sidematching}) and repeatedly resample.  In the middle diagram of Figure~\ref{fig::shields}, the Gibbs properties assumed and the bi-chordal theorem imply that we can fix all of the ``downward going'' paths in the shields and resample the upward ones and red flow line as flow lines of the same GFF without any additional conditioning.  (Alternatively, we can just forget about the red flow line and sample the upward going lines of the shield from this law.)  We can also alter boundary conditions to those of the right diagram of Figure~\ref{fig::shields} and resample all of the downward going paths of the shields as ordinary GFF flow lines without additional conditioning.  It is now not hard to show that, uniformly in the choice of downward-going lines, the probability that the upward-going lines go below some level $-R$ tends to zero as $R \to \infty$.  A similar result holds for downward going lines going above level $R$.  Using Remark~\ref{rem::rate_of_convergence}, one can then see that no matter how many times we alternate between resampling the downward and upward lines, the law of paths below $-R$ and above $R$ is still reasonably ``nice'' and in particular, the analog of the sets $\Omega_\epsilon$ in the proofs Proposition~\ref{prop::conditionalunique} and Proposition~\ref{prop::conditionalunique2} have measures that do not tend to zero.  Since the Radon-Nikodym derivative of $\mu$ with respect to $\nu$ on these sets is bounded (as in the proofs mentioned above) it follows that $\mu$ cannot be an infinite measure.
\end{proof}

\bibliographystyle{hmralphaabbrv}
\bibliography{reversibility}

\bigskip

\filbreak
\begingroup
\small
\parindent=0pt

\bigskip
\vtop{
\hsize=5.3in
Microsoft Research\\
One Microsoft Way\\
Redmond, WA, USA }

\bigskip
\vtop{
\hsize=5.3in
Department of Mathematics\\
Massachusetts Institute of Technology\\
Cambridge, MA, USA } \endgroup \filbreak
\end{document}